\newtheorem{theorem}{Theorem}[section]
\newtheorem{lemma}[theorem]{Lemma}
\newtheorem{ass}[theorem]{Assumption}
\newtheorem{proposition}[theorem]{Proposition}
\newtheorem{corollary}[theorem]{Corollary}
\newtheorem{definition}[theorem]{Definition}
\theoremstyle{remark}
\newtheorem{remark}[theorem]{\it \bf{Remark}\/}
\numberwithin{equation}{section}
\def\section{\@startsection{section}{1}%
	\z@{1.5\linespacing\@plus\linespacing}{.5\linespacing}%
	{\normalfont\bfseries\large\centering}}
\newcommand{\be}{\begin{equation}}
	\newcommand{\ee}{\end{equation}}
\newcommand{\bea}{\begin{eqnarray}}
	\newcommand{\eea}{\end{eqnarray}}
\newcommand{\bee}{\begin{eqnarray*}}
	\newcommand{\eee}{\end{eqnarray*}}
\def\pa{\partial}
\def\CC{\mathbb{C}}
\def\NN{\mathbb{N}}
\def\RR{\mathbb{R}}
\def\ZZ{\mathbb{Z}}
\def\b{\beta}
\def\supess{\mathop{\operator@font Sup\,ess}}
\def\CC{\mathbb{C}}
\def\C{\matchal C}
\def\NN{\mathbb{N}}
\def\RR{\mathbb{R}}
\def\CC{\mathbb{C}}
\def\XX{\mathbb{X}}
\def\ZZ{\mathbb{Z}}
\def\a{\alpha}
\def\e{\varepsilon}
\def\bar#1{{\overline #1}}
\def\R2+{\RR ^2_+}
\def\calE{{\mathcal E}}
\def\calH{{\mathcal H}}
\def\calJ{{\mathcal J}}
\def\calL{{\mathcal L}}
\def\calM{{\mathcal M}}
\def\calO{{\mathcal O}}
\def\calP{{\mathcal P}}
\def\calR{{\mathcal R}}
\def\calS{{\mathcal S}}
\def\calT{{\mathcal T}}
\def\pa{\partial}
\def\lim{\mathop{\rm lim}}
\def\supp{{\rm supp}~}
\def\sup{\mathop{\rm sup}}
\def\l{\lambda}
\def\log{{\rm log}}
\def\pa{\partial}
\def\pa{\partial}
\def\la{\langle}
\def\matchal{\mathcal}
\def\ra{\rangle}
\def\L{\mathcal L}
\begin{document}
	
	\title[]{On stability of self-similar blowup for mass supercritical NLS}
	
	\author{Zexing Li}
	\address{Laboratoire AGM \\ CY Cergy Paris Universit\'e \\ 2 avenue Adolphe Chauvin \\ 95300 Pontoise \\ France}
	\email{zexing.li@u-cergy.fr}

\begin{abstract} We consider the mass supercritical (NLS) $ i\pa_t u+\Delta u+u|u|^{p-1}=0$ in dimension $d\ge 1$ in the mass-supercritical range $0<s_c=\frac{d}{2}-\frac{2}{p-1}<1$. The existence of self-similar blow up dyamics is known \cite{MR2729284}, and suitable self-similar blow up profiles were constructed in \cite{MR4250747}. In this work, we establish the finite codimensional nonlinear asymptotic stability in homogeneous Sobolev space and energy space $H^1$ for a large class of self-similar profiles. The heart of the proof is, following the approach of Beceanu \cite{beceanu2011new},  the derivation of Strichartz dispersive estimates for matrix operators of the form
	$$\mathcal{H} = \left( \begin{array}{cc} \Delta_b -1 &  \\ &  -\Delta_{-b} +1\end{array} \right) -ibs_c + \left( \begin{array}{cc} W_1 & W_2  \\ -\overline{W_2} & -W_1   \end{array} \right),$$
	$b > 0$ and
	\[ \Delta_b = \Delta + ib\left(\frac d2 + x\cdot\nabla\right), \]
	inhomogeneous Sobolev spaces. The deformed Laplacian $\Delta_b$ arises from the renormalization and its operator group $e^{it\Delta_b}$ exhibits self-similar dispersion, which not only recovers the free Strichartz but also enables an extension of resolvent families. Compared with Strichartz estimates based on $\Delta$, this one has a larger admissible region, works for arbitrarily small polynomial decaying potential, and requires no spectral assumption.	\end{abstract}

	\maketitle

\section{Introduction}

\subsection{Setting of the problem}

We consider the nonlinear Schr\"odinger equation
\be \left\{\begin{array}{l} i\pa_t u+\Delta u+u|u|^{p-1}=0\\ u_{|t=0}=u_0\end{array}\right. \tag{NLS} \label{eqNLS}\ee 
for spatial dimension $d \ge 1$ and $p > 1$. The scaling symmetry \[ u_\l (t, x) := \l^{\frac{2}{p-1}}u(\l^2 t, \l x), \quad \l > 0\]
allows us to compute the critical Sobolev norm invariant
\[ \| u_\l (t, \cdot) \|_{\dot{H}^{s_c}} = \| u (\l^2 t, \cdot) \|_{\dot{H}^{s_c}},\quad s_c := \frac{d}{2} - \frac{2}{p-1}.\]
In this paper, we are interested in describing singularity formation mechanisms for nonlinearities above the mass and below the energy conservation laws:
\[ 0 < s_c < 1. \]

\subsection{Type I and Type II blowup.} 
We now recall some key facts related to the description of singularity formation for \eqref{eqNLS} with $0 < s_c < 1$. \\

\noindent\emph{The self-similar law.} From an elementary scaling argument, all blow up solutions satisfy the scaling self-similar lower bound
\be \| u(t)\|_{\dot{H}^{\sigma}} \gtrsim (T-t)^{-\frac{\sigma - s_c}{2}}  \label{eqscaling}\ee where $s_c<\sigma\le 1$ is supercritical. We say a solution blows up self-similarily if it saturates the self-similar law
\be \| u(t)\|_{\dot{H}^{\sigma}} \sim (T-t)^{-\frac{\sigma - s_c}{2}},\quad s_c < \sigma \le 1  \label{eqscalingbis}.
\ee

\noindent \emph{Type II blow}. Type II solutions which blow up strictly faster than \eqref{eqscalingbis} have been constructed both in the critical case $s_c=0$, \cite{MR1852922,MR2150386,MR2169042,MR1995801,MR2061329,MR2116733,MR2122541}), and the supercritical case through the derivation of "ring" solutions which concentrate on a circle, \cite{MR2370365, MR2248831,MR3161317,MR3324912}.\\

\noindent \emph{Type I blow}. However, for $0<s_c<1$, type I blow is conjectured to exist and generate stable blow up dynamics, \cite{MR1696311,MR1487664}. More precisely, the self-similar renormalization 
$$ u(t, x) = \frac{e^{i\gamma(t)}}{(\l(t))^{\frac{2}{p-1}}} v\left(\tau,y\right)$$
with the explicit laws 
\be
\label{eqlaw}
\lambda(t) = \sqrt{2b(T-t)},\quad \gamma(t) = \tau(t), \ \  \tau(t) = -\frac{1}{2b}\ln (T- t), \ \ y=\frac{x}{\lambda(t)}
\ee
maps \eqref{eqNLS} onto the renormalized flow 
\be
\label{enoivnevneiovnovneonvi}
i\pa_\tau v+\Delta v -v+\frac{ib}{2}\left(\frac{2}{p-1}v+y\cdot\nabla v\right)+v|v|^{p-1}=0.
\ee In the range $0 < s_c < 1$, admissible self-similar profiles are expected to exist from numerical grounds, \cite{mclaughlin1986focusing,lemesurier1988focusing,landman1991stability,MR1487664,budd1999new,doi:10.1137/S0036139900382395}.\\

\noindent{\bf Conjecture} (Existence of suitable profiles). {\em  Let $d\ge 1$ and $s_c \in (0, 1)$. Then there exists $b = b(d, s_c)>0$ and a smooth radially symmetric profile $Q_b$ with the following properties.\footnote{The equation \eqref{eqselfsimilar} and self-similar decay \eqref{selfsimilardecay} are essential, in that they can imply the non-vanishing property \eqref{eqnonvanishing} and self-similar decay for higher order derivatives \eqref{selfsimilardecayhigh} via ODE analysis, see \cite[Lemma 2.2]{troy2022zero} and \cite{Limodestability} respectively. We remark that the main result of \cite{troy2022zero} was unfortunately dubious.}
	\begin{enumerate}
\item[\rm (i)]{\rm  Equation:} $Q_b$ is a stationary solution to \eqref{enoivnevneiovnovneonvi}:
\be \Delta Q_b-Q_b+ib\left(\frac{2}{p-1}Q_b+y \cdot \nabla Q_b\right)+Q_b|Q_b|^{p-1}=0 .\label{eqselfsimilar} \ee
\item[\rm (ii)]{\rm Non-vanishing}: \be Q_b(x) \neq 0 \quad \forall x \in \RR^d.  \label{eqnonvanishing}\ee
\item[\rm (iii)]{\rm Self-similar decay}: \bea
\label{selfsimilardecay}
\lim_{r \to \infty}r^{\frac{2}{p-1}} |Q_b(r)| = c_{p} > 0,&& \limsup_{r \to \infty}r^{\frac{p+1}{p-1}} |Q_b'(r)| < \infty;\\
\limsup_{r \to \infty}r^{\frac{2}{p-1} + k} |\pa_r^k Q_b'(r)| < \infty, && \forall \,\, k \ge 2.\label{selfsimilardecayhigh}
\eea 
\end{enumerate}}

The existence of such profiles has been obtained by Bahri-Martel-Rapha\"el \cite{MR4250747} in the range $0<s_c\ll1$ by bifurcating from the critical case $s_c=0$, see also \cite{MR3311593} for the case of (gKdV). Extending the branch to higher values of $s_c$ is a delicate nonlinear ODE problem, and recently Donninger-Sh\"orkhuber \cite{arXiv:2406.16597} constructed the suitable profile for the physical scenario $d=p=3$ with computer assistance. Let us stress that the construction of self-similar profiles is central in the study of singularity formation mechanisms and has been mostly addressed in the energy supercritical case $s_c>1$, see \cite{MR3986939} for the heat equation, \cite{MR4350584} for the wave equation and \cite{MR8522,MR4445442} for compressible fluids.\\

\noindent{\em Existence of self-similar dynamics}. Self-similar profiles satisfying \eqref{selfsimilardecay} are not in the energy space $H^1$ and hence cannot be used as such to produce finite energy type I solutions. The existence of such dynamics was first obtained in \cite{MR2729284} for $0<s_c\ll 1$ using again a bifurcation argument from the critical case $s_c=0$. The analysis provides solutions that satisfy the self-similar law \eqref{eqscaling}, but does not describe the asymptotic structure of the singularity which should be given by the self-similar profile constructed in \cite{MR4250747}.

\subsection{Main results} Our aim in this paper is to provide a robust approach for the proof of the asymptotic stability of self-similar profiles. 

\subsubsection{$\dot H^{s_c+}$ stability theory}

\begin{theorem}[Finite-codimensional asymptotic stability in $\dot H^{s_c+}$] \label{thmcodimstability}
Let $d \ge 1$, $s_c\in (0, 1)$, and $b, Q_b$ satisfying \eqref{eqselfsimilar}, \eqref{eqnonvanishing}, \eqref{selfsimilardecay}. Then for any $0 < \sigma - s_c \ll 1$, there exists a Lipschitz finite-codimensional manifold $\mathcal{M} \subset \dot{H}^\sigma$ of initial data
	\[ u_0 = Q_b + \e_0 \]
	where $\| \e_0 \|_{\dot{H}^\sigma }\ll 1$, such that the corresponding solution to \eqref{eqNLS} blows up at $T = \frac{1}{2b}$ with a decomposition
	\be u(t, x) = \frac{1}{\l(t)^{\frac{2}{p-1}}}(Q_b + \e) \left(t, \frac{x}{\l(t)}\right)e^{i\tau(t)} \label{equmodrep} \ee
	where $(\lambda(t),\gamma(t),\tau(t))$ are given by \eqref{eqlaw} and
	\be \|\e(t)\|_{\dot{H}^\sigma_x}\lesssim (T-t)^{\frac{\sigma - s_c}{2}}.
	\label{eqbdd1}
		\ee
		Moreover, there exists $u_*\in \dot{H}^\sigma$ such that 
		\be
		u(t) - \frac{1}{\l(t)^{\frac{2}{p-1}}}Q_b \left( \frac{x}{\l(t)}\right)e^{i\tau(t)}  \to u_*\quad \mathrm{in}\,\,\dot{H}^\sigma \quad as\quad t\to T.
		\label{eqbdd3}
		\ee		
\end{theorem}


The result reduces the nonlinear asymptotic stability to a linear spectral stability problem. As a corollary, assuming the unstable directions of the linearized operator are generated merely by symmetries of the equation (mode stability), we can obtain the asymptotic stability without losing codimensions. 

\begin{ass}[Mode stability] \label{assmodestab}
    Let $d \ge 1, s_c \in (0, 1)$, and $b, Q_b$ satisfying \eqref{eqselfsimilar}, \eqref{eqnonvanishing}, \eqref{selfsimilardecay}. Let $\calH$ be the linearized operator around $Q_b$ in self-similar coordinates defined by \eqref{Hb}-\eqref{potential}. For any $0 < \sigma - s_c \ll 1$, the discrete spectrum of $\calH$ satisfies 
    \be
      \sigma_{\rm disc}\left( \calH \big|_{(\dot H^\sigma(\RR^d))^2} \right) \cap \left\{ z\in\CC: \Im z < b(\sigma - s_c) \right\} = \{ 0, -bi, -2bi \},
    \ee
     with the corresponding Riesz projections satisfying\footnote{They are instabilities generated by phase rotation, spatial translation and scaling symmetry respectively. See Section \ref{sec63} for detailed discussion.}
    \be
    \dim {\rm Ran} P_0 = 1,\quad \dim {\rm Ran} P_{-bi} = d, \quad \dim {\rm Ran} P_{-2bi} = 1. \label{eqRieszchar}
    \ee
\end{ass}

\begin{remark}[Verification of mode stability]\label{rmkmodestab}
Proving Assumption \ref{assmodestab} for certain self-similar profile is an independent linear problem. It requires an exact characterization of unstable spectrum for a non-self-adjoint operator, which is relatively unbounded with respect to (matrix) Schr\"odinger operators. In our forthcoming work \cite{Limodestability}, we show this property holds for the slightly mass-supercritical profiles constructed in \cite{MR4250747} for $d \le 10$, as expected from the stability result \cite{MR2729284}. 
\end{remark}

\begin{theorem}[Conditional asymptotical stability in $\dot H^{s_c+}$] \label{thmasympstab} 
 Under the same assumption of Theorem \ref{thmcodimstability} plus mode stability Assumption \ref{assmodestab}, for any $0 < \sigma - s_c \ll 1$, there exists $\epsilon_1 \ll 1$ such that for all $\| \e_0 \|_{\dot H^\sigma} < \epsilon_1$, there uniquely exists $(\l_0, x_0, \theta_0) \in \RR \times \RR^d \times \RR$ with 
  \bee
    |\l_0 - 1| + |x_0| + |\theta_0| \lesssim \| \e_0 \|_{\dot{H}^\sigma}
  \eee
  such that the initial data $ u_0 = Q_b + \e_0$
  generates a solution blowing up at $T = \frac{\l_0^2}{2b}$ satisfying the decomposition
  \be
      u(t, x) = \frac{1}{(1 - 2b \l_0^{-2} t)^{\frac{1}{p-1}}}  (Q_b + \e)\left(t, \frac{x-x_0}{\sqrt{\l_0^2-2bt}} \right)  e^{-i\left[\frac{\ln(\frac{1}{2b} - \l_0^{-2}t )}{2b} + \theta_0 \right]} \label{eqss33} \ee
  with the decaying of perturbation \eqref{eqbdd1} and the existence of blowup profile in $\dot H^\sigma$ \eqref{eqbdd3}. 
 \end{theorem}

\noindent{\em Comments on Theorem \ref{thmcodimstability} and Theorem \ref{thmasympstab}.}\\

\noindent{\em 1. Regularity.} \eqref{eqNLS} is local-wellposed in $\dot{H}^\sigma$ with $0 < \sigma - s_c \ll 1$, see Proposition \ref{proplwp}, so the evolution makes sense with merely $u_0 \in \dot{H}^\sigma$. This result is almost sharp in terms of regularity for $\dot{H}^s$ spaces, and the critical $\dot H^{s_c}$ stability is still open. \\

\noindent{\em 2. Manifold structure of Theorem \ref{thmcodimstability}}. Let $B_{\epsilon_0}^{\dot H^\sigma}:= \{ f \in \dot{H}^\sigma, \| f \|_{\dot{H}^\sigma} < \epsilon_0 \}$ with $\epsilon_0 \ll 1$, then we construct a direct sum decomposition \be\dot{H}^\sigma = X_{u} \oplus X_{cs} \label{eqXcus}\ee
with $X_{cs}$ and $X_u$ are $\RR$-linear subspaces and $X_{u}$ finite-dimensional, and a Lipschitz map $\Phi: B_{\epsilon_0}^{\dot H^\sigma} \cap X_{cs} \to X_{u}$ with $\Phi(0) = 0$, such that the manifold in Theorem \ref{thmcodimstability} can be realized as
\be\mathcal{M} = ({\rm Id} + \Phi)(B_{\epsilon_0}^{\dot H^\sigma} \cap X_{cs}) + Q_b. \label{eqmfd}\ee
		Moreover, this manifold is tangential to the center-stable hyperplane $B_{\epsilon_0}^{\dot H^\sigma} \cap X_{cs} + Q_b$. See Theorem \ref{thmfincodimstabHsigmaB} for more accurate formulation. 
		

\subsubsection{$H^1$ stability theory}

The self-similar decay \eqref{selfsimilardecay} makes $Q_b \notin \dot H^s$ of any $s \in [0, s_c]$, which raises the question for the existence, stability and behavior of Type I blowup in the energy space $H^1$.\footnote{Nevertheless, the existence of $H^1$ open set of data leading to Type I blowup with vanishing perturbation \eqref{eqbdd1} is included in the result Theorem \ref{thmasympstab} assuming Assumption \ref{assmodestab}.} Based on the proof of $\dot H^\sigma$ stability, we also prove the counterpart of Theorem \ref{thmcodimstability} and Theorem \ref{thmasympstab} in $H^1$.

\begin{theorem}[Asymptotic stability in $H^1$] \label{thmasympstabH1}
Let $d \ge 1$, $s_c\in (0, 1)$, and $b, Q_b$ satisfying \eqref{eqselfsimilar}, \eqref{eqnonvanishing}, \eqref{selfsimilardecay}, \eqref{selfsimilardecayhigh}. 

\begin{enumerate}
\item[\rm (i)]{\rm Finite-codimensional asymptotic stability:} 
For $R_1 \gg 1$, there exists a Lipschitz finite-codimensional manifold $\mathcal{M} \subset H^1$ of initial data
    such that the corresponding solution to \eqref{eqNLS} blows up at $T = \frac{1}{2b}$ with the decomposition \eqref{equmodrep} for $(\lambda(t),\gamma(t),\tau(t))$ are given by \eqref{eqlaw} and decaying of perturbation 
    \be \|\e(t)\|_{\dot{H}^\sigma \cap \dot H^1}\lesssim (T-t)^{\frac{\sigma - s_c}{2}}
	\label{eqbdd1H1} 
    \ee
    where $0 < \sigma - s_c \ll 1$. 
    Moreover, there exists $u_*\in \dot{H}^\sigma \cap \dot H^1$ and $u^* \in H^{\tilde \sigma}$ for every $0 \le \tilde \sigma < s_c$ such that 
		\bea
		u(t) - \frac{1}{\l(t)^{\frac{2}{p-1}}}Q_b \left( \frac{x}{\l(t)}\right)e^{i\tau(t)}  \to u_* && \mathrm{in}\,\,\dot{H}^\sigma\cap \dot H^1 \quad as\,\, t\to T,
		\label{eqbdd3H1} \\
        u(t) \to u^* && \mathrm{in}\,\,H^{\tilde \sigma} \quad \mathrm{as}\,\, t \to T. \label{eqL2profH1}
		\eea 
        and with $p_c = \frac{2d}{d-2s_c}$, we can quantify the critical norm blowup as $t \to T$
        \be \| u(t) \|_{L^{p_c}} \sim |\log(T-t)|^{\frac{1}{p_c}} (1+o(1)),\quad \| u(t)\|_{\dot{H}^{s_c}} \sim |\log(T-t)|^{\frac 12}(1+o(1)).\label{eqcriticalnorm} \ee
        \item[\rm (ii)]{\rm Conditional asymptotic stability:} If we further assume Assumption \ref{assmodestab}, then there exists an open set of initial data $\calO \subset H^1$ such that its solution to \eqref{eqNLS} blows up at $T = \frac{\l_0^2}{2b}$ and satisfies the decomposition \ref{eqss33} with $|(\l_0, x_0, \theta_0)| \ll 1$. For some $0 < \sigma - s_c \ll 1$, the properties \eqref{eqbdd1H1}-\eqref{eqcriticalnorm} remain true.

        \end{enumerate}
\end{theorem}

\noindent{\em Comments on Theorem \ref{thmasympstabH1}.}\\

\noindent{\em 1. Comparison with stability result \cite{MR2729284}.}
Combined with Remark \ref{rmkmodestab}, this completes \cite{MR2729284} by deriving the asymptotic stability of the self-similar profile and provides a different route map towards the stability. Besides, since our argument is independent of energy structure, it should be natural to generalize to higher regularity $H^k$. \\

\noindent{\em 2. Structure of manifold and open set.} Here we use a different subspace decomposition for $H^1$ as 
\[ H^1 = \tilde X_{cs} \oplus \tilde X_u, \]
with $\tilde X_u \subset C^\infty_c$ and being finite-dimensional. Denote $\chi_{R_1}$ to be a smooth cutoff as \eqref{eqdefchiR} and $B^{H^1}_{\epsilon_1}$ be the $\epsilon_1$-ball in $H^1$. Given an $R_1 \gg 1$, the manifold $\calM$ in (i) and open set $\calO$ in (ii) are constructed as 
\bee
  \calO &=& Q_b \chi_{R_1} + \e^*_{R_1} + B^{H^1}_{\epsilon_1} \\
  \calM &=& Q_b \chi_{R_1} + \e^*_{R_1} + ({\rm Id} + \tilde \Phi_{R_1})(B^{H^1}_{\epsilon_1} \cap \tilde X_{cs})
\eee
where $\e^*_{R_1} \in \tilde X_u \subset C^\infty_c$ with $\| \e^*_{R_1} \|_{H^1} = o_{R_1\to \infty} (1)$, and  $\tilde \Phi_{R_1}: B^{H^1}_{\epsilon_0} \cap \tilde X_{cs} \to \tilde X_u$, $\tilde \Phi_{R_1}(0) = 0$ is a Lipschitz map. See Section \ref{sec73} for detailed description.\\

\noindent{\em 3. Blowup of the critical norm.} 
        The asymptotics of critical norm \eqref{eqcriticalnorm} answers a question in \cite{MR2427005} for the expectation of sharp power low of its logarithmic divergence. It also indicates that subcritical limit profile $u^* \notin \dot H^{s_c}$ as stated in the stability result \cite{MR2729284}. 

\subsection{Strichartz estimates for the self-similar propagator} 

Let the deformed Laplacian be 
\bee \Delta_b := \Delta + ib \left(\frac{d}{2} + y \cdot\nabla \right).\eee

\noindent{\em Structure of the linearized operator}.  The linearization of the time-dependent flow \eqref{enoivnevneiovnovneonvi} close to a self-similar solution $Q_b$ solution to \eqref{eqselfsimilar} leads to a non-self-adjoint linearized problem of the form
\be i \pa_\tau Z + \calH Z = F(Z)\label{eqZ}. \ee
Here $F$ is the nonlinear term, $Z = \left( \begin{array}{c} Z_1 \\ Z_2 \end{array} \right)\in  \CC^2$ and 
\begin{align}
	\mathcal{H} = \left( \begin{array}{cc} \Delta_b -1 &  \\ &  -\Delta_{-b} +1\end{array} \right) - ibs_c + \left( \begin{array}{cc} W_1 & W_2  \\ -\overline{W}_2 & -W_1   \end{array} \right)  
	\label{Hb}
\end{align}
for the explicit potentials
\begin{align} 	W_1 = \frac{p+1}{2}|Q_b|^{p-1}, &\quad W_2 = \frac{p-1}{2}Q_b^2|Q_b|^{p-3}\label{potential}  
\end{align}
which under \eqref{selfsimilardecay} satisfy a polynomial decay
\be |W_i(x)| \lesssim \la x \ra^{-\a},\quad |\nabla W_i(x)| \lesssim \la x \ra^{-\a - 1} \label{eqpotbd0} \ee
for $i = 1, 2$ for some $\a > 0$.\\

\noindent{\em Decay for the linearized operator}. Various strategies have been used in the literature to address the derivation of local-in-space decay estimates for linearized operators close to self-similar profiles: local Virial estimates \cite{MR1896235,MR2150386,MR3090180}, self-adjoint spectral estimates \cite{MR3986939}, distorted Fourier transform \cite{MR2471133}, semigroup estimates \cite{MR3537340,MR4359478,zbMATH07952722}. 
Strichartz estimates were derived in the pioneering work \cite{MR3662440} to control the flow around self-similar blowup of spherically symmetric 3D energy-critical nonlinear wave equations in the renormalized light cone.\\ 

\noindent{\em Strichartz bound}. Our second main result in this paper is the derivation of global in space Strichartz estimates which can be directly used for the proof of the nonlinear stability Theorem \ref{thmcodimstability}. One edge of Strichartz estimate here, compared with semigroup method, is that it requires at most one derivative, hence works nicely for weak and non-analytic nonlinearities. 

Our approach follows the framework developed by Beceanu for the derivation of Strichartz in the $b=0$ case.

\begin{theorem}[Strichartz estimates]\label{thmStrichartz} Let $d \ge 1$, $b > 0$ and $s_c \in (0, 1)$. Let the linear matrix Schr\"odinger system,
	\be i \partial_\tau Z + \mathcal{H}Z = F \label{eqls} \ee
	with $\mathcal{H}$ as in \eqref{Hb}, $W_1$ and $W_2$ satisfying \eqref{eqpotbd0} for some $\a > 0$. Then there exists $\delta_1 >0$ such that for all $\sigma \in (s_c, s_c + \delta_1)$, there exists a finite rank projection operator $P_{\rm disc}:\dot{H}^\sigma\to \dot{H}^\sigma$ such that the following holds. Let $\nu \le b(\sigma-s_c)$ and $P_{\rm ess} := {\rm Id} - P_{\rm disc}$,
	then \begin{align} 
	\label{eqStrichartz} \|e^{\nu t} P_{\rm ess} Z \|_{L^{q_1}_\tau \dot{W}^{\sigma, p_1}_x} \lesssim 
	\| P_{\rm ess}Z(0)\|_{\dot{H^\sigma}} + \| e^{\nu t}P_{\rm ess} F \|_{L^{q_2'}_\tau \dot{W}^{\sigma, p_2'}_x}
\end{align}
	where $(q_i, p_i)$ for $i = 1, 2$ satisfies
	\bea 
	(q, p, d) \neq (2, \infty, d),\quad (q, p) \in \left\{ q \ge 2, 2 < p \le \infty, \frac 2q + \frac dp \ge \frac d2 \right\} \cup \{(\infty, 2)\}. \label{eqadmissible}
	\eea

\end{theorem}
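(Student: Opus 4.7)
My plan is to follow the Wiener-algebra approach developed by Beceanu \cite{beceanu2011new} in the flat case $b=0$, while exploiting the \emph{enhanced dispersion} of the deformed group $e^{i\tau\Delta_b}$ to relax both the spectral hypothesis and the decay required on the potential. The first ingredient is an explicit conjugation: running the self-similar change of variables \eqref{eqlaw} backwards, one checks
\begin{equation*}
	e^{i\tau\Delta_b}f(x) = \mu(\tau)^{-d/2}\bigl(e^{is(\tau)\Delta}f\bigr)\bigl(x/\mu(\tau)\bigr),\qquad \mu(\tau)=e^{b\tau},\quad s(\tau)=\tfrac{1-e^{-2b\tau}}{2b},
\end{equation*}
so that $\tau\in[0,\infty)$ is mapped to a bounded physical time $s(\tau)\in[0,\tfrac{1}{2b})$ while an exponential dilation supplies extra decay. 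Any standard Strichartz bound $\|e^{is\Delta}f\|_{L^{q}_s L^{p}_x}$ thus converts, after undoing the dilation, into $\|e^{i\tau\Delta_b}f\|_{L^{q_1}_\tau L^{p_1}_x}$ together with an exponential factor $e^{c(q_1,p_1)b\tau}$ coming from the Jacobian; matching exponents produces exactly the extended admissibility \eqref{eqadmissible} together with the weight budget $\nu\le b(\sigma-s_c)$. The matrix analogue for $\mathcal{H}_0:=\mathrm{diag}(\Delta_b-1,-\Delta_{-b}+1)-ibs_c$ is obtained componentwise, the constant $-ibs_c$ absorbing the shift $bs_c$ into $\nu$.

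\emph{Resolvent and spectral analysis.} Laplace-transforming the free Strichartz in $\tau$ allows me to analytically continue the free resolvent $R_0(z):=(\mathcal{H}_0-z)^{-1}$ into the strip $\{\mathrm{Im}\,z>-b(\sigma-s_c)\}$ with quantitative bounds between weighted $\langle x\rangle^{-\beta}L^2$ spaces. Writing $V$ for the matrix potential of \eqref{Hb}, the polynomial decay \eqref{eqpotbd0} with \emph{arbitrary} $\alpha>0$ then makes $VR_0(z)$ compact on those spaces, because enhanced dispersion supplies the extra $\mu(\tau)^{-d/2}$ dilation factor that compensates the slow decay of $V$. The Fredholm alternative for $\mathrm{Id}+VR_0(z)$ leaves only finitely many poles inside the strip, and I would define $P_{\rm disc}$ as the projection onto the sum of their generalized eigenspaces. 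No hypothesis on the real axis is needed, because the dilation term has already pushed the essential spectrum of $\mathcal{H}$ into $\{\mathrm{Im}\,z\le -bs_c\}$.

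\emph{From uniform resolvent to Strichartz.} On $X_s:=(\mathrm{Id}-P_{\rm disc})\dot H^\sigma$ the full resolvent $R(z):=(\mathcal{H}-z)^{-1}$ is uniformly bounded on the line $\mathrm{Im}\,z=-\nu$ for every $\nu\in[0,b(\sigma-s_c)]$. Iterating the identity $R=R_0-R_0VR$ inside the Wiener algebra of operator-valued functions on that line, as in \cite{beceanu2011new}, transfers the free Strichartz of the first step to the full propagator $e^{i\tau\mathcal{H}}(\mathrm{Id}-P_{\rm disc})$; the inhomogeneous piece of \eqref{eqStrichartz} then follows by the Christ--Kiselev lemma.

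The main obstacle is the harmonic-analytic input behind the resolvent step: $\Delta_b$ is not translation-invariant, so its resolvent kernel is of Mehler type rather than a pure convolution, and the structural identities that underpin Beceanu's Wiener algebra -- in particular the characterisation by Fourier coefficients -- have to be rebuilt from the explicit dilation formula above while simultaneously keeping track of the matrix coupling between $\Delta_b$ and $-\Delta_{-b}$ provided by $V$. Once the right functional-analytic framework is in place, the Fredholm analysis and the exclusion of embedded resonances on the boundary $\mathrm{Im}\,z=-b(\sigma-s_c)$ follow from standard perturbative arguments.
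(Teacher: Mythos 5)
Your overall architecture (enhanced dispersion of $e^{i\tau\Delta_b}$, extended resolvents, Fredholm theory to define $P_{\rm disc}$, Beceanu's Wiener algebra) matches the paper's, but two of your load-bearing claims are wrong or missing, and they are exactly where the theorem's content lies. First, the spectral picture: on $\dot H^\sigma$ the essential spectrum of $\calH$ is the full horizontal line $\RR + ib(\sigma-s_c)$ (this follows from $D^\sigma\Delta_b D^{-\sigma}=\Delta_b+ib\sigma$ and $\sigma(\Delta_b)=\RR$), not a half-plane $\{\Im z\le -bs_c\}$; the dilation does not push it out of the way, it only shifts the line by $b(\sigma-s_c)$, which is precisely why the admissible weight is capped at $\nu\le b(\sigma-s_c)$. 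Consequently your stated reason that ``no hypothesis on the real axis is needed'' is not a reason at all: a priori there can be exceptional values embedded in that line (and at critical regularity there genuinely is one, coming from $\calH(iQ_b,-i\overline{Q_b})^T=0$), and ruling them out is not ``standard perturbative''. The paper's mechanism is the decoupling argument of Lemma \ref{prop1}--Lemma \ref{lemgap}: the exceptional set of $I+V_2\calS_b^\pm(z+ibs_c)V_1$ is analyzed in a strip of $z$ \emph{independent of} $\sigma$, shown finite and discrete, and then $\sigma\in(s_c,s_c+\delta_1)$ is chosen so that the line $\RR+ib(\sigma-s_c)$ sits in a gap free of exceptional values. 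This is where the $\delta_1$ in the statement comes from; your proposal has no substitute for it, and without it the ``uniform resolvent bound on $\Im z=-\nu$'' you invoke is unjustified. Relatedly, passing from that uniform bound to Strichartz for $e^{i\tau\calH}(\mathrm{Id}-P_{\rm disc})$ requires threading the projection through the resolvent expansion; since $P_{\rm ess}$ does not commute with the free resolvent, the paper modifies the generator to $\calH P_{\rm ess}+i\mu P_{\rm disc}$ (absorbing $-P_{\rm disc}(\calH-i\mu)$ into the potential) and proves that $P_{\rm ess}$ removes the discrete-spectrum poles uniformly up to the essential line (Proposition \ref{prop4}, Corollary \ref{coroSbVbdd}, which need the regularity of $\mathrm{Ran}\,P_\xi$, $\mathrm{Ran}\,P_\xi^*$ and quantitative $\langle\lambda\rangle^{-2}$ resolvent decay). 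None of this is addressed by ``iterating $R=R_0-R_0VR$ as in Beceanu''.

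Second, your opening transference step is weaker than claimed. Converting the global Strichartz bound for $e^{is\Delta}$ through the dilation identity produces, after the change of variables $s=s(\tau)$, the weight $(1-2bs)^{\frac q2(\frac d2-\frac dp)-1}$ on the bounded interval $[0,\tfrac1{2b})$; for every interior pair of \eqref{eqadmissible} (i.e. $\tfrac2q+\tfrac dp>\tfrac d2$) the H\"older exponent needed to absorb this weight against the admissible-pair Strichartz norm is exactly borderline and the weight fails to be integrable (logarithmic divergence), while pointwise-in-time transference for large $\tau$ only controls $\|e^{is\Delta}f\|_{L^p}$ near $s=\tfrac1{2b}$, which is not bounded by $\|f\|_{L^2}$. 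So the enlarged region does not follow ``by undoing the dilation''; the paper instead proves it directly from the two-regime dispersive bound \eqref{eqdispersiveest}, whose $L^1_t\cap L^{q/2}_t$ time-integrability lets Young's inequality replace the scaling-critical Keel--Tao machinery off the sharp-admissible line. You should either adopt that route or add a genuinely different summation argument; as written, Step 1 does not deliver \eqref{eqadmissible}.
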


\noindent{\em Comments on Theorem \ref{thmStrichartz}.}\\

\noindent{\em 1. Deformed Laplacian $\Delta_b$.}

Compared with the standard Laplacian $\Delta$, the correction part $ib(\frac d2 + y\cdot \nabla)$ is not relatively bounded and changes the spectrum drastically to be $\sigma(\Delta_b) = \RR$ with $b \neq 0$ (see Lemma \ref{lemctsspec}).
Current results for variable-coefficient Schr\"odinger operators (for example \cite{burq2004strichartz,staffilani2002strichartz,MR2565717}) do not directly apply since $\Delta_b$ is no longer asymptotically flat. 
Fortunately, the semigroup $e^{it\Delta_b}$ enjoys good structure and has explicit representation \eqref{eqrepuhat}-\eqref{eqrepu} (first derived by Carles in \cite{zbMATH02028533,zbMATH06248068}). Here we list two main features:

\begin{enumerate}
\item \emph{$\dot{H}^\sigma$-energy decay}. The commutator of $\Delta_b$ with fractional differentiation $D^\sigma$ gives constant $ib\sigma$, leading to an exponential decay $e^{b\sigma t}$ of $e^{it\Delta_b}$ in $\dot{H}^\sigma$ as indicated in \eqref{eqStrichartz}. 
\item \emph{Self-similar dispersion}. The self-similar rescaling in time makes $e^{it\Delta_b}$ exhibit exponential dispersive decay as $t \rightarrow \infty$. As an important corollary, the resolvent families can be extended onto and even beyond the real axis.
\end{enumerate}

\noindent{\em 2. Comparison with Donninger's work \cite{MR3662440}.} 
In \cite{MR3662440}, Donninger first represents the semigroup $e^{t\mathbf{L}}$ by resolvents $(\l - \mathbf{L})^{-1}$ via Laplace transform, and further solves for Green's functions to represent the resolvents. Then $e^{t\mathbf{L}}$ is a sum of free part and perturbation given by potential, and Strichartz estimates of each part follow intricate spatial estimates. This explicit construction is very clear in treating the radial case.

In this work, we exploit nice structure of the free operator group $e^{it\Delta_b}$ and deal with the potential with a more abstract framework invented by Beceanu \cite{beceanu2011new}, which utilizes the convolution structure of the semigroup and resolvent identities to reduce the Strichartz estimates to a uniform invertibility problem. The semigroup $e^{it\calH}$ is decomposed into a composition of the free part and the potentials (Birman-Schwinger principle) rather than a summation. This strategy avoids explicitly solving resolvent equations and thus can handle non-radial case in the whole space. 
\\

\noindent{\em 3. Comparison with Strichartz estimates for Schr\"odinger equation with potentials.}
There have been many works devoting to Strichartz estimates of $e^{it(-\Delta + W)}$ or its matrix version, arising from Schr\"odinger equation with potentials or linearization around solitons \cite{MR3662440,MR1105875,rodnianski2004time,MR2106340,MR2484934}. Since we replace the Laplace with a deformed one $\Delta_b$, our Strichartz estimates have the following new features:
\begin{enumerate}
	\item  {\em Enlarged admissible region of $(q, p)$.} The self-similar dispersive rate at infinity implies a larger admissible region \eqref{eqadmissible} (Figure 1) for $e^{it\Delta_b}$ \eqref{eqStrichartzfree} and ultimately for $e^{it\calH}$, instead of just $\frac 2q + \frac dp = \frac d2$.
	
	\begin{figure}[h!]
		\centering
		\begin{subfigure}[b]{0.3\linewidth}
			\includegraphics[width=\linewidth]{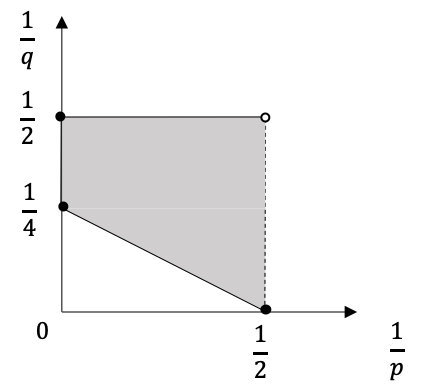}
			\caption{$d=1$}
		\end{subfigure}
		\begin{subfigure}[b]{0.3\linewidth}
			\includegraphics[width=\linewidth]{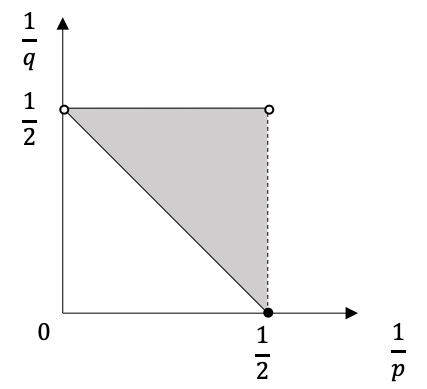}
			\caption{$d=2$}
		\end{subfigure}
		\begin{subfigure}[b]{0.3\linewidth}
			\includegraphics[width=\linewidth]{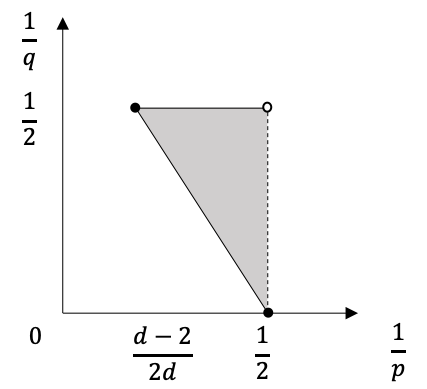}
			\caption{$d\ge 3$}
		\end{subfigure}
		\caption{The admissible region for Strichartz pair $(q, p)$.}
		\label{fig:coffee}
	\end{figure}
	\item  {\em Decay of potential.} To make the potential perturbative in $e^{it(-\Delta + V)}$, we need at least $|x|^{-2}$ decay of $V$ (see \cite{MR2211154} for counterexamples). However, the deformed case $b \neq 0$ can admit arbitrarily small polynomial decay ($\a > 0$ in the sense of \eqref{eqpotbd0}). This amounts to different boundedness of the limiting extension $(-\Delta - (\l + i0))^{-1}$ and $(-\Delta_b - (\l+i0))^{-1}$.
	\item {\em No spectral assumption.} Previous results usually assume the absence of resonance or embedded eigenvalues  \cite{rodnianski2004time,MR2279551}, especially in the scenario of matrix-valued complex-valued potential  \cite{beceanu2011new,MR2484934}. They would essentially affect the dispersive behavior, see \cite{MR2279551} for more discussion. However, we can avoid that trouble by changing spaces $\dot{H}^{\sigma}$ thanks to those two features of $\Delta_b$. When $\sigma$ varies, the $\dot{H}^{\sigma}$-decay implies a shift of essential spectrum, while the discrete spectrum basically remains the same by uniform treatment with extended resolvent families. This decoupling effect allows us to avoid embedded exceptional values.\\
	\end{enumerate}

\noindent{\em 4. Finiteness of discrete spectrum.} The operator $P_{\rm disc}$ is the Riesz projection for discrete spectrum of $\calH$ (see Proposition \ref{prop3}). While the discrete spectra may accumulate toward the essential spectrum (see \cite[Proposition 2.9]{beceanu2011new}) for Schr\"odinger operator, the finite rank of $P_{\rm disc}$ is an implication of the self-similar dispersion. This unconditional finiteness of unstable spectrum for linearized operator around blowup can also be proven through semigroup theory (see \cite{MR4359478} or \cite{zbMATH07588427}). \\

\noindent{\em 5. Restriction on $\sigma$.} The condition $0 < \sigma -s_c \ll 1$ is only to ensure the absence of exceptional value on essential spectrum by the decoupling effect. 
We also obtain $\dot H^\sigma \cap \dot H^1$ Strichartz (Lemma \ref{lemStrichartz1}) as persistence of regularity for linear equation, and generalization to higher regularity is well-expected. However, due to the existence of embedded resonance $\calH (iQ_b, -i\overline{Q_b} )^T = 0$, the Strichartz estimate at sharp regularity is very unclear, even for the radial case\footnote{For wave equation, the linearized operator has no embedding eigenmode in radial case, but has one generated by Lorentz symmetry in the non-radial case.}\\

\subsection{Outline of the proofs and the paper.}\label{s13}

\subsubsection{Proof of Strichartz estimates.} We proof Theorem \ref{thmStrichartz} in Section \ref{s2}-\ref{s5} following the framework of Beceanu \cite{beceanu2011new}. \\

\noindent{\em Step 1. Spectral analysis for scalar operators.}

 In Section \ref{s2}, we will prove self-similar dispersive estimate and free Strichartz estimate for $\Delta_b$ as a self-adjoint operator on $L^2$, extend resolvent families as bounded operators from $L^{p'}$ to $L^{p}$ $(p > 2)$ and discuss their properties as resolvents. Since $\calL(L^{p'} \to L^p)$ is no longer an algebra, we make sense of the resolvent identities and differentiability in a weighted $L^2$ space to exploit oscillation in the Fourier space.

 Next, in Section \ref{s3}, we consider scalar operator  $\bar{V}_2 (-\Delta_b - z)^{-1} \bar{V}_1$ in $\dot{H}^{\sigma}$ with scalar potentials $
	\bar{V}_1, \bar{V}_2$ and prove its compactness and decay in $z$. \\

\noindent{\em Step 2. Spectral analysis for matrix operators.} We come back to the matrix operator $\calH$ in Section \ref{s4}. By decomposing $V = V_1 V_2$, the previous results ensure $I + V_2 (\calH - V - z)^{-1} V_1$ analytic Fredholm as bounded operators in $\dot{H}^{\sigma}$ with $z$ in a region independent of $\sigma$. Hence by picking $\sigma$ close to $s_c$, we can identify and decouple the discrete spectrum with the essential spectrum, and discuss their spectral Riesz projections $P_{\rm disc}$ and $P_{\rm ess}$.

 We conclude by confirming $P_{\rm ess}$ eliminates all singularities of $(\calH - z)^{-1}$.\\

\noindent{\em Step 3. Abstract Wiener's theorem and end of the proof.} In \cite{beceanu2011new}, Beceanu proved the abstract Wiener's theorem which determines the invertibility of an operator in a convolutional Banach algebra through the invertibility of its Fourier transform. Since the Strichartz estimate for $e^{i\tau\calH}$ can be reduced to invertibility of $I - iV_2 e^{i\tau(\calH - V)} V_1$ in $L^2_\tau \dot{H}^{\sigma}_y$, it further boils down to the invertibility of its $\tau$-Fourier transform $I + V_2 (\calH - V -z)^{-1} V_1$ in $\dot{H}^\sigma$ and some compactness. Hence the uniform invertibility of $(\calH - z)^{-1}P_{\rm ess}$ implies a Strichartz estimate for the modified system
\bee i\partial_t Z + \calH P_{\rm ess} + i\mu P_{\rm disc} = F 
\eee
which captures the evolution of the original system in $P_{\rm ess}\dot{H}^\sigma$ and concludes the proof in Section \ref{s5}.

\subsubsection{Proof of $\dot H^\sigma$ stability} In Section \ref{s6}, we prove Theorem \ref{thmcodimstability} and Theorem \ref{thmasympstab}. We start by introducing $J$-invariance to connect \eqref{eqNLS} with the spectrally decoupled system. The Strichartz and spectral gap enables us to construct the solution for any given center-stable initial datum through contraction mapping (namely Lyapunov-Perron method, see for example \cite{MR2480603}). As a corollary, we have a nonlinear unstable-data map from center-stable initial data to the unstable data leading to blowup (Theorem \ref{thmfincodimstabHsigmaB}). One difficulty comes from the nonlinear estimates in Appendix \ref{appC}, where we take $D^\sigma$ derivative for a $C^{\beta}$ nonlinearity with possibly $\beta < \sigma$ by exploiting the non-vanishing of $Q_b$.

For Theorem \ref{thmasympstab}, instead of introducing modulation parameters, track its evolution and redo the construction, we use a Brouwer argument to match initial data, inspired by Donninger-Sh\"orkhuber \cite{MR2909934}. Specifically, for general initial data, we find the suitable symmetry transformation to shift it onto the blowup manifold constructed in Theorem \ref{thmcodimstability}. The mode stability Assumption \ref{assmodestab} ensures the degree of freedom equals the lack of codimensions and non-degeneracy.
This argument not only simplifies the proof by avoiding modulation argument, but also enables more time-independent tools like the Strichartz estimates for stability problem. 

\subsubsection{Proof of $H^1$ stability}
Finally, we prove the $H^1$ result Theorem \ref{thmasympstabH1} in Section \ref{s7}. In order to generalize from $\dot H^\sigma$ stability theory, we introduce the modified spectral projections for localization and $\dot H^\sigma \cap \dot H^1$ Strichartz estimate for improving regularity. 

Critical decay of self-similar profile is a universal difficulty for obtaining localized blowup solution (for instance in fluid \cite{arXiv:1910.14071}, dispersive \cite{MR4359478,zbMATH07952722}, or parabolic problems \cite{arXiv:2404.17228}). Instead of introducing time-dependent damped profiles, we apply the approach from \cite{arXiv:2404.17228} to reparametrize the spectral decomposition and localize the eigenmodes from discrete spectrum (Lemma \ref{lemJinv3}). Interestingly, this argument also helps with the improvement of regularity, since stable eigenmodes have oscillatory nature and may not belong to $\dot H^1$. 

The $\dot H^\sigma \cap \dot H^1$ Strichartz estimate (Lemma \ref{lemStrichartz1}) follows from differentiating the equation and commutator estimates of residuals. Then like Theorem \ref{thmfincodimstabHsigmaB}, we can construct unstable-data map in $\dot H^\sigma \cap \dot H^1$ with modified spectral projections, which can lead to localized blowup datum. Then the proof of Theorem \ref{thmasympstabH1} comes as in Section \ref{s6} for $\dot H^\sigma$ results.

\subsection{Notations.}

The notation in this paper is standard. We use $\hat{f}$ or $\mathscr{F}f$ to denote the Fourier transform of a function $f$, and $D^\a f := \mathscr{F}^{-1} |\xi|^{\a} \mathscr{F} f$ with $\a \in \RR$ for the fractional derivatives. We write balls in Banach space $X$ as $B_R^X:= \{ x \in X: |x| < R \}$. For $X = \RR^d$, we omit the superscript and denote $B_R = B_R^X$, with $B_R^c$ its complement. The Japanese bracket notation is $\la x \ra := \sqrt{1+|x|^2}$. The energy-critical exponent is $2^* := \frac{2d}{d-2}$ for $d\ge 3$ and $2^* := +\infty$ for $d = 1, 2$. 

The weighted $\dot{H}^\sigma$ $(\sigma \ge 0)$ spaces with weight $\la x \ra^{\delta} (\delta \in \RR)$ are written as  

$\| f \|_{\dot{H}^\sigma_{\la x \ra^{\delta} }} := \| \la x \ra^{\delta} D^{\sigma-n} \nabla^n f \|_{L^2}$ for $\sigma \in [n, n+1)$, $n \in \NN_{\ge 0}$.
The Schwartz function space is $\mathscr{S}(\RR^d)$. For the product of Banach spaces $X = X_1 \times X_2 \times ... \times X_n$, we define its norm as $\| f \|_X := \max_j \{ \| f_j \|_{X_j} \}$. We also denote $\calL(X \to Y)$ for Banach space of bounded linear operators from $X$ to $Y$, and $\calL(X) := \calL(X\to X)$. The spectrum of a bounded linear operator $T$ is $\sigma(T)$. 

We denote the smooth radial cut-off function to be 
\be \chi(r) = \left| \begin{array}{ll}
    1 & r \le 1 \\
    0 & r \ge \frac 32
\end{array}\right. \in  C^\infty_{c, rad}(\RR^d),\quad \chi_R = \chi(R^{-1}\cdot). \label{eqdefchiR} \ee

When working with vector-valued functions (say $Z = (Z_1, Z_2)^T$) and corresponding operators (say multiplication by matrix-valued functions $V$), for convenience, we keep the scalar notations of Sobolev norms and operator norms. \\

\textbf{Acknowledgments.} The author is deeply grateful to his supervisor Pierre Rapha\"el for his consistent encouragement and support, and to Yvan Martel, Cl\'ement Mouhot, Charles Collot, and anonymous reviewers for their valuable suggestions. The author also thanks Joackim Bernier, Xiaodong Li, and Tao Zhou for enlightening discussions. This work is supported by funding from ERC project SingWave and ERC project FloWAS.

\section{Spectral analysis of $\Delta_b$}\label{s2}
\label{S2}
This section is devoted to the analysis properties of the deformed Laplacian $\Delta_b= \Delta + ib \left(\frac{d}{2} + x \cdot \nabla \right)$, its semigroup $e^{it\Delta_b}$ and resolvents $(-\Delta_b - z)^{-1}$. 

To begin with, we notice that 
\be
   e^{i\frac{b|x|^2}{4}} \circ (-\Delta_b) \circ e^{-i\frac{b|x|^2}{4}} = -\Delta - \frac{b^2 |x|^2}{4},
\ee
and that $-\Delta + V$ is essentially self-adjoint on $L^2(\RR^d)$ if $V \gtrsim -\la x \ra^2$ (see \cite[Section X.5, P.199]{zbMATH03483022}). Therefore we consider $-\Delta_b$ as the self-adjoint operator on $L^2(\RR^d)$.

\subsection{Free evolution $e^{it\Delta_b}$ and self-similar dispersion} \label{s22}
Since $\Delta_b$ is self-adjoint, we can well-define 
the unitary operator group $e^{it\Delta_b}$ on $L^2(\RR^d)$. In this subsection, we will give exact formulas of $e^{it\Delta_b}$ and derive the self-similar dispersive estimate and Strichartz estimates for such free evolution.\\

\begin{lemma}[Formula for $e^{it\Delta_b}$] For $u_0 \in \mathscr{S}(\RR^d)$, $b \neq 0$,  we have
	\bea 
	(\widehat{e^{it\Delta_b}u_0})(t, \xi) &=&e^{\frac{bdt}{2}} \hat{u}_0 (e^{bt} \xi) e^{-i\frac{e^{2bt} - 1}{2b}|\xi|^2 }, \label{eqrepuhat} \\
   (e^{it\Delta_b}u_0)(t, x) &=& \frac{e^{-\frac{id\pi}{4}\mathrm{sgn}(t)}}{(2 \pi)^{\frac d2}} |\sinh{bt}|^{-\frac d2} \int e^{i\frac{b}{2(1-e^{-2bt})} |xe^{-bt}- y|^2} u_0(y) dy.
   \label{eqrepu} \eea
	\end{lemma}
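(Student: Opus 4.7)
My plan is to pass to the Fourier side first, where the first-order term $ib(\tfrac{d}{2}+x\cdot\nabla)$ becomes a pure transport in $\xi$, solve the resulting scalar transport equation by the method of characteristics, and then invert the Fourier transform to recover the physical-side kernel, identifying it with a rescaled Schr\"odinger kernel.

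\emph{Step 1 (Fourier-side equation).} Using the identity \eqref{eqLambdafourier} already recalled in the proof of Lemma~\ref{corospec1}, I compute the Fourier symbol of $\Delta_b$:
\[
\mathscr{F}(\Delta_b f)(\xi) = -|\xi|^2 \hat f(\xi) - ib\Bigl(\tfrac{d}{2}+\xi\cdot\nabla_\xi\Bigr)\hat f(\xi).
\]
Setting $u(t)=e^{it\Delta_b}u_0$ for $u_0\in\mathscr{S}$, the Fourier transform $\hat u$ therefore solves the linear transport-type PDE
\[
\partial_t \hat u = -i|\xi|^2 \hat u + b\Bigl(\tfrac{d}{2}+\xi\cdot\nabla_\xi\Bigr)\hat u,\qquad \hat u(0,\xi)=\hat u_0(\xi).
\]

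\emph{Step 2 (Characteristics).} Along the backward characteristic $\xi(t)=e^{-bt}\xi_0$, which diagonalizes the transport piece, the ODE for $g(t):=\hat u(t,\xi(t))$ becomes
\[
\dot g(t) = \Bigl(-ie^{-2bt}|\xi_0|^2 + \tfrac{bd}{2}\Bigr) g(t),
\]
which integrates to
\[
g(t) = e^{bdt/2} \exp\!\Bigl(-i|\xi_0|^2 \tfrac{1-e^{-2bt}}{2b}\Bigr)\hat u_0(\xi_0).
\]
Rewriting in terms of $\xi=e^{-bt}\xi_0$, i.e.\ $\xi_0=e^{bt}\xi$, yields exactly \eqref{eqrepuhat}. (Since $u_0\in\mathscr{S}$ the rapid decay of $\hat u_0$ justifies uniqueness and stays within $\mathscr{S}$.)

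\emph{Step 3 (Inversion to physical side).} Taking $\mathscr{F}^{-1}$ in \eqref{eqrepuhat} and substituting $\eta=e^{bt}\xi$ removes the dilation of $\hat u_0$ and produces the Gaussian oscillatory factor
\[
u(t,x) = e^{-bdt/2} \int e^{i\,(xe^{-bt})\cdot\eta}\, e^{-i\tau(t)|\eta|^2}\, \hat u_0(\eta)\,\frac{d\eta}{(2\pi)^d}\quad\text{with}\quad \tau(t):=\tfrac{1-e^{-2bt}}{2b}.
\]
The inner integral is exactly the free Schr\"odinger propagator $e^{i\tau(t)\Delta}u_0$ evaluated at the rescaled point $xe^{-bt}$. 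Applying the classical Schr\"odinger kernel $(4\pi i\tau)^{-d/2}\!\int e^{i|y-z|^2/(4\tau)} u_0(z)\,dz$ with $y=xe^{-bt}$, and using $\tau(t)=e^{-bt}\cdot\frac{e^{bt}-e^{-bt}}{2b}$ to rewrite $(4\pi i\tau)^{-d/2}e^{-bdt/2}=C_d e^{-id\pi\,\mathrm{sgn}(t)/4}\bigl|\tfrac{e^{bt}-e^{-bt}}{2b}\bigr|^{-d/2}$ (the $\mathrm{sgn}(t)$ accounting for the branch of the complex square root when $t<0$, in which case $\tau<0$), together with $\frac{1}{4\tau}=\frac{b}{2(1-e^{-2bt})}$ in the quadratic phase, yields \eqref{eqrepu}.

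The only subtle point, hence the main obstacle, is the consistent choice of branch of $(4\pi i\tau)^{-d/2}$ across $t>0$ and $t<0$, which is handled by noting $\mathrm{sgn}(\tau(t))=\mathrm{sgn}(t)$ and by the standard convention for the free Schr\"odinger kernel; every other step is a direct computation.
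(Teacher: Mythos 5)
Your proof is correct and follows essentially the same route as the paper: pass to the Fourier side using \eqref{eqLambdafourier}, solve by characteristics along $\xi(t)=e^{-bt}\xi_0$, then invert the Fourier transform. The only cosmetic differences are that you integrate the full complex ODE along the characteristics in one step (the paper splits amplitude and phase) and that in Step 3 you package the Fresnel integral through the free Schr\"odinger kernel at time $\tau(t)=\frac{1-e^{-2bt}}{2b}$ evaluated at $xe^{-bt}$ --- which is precisely the rescaling identity \eqref{eqbto0} the paper records in the remark after the lemma --- while the paper evaluates $\int e^{iA|x|^2}dx$ directly; your sign bookkeeping $\mathrm{sgn}(\tau(t))=\mathrm{sgn}(t)$ for the branch of the square root is also correct.
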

\begin{proof} The formula \eqref{eqrepu} was derived in \cite[(1.8)]{zbMATH02028533} using Mehler's formula. Here we give another simpler proof using self-similar rescaling of the original free Schr\"odinger equation
	\[ i\pa_t u + \Delta u = 0,\quad u\big|_{t=0} =u_0. \]
    For simplicity, assume $b > 0$. 
	Consider the $L^2$ rescaling
	$$ u(t, x ) = \l(t)^{-\frac d2} v(\tau, y)$$
	with self-similar laws similar to \eqref{eqlaw}
        \[  \l(t) = \sqrt{1-2bt}, \quad\tau(t) = -\frac{\ln\left(1-2bt\right)}{2b},\quad y = \frac{x}{\l(t)}, \]
	so that $\l(t(\tau)) = e^{-b\tau}$, $\l\big|_{t=0} = 1$, and $\tau\big|_{t=0} = 0$. A direct computation implies 
	\[ i\pa_\tau v + \left[\Delta_y + ib\left(\frac d2 + y\cdot\nabla_y\right)\right] v = 0,\quad v\big|_{\tau=0} = u_0. \]
	Hence 
	\be e^{it\Delta}u_0 = \l(t)^{-\frac d2} \left(e^{i\tau(t)\Delta_b} u_0\right)(\l(t)^{-1}\cdot)  \label{eqbto0}\ee
    as $t < \frac{1}{2b}$ namely $\tau < \infty$. This yields \eqref{eqrepuhat} and \eqref{eqrepu}. 
\end{proof}

As in the original Schr\"odinger case, these exact formulas of $e^{it\Delta_b}$ yield dispersive estimates and thereafter Strichartz estimates. 

\begin{proposition}[Dispersive estimates and Strichartz estimates for $e^{it\Delta_b}$] Let $b \neq 0$, $d \ge 1$.
	\begin{enumerate}
\item[\rm (i)]{\rm Dispersive estimates:} For $p \in [2, \infty]$, we have
	\be
	\| e^{it\Delta_b} \|_{L^{p'}\to L^{p}} \lesssim_{d,p}\left\{ \begin{array}{ll} |t|^{-\left(\frac d2 - \frac dp \right)} & |t| \le |b|^{-1} \\ 
		(|b|^{-1}e^{|bt|})^{-\left(\frac d2 - \frac dp \right)} & |t| > |b|^{-1}
	\end{array}  \right.  \label{eqdispersiveest}
	\ee
\item[\rm (ii)]{\rm Strichartz estimates:}
	Let $u(t)$ be solution of $i\partial_t u + \Delta_b u = F $ with $u_0$ and $F$ regular, we have
	\be  \| u(t) \|_{L^{q_1}_t L^{p_1}_x } \lesssim_d \| u_0 \|_{L^2_x} + \| F \|_{L^{q_2'}_t L^{p_2'}_x}. \label{eqStrichartzfree}
	\ee
	where $(q_i, p_i)$ for $i = 1,2$ satisfies the admissible condition \eqref{eqadmissible}.
	\end{enumerate}
\end{proposition}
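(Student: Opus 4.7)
I would prove the proposition in two stages: part (i) follows directly from the explicit kernel representation \eqref{eqrepu}, and part (ii) is then obtained through the $TT^*$/Keel--Tao machinery, with the enlargement of the admissible region relative to classical Strichartz arising from the exponential enhancement in the dispersive rate when $|bt|>|b|^{-1}$.

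For (i), the spatial representation \eqref{eqrepu} realizes $e^{it\Delta_b}$ as an oscillatory integral with kernel of modulus $C_d\,|\sinh(bt)/b|^{-d/2}$, yielding immediately
\[
\|e^{it\Delta_b}\|_{L^1\to L^\infty} \lesssim_d \left|\frac{\sinh(bt)}{b}\right|^{-d/2}.
\]
Coupling this with the $L^2\to L^2$ unitarity provided by the self-adjointness of $\Delta_b$ (Lemma \ref{corospec1}), Riesz--Thorin interpolation gives
\[
\|e^{it\Delta_b}\|_{L^{p'}\to L^p} \lesssim_{d,p} \left|\frac{\sinh(bt)}{b}\right|^{-d(1/2-1/p)}, \qquad p\in[2,\infty].
\]
The piecewise bound \eqref{eqdispersiveest} then follows from the asymptotics $\sinh(bt)/b\asymp t$ for $|bt|\le 1$ and $\sinh(bt)/b\asymp |b|^{-1}e^{|bt|}$ for $|bt|>1$.

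For (ii), set $\alpha:=d(1/2-1/p)$ and let $\phi(\tau):=\|e^{i\tau\Delta_b}\|_{L^{p'}\to L^p}$, so that (i) yields $\phi(\tau)\lesssim \min\bigl(|\tau|^{-\alpha},\,|b|^{\alpha}e^{-\alpha|b\tau|}\bigr)$. By a standard $TT^*$ argument, the diagonal Strichartz bound for the pair $(q,p)$ reduces to controlling the scalar convolution $f\mapsto \phi*f$ from $L^{q'}(\RR)$ to $L^q(\RR)$. For pairs with $2/q+d/p>d/2$, equivalently $q\alpha<2$, one has $\phi\in L^{q/2}(\RR)$ (the local $|\tau|^{-\alpha}$ singularity is integrable at exponent $q/2$, the tail decays exponentially), and Young's inequality with the relation $2/q+1/q'=1+1/q$ closes the estimate. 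On the classical Strichartz line $2/q+d/p=d/2$ the conclusion follows from the abstract Keel--Tao theorem applied to the dispersive bound $\phi(\tau)\lesssim |\tau|^{-\alpha}$, which remains globally valid (and is only improved at infinity). The trivial endpoint $(q,p)=(\infty,2)$ is just unitarity. Finally, to pass from the diagonal case to mixed pairs $(q_1,p_1)\neq(q_2,p_2)$ in the Duhamel/retarded propagator, I would apply the Christ--Kiselev lemma, which is available whenever $q_1>q_2'$, or equivalently the bilinear form of Keel--Tao.

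The main difficulty is organizational rather than analytical: every pair in \eqref{eqadmissible} --- apart from the forbidden case $(q,p,d)=(2,\infty,2)$, which mirrors the well-known classical failure --- must be placed in exactly one of the three regimes (strict interior via Young, Strichartz line via Keel--Tao, trivial endpoint via unitarity), and the inhomogeneous mixed-pair bound must be produced uniformly. A minor technical point worth flagging is the dependence of the implicit constants on $|b|$: the Young bound on the tail of $\phi$ carries a factor $|b|^{\alpha}$ which is harmless for fixed $b>0$ but would require care if one later wished to take $b\to 0$.
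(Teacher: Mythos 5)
Your proposal is correct and follows essentially the same route as the paper: part (i) by interpolating the explicit $L^1\to L^\infty$ kernel bound from \eqref{eqrepu} against $L^2$ unitarity, and part (ii) by Keel--Tao on the sharp line $\frac 2q+\frac dp=\frac d2$ plus a Young/H\"older bound on the bilinear form in the strict interior (where the dispersive rate lies in $L^{q/2}_t$), finished by the standard $TT^*$ machinery. The only cosmetic difference is that you invoke Christ--Kiselev for mixed pairs where the paper simply appeals to the standard $TT^*$ argument; both are routine closings of the same estimate.
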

\begin{proof}
	The dispersive estimates \eqref{eqdispersiveest} follows interpolation between $L^2 \to L^2$ conservation and $L^1 \to L^\infty$ dispersive estimate, which are direct consequences of \eqref{eqrepuhat} and \eqref{eqrepu} respectively. Note that \eqref{eqdispersiveest} also indicates 
	$$\| e^{it\Delta_b}\|_{L^{p'}\to L^p} \lesssim |t|^{-\left(\frac d2 - \frac dp \right)}.$$
	The result of abstract result of Keel-Tao \cite{keel1998endpoint} induces the Strichartz estimates for $q, p \ge 2$ satisfying $\frac 2q + \frac dp = \frac d2$ except $(q, p, d) = (2, \infty, 2)$. For the other indices $(q, p) \in \left\{ q \ge 2, 2<p \le \infty, \frac 2q + \frac dp > \frac d2 \right\}$, \eqref{eqdispersiveest} indicates that $\| e^{it\Delta_b} \|_{L^{p'} \to L^p} \in L^1(\RR) \cap L^{\frac q2}(\RR)$. Hence Young's inequality and H\"older's inequality imply
	\bee
	&&\left|\int_\RR \int_\RR \left( e^{i(t-s)\Delta_b} F(s), G(t)\right)_{L^2(\RR^d)} ds dt  \right| \\
	&\lesssim & \int_\RR \int_\RR \| e^{i(t-s)\Delta_b }\|_{L^{p'}\to L^p} \| F(s)\|_{L^{p'}_x} \| G(t) \|_{L^{p'}_x} ds dt \\
	&\lesssim & \| F\|_{L^{q'}_t L^{p'}_x} \| G\|_{L^{q'}_t L^{p'}_x} \left \|  \| e^{i\tau\Delta_b }\|_{L^{p'}\to L^p}\right \|_{L^{\frac q2}_\tau} \lesssim \| F\|_{L^{q'}_t L^{p'}_x} \| G\|_{L^{q'}_t L^{p'}_x}.
	\eee
	With this estimate, the standard $TT^*$-argument \cite{MR1151250} will conclude the proof. 
	\end{proof}

We emphasize that the dispersion decay rate in \eqref{eqdispersiveest} is exponential due to the self-similar scaling.

\subsection{Extended resolvent families}\label{s23}
Using Laplace transform, we can define the resolvent of $\Delta_b$ through the free evolution $e^{it\Delta_b}$
\bea R_b^+(z) &=& i \int_0^\infty e^{it\Delta_b}e^{itz}dt, \quad \quad \quad \Im z > 0;  \label{eqRb+}\\
 R_b^-(z) &=& -i \int_0^\infty e^{-it\Delta_b} e^{-itz}dt, \quad \Im z < 0. \label{eqRb-}
\eea
They satisfy the following resolvent properties:
\begin{lemma}[Resolvent properties]\label{lemresprop}
	Let $b \neq 0$. For $\Im z > 0$, the resolvent $R^+_b(z)$ satisfies (similarly for $R^-_b(z)$ with $\Im z < 0$):
	\begin{enumerate}
\item[\rm (i)]{\rm Boundedness:} $R_b^+(z)$ is bounded in $L^2$ and bijective from $L^2$ to $D(\Delta_b)$.  \\
\item[\rm (ii)]{\rm Inversion:} 
		\bee R_b^+(z) (-\Delta_b - z) u &=& u \quad u \in D(\Delta_b),\\
		(-\Delta_b - z) R_b^+(z) u &=& u \quad u \in L^2.  \eee
\item[\rm (iii)]{\rm Resolvent identities:} for $\Im z > 0$, $\Im z' > 0$, 
		\bee R_b^+(z) - R_b^+(z') = (z - z')R_b^+(z) R_b^+(z'). \eee
		In particular, $R_b^+(z)$ is analytic on $\{z \in \CC:\Im z > 0\}$ as $\L(L^2)$-valued function with derivative $\partial_z R_b^+(z) = R_b^+(z)^2 \in \L(L^2(\RR^d))$.
	\end{enumerate}
		
		\noindent{Moreover, there holds}
		\begin{enumerate}
\item[\rm (iii')]{\rm Resolvent identities for different families:} For $\Im z > 0$ and $\Im w < 0$,
		\bee 
		R_b^+(z) - R_b^-(w) = (z - w)R_b^+(z) R_b^-(w) = (z-w) R_b^-(w) R_b^+(z). \eee
	\end{enumerate}
	\end{lemma}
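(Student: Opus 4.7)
The overall plan is to leverage the self-adjointness of $\Delta_b$ on $L^2$ established in Lemma \ref{corospec1} together with Stone's theorem, which identifies $e^{it\Delta_b}$ as a strongly continuous unitary group. Since $\|e^{it\Delta_b}\|_{L^2\to L^2}=1$ and $|e^{itz}|=e^{-t\,\Im z}$, for $\Im z>0$ the integral \eqref{eqRb+} converges absolutely in the Bochner sense, immediately yielding (i) with the quantitative bound $\|R_b^+(z)\|_{L^2\to L^2}\le(\Im z)^{-1}$; the $R_b^-(w)$ case is completely symmetric, with $|e^{-itw}|=e^{t\,\Im w}$ providing decay on $[0,\infty)$ when $\Im w<0$.

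The core of the argument is (ii): I plan to identify $R_b^+(z)$ with the resolvent $(-\Delta_b-z)^{-1}$. On $u\in D(\Delta_b)$, I will exploit the commutativity of $\Delta_b$ with $e^{it\Delta_b}$ together with the generator relation $\frac{d}{dt}e^{it\Delta_b}u=i\Delta_b e^{it\Delta_b}u$ to rewrite the integrand $e^{it\Delta_b}(-\Delta_b)u\,e^{itz}$ as a total $t$-derivative of $e^{it\Delta_b}u\,e^{itz}$ plus a multiple of that same quantity; integration by parts in $t$, with the boundary term at $+\infty$ killed by $\Im z>0$, then produces $R_b^+(z)(-\Delta_b-z)u=u$. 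Injectivity of $-\Delta_b-z$ on $D(\Delta_b)$ is immediate from self-adjointness (spectrum is real, so $\Im z\ne 0$ forces $z\notin\sigma(-\Delta_b)$), and $(-\Delta_b-z)R_b^+(z)u=u$ on all of $L^2$ will follow once I verify that $R_b^+(z)$ maps $L^2$ into $D(\Delta_b)$. Combining these two statements yields the bijection $L^2\to D(\Delta_b)$.

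Once the identification $R_b^+(z)=(-\Delta_b-z)^{-1}$ (and analogously $R_b^-(w)=(-\Delta_b-w)^{-1}$) is in hand, items (iii) and (iii') reduce to the elementary algebraic manipulation
\[(z-z')\,R_b^+(z)R_b^+(z')=R_b^+(z)\bigl[(-\Delta_b-z')-(-\Delta_b-z)\bigr]R_b^+(z')=R_b^+(z)-R_b^+(z'),\]
and an identical computation, using the commutativity of any two resolvents of the same operator, handles the cross identity $R_b^+(z)-R_b^-(w)=(z-w)R_b^+(z)R_b^-(w)=(z-w)R_b^-(w)R_b^+(z)$ in (iii'). Norm analyticity of $z\mapsto R_b^+(z)$ and the formula $\partial_z R_b^+(z)=R_b^+(z)^2$ then follow by dividing the resolvent identity by $z-z'$, passing $z'\to z$, and invoking the bound from (i) to pass from strong to norm convergence. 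The only delicate point, and the main obstacle I anticipate, is verifying that $R_b^+(z)u\in D(\Delta_b)$ for arbitrary $u\in L^2$: this is needed both for the reverse inversion in (ii) and for the algebraic manipulation in (iii) to make literal sense. I plan to resolve it either abstractly via the spectral theorem for the self-adjoint $\Delta_b$, or concretely by approximating $u\in L^2$ by $\mathscr{S}$-data and passing to the limit using the closedness of $\Delta_b$ recorded in Lemma \ref{corospec1}, with the explicit Fourier representation \eqref{eqrepuhat} available as a backup to directly estimate $|\xi|^2\widehat{R_b^+(z)u}$ in $L^2$.
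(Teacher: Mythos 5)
Your proposal is correct and is essentially the paper's argument: the paper disposes of this lemma by citing classical semigroup theory (Engel--Nagel, Chap.~2, Thm.~1.10), i.e.\ exactly the Laplace-transform representation of the resolvent of the generator of the unitary group $e^{it\Delta_b}$, which your writeup simply re-derives from scratch using the self-adjointness of $\Delta_b$ from Lemma \ref{corospec1} and Stone's theorem. The one point you flag as delicate, $R_b^+(z)u\in D(\Delta_b)$ for all $u\in L^2$, is indeed settled by either of your backup routes (spectral theorem, or the standard difference-quotient argument for generators), so there is no gap.
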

\begin{proof}
	This is classical operator semigroup theory \cite[Chap.2 Thm. 1.10]{engel2000one}. 
	\end{proof}

However, the self-similar dispersive estimate \eqref{eqdispersiveest} indicates the integral in \eqref{eqRb+} to converge in $\calL(L^{p'} \to L^p)$ for some $2 < p < 2^*$ as  $\Im z > -|b|\min\{\frac d2, 1\}$. Thus we call them extended resolvent family in a larger range of $z$. The rest of this subsection is devoted to showing these extended resolvent families retain resolvent properties as Lemma \ref{lemresprop} in a modified sense.

\subsubsection{Boundedness}

Firstly, we develop boundedness and a Fourier representation. 

\begin{lemma}\label{lemRbbdd}
	For $|b|> 0$ and the operator $R_b^+(z)$ with $\Im z > -|b|\min\left\{\frac{d}{2}, 1\right\}$ (or $R_b^-(w)$ with $\Im w < |b|\min\left\{\frac{d}{2}, 1\right\}$) defined by \eqref{eqRb+} (or \eqref{eqRb-}), we have the following properties:
	\begin{enumerate}
	    \item Boundedness\footnote{We also conjecture the boundedness at the endpoint case $p = 2^*$ for $d \ge 3$.}: for $p \in \left[2, 2^*\right)$, $R^+_b(z)$ is bounded from $L^{p'}$ to $L^p$ if $\Im z > -|b|\left(\frac d2 - \frac dp\right)$, and it is continuous with respect to $z$ in the operator norm. Moreover, we have
	    \bea \sup_{z: \Im z \ge -|b|\left(\frac d2 - \frac dp\right) + \delta}\| R^+_b (z)\|_{L^{p'} \to L^p} < \infty, &\quad \forall \,\delta > 0; \label{eqresunibdd}\\
	    \sup_{z: \Im z = \lambda }\| R^+_b (z)\|_{L^{p'} \to L^p} \to 0, &\quad \mathrm{as}\,\,\lambda \to +\infty.\label{eqresvan}
	    \eea
	    Similar boundedness holds for $R_b^-(w)$.
	    \item Fourier representation: Let $b > 0$. For $f \in \mathscr{S}'(\RR^d)$ with $\hat{f} \in L^1_{loc}(\RR^d)$ satisfying
	    \be r^{-\frac{d}{2} + \frac{\Im z}{b}}\int_r^\infty \rho^{\frac{d}{2} - 1 - \frac{\Im z}{b}} |\hat{f}|(\rho \omega) d\rho \in L^q(\RR^d), \label{eqRbcond+}
	    \ee 
	    for some $q \in [1, \infty)$, we have $R_b^+(z)f \in \mathscr{S}'(\RR^d)$ with its Fourier transform given by
	    \be \label{eqRb+f} (\widehat{R_b^+(z)f})(r\omega) = \frac{i}{b}r^{-\frac d2 + \frac{\Im z}{b}} e^{\frac{ir^2}{2b}-\frac{i\Re z}{b} \ln r} \int_r^\infty \rho^{\frac d2 - 1 - \frac{\Im z}{b}} e^{-\frac{i\rho^2}{2b}+\frac{i\Re z}{b} \ln \rho} \hat{f}(\rho \omega) d\rho.
	    	\ee
	    	Similarly, if $f \in \mathscr{S}'(\RR^d)$ with $\hat{f} \in L^1_{loc}(\RR^d)$ satisfying
	    	\be  r^{-\frac{d}{2} + \frac{\Im w}{b}}\int_0^r \rho^{\frac{d}{2} - 1 - \frac{\Im w}{b}} |\hat{f}|(\rho \omega) d\rho \in L^q(\RR^d),  \label{eqRbcond-}
	    	\ee
	    	for some $q \in [1, \infty)$, then $R_b^-(w)f \in \mathscr{S}'(\RR^d)$ with Fourier representation
	    \be
	    	\label{eqRb-f} (\widehat{R_b^-(w)f})(r\omega) =  -\frac{i}{b}r^{-\frac d2 + \frac{\Im w}{b}} e^{\frac{ir^2}{2b}-\frac{i\Re w}{b} \ln r}\int_0^r \rho^{\frac d2 - 1 - \frac{\Im w}{b}} e^{-\frac{i\rho^2}{2b}+\frac{i\Re w}{b} \ln \rho} \hat{f}(\rho \omega) d\rho.
	    \ee
	    
	    In particular, all these representations hold for $f \in \mathscr{S}(\RR^d)$. Similar formulas holds for $b < 0$ case. 
	\end{enumerate}
    
\end{lemma}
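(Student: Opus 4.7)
The plan is to handle the two claims separately: boundedness by direct integration of the enhanced dispersive estimate \eqref{eqdispersiveest} in the Bochner integral \eqref{eqRb+}, and the Fourier representation by combining the spectral formula \eqref{eqrepuhat} with an explicit change of variable $\rho = e^{bt}r$. For part (1), one estimates
\[ \|R_b^+(z)\|_{L^{p'}\to L^p} \le \int_0^\infty \|e^{it\Delta_b}\|_{L^{p'}\to L^p}\, e^{-t\Im z}\,dt, \]
and splits at $t = |b|^{-1}$: the small-$t$ piece is bounded by $C_{d,p}\,t^{-(d/2-d/p)}e^{-t\Im z}$ (integrable near zero iff $d/2-d/p<1$, i.e.\ $p<2^*$) and the large-$t$ piece by $C_{d,p}\,|b|^{d/2-d/p}e^{-t(\Im z+|b|(d/2-d/p))}$ (integrable at infinity iff $\Im z>-|b|(d/2-d/p)$). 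These are exactly the stated hypotheses. Continuity in $z$ and the uniform bound \eqref{eqresunibdd} follow by dominated convergence on $\{\Im z\ge -|b|(d/2-d/p)+\delta\}$ using this common majorant, and \eqref{eqresvan} as $\Im z=\lambda\to+\infty$ follows from the explicit Gamma-integral bound $C\lambda^{-(1-d/2+d/p)}$ on the small-$t$ contribution together with the exponential decay of the large-$t$ contribution. The argument for $R_b^-(w)$ is identical after reversing time.

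For part (2) I first verify \eqref{eqRb+f} on the dense class $f \in \mathscr{S}(\RR^d)$ with $\Im z > 0$, where $R_b^+(z)$ agrees with the classical $L^2$-resolvent of Lemma \ref{lemresprop}. Taking the Fourier transform termwise in \eqref{eqRb+}, inserting \eqref{eqrepuhat}, and substituting $\rho = e^{bt}r$ with $\xi = r\omega$ and $dt = d\rho/(b\rho)$, the Gaussian phase $-i(e^{2bt}-1)|\xi|^2/(2b)$ becomes $-i(\rho^2-r^2)/(2b)$, while $e^{bdt/2}e^{itz}$ contributes the factor $(\rho/r)^{d/2 + iz/b}$; separating $(\rho/r)^{iz/b} = (\rho/r)^{-\Im z/b} e^{i\Re z \ln(\rho/r)/b}$ into modulus and phase yields exactly the stated formula \eqref{eqRb+f}. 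For $f$ merely satisfying \eqref{eqRbcond+}, the right-hand side of \eqref{eqRb+f} is already a well-defined element of $L^q \subset \mathscr{S}'(\RR^d)$; I extend the identity by approximating $\hat{f}$ by Schwartz functions supported in growing annuli $\{m_n \le |\xi| \le M_n\}$ (mirroring the cutoff construction at the end of Lemma \ref{corospec1}) and invoking dominated convergence on the right-hand side. The formula \eqref{eqRb-f} for $R_b^-(w)$ follows from the symmetric substitution $\rho = e^{-bt}r$, which reverses the limits of integration to $(0,r)$.

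The main technical obstacle is the compatibility of the two definitions of $R_b^+(z)$ on their common domain during this extension beyond $\mathscr{S}$: the Bochner definition of Step 1 operates between Lebesgue pairings, whereas \eqref{eqRbcond+} is phrased purely on the Fourier side. This is resolved by choosing Schwartz approximations that converge simultaneously in $L^{p'}$ (to commute with the extended $L^{p'}\to L^p$ action of $R_b^+(z)$ whenever applicable) and in the weighted Fourier norm implied by \eqref{eqRbcond+} (to commute with the explicit integral); such a choice is possible because $C_c^\infty$ is dense in both. The agreement of the two resulting candidates for $R_b^+(z)f$ then follows because both sides are analytic/continuous in $z$ throughout the extended region and coincide on the classical region $\Im z > 0$ for Schwartz data, so the resolvent properties listed in Lemma \ref{lemresprop} can later be propagated to the extended families by the same analytic-continuation argument.
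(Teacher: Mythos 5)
Part (1) of your proposal is correct and is essentially the paper's own argument: integrate \eqref{eqdispersiveest} against $e^{-t\Im z}$, split at $t=|b|^{-1}$, and read off continuity, \eqref{eqresunibdd} and \eqref{eqresvan} from the resulting majorant. The computational core of part (2) (Fourier transform of \eqref{eqRb+}, insertion of \eqref{eqrepuhat}, substitution $\rho=e^{bt}r$) also matches the paper.

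However, your justification of part (2) has a genuine gap at the extension step. The lemma is stated for $f\in\mathscr{S}'(\RR^d)$ with $\hat f\in L^1_{loc}$ satisfying only \eqref{eqRbcond+}; such $f$ need not lie in any $L^{p'}$ with $p\in[2,2^*)$, and indeed the functions to which the representation is later applied (e.g.\ $R_b^+(z')f$ in Lemma \ref{lemresident}, whose Fourier transform is only controlled by $r^{-\frac d2+\frac{\Im z'}{b}}$ near the origin) are exactly of this kind. So your plan to pick Schwartz approximants converging ``simultaneously in $L^{p'}$ \dots whenever applicable'' collapses: for general $f$ in the stated class there is no $L^{p'}\to L^p$ action to commute with, no topology in which you show $R_b^+(z)f_n\to R_b^+(z)f$, and in fact no proof that the defining integral $i\int_0^\infty e^{it\Delta_b}e^{itz}f\,dt$ converges in $\mathscr{S}'$ at all. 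That convergence is the nontrivial content of part (2); dominated convergence on the right-hand side only identifies the limit of the explicit formulas, not the action of the operator on $f$. The paper closes precisely this point by a tail estimate on the Fourier side,
\[
\Bigl|\, i\int_R^\infty e^{itz}\,\widehat{e^{it\Delta_b}f}(r\omega)\,dt \,\Bigr|
\;\le\; b^{-1}\, r^{-\frac d2+\frac{\Im z}{b}}\int_{re^{Rb}}^\infty \rho^{\frac d2-1-\frac{\Im z}{b}}\,|\hat f(\rho\omega)|\,d\rho,
\]
using \eqref{eqRbcond+} as an $L^q$ dominating function: the truncated integrals are manifestly well defined in $\mathscr{S}'$, the tails vanish in $L^q\subset\mathscr{S}'$, and the change of variables then passes to the limit, yielding \eqref{eqRb+f} directly for every $f$ in the stated class and every $z$ in the stated range, with no detour through $\Im z>0$. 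Your analytic-continuation step also carries avoidable burden (weak analyticity of both sides in the extended region must itself be established; for Schwartz data this can be done, but for general $f$ it again presupposes a definition of the left-hand side), whereas the direct tail-domination argument makes it unnecessary. Replacing your density-plus-continuation scheme by this estimate repairs the proof.
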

\begin{proof}We only prove for $R_b^+(z)$ with $b > 0$ and the proofs for the other three cases are almost the same.
	
	(1) The boundedness comes from integrating \eqref{eqdispersiveest}
	\[ \| R_b^+(z) \|_{L^{p'} \to L^p} \lesssim_{d, p} \int_0^{|b|^{-1}} t^{-\left(\frac d2 -\frac dp\right)} e^{-t\Im z} dt + \int_{|b|^{-1}}^\infty |b|^{-\left(\frac d2 -\frac dp\right)} e^{-t\left[\Im z + |b|\left(\frac d2 -\frac dp\right) \right]} dt \]
	 We need $p < 2^*$ for integrability near $t = 0$, and $\Im z > -|b|\left(\frac d2 - \frac dp\right)$ for integrability at infinity. The continuity easily follows
	 \[ \|R_b^+(z) - R_b^+(z') \|_{L^{p'} \to L^p} \le \int_0^\infty \|e^{it\Delta_b} \|_{L^{p'} \to L^p} |e^{itz} - e^{itz'}| dt \to 0 \quad \mathrm{as}\,\,|z-z'|\to 0.   \]
	 The uniformity \eqref{eqresunibdd} and the vanishing \eqref{eqresvan} both follow the monotonicity and vanishing of the above integral concerning $\Im z$. 
	 
	 (2) Formally, the representation formula follows \eqref{eqrepuhat} and \eqref{eqRb+} directly.
	 \bee
	 (\widehat{R_b^+(z)f})(r\omega) &=& i \int_0^\infty e^{itz} \widehat{e^{it\Delta_b}f}(r\omega) dt \\ 
	 & = &i \int_0^\infty e^{itz} e^{\frac{bdt}{2}} \hat{f} (e^{bt}r\omega) e^{-i\frac{e^{2bt} - 1}{2b}r^2 } dt \\
	 &=& i\int_r^\infty e^{iz \frac{1}{b} \ln \frac \rho r} e^{\frac{db}2 \frac{1}{b} \ln \frac \rho r} \hat{f}(\rho \omega) e^{-i\frac{\left( \frac \rho r\right)^2 - 1}{2b} r^2} \frac{d\rho}{b \rho} \\
	 &=&\frac{i}{b}r^{-\frac d2 + \frac{\Im z}{b}} e^{\frac{ir^2}{2b}-\frac{i\Re z}{b} \ln r} \int_r^\infty \rho^{\frac d2 - 1 - \frac{\Im z}{b}} e^{-\frac{i\rho^2}{2b}+\frac{i\Re z}{b} \ln \rho} \hat{f}(\rho \omega) d\rho
	 \eee
	 where in the third equivalence, we change $t$ to the variable $\rho = e^{bt}r \in [r,\infty)$ with $b > 0$. Since $e^{it\Delta_b}$ maps $\mathscr{S}$ to itself and hence maps $\mathscr{S}'$ to itself, the integration on a finite interval is well-defined in $\mathscr{S}'(\RR^d)$. Thus it suffices to verify the integral in the first equivalence to converge in the sense of $\mathscr{S}'(\RR^d)$ to make this computation rigorous. Using \eqref{eqrepuhat}, 
	 \bee  \left| i\int_R^\infty e^{itz} \widehat{e^{it\Delta_b}f}(r\omega) dt\right|
	 &\le& \int_R^\infty e^{-t\Im z} e^{\frac{bdt}{2}} \left| \hat{f} (e^{bt}r\omega) \right| dt \\
	 &\le& b^{-1} r^{-\frac d2 + \frac{\Im z}{b}} \int_{re^{Rb}}^\infty  \rho^{\frac d2 - 1 - \frac{\Im z}{b}} \left| \hat{f} (\rho \omega) \right| d\rho
	 \eee
	 So we can use \eqref{eqRbcond+} as a dominant function in $L^q$ to show dominant convergence. The tail vanishes in $L^q$ and hence the computation makes sense.
	 \end{proof}

    As an application, we prove the full characterization of $\sigma(\Delta_b)$ with $b\neq 0$, which is drastically different from the $b=0$ case.   
    \begin{lemma}[Spectrum of $\Delta_b$]\label{lemctsspec}
    	For $b \neq 0$, $\sigma(-\Delta_b)= \RR$.
    \end{lemma}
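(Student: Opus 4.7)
By Lemma \ref{corospec1}, $\Delta_b$ is self-adjoint with $\sigma_r(-\Delta_b)=\sigma_p(-\Delta_b)=\emptyset$, so automatically $\sigma(-\Delta_b)\subset\RR$ and all remaining spectrum is continuous. It thus suffices to prove $\RR\subset\sigma(-\Delta_b)$, which I plan to do by verifying Weyl's criterion: for every $\lambda\in\RR$, construct a sequence $f_n\in D(\Delta_b)$ with $\|f_n\|_{L^2}=1$ and $\|(-\Delta_b-\lambda)f_n\|_{L^2}\to 0$.

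The building block is the explicit homogeneous Fourier-side solution produced in the proof of Lemma \ref{corospec1}: in radial coordinates $(r,\omega)\in(0,\infty)\times\mathbb{S}^{d-1}$ the equation $(-\Delta_b-\lambda)f=0$ reduces to $\partial_r\bigl(r^{d/2}e^{\frac{i}{b}(-r^2/2+\lambda\ln r)}\hat{f}(r\omega)\bigr)=0$, hence
\[ \hat{f}^{(0)}(r\omega) := \psi(\omega)\, r^{-d/2}\, e^{\frac{i}{b}(r^2/2-\lambda\ln r)} \]
is a formal solution for any $\psi\in L^2(\mathbb{S}^{d-1})$. Its $L^2$ mass density is $|\psi(\omega)|^2 r^{-1}\,dr\,d\omega$, which diverges logarithmically at both endpoints; this is consistent with $\sigma_p=\emptyset$ but also suggests that a truncation on the logarithmic scale will extract a large but finite amount of mass. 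Fix $\chi\in C^\infty_c((1,2))$ and a normalized $\psi\in L^2(\mathbb{S}^{d-1})$, and set $\hat{f}_n(r\omega):=\chi(n^{-1}\log r)\,\hat{f}^{(0)}(r\omega)$. Since $\hat{f}_n$ is smooth and compactly supported away from the origin, $f_n\in\mathscr{S}(\RR^d)\subset D(\Delta_b)$.

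Because $\hat{f}^{(0)}$ solves the homogeneous equation exactly, applying the Fourier-side symbol $|\xi|^2-\lambda+ib(d/2+\xi\cdot\nabla)$ to $\hat{f}_n$ produces only the commutator with the cutoff, namely $ibr\chi_n'(r)\,\hat{f}^{(0)}$ with $\chi_n'(r)=n^{-1}r^{-1}\chi'(n^{-1}\log r)$. The change of variables $s=n^{-1}\log r$ then yields $\|f_n\|_{L^2}^2\sim n\|\chi\|_{L^2}^2$ while $\|(-\Delta_b-\lambda)f_n\|_{L^2}^2\sim n^{-1}b^2\|\chi'\|_{L^2}^2$, so the normalized sequence $g_n:=f_n/\|f_n\|_{L^2}$ satisfies $\|(-\Delta_b-\lambda)g_n\|_{L^2}\sim n^{-1}\to 0$, and Weyl's criterion delivers $\lambda\in\sigma(-\Delta_b)$.

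The only delicate point is choosing the truncation on the logarithmic scale, which is essentially forced by the $r^{-1}\,dr$ density of $|\hat{f}^{(0)}|^2$: a finer cutoff would leave $\chi_n'$ too large to be dominated by the mass, and a coarser one would fail to capture the logarithmic divergence. Beyond this scaling balance I do not anticipate any serious technical obstruction.
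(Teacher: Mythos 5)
Your proof is correct, but it follows a genuinely different route from the paper. The paper proves $\RR\subset\sigma(-\Delta_b)$ by contradiction using the extended resolvent families: if $\lambda\in\rho(-\Delta_b)$, then by analyticity of the resolvent and the $\L(L^{p'}\to L^p)$ continuity of $R_b^{\pm}$ from Lemma \ref{lemRbbdd}, one would have $R_b^+(\lambda)f=R_b(\lambda)f=R_b^-(\lambda)f$ for Schwartz $f$, while the explicit Fourier representations \eqref{eqRb+f}--\eqref{eqRb-f} show that $R_b^+(\lambda)f-R_b^-(\lambda)f$ is generically non-zero (the boundary values of the two resolvent families jump across the real axis). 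You instead construct an explicit Weyl sequence: the exact homogeneous Fourier-side solution $\psi(\omega)r^{-d/2}e^{\frac{i}{b}(r^2/2-\lambda\ln r)}$ has $L^2$ density $r^{-1}dr\,d\omega$, and your logarithmic-scale cutoff $\chi(n^{-1}\log r)$ correctly balances mass ($\sim n$) against the commutator error ($\sim n^{-1}$), so Weyl's criterion (legitimate here since self-adjointness is already known from Lemma \ref{corospec1}) gives $\lambda\in\sigma(-\Delta_b)$, and $\sigma_p=\sigma_r=\emptyset$ upgrades this to purely continuous spectrum. Your approach is more elementary and self-contained — it needs only the Fourier-side ODE from the proof of Lemma \ref{corospec1} and none of the extended-resolvent boundedness theory — whereas the paper's argument reuses machinery it develops anyway and simultaneously illustrates the jump of $R_b^{\pm}$ on $\RR$, which is thematically central to the Strichartz analysis. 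One cosmetic point: to claim $f_n\in\mathscr{S}(\RR^d)$ you should take $\psi$ smooth on $\mathbb{S}^{d-1}$ (or simply note that compact support of $\hat f_n$ in an annulus already gives $f_n\in D(\Delta_b)$, which is all Weyl's criterion requires).
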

    \begin{proof}
    	With the self-adjointness of $-\Delta_b$, we will apply the explicit formula of extended resolvent families to prove $\lambda \in \sigma(-\Delta_b)$ for all $\lambda \in \RR$. We assume $b > 0$ and the case $b < 0$ comes in the same way.
    	
    	If not, then $\lambda \in \rho(-\Delta_b)$ and $-\Delta_b - z$ is invertible near $z$ with an analytic inverse. We denote $R_b(\lambda) = (-\Delta_b - \lambda)^{-1} \in \L(L^2)$, then this analyticity implies 
    	\bee R_b^{+}(\lambda + i\epsilon) \to R_b(\lambda) \quad \mathrm{in}\,\,\L(L^2)&\,\,\mathrm{as}\,\,\epsilon \to 0^+; \\
    	R_b^{-}(\lambda - i\epsilon) \to R_b(\lambda) \quad \mathrm{in}\,\,\L(L^2)&\,\,\mathrm{as}\,\,\epsilon \to 0^+.
    	\eee
    	Also recall the continuity and boundedness in Lemma \ref{lemRbbdd}. Suppose $\epsilon < \frac{|b|}{4}$, we also have 
    	\bee R_b^{+}(\lambda + i\epsilon) \to R_b^+(\lambda) \quad \mathrm{in}\,\,\L(L^\frac{4d}{2d+1} \to L^\frac{4d}{2d-1})&\,\,\mathrm{as}\,\,\epsilon \to 0^+; \\
    	R_b^{-}(\lambda - i\epsilon) \to R_b^-(\lambda) \quad \mathrm{in}\,\,\L(L^\frac{4d}{2d+1} \to L^\frac{4d}{2d-1})&\,\,\mathrm{as}\,\,\epsilon \to 0^+.
    	\eee
    	Thus for $f \in \mathscr{S}(\RR^d)$, we should have $R^+_b(\l)f = R_b(\lambda)f = R_b^-(\l)f$, while 
    	\[ \mathscr{F}(R_b^+(\l)f - R_b^-(\l)f)(r\omega) = \frac{i}{b}r^{-\frac d2 + \frac{\Im z}{b}} e^{\frac{ir^2}{2b}-\frac{i\Re z}{b} \ln r} \int_0^\infty \rho^{\frac d2 - 1 - \frac{\Im z}{b}} e^{-\frac{i\rho^2}{2b}+\frac{i\Re z}{b} \ln \rho} \hat{f}(\rho \omega) d\rho  \]
    	which may not be zero when the integral is non-zero. This contradiction shows that $\lambda \in \sigma(-\Delta_b)$ and finishes the proof. 
    \end{proof}

    \subsubsection{Inversion property}
    
    Next, we come to the inversion property. We first check the strong continuity of the semigroup in the sense of tempered distribution.
    
    \begin{lemma}\label{lemsemicts}
    	For $z \in \CC$ and $f \in \mathscr{S}(\RR^d)$, the orbital map
    	\[ C_{z, f} : t \mapsto e^{it\Delta_b} e^{itz} f \]
    	is continuous and differentiable from $\RR$ to $\mathscr{S}(\RR^d)$ with 
    	\be \frac{d}{dt}C_{z, f}(t) := \lim_{h \to 0} \frac{1}{h}\left(e^{i(h+t)(\Delta_b+z)} f - e^{it(\Delta_b+z)} f\right) = i( \Delta_b + z) e^{it(\Delta_b+z)} f \label{eqdifsemi}\ee
    	where the convergence is in the sense of $\mathscr{S}(\RR^d)$ and is uniform in finite time. 
    	The same statements also hold if we replace $\mathscr{S}(\RR^d)$ by $\mathscr{S}'(\RR^d)$. 
\end{lemma}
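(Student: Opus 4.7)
The plan is to work on the Fourier side, where \eqref{eqrepuhat} reduces $e^{it\Delta_b}$ to a composition of a dilation, a scalar amplitude $e^{bdt/2}$, and a unimodular quadratic chirp. Since $\mathscr{F}$ is a topological isomorphism on both $\mathscr{S}(\RR^d)$ and $\mathscr{S}'(\RR^d)$, everything reduces to analysing
\be \widehat{C}_{z,f}(t,\xi) = e^{itz}\, e^{\frac{bdt}{2}}\, \hat{f}(e^{bt}\xi)\, e^{-i\frac{e^{2bt}-1}{2b}|\xi|^2} \ee
as an $\mathscr{S}(\RR^d_\xi)$-valued function of $t$. For each fixed $t$, this is Schwartz: the dilate $\hat{f}(e^{bt}\cdot)$ belongs to $\mathscr{S}$, and multiplying by the smooth unimodular chirp, whose $k$-th $\xi$-derivatives are dominated by $C_k(t)\la\xi\ra^k$, preserves $\mathscr{S}$.

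Continuity and $C^1$-differentiability in the Fr\'echet topology of $\mathscr{S}$ then follow by fixing a seminorm $\|g\|_{\a,\b} := \sup_\xi |\xi^\a \pa_\xi^\b g(\xi)|$ and expanding the finite difference of $\widehat{C}_{z,f}$ via Taylor's formula in $t$, combined with Leibniz in $\xi$. Each of the four factors is jointly $C^\infty$ in $(t,\xi)$ with $\xi$-derivatives of polynomial growth controlled continuously in $t$; the Schwartz decay of $\hat f$ absorbs all polynomial losses and produces seminorm bounds uniform on any compact interval $[-T,T]$. A direct chain-rule computation then yields
\bee \pa_t \widehat{C}_{z,f}(t,\xi) = \left(-i|\xi|^2 + \tfrac{bd}{2} + b\,\xi\cdot\nabla_\xi + iz\right)\widehat{C}_{z,f}(t,\xi), \eee
which, using \eqref{eqLambdafourier}, matches $i\,\mathscr{F}\!\left[(\Delta_b+z)C_{z,f}(t)\right]\!(\xi)$. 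Taking $\mathscr{F}^{-1}$ gives \eqref{eqdifsemi}.

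The $\mathscr{S}'$-case follows by duality. Since $\Delta_b$ is symmetric on $\mathscr{S}$ (as shown in the proof of Lemma \ref{corospec1}), the formal adjoint on $\mathscr{S}$ of $e^{it\Delta_b}$ is $e^{-it\Delta_b}$, and $e^{it\Delta_b}$ extends to $\mathscr{S}'$ via $\la e^{it\Delta_b}T,\phi\ra := \la T, e^{-it\Delta_b}\phi\ra$. Weak-$*$ continuity, differentiability, and the derivative formula on $\mathscr{S}'$ then follow by testing against Schwartz functions and invoking the statements already proved on $\mathscr{S}$, applied to $s \mapsto e^{-is\Delta_b}\phi$.

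The only real obstacle is bookkeeping: one must track how the polynomially growing $\xi$-derivatives of the chirp $e^{-i\frac{e^{2bt}-1}{2b}|\xi|^2}$ interact with the Schwartz seminorms of $\hat{f}$ while maintaining uniform control on $t\in[-T,T]$. This is routine but requires careful Leibniz expansion, plus one must reconcile the $b\,\xi\cdot\nabla_\xi\widehat{C}_{z,f}$ term (arising from $\pa_t$ acting on both the dilation and the chirp) with the Fourier symbol of $ib\,x\cdot\nabla$.
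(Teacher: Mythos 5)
Your proposal is correct and follows essentially the same route as the paper: both prove the $\mathscr{S}$-valued continuity and differentiability by direct computation on the Fourier side using the explicit representation \eqref{eqrepuhat} (your derivative formula $\pa_t\widehat{C}_{z,f}=(-i|\xi|^2+\tfrac{bd}{2}+b\,\xi\cdot\nabla_\xi+iz)\widehat{C}_{z,f}$ is consistent with \eqref{eqLambdafourier} and the symbol of $i(\Delta_b+z)$), and both transfer the result to $\mathscr{S}'$ by duality through the pairing $\la e^{it\Delta_b}f,\varphi\ra=\la f,e^{-it\Delta_b}\varphi\ra$.
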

    
    \begin{proof}
    	The conclusion (continuity, differentiability and uniform convergence of derivatives) in $\mathscr{S}(\RR^d)$ follow direct computations using \eqref{eqrepuhat}. For example, for the continuity at $t = 0$ with $z = 0$, we compute
    	\bee &&\mathscr{F}(e^{it\Delta_b} f- f )(\xi) \\
    	&=& \left( e^{\frac {bdt}{2} }- 1\right)\hat{f}(e^{bt}\xi) e^{-i\frac{e^{2bt}-1}{2b}|\xi|^2} 
    	+ \left( \hat{f}(e^{bt}\xi) - \hat{f}(\xi) \right) e^{-i\frac{e^{2bt}-1}{2b}|\xi|^2} \\
    	&&+  \hat{f}(\xi) \left( e^{-i\frac{e^{2bt}-1}{2b}|\xi|^2} -1\right).
    	\eee

    	It is easy to check that each term on the right-hand side is vanishing in the Schwartz space. Then since 
    	\bee \la e^{it\Delta_b} f, \varphi \ra_{\mathscr{S}', \mathscr{S}} =  \la f,  e^{-it\Delta_b} \varphi \ra_{\mathscr{S}', \mathscr{S}}, \quad \la \Delta_b f, \varphi \ra_{\mathscr{S}', \mathscr{S}} = \la f,\Delta_b \varphi \ra_{\mathscr{S}', \mathscr{S}},
    	\eee
    	the conclusion in $\mathscr{S}'(\RR^d)$ follows the one in $\mathscr{S}(\RR^d)$. 
    	\end{proof}
    
    Now we prove the inversion property within the range of the previous lemma.
    
    \begin{lemma}[Inverseion properties for extended resolvent families]\label{leminv}
    	Let $p \in \left[2, 2^*\right)$ and $z \in \CC$ satisfying $\Im z > -|b| \left( \frac d2 - \frac dp\right)$.
    	If $ u \in L^{p'}$, we have
    	\be (-\Delta_b - z) R_b^+(z)  u = u. \label{eqinv1} \ee
    	Moreover, if $u \in L^{p'}$ and $\Delta_b u \in L^{p'}$, then
    	\be
    	R_b^+(z) (-\Delta_b - z) u = u.\label{eqinv2}
    	\ee
    	Similar identities hold for $R^-_b(z)$ with $z$ in the corresponding range.
    \end{lemma}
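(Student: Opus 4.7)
The overall plan is to carry out, now directly in $\mathscr{S}'$, the same manipulations that establish the standard resolvent identity on $L^2$ for $\Im z$ large. Both essential ingredients survive in the extended range: the integral representation $R_b^+(z) u = i \int_0^\infty e^{it(\Delta_b+z)} u \, dt$ converges absolutely in $L^p$ by the enhanced dispersive bound \eqref{eqdispersiveest}, and the orbit $t \mapsto e^{it(\Delta_b+z)} u$ is strongly differentiable in $\mathscr{S}'$ with the expected derivative by Lemma \ref{lemsemicts}. I treat the $R_b^+$ case; the $R_b^-$ case is entirely symmetric.

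For \eqref{eqinv1}, I apply $(-\Delta_b - z)$ distributionally to the integral representation and commute it with the improper integral; this is legitimate after testing against $\varphi \in \mathscr{S}$, transposing $\Delta_b$ onto $\varphi$ via self-adjointness, and using \eqref{eqdispersiveest} as a dominant. By Lemma \ref{lemsemicts} the integrand becomes $-\frac{d}{dt}\, e^{it(\Delta_b+z)} u$ in $\mathscr{S}'$, so the fundamental theorem of calculus yields
\[ (-\Delta_b - z) R_b^+(z) u \;=\; u - \lim_{T \to \infty} e^{iT(\Delta_b+z)} u. \]
The tail limit vanishes in $L^p$, hence in $\mathscr{S}'$, because $\|e^{iT(\Delta_b+z)} u\|_{L^p} \le \|e^{iT\Delta_b}\|_{L^{p'} \to L^p}\, e^{-T\Im z}\, \|u\|_{L^{p'}}$ decays exponentially by \eqref{eqdispersiveest} and the assumption $\Im z > -|b|(d/2 - d/p)$.

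For \eqref{eqinv2}, the extra hypothesis $\Delta_b u \in L^{p'}$ places $v := (-\Delta_b - z) u$ in $L^{p'}$, so $R_b^+(z) v \in L^p$ is well-defined. The crucial new ingredient is the distributional commutation $\Delta_b \circ e^{it\Delta_b} = e^{it\Delta_b} \circ \Delta_b$ on $\mathscr{S}'$, which I obtain by transposition from the corresponding identity on $\mathscr{S}$ supplied by Lemma \ref{lemsemicts} (applied with $-b$ to test functions). Inserting this commutation under the integral converts $R_b^+(z)(-\Delta_b - z) u$ into $(-\Delta_b - z) R_b^+(z) u$ in $\mathscr{S}'$, and \eqref{eqinv1} then yields $u$. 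Since both sides already lie in $L^p + L^{p'}$, the distributional equality upgrades to an a.e.\ equality of functions.

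The principal obstacle is that $\Delta_b$ carries the unbounded coefficient $x \cdot \nabla$ while the inputs are only $L^{p'}$, so every interchange of the differential operator with the improper integral or with the semigroup must be handled distributionally rather than via a naive density argument against Schwartz functions (which would not preserve $\Delta_b$). All such steps are reduced by duality to the Schwartz action $e^{-it\Delta_{-b}}\varphi$, where Lemma \ref{lemsemicts} supplies both strong $\mathscr{S}$-differentiability and continuity of $\Delta_{-b}$ on $\mathscr{S}$, while \eqref{eqdispersiveest} supplies the integrable domination required to legitimate every limit interchange.
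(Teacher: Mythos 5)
Your argument is correct, and it rests on exactly the same two pillars as the paper's proof (the strong differentiability/continuity of the orbit map from Lemma \ref{lemsemicts} and the enhanced dispersive bound \eqref{eqdispersiveest} as integrable dominant), but the mechanics differ enough to be worth comparing. For \eqref{eqinv1} the paper never moves $\Delta_b$ inside the improper integral: it uses the shift identity $e^{ih(\Delta_b+z)}R_b^+(z)u = i\int_h^\infty e^{it(\Delta_b+z)}u\,dt$, so that $\frac{i}{h}\bigl(e^{ih(\Delta_b+z)}-I\bigr)R_b^+(z)u = \frac1h\int_0^h e^{it(\Delta_b+z)}u\,dt$, and lets $h\to 0$, with \eqref{eqdifsemi} handling the left side and orbital continuity at $t=0$ the right side; you instead commute the generator with the improper integral by Schwartz duality and close with the fundamental theorem of calculus plus an explicit $L^p$-vanishing of the boundary term $e^{iT(\Delta_b+z)}u$, which is where the admissibility condition $\Im z > -|b|\bigl(\tfrac d2-\tfrac dp\bigr)$ enters in your version (in the paper it is absorbed into the convergence of $\int_h^\infty$). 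For \eqref{eqinv2} the paper truncates to $\int_0^T$, commutes the difference quotient through the finite integral using the uniform convergence in \eqref{eqdifsemi}, and then lets $T\to\infty$; you instead commute the generator with the semigroup on $\mathscr{S}'$ (valid by transposition from the Schwartz-level identity, e.g.\ read off \eqref{eqrepuhat}), pull $(-\Delta_b-z)$ out of the integral by the same duality argument, and invoke \eqref{eqinv1}. Both executions are legitimate and of comparable length; your interchange steps are justified because the partial integrals converge in $L^p$ and the transposed test functions remain in $\mathscr{S}\subset L^{p'}$. One cosmetic point: you mix pairing conventions — under the sesquilinear pairing used in Lemma \ref{lemsemicts} the adjoint orbit is $e^{-it\Delta_b}\varphi$, while under the bilinear pairing the transpose is $e^{it\Delta_{-b}}\varphi$; your "$e^{-it\Delta_{-b}}\varphi$" is neither, but this is a bookkeeping slip with no effect on the estimates, since either dual operator acts continuously on $\mathscr{S}$ with the same bounds.
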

  
    \begin{proof}
    	First observing that for such $p$ and $z$ in the lemma and $f \in L^{p'}$, we have
    	\be e^{ih(\Delta_b+z)} R_b^+(z) f = e^{ih(\Delta_b+z)} i\int_0^\infty e^{it(\Delta_b+z)}f dt = i\int_h^\infty e^{it(\Delta_b+z)} f dt \label{eqinvconv} \ee
    	since the integral converges in $L^{p}$. 
    	Therefore for $u \in L^{p'}$,
    	\[ \frac{i}{h}\left(e^{ih(\Delta_b+z)} - I\right) R_b^+(z) u = \frac{1}{h} \int_0^h e^{it(\Delta_b+z)} u dt. \]
    	As $u, R_b^+(z) u \in \mathscr{S}'(\RR^d)$, the continuity of $C_{z, u}$ and \eqref{eqdifsemi} for $C_{z, R_b^+(z)u}$ in $\mathscr{S}'(\RR^d)$ from Lemma \ref{lemsemicts} imply that 
    	both sides of the above equation will converge to \eqref{eqinv1} as $h \to 0$. 
    	
    	For \eqref{eqinv2}, we start from
    	\[ \frac{1}{h} \left(e^{ih (\Delta_b+z)} - I \right) \int_0^T e^{it(\Delta_b+z)} u dt =\int_0^T e^{it(\Delta_b+z)} \frac{1}{h} \left(e^{ih (\Delta_b+z)} - I \right) u dt \]
    	for $T > 0$. 
    	Letting $h$ goes to $0$, the uniform convergence of \eqref{eqdifsemi} implies that both sides converges in $\mathscr{S}'(\RR^d)$ to 
    	\[ i(\Delta_b + z) \int_0^T e^{it(\Delta_b + z)}udt = i\int_0^T e^{it(\Delta_b + z)} (\Delta_b + z) udt.  \]
    	Since $\Delta_b u \in L^{p'}$ and $u \in L^{p'}$, the right-hand side converge in $L^p$ and the left-hand side converges in $\mathscr{S}'$ as $T\to \infty$. Then \eqref{eqinv2} follows \eqref{eqinv1}. 
    \end{proof}


    \subsubsection{Resolvent identity and differentiability} 

	Although resolvent identities formally hold, it is not trivial to see how extended resolvents compose with $\L(L^{p'}\to L^p)$ no longer an algebra. The following lemma rigorously verifies such resolvent identities. In particular, the composition makes sense as an operator from $L^{p'}$ to $L^{p}$.
	
    \begin{lemma}[Resolvent identities for extended resolvent families]\label{lemresident}
        Let $b \neq 0$. For $z, z'$ such that $\Im z \neq \Im z' > -|b|\min\left\{\frac d2, 1 \right\}$, and for $p$ satisfies  $\frac{1}{p} < \frac{\min\{ \Im z, \Im z'\}}{|b|d} + \frac{1}{2}$ and $p \in [2,2^*)$, we have
        \begin{enumerate}
        	\item[\rm (i)]{\rm  Resolvent identity for the same family:}
        	\be R_b^+(z) - R_b^+(z') = (z - z')R_b^+(z) R_b^+(z')=(z-z')R_b^+(z)  R_b^+(z')  \ee
        	holds as bounded operator from $L^{p'}$ to $L^p$.
        	Similar statement holds when we replace $+$ by $-$ and $z$, $z'$ by $-z$, $-z'$.
        	\item[\rm (ii)]{\rm Resolvent identity for different families:} Let $w := -z'$ and suppose $$\Im z > \Im w,$$ then we have
        	\be R_b^+(z) - R_b^-(w) = (z - w)R_b^+(z) R_b^-(w) = (z-w) R_b^-(w) R_b^+(z) \label{eqresident2}\ee
        \end{enumerate}  
    \end{lemma}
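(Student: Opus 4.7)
The plan is to interpret the compositions $R_b^+(z) R_b^+(z')$ and $R_b^+(z)R_b^-(w)$ via their double-integral representations coming from the semigroup formula $R_b^+(\zeta) = i\int_0^\infty e^{it\Delta_b} e^{it\zeta}\,dt$, and then derive the resolvent identities by Fubini and a change of variables. Since $\calL(L^{p'}\to L^p)$ is not a composition algebra, for $f\in L^{p'}$ I take as definition
\bee
R_b^+(z) R_b^+(z')f := -\int_0^\infty\!\!\int_0^\infty e^{i(t+s)\Delta_b}e^{itz+isz'} f\, dt\, ds,
\eee
as a Bochner integral taking values in $L^p$. Consistency with the classical composition on Schwartz functions can be checked using the Fourier representation \eqref{eqRb+f}, and with the $L^2$ composition of Lemma~\ref{lemresprop} when $\Im z, \Im z'>0$ by continuous extension.

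First I would establish absolute convergence in the $L^{p'}\to L^p$ operator norm. The enhanced dispersive bound \eqref{eqdispersiveest} yields, with $u=t+s$,
\bee
\|e^{iu\Delta_b}\|_{L^{p'}\to L^p}\lesssim \un_{u\le |b|^{-1}}\, u^{-(d/2-d/p)} + \un_{u>|b|^{-1}}\, |b|^{d/p-d/2} e^{-|b|u(d/2-d/p)}.
\eee
The assumption $p\in[2,2^*)$ controls the short-time singularity since $d/2-d/p<1$, while the hypothesis $\frac{1}{p}<\frac{\min\{\Im z,\Im z'\}}{|b|d}+\frac{1}{2}$ rearranges to $\min\{\Im z,\Im z'\}>-|b|(d/2-d/p)$, which absorbs the exponential dispersive decay at infinity against $|e^{itz+isz'}|=e^{-t\Im z-s\Im z'}$. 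Given absolute convergence, Fubini permits the change of variables $u=t+s,\ t\in[0,u]$:
\bee
-\int_0^\infty e^{iu\Delta_b}\left(\int_0^u e^{itz+i(u-t)z'}dt\right)f\,du = \frac{-1}{i(z-z')}\int_0^\infty e^{iu\Delta_b}\!\left(e^{iuz}-e^{iuz'}\right)f\,du,
\eee
and using $\int_0^\infty e^{iu\Delta_b}e^{iu\zeta}\,du=-iR_b^+(\zeta)$ the right-hand side becomes $\frac{1}{z-z'}(R_b^+(z)-R_b^+(z'))f$, establishing (i). Commutativity in $z,z'$ is automatic because swapping $(t,s)\mapsto(s,t)$ in the defining double integral interchanges $z$ and $z'$.

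For identity (ii), the same strategy applies to
\bee
R_b^+(z)R_b^-(w)f := \int_0^\infty\!\!\int_0^\infty e^{i(t-s)\Delta_b}e^{itz-isw}f\,dt\,ds.
\eee
Under the hypotheses, both $R_b^+(z)$ and $R_b^-(w)$ are $L^{p'}\to L^p$ bounded, and the extra condition $\Im z>\Im w$ is exactly what makes the inner integrals over the free variable convergent after the change of variables $u=t-s$. Splitting into $\{u>0\}$ (parametrized by $s\ge 0,\ t=s+u$) and $\{u<0\}$ (parametrized by $t\ge 0,\ s=t+|u|$), the inner integral on each piece evaluates using $\Im(z-w)>0$, and the outer integral in $u$ yields $-iR_b^+(z)$ on $\{u>0\}$ and $+iR_b^-(w)$ on $\{u<0\}$ via the definitions of $R_b^\pm$. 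Summing produces $R_b^+(z)R_b^-(w)f = \frac{R_b^+(z)-R_b^-(w)}{z-w}f$. The identity $R_b^-(w)R_b^+(z) = R_b^+(z)R_b^-(w)$ follows because relabeling $(t,s)\mapsto(s,t)$ in the analogous double integral for $R_b^-(w)R_b^+(z)$ recovers the one above.

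The main obstacle is the absolute-convergence bookkeeping in the first step: matching the hypothesis on $p$ and $\min\{\Im z,\Im z'\}$ with the exponential dispersive decay of $e^{iu\Delta_b}$ at infinity, and matching $p<2^*$ with the $u^{-(d/2-d/p)}$ short-time singularity of the dispersive norm. Once this is secured, Fubini and the change of variables deliver the rest essentially algebraically.
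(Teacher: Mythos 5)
Your argument is correct, but it follows a genuinely different route from the paper. The paper never touches the double time integral: it works entirely on the Fourier side, first checking via the singularity/decay bound \eqref{eqRb+sing} that $\widehat{R_b^+(z')f}$ (resp.\ $\widehat{R_b^-(w)f}$, $\widehat{R_b^+(z)f}$) satisfies the integrability condition \eqref{eqRbcond+} (resp.\ \eqref{eqRbcond-}) so that the radial representation \eqref{eqRb+f}--\eqref{eqRb-f} may be applied to the composed function, and then computing the inner radial integral $\int \rho^{-1}e^{\frac{i(z-z')}{b}\ln\rho}\,d\rho$ explicitly to recover the identity; this makes the composition rigorous as the honest iterated application in $\mathscr{S}'$ and, as a by-product, produces the explicit kernel with the $\ln(\mu/r)$ structure that is reused in Lemma \ref{lemanalyticity}. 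You instead realize the composition as an absolutely convergent $L^p$-valued double Bochner integral of the semigroup, with the convergence bookkeeping matching the hypotheses exactly as you say: $\min\{\Im z,\Im z'\}>-|b|(\tfrac d2-\tfrac dp)$ absorbs the exponential tail of \eqref{eqdispersiveest}, $p<2^*$ handles the $|u|^{-(d/2-d/p)}$ singularity (in case (i) the inner $t$-integral even contributes an extra factor $u$, so the constraint only truly bites in case (ii)), and $\Im z>\Im w$ is precisely what makes the $s$- (or $t$-) integral converge after $u=t-s$. Your route is more abstract and avoids the radial Fourier computation altogether, at the price of not yielding the explicit composed kernel the paper exploits later. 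The one step you assert rather than carry out is the consistency of the double-integral definition with the iterated application used in the statement; this is routine but should be recorded: for $f\in\mathscr{S}$ one has $R_b^+(z')f\in L^p\hookrightarrow\mathscr{S}'$, and pairing against a fixed Schwartz test function (a bounded functional on $L^p$) commutes with the Bochner integral, so $e^{it\Delta_b}R_b^+(z')f=\int_0^\infty e^{i(t+s)\Delta_b}e^{isz'}f\,ds$ in $\mathscr{S}'$ by the duality $\la e^{it\Delta_b}g,\varphi\ra=\la g,e^{-it\Delta_b}\varphi\ra$ of Lemma \ref{lemsemicts}; Fubini on your absolutely convergent double integral then identifies it with the iterated composition that the paper defines through \eqref{eqRbcond+}. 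With that sentence added, your proof is complete.
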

    \begin{proof}
    	The first requirement on $p$ guarantees $R_b^+(z), R_b^+(z'), R_b^-(w)$ are bounded from $L^{p'}$ to $L^p$ according to Lemma \ref{lemRbbdd}. So the boundedness of composition follows the resolvent identities. We only need to verify the resolvent identity on $f \in \mathscr{S}(\RR^d)$ and focus on the case $b > 0$ for example. 
    	
    	(1) First we check the validity of Fourier representation formula \eqref{eqRb+f} acting on $R_b^+(z')f$. Note that $\widehat{R_b^+(z')f}(r\omega)$ has mild singularity near the origin:
    	\be |\widehat{R_b^+(z')f}(r\omega)| \lesssim 
    	\left\{ \begin{array}{ll} 
    		r^{-\frac d2 + \frac{\Im z'}{b}} & \frac{\Im z'}{b} < \frac d2  \\
    		|\ln r| & \frac{\Im z' }{b} = \frac d2 \\
    	    1 & \frac{\Im z'}{b} > \frac d2  \end{array} \right.\quad r \le 1,  \label{eqRb+sing}\ee
        and it decays faster than any polynomial order at infinity. Thus 
    	\[ r^{-\frac d2+\frac{\Im z }{b}}\int_r^\infty \rho^{\frac{d}{2} -1 - \frac{\Im z}{b} } \left| \widehat{R_b^+(z')f} (\rho \omega)\right| d\rho \lesssim \left\{ 
    	\begin{array}{ll} r^{\min \left\{-\frac d2 + \frac{\Im z }{b}, -\frac{d}{2} + \frac{\Im z'}{b}, 0\right\}} |\ln r|  & r \le 1 \\ r^{-k} & r \ge 1  \end{array} \right.  \]
    	for any $k \ge 1$, which implies \eqref{eqRbcond+}. Using \eqref{eqRb+f} and changing the order of integration, we get
    	\bee 
    	&& \mathscr{F}\left(R_b^+(z)R_b^+(z') f\right) (r\omega) \\
        & = & -\frac{1}{b^2} r^{-\frac d2} e^{\frac{ir^2}{2b}-\frac{i z}{b} \ln r} \int_r^\infty \mu^{\frac d2 - 1} e^{-\frac{i\mu^2}{2b}+\frac{i z'}{b} \ln \mu} \hat{f}(\mu \omega) \int_r^\mu \rho^{-1}e^{\frac{i (z-z')}{b} \ln \rho} d\rho d\mu \\
        & = & -\frac{1}{b^2} r^{-\frac d2} e^{\frac{ir^2}{2b}-\frac{i z}{b} \ln r} \int_r^\infty \mu^{\frac d2 - 1} e^{-\frac{i\mu^2}{2b}+\frac{i z'}{b} \ln \mu} \hat{f}(\mu \omega) \left[ \frac{b}{i} \frac{1}{z-z'} \left( e^{i\frac{z-z'}{b} \ln \mu} - e^{i\frac{z-z'}{b} \ln r} \right) \right] d\mu \\
        & = & \frac{1}{z-z'}\left(\widehat{R_b^+(z)f} - \widehat{R_b^+(z')f}\right).
    	\eee
    	
    	(2) Now we have $|\widehat{R^-_b(w)f}(r\omega)| \lesssim \la r \ra^{-\frac d2 + \frac{\Im w}{b}}$ for $f \in \mathscr{S}(\RR^d)$. Since $\Im z > \Im w $, the integral satisfies
    	\[ r^{-\frac d2 + \frac{\Im z }{b}} \int_r^\infty \rho^{\frac{d}{2} -1 - \frac{\Im z}{b} } \left| \widehat{R_b^-(w)f} (\rho \omega)\right| d\rho \lesssim r^{-\frac d2 + \frac{\Im z }{b}}  \la r \ra^{-\frac{\Im z - \Im w}{b}}. \]
    	So \eqref{eqRbcond+} holds for $\frac 1q \in \left(\frac{1}{2} - \frac{\Im z}{bd}, \frac{1}{2} - \frac{\Im w}{bd} \right)$. Similarly applying the Fourier representation formulas, we obtain
    	\bee 
    	&& \mathscr{F}\left(R_b^+(z)R_b^-(w) f\right) (r\omega) \\
    	& = & \frac{1}{b^2} r^{-\frac d2} e^{\frac{ir^2}{2b}-\frac{i z}{b} \ln r} \bigg[ \int_0^r \mu^{\frac d2 - 1} e^{-\frac{i\mu^2}{2b}+\frac{i w}{b} \ln \mu} \hat{f}(\mu \omega) \int_r^\infty \rho^{-1}e^{\frac{i (z-w)}{b} \ln \rho} d\rho d\mu \\
    	&& + \int_r^\infty \mu^{\frac d2 - 1} e^{-\frac{i\mu^2}{2b}+\frac{i w}{b} \ln \mu} \hat{f}(\mu \omega) \int_\mu^\infty \rho^{-1}e^{\frac{i (z-w)}{b} \ln \rho} d\rho d\mu  \bigg] \\
    	& = & -\frac{1}{b^2} r^{-\frac d2} e^{\frac{ir^2}{2b}-\frac{i z}{b} \ln r} \bigg[ \int_0^r \mu^{\frac d2 - 1} e^{-\frac{i\mu^2}{2b}+\frac{i w}{b} \ln \mu} \hat{f}(\mu \omega) d\mu \frac{-b}{i} \frac{1}{z-w}  e^{i\frac{z-w}{b} \ln r} \\
    	&& +\int_r^\infty \mu^{\frac d2 - 1} e^{-\frac{i\mu^2}{2b}+\frac{i w}{b} \ln \mu} \hat{f}(\mu \omega)   \frac{-b}{i} \frac{1}{z-w}  e^{i\frac{z-w}{b} \ln \mu} d\mu \bigg] \\
    	& = & \frac{1}{z-w}\left(\widehat{R_b^+(z)f} - \widehat{R_b^-(w)f}\right).
    	\eee
        In the second equivalence, the integrability of $\rho$ comes from $\Im z > \Im w$. 
    	
    	And for $R_b^-(w) R_b^+(z) f$, we see from \eqref{eqRb+sing} and $\Im z > \Im w$ that for $r \le 1$, 
    	\[ r^{-\frac d2+\frac{\Im w}{b}} \int_0^r \rho^{\frac d2 - 1 -\frac{\Im w}{b}} |\widehat{R_b^+(z)f}(\rho \omega)| d\rho \lesssim 
    	\left\{ \begin{array}{ll} 
    		r^{-\frac d2 + \frac{\Im z}{b}} & \frac{\Im z}{b} < \frac d2  \\
    		C_\e r^{-\e} & \frac{\Im z}{b} = \frac d2 \\
    		1 & \frac{\Im z}{b} > \frac d2  \end{array} \right. \]
    	where $\e$ can be an arbitrary positive number, and from the fast decay at infinity that the left-hand side is controlled by $r^{-\frac d2 + \frac{\Im w}{b}}$ at infinity. These decay and $\Im z > \Im w$ imply integrability \eqref{eqRbcond-}, and the last equality in \eqref{eqresident2} follows similar computation as above. 
    \end{proof}

    However, those resolvent identities does not imply differentiability since $R_b^+(z) R_b^+(z') \in \calL(L^{p'} \to L^p)$ is not convergent when $z' \to z$. We address this issue by invoking a weaker topology $\L(L^2_{\la x \ra^\delta} \to L^p )$ to exploit the oscillation in Fourier space.

    \begin{lemma}[Differentiability of extended resolvent families]\label{lemanalyticity}
    	For $ \delta \in (0, 1)$, $\Im z > -|b|\min\left\{\frac d2, 1\right\}$ and $p \in [ 2, 2^*)$ satisfying 
    	\be \delta > \frac d2- \frac dp > -\frac{\Im z}{|b|}, \label{eqanalyticitycond} \ee
        we have 
    	\bea \left\| \widehat{R_{b}^+(z)^2 f} \right\|_{L^{p'}}&\lesssim_{b,d,p, z, \delta}& \| \hat{f}\|_{H^\delta} \label{eqana1} \\
    	\left\| \mathscr{F}\{(R_b^+(z') -  R_b^+(z))R_{b}^+(z) f\} \right\|_{L^{p'}} &\lesssim_{b,d,p, z, \delta}& |z-z'| \| \hat{f}\|_{H^\delta}  \label{eqana2}
    	\eea
    	where the second inequality holds when $|z'-z| \ll 1$. In particular, 
    	\be R_b^+(z')R_b^+(z) \to R_b^+(z)^2 \quad \text{in} \,\,\L(L^2_{\la x \ra^\delta} \to L^p )\,\,\text{as}\,\,z' \to z. \ee
    	Similarly bounds and convergence hold for $R_b^-(z)$ with the corresponding $z$. 
\end{lemma}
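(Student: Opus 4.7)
The plan is to obtain an explicit kernel representation of $\widehat{R_b^+(z)^2 f}$ on the Fourier side, then bound it in $L^{p'}$ by Minkowski's integral inequality followed by a H\"older-type embedding into $H^\delta$. Since $R_b^+(z)^2 = \partial_z R_b^+(z)$, the Laplace representation $\partial_z R_b^+(z) = -\int_0^\infty t\, e^{it\Delta_b}e^{itz}\,dt$ (or, equivalently, formal differentiation in $z$ of \eqref{eqRb+f}) combined with \eqref{eqrepuhat} and the substitution $\rho = e^{bt}r$ gives
\begin{equation*}
\widehat{R_b^+(z)^2 f}(r\omega) = -\frac{1}{b^2}\, r^{-d/2-iz/b}\, e^{ir^2/(2b)} \int_r^\infty \rho^{d/2-1+iz/b}\ln(\rho/r)\, e^{-i\rho^2/(2b)}\, \hat f(\rho\omega)\, d\rho.
\end{equation*}

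For \eqref{eqana1} I would pass to absolute values, substitute $\rho = u r$, apply Minkowski's inequality in $L^{p'}(r^{d-1}dr)$ along each ray $\omega$, and use the dilation scaling $\||\hat f(u\cdot\omega)|\|_{L^{p'}(r^{d-1}dr)} = u^{-d/p'}\|\hat f(\cdot\omega)\|_{L^{p'}(r^{d-1}dr)}$ to reduce the estimate to the finiteness of
\begin{equation*}
\int_1^\infty u^{d/2 - 1 - \Im z/b - d/p'}\, \ln u\, du.
\end{equation*}
Since $d/p' = d - d/p$, this converges exactly when $d/2 - d/p > -\Im z/|b|$, and yields $\|\widehat{R_b^+(z)^2 f}\|_{L^{p'}(\RR^d)} \lesssim \|\hat f\|_{L^{p'}(\RR^d)}$ after integration in $\omega$. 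A final H\"older bound $\|\hat f\|_{L^{p'}} \le \|\la\xi\ra^\delta \hat f\|_{L^2}\,\|\la\xi\ra^{-\delta}\|_{L^{2p'/(2-p')}}$, whose weight is integrable precisely under $\delta > d/2 - d/p$, closes \eqref{eqana1}.

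For \eqref{eqana2} I would combine the resolvent identity $\widehat{R_b^+(z')R_b^+(z) f} = (z'-z)^{-1}[\widehat{R_b^+(z')f} - \widehat{R_b^+(z)f}]$ from Lemma \ref{lemresident}(i) with the explicit formula \eqref{eqRb+f} to write $\widehat{R_b^+(z')R_b^+(z)f} - \widehat{R_b^+(z)^2 f}$ in the same integral form but with $\ln(\rho/r)$ replaced by a kernel $\tilde K(z'-z, \ln(\rho/r))$, where
\begin{equation*}
\tilde K(w, L) = \frac{ib(e^{iwL/b}-1) + wL}{b^2 w}.
\end{equation*}
Splitting into the regimes $|wL/b|\le 1$ (Taylor: $\tilde K = -iwL^2/(2b^3) + O(w^2 L^3/b^4)$) and $|wL/b| > 1$ (where $|e^{iwL/b}-1| \le 2$ gives $|\tilde K|\lesssim 1/(|b||w|) + L/b^2$, and the defining inequality $|wL/b|>1$ upgrades both terms to multiples of $|w|L^2/|b|^3$) produces the uniform bound $|\tilde K(w,L)|\lesssim_b |w|L^2$ for $|w|$ small and all $L\ge 0$. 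Running the Minkowski$+$H\"older chain with $(\ln u)^2$ in place of $\ln u$ (the $u$-integral still converges under the same assumptions on $\delta, p, z$) yields \eqref{eqana2}. The claimed convergence in $\calL(L^2_{\la x\ra^\delta}\to L^p)$ follows from Hausdorff--Young $\|g\|_{L^p} \le \|\hat g\|_{L^{p'}}$ (valid for $p\ge 2$) together with the Fourier duality $\|\hat f\|_{H^\delta}\sim \|f\|_{L^2_{\la x\ra^\delta}}$, and the $R_b^-(z)$ case is identical after swapping integration limits. The main technical subtlety is the uniform bound on $\tilde K$ in the regime $|wL/b|>1$, where Taylor breaks down and one must exploit the defining inequality of the regime to upgrade the crude $L/b^2$ bound; apart from that, the conditions \eqref{eqanalyticitycond} enter the proof sharply at exactly two places ($d/2-d/p>-\Im z/|b|$ in Minkowski, $\delta>d/2-d/p$ in H\"older).
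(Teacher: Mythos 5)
There is a genuine gap, and it sits at the final H\"older step. Your chain proves $\|\widehat{R_b^+(z)^2f}\|_{L^{p'}}\lesssim\|\hat f\|_{L^{p'}}\le\|\la\xi\ra^{\delta}\hat f\|_{L^2}\,\|\la\xi\ra^{-\delta}\|_{L^{2p'/(2-p')}}$, i.e.\ a bound by the \emph{weighted} norm of $\hat f$, which equals $\|f\|_{H^\delta}$. But the lemma's right-hand side is $\|\hat f\|_{H^\delta}$, the \emph{Sobolev} norm of $\hat f$, which by Plancherel is $\sim\|\la x\ra^{\delta}f\|_{L^2}=\|f\|_{L^2_{\la x\ra^\delta}}$ — a different quantity (this is exactly the norm needed later, since the potentials supply spatial decay via \eqref{eqpotbdd2}, not Sobolev regularity). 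For $p>2$ one has $p'<2$, and no amount of $H^\delta$-regularity of $\hat f$ controls $\|\hat f\|_{L^{p'}}$: $H^\delta$ does not embed into any Lebesgue space with exponent below $2$ (e.g.\ $\hat f(\xi)=\la\xi\ra^{-d/2-\varepsilon}$ with $\varepsilon$ small lies in $H^\delta$ but not in $L^{p'}$). So the absolute-value/Minkowski route cannot yield \eqref{eqana1} as stated; after taking absolute values the kernel $r^{-d/2+\Im z/b}\int_r^\infty\rho^{d/2-1-\Im z/b}\ln(\rho/r)|\hat f(\rho\omega)|d\rho$ only decays like $r^{-d/2+}$ when measured against an $H^\delta$-type control of $\hat f$, which is never in $L^{p'}(B_1^c)$ for $p'\le2$. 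The missing ingredient is precisely the oscillation $e^{-i\rho^2/(2b)}$: the paper's proof decomposes $\hat f$ into $g_1+g_2+g_3$ \eqref{eqanadecomp1}--\eqref{eqanadecomp4}, integrates by parts over the quadratic phase on the mollified piece, and uses the fractional non-stationary-phase estimates of Lemma \ref{lemfrac} to convert $H^\delta$-regularity of $\hat f$ into an extra factor $r^{-\delta}$ of decay, which is exactly what makes $r^{-d/2-\delta}\in L^{p'}(B_1^c)$ under $\delta>\frac d2-\frac dp$. At the end you also invoke $\|\hat f\|_{H^\delta}\sim\|f\|_{L^2_{\la x\ra^\delta}}$ to state the convergence in $\L(L^2_{\la x\ra^\delta}\to L^p)$, but your estimates are in terms of $\|\la\xi\ra^\delta\hat f\|_{L^2}$, so the norms do not match; the argument is internally inconsistent at this point. (Your scheme is fine only for $p=2$, where it reduces to the trivial $L^2$ bound.)

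A secondary problem concerns \eqref{eqana2}: the claimed uniform bound $|\tilde K(w,L)|\lesssim_b|w|L^2$ for all $L\ge0$ is false when $w=z'-z$ is complex with $\Im w$ of the unfavorable sign, since then $|e^{iwL/b}|=e^{-\Im(w)L/b}$ grows exponentially in $L$ and dominates $|w|L^2$ for $L\gg|w|^{-1}$; your estimate $|e^{iwL/b}-1|\le2$ is only valid for real $w$ (or favorable sign). The paper's $\Theta$-estimates keep the correct large-$L$ behavior $\lesssim Le^{|w|L}$ and recover the factor $|w|$ from the exponential smallness $e^{-c/|w|}\lesssim|w|$ of the tail region $\rho\ge re^{c/|w|}$; this part is repairable along those lines, but the first issue above is structural and the proposal does not prove the lemma.
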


\begin{proof}
	Again, we only consider the case $b > 0$. Note that $L^2_{\la x \ra^\delta}$ is embedded in $L^{p'}$ by H\"older's inequality and $R_b^\pm(z) \in \L(L^{p'} \to L^p)$ with $z$ in the corresponding range. It suffices to prove \eqref{eqana1} and \eqref{eqana2}. \\
	
	\underline{1. Proof of \eqref{eqana1}.}  Following the computation in the previous lemma, we have for $f \in \mathscr{S}(\RR^d)$,
	\bee 
	&& \mathscr{F}\left(R_b^+(z)R_b^+(z) f\right) (r\omega) \\
	& = & -\frac{1}{b^2} r^{-\frac d2} e^{\frac{ir^2}{2b}-\frac{i z}{b} \ln r} \int_r^\infty \mu^{\frac d2 - 1} e^{-\frac{i\mu^2}{2b}+\frac{i z}{b} \ln \mu} \hat{f}(\mu \omega) \int_r^\mu \rho^{-1} d\rho d\mu \\
	&=& -\frac{1}{b^2} r^{-\frac d2} e^{\frac{ir^2}{2b}-\frac{i z}{b} \ln r} \int_r^\infty \mu^{\frac d2 - 1} e^{-\frac{i\mu^2}{2b}+\frac{i z}{b} \ln \mu} \ln \left(\frac \mu r\right) \hat{f}(\mu \omega) d\mu.
	\eee
	
	Let $\psi \in C^\infty_{c, rad}(\RR^d)$, non-negative and $\int_{\RR^d} \psi(x) dx = 1$. Define its rescaling $\psi_\l (r) := \l^{-d}\psi(\l^{-1} r)$. Also recall the smooth cutoff $\chi$ from \eqref{eqdefchiR}. Now decompose $\hat{f}(\xi)$ by
	\be \hat{f} = g_1 + g_2 + g_3 \label{eqanadecomp1}
	\ee
	with 
	\bea g_1(\xi) &=& \hat{f}(\xi) \chi(\xi), \label{eqanadecomp2}\\
	g_2(\xi) &=& (1-\chi(\xi))  (\hat{f}* \psi_{|\xi|^{-1}})(\xi) , \label{eqanadecomp3}\\
	g_3(\xi) &=& (1-\chi(\xi)) \left[ \hat{f}(\xi)  - (\hat{f}* \psi_{|\xi|^{-1}})(\xi) \right]. \label{eqanadecomp4}
	\eea
	Then
	\bee  &&\mathscr{F}\left(R_b^+(z)R_b^+(z) f\right) (r\omega) \\
	 &=& -\frac {1}{b^2} r^{-\frac d2}e^{i\frac{r^2}{2b} - i\frac{z}{b}\ln r}  \int_r^\infty e^{-i \frac{\rho^2}{2b} + i\frac zb \ln \rho}\rho^{\frac d2 - 1}\ln \left(\frac \rho r\right)  \left(g_1(\rho \omega) + g_2(\rho \omega) + g_3(\rho \omega) \right) d\rho \\
	&=:& G_1(r\omega) + G_2(r\omega) + G_3(r\omega).
	\eee
    We will estimate each term respectively. Before that, we specify a small $\e > 0$ such that
    \be \frac d2 - \frac dp >- \frac{\Im z}{b} + \e \label{eqepsilonsmall}\ee
    and will bound the logarithm term by $\ln \left(\frac \rho r\right) \lesssim_{\e} \left(\frac \rho r \right)^\e$. \\
	
	\emph{(1) Term $G_1$}. The support of $\chi$ indicates that $\supp G_1 \subset B_{\frac 32}$. For $r \le \frac 32$, by estimating $\ln t \lesssim_\e t^\e$ for $t = \frac{\rho}{r} \ge 1$ and applying Cauchy-Schwarz inequality, 
	\bee
	|G_1(r\omega)|
	&\lesssim_{\e}&  b^{-2} r^{-\frac d2 + \frac{\Im z}{b}}  \int_r^\frac 32 \rho^{\frac d2 - 1 - \frac{\Im z}{b}} \left(\frac \rho r\right)^{\e} |\hat{f}(\rho \omega)|d\rho \\
	&\lesssim_{\e}& b^{-2} r^{-\frac d2 - \max\left\{-\frac{\Im z}{b} + \e,0 \right\}} |\ln r| \left(\int_0^\infty |\hat{f}(\rho \omega)|^2 \rho^{d-1} d\rho \right)^{\frac 12}.
	\eee
	The choice of $\e$ ensures $r^{-\frac d2 - \max\left\{-\frac{\Im z}{b} + \e,0 \right\}}|\ln r|  \in L^{p'}(B_{\frac 32})$. Also, $p' \le 2$ enables us to bound $L^{p'}(\mathbb{S}^{d-1})$ by $L^2(\mathbb{S}^{d-1})$. Consequently, we obtain
	$\| G_1\|_{L^{p'}} \lesssim_{b, d, \delta, p, z} \| \hat{f}\|_{L^2}$. \\
	
	    \emph{(2) Term $G_2$}. We integrate by parts over the quadratic phase to gain decay of $\rho$
	    \bee &&\int_r^\infty e^{-i \frac{\rho^2}{2b} + i\frac zb \ln \rho}\rho^{\frac d2 - 1} \ln \left(\frac \rho r\right) g_2(\rho \omega)  d\rho\\
	    &=& \int_r^\infty \rho^{\frac d2 - 1} \left[\frac{-b}{i\rho}\partial_\rho e^{-\frac{i\rho^2}{2b}} \right]e^{\frac{i z}{b} \ln \rho}\ln \left(\frac \rho r\right) g_2(\rho \omega) d\rho \\
	    &=&  -ib \int_r^\infty  e^{-\frac{i\rho^2}{2b}} \rho^{\frac d2 -2}e^{\frac{i z}{b} \ln \rho}
	    \Bigg\{ \ln \left(\frac \rho r\right)  \partial_\rho g_2(\rho\omega)  
	       + \rho^{-1}\left[ \left(i\frac zb + \frac d2 - 2\right)   \ln \left(\frac \rho r\right) + 1 \right] g_2(\rho \omega)  \Bigg\} d\rho 
	    \eee
	where in the second equivalence we exploit $\ln (\rho/r)\big|_{\rho = r} = 0$  to eliminate the boundary term.
	Thus with the $\epsilon$ satisfying \eqref{eqepsilonsmall},
	we have
	\bee &&|G_2(r\omega)| \lesssim_{z, b, d, \e} r^{-\frac d2 + \frac{\Im z}{b}} \int_r^\infty \rho^{\frac d2 - 2 - \frac{\Im z}{b}}\left(\frac \rho r\right)^{\e} \left( \rho^{-1} |g_2(\rho\omega)| + |\partial_\rho g_2(\rho \omega)| \right) d\rho \\
	&\lesssim& r^{-\frac d2 + \frac{\Im z}{b}-\e} \int_{\max\{r, 1\}}^\infty \rho^{\frac d2 - 2 - \frac{\Im z}{b}+\e}\left( |(\hat{f}* \psi_{|\xi|^{-1}})(\rho \omega)| + |\partial_\rho (\hat{f}* \psi_{|\xi|^{-1}})(\rho\omega)| \right) d\rho
	\eee
	where we bound $\partial_r \chi$ by a constant. 
	Now we estimate the integral term with Cauchy-Schwarz inequality as
	\bee
	&&\int_{\max\{r, 1\}}^\infty \rho^{\frac d2 - 2 - \frac{\Im z}{b}+\e} \left( |(\hat{f}* \psi_{|\xi|^{-1}})(\rho \omega)| + |\partial_\rho (\hat{f}* \psi_{|\xi|^{-1}})(\rho\omega)| \right) d\rho \\
	& \lesssim_{z, b, \delta, \e}& \la r \ra^{-1-\frac{\Im z}{b}+\e} \left(\int_1^\infty |(\hat{f}* \psi_{|\xi|^{-1}})(\rho \omega)|^2 \rho^{d-1} d\rho \right)^{\frac 12} \\
	&+&
	 \la r \ra^{-\frac{\Im z}{b} - \delta+\e} \left(\int_1^\infty |\pa_\rho (\hat{f}* \psi_{|\xi|^{-1}})(\rho \omega)|^2  \rho^{d-3+2\delta} d\rho \right)^{\frac 12}
	\eee
	Noticing that the radial and angular parts of $G_2$ are decoupled, we have
	\bee
	&&\| G_2 \|_{L^{p'}} \\
	&\lesssim_{z, b, \delta, d, \e} &
	 \left\|  r^{-\frac d2 + \frac{\Im z}{b}-\e} \la r \ra^{-1-\frac{\Im z}{b}+\e} \right\|_{L^{p'}(r^{d-1}, [0,\infty])}
	\left\| \left( \int_1^\infty |(\hat{f}* \psi_{|\xi|^{-1}})(\rho \omega)|^2 \rho^{d-1} d\rho \right)^{\frac 12}  \right\|_{L^{p'}(\mathbb{S}^{d-1})} \\
	& +& \left\|  r^{-\frac d2 + \frac{\Im z}{b}-\e} \la r \ra^{-\delta-\frac{\Im z}{b}+\e} \right\|_{L^{p'}(r^{d-1}, [0,\infty])}
	\left\| \left(\int_1^\infty |\pa_\rho (\hat{f}* \psi_{|\xi|^{-1}})(\rho \omega)|^2  \rho^{d-3+2\delta} d\rho \right)^{\frac 12}  \right\|_{L^{p'}(\mathbb{S}^{d-1})} \\
	&\lesssim& 
	\left( \| r^{-\frac d2 + \frac{\Im z}{b}-\e} \|_{L^{p'}(B_1)} + 
	\| r^{-\frac d2 -\delta} \|_{L^{p'}(B_1^c)}\right) \\
	&&\cdot \left( \| \hat{f}* \psi_{|\xi|^{-1}} \|_{L^{2}(B_1^c)} + \| \partial_\rho (\hat{f}* \psi_{|\xi|^{-1}})|\cdot|^{-1+\delta} \|_{L^{2}(B_1^c)} \right).
	\eee
	From the parameter condition \eqref{eqanalyticitycond}, choice of $\e$ and the estimates \eqref{eqfrac1},\eqref{eqfrac4} in Lemma \ref{lemfrac}, we get $\| G_2\|_{L^{p'}} \lesssim_{z, b, d, \delta} \|\hat{f} \|_{H^{\delta}}$.\\
	
	\emph{(3) Term $G_3$}. We estimate the integral part directly
	\bee
	&&\int_{\max\{r, 1\}}^\infty \rho^{\frac d2 - 1 - \frac{\Im z}{b}} \left(\frac \rho r\right)^\e |f(\rho \omega) - (\hat{f}* \psi_{|\xi|^{-1}})(\rho \omega)| d\rho \\
	&\lesssim_{z, b, \delta, \e}& r^{-\delta-\frac{\Im z}{b}} \left(\int_1^\infty |\hat{f}(\rho\omega)-(\hat{f}* \psi_{|\xi|^{-1}})(\rho \omega)|^2 \rho^{d-1+2\delta} d\rho \right)^{\frac 12} 
	\eee
	Similarly, using parameter conditions and \eqref{eqfrac3} from Lemma \ref{lemfrac}, we see $\| G_3\|_{L^{p'}} \lesssim_{z, b, d, \delta} \|\hat{f} \|_{H^{\delta}}$.\\
	
	These three parts conclude the proof of \eqref{eqana1}.\\

	 \underline{2. Proof of \eqref{eqana2}.} Define 
	 $$\Theta(t, w) := \left\{ \begin{array}{ll} \frac{e^{tw} -1}{w} & w \in \CC - \{ 0\}, \\ t & w =0,  \end{array}\right. $$
	 with $t \ge 0$ and let 
	 $$a := \frac{i (z' - z)}{b},\quad |a| \ll 1.$$ 
	 Recalling the computation in Lemma \ref{lemresident}, we can uniformly write 
	 \bee &&\mathscr{F}\left(R_b^+(z') R_b^+(z) f\right) (r\omega) 
	 = -\frac{1}{b^2} r^{-\frac d2} e^{\frac{ir^2}{2b}-\frac{i z}{b} \ln r} \int_r^\infty \mu^{\frac d2 - 1} e^{-\frac{i\mu^2}{2b}+\frac{i z}{b} \ln \mu} \Theta\left(\ln \left(\frac \rho r\right), a  \right)  \hat{f}(\mu \omega) d\mu. \eee
	 $\Theta$ satisfies simple estimates
	 \bee \left| \Theta(t, w) - \Theta(t, 0) \right| \lesssim \left\{ \begin{array}{ll} |w|t^2 & t \le 2|w|^{-1}  , \\ t e^{|w|t} & t \ge \frac{1}{2} |w|^{-1},  \end{array}\right. \\
	 \left| \partial_t( \Theta(t, w) - \Theta(t, 0)) \right| \lesssim \left\{ \begin{array}{ll} |w|t & t \le 2|w|^{-1}  , \\ e^{|w|t} & t \ge \frac{1}{2} |w|^{-1}.  \end{array}\right.
	 \eee
	 Like the first part, we decompose $\hat{f}$ into three parts as \eqref{eqanadecomp1}-\eqref{eqanadecomp4} and estimate each contribution $G_1$, $G_2$ and $G_3$ as
	 \bee  &&\mathscr{F}\left(R_b^+(z') R_b^+(z) f\right) (r\omega) -  \mathscr{F}\left(R_b^+(z)^2 f\right) (r\omega) \\
	 &=& -\frac{1}{b^2} r^{-\frac d2} e^{\frac{ir^2}{2b}-\frac{i z}{b} \ln r} \int_r^\infty \rho^{\frac d2 - 1} e^{-\frac{i\rho^2}{2b}+\frac{i z}{b} \ln \rho}\left[ \Theta\left(\ln \left(\frac \rho r\right), w  \right) -\Theta\left(\ln \left(\frac \rho r\right), 0  \right)  \right] \sum_{k=1}^3 g_k(\rho \omega) d\rho \\
	 &=:& \sum_{k=1}^3 G_k(r\omega) .
	 \eee
	 We control each part similarly to the counterpart for \eqref{eqana1} but with an $|a|$ factor. We only sketch the proof and focus on the additional difficulty from $\Theta$. Note that $\ln(\rho/r) \le 2|a|^{-1}$ and $\ln (\rho/r)\ge \frac 12 |a|^{-1}$ are equivalent to $\rho \le re^{2|a|^{-1}}$ and  $\rho \ge re^{\frac 12|a|^{-1}}$ respectively. \\

	 \emph{(1) Term $G_1$}. Again, $G_1$ is supported in $B_{\frac 32}$. So we assume $r \le \frac 32$ here. Take $\e_1 > 0$ such that $\frac d2 - \frac dp > - \frac{\Im z}{b} + \e_1$ as before and $0 < \e \ll \e_1$ specified later. We assume $|a| \le \e$.
	 
	 If $re^{|a|^{-1}} > 1$, we know $\rho \le re^{2|a|^{-1}}$ for all $\rho \in \left[r, \frac 32\right]$, so
	 \bee
	 |G_1(r\omega)| &\lesssim_{\e}& b^{-2} r^{-\frac d2 + \frac{\Im z}{b}}  \int_r^\frac 32 \rho^{\frac d2 - 1 - \frac{\Im z}{b}} |a|\left(\frac \rho r\right)^{2\e} |\hat{f}(\rho \omega)|d\rho\\
     &\lesssim_{b, z, \e}& |a| b^{-2} r^{-\frac d2 - \max\left\{ -\frac{\Im z}{b} + 2\e, 0\right\}} |\ln r|\left(\int_r^\frac 32 |\hat{f}(\rho \omega)|^2 \rho^{d-1} d\rho \right)^{\frac 12}.
	 \eee
	 If $re^{|a|^{-1}} \le 1$, using that $e^{-|a|^{-1}} \lesssim |a|$ when $|a| \ll 1$, 
	 \bee &&|G_1(r)| \\
	 &\lesssim_\e& \left[ \left(\int_r^{e^{|a|^{-1} }r}  \rho^{-1-2\frac{\Im z }{b}} |a|^2  \left(\frac \rho r\right)^{4\epsilon}d\rho \right) + \left( \int_{e^{|a|^{-1}} r}^\frac 32 \rho^{-1-2\frac{\Im z }{b}} \left(\frac \rho r\right)^{2\epsilon+2|a|}  d\rho\right)\right]^{\frac 12} 
	 \\
	 &&  \cdot  b^{-2} r^{-\frac d2 +\frac{\Im z}{b}}  \left( \int_0^\infty |\hat{f}(\rho\omega)|^2 \rho^{d-1}d\rho \right)^{\frac 12} \\
	 &\lesssim& \left[(|a|^2 + e^{-2|a|^{-1}})  \int_{r}^\frac 32 \rho^{-1-2\frac{\Im z}{b}} \left(\frac \rho r\right)^{4\epsilon+2|a|} d\rho \right]^{\frac 12}
	 \\
	 && \cdot  b^{-2} r^{-\frac d2 +\frac{\Im z }{b}}   \left( \int_0^\infty |\hat{f}(\rho\omega)|^2 \rho^{d-1}d\rho \right)^{\frac 12} \\
	 &\lesssim_{\e, b, z}& |a|r^{-\frac d2 - \max\left\{ -\frac{\Im z}{b} + 2\e+|a|, 0\right\}} |\ln r| \left( \int_0^\infty |\hat{f}(\rho\omega)|^2 \rho^{d-1} \right)^{\frac 12}.
	 \eee
	 So letting $3\e \le \e_1$, we can finish the estimate as before with an $|a|$ factor. \\

      \emph{(2) Terms $G_2$ and $G_3$}. For $G_2$, we integrate by parts over the quadratic phase to gain one order decay of $\rho$ with the cost of creating $\partial_\rho g_2(\rho \omega)$ and having weights $\left[ \Theta\left(\ln \left(\frac \rho r\right), a  \right) -\Theta\left(\ln \left(\frac \rho r\right), 0  \right)  \right]$ and $\partial_t \left[ \Theta\left(\ln \left(\frac \rho r\right), a  \right) -\Theta\left(\ln \left(\frac \rho r\right), 0  \right)  \right]$. These new weights (compared with the estimate for $G_2$ of \eqref{eqana1}) can be bounded by $C_{\e}|a|(\rho  / r)^{5\e + |a|}$ as we do in $G_1$, so it will not affect the estimate if we take $\e, |a|$ small enough. Therefore, the estimate of $G_2$ follows in the same way with a factor $|a|$ and so is $G_3$.
     
	 \end{proof}

\subsection{Improvement of decay and regularity}

Recall that $(-\Delta - z)^{-1}$ with $\Im z \neq 0$ as a convolution with exponential decaying function can retain polynomial spatial decay. A similar effect holds for our resolvent of $\Delta_b$, which will be used to improve the decay of generalized eigenfunctions. 

\begin{lemma}\label{lemimpreg}
	For $d \ge 1$, $\delta, \delta' \in (0, 1)$ and $z \in \CC$ satisfying 
	\be\frac{\Im z}{|b|} > \delta > 3 \delta' \label{eqimpregcond} \ee then, we have
	\be \left\| \la x \ra^{\delta'} R_{b}^+(z) f \right\|_{L^2}\lesssim_{b,d, z, \delta, \delta'} \|\la x \ra^\delta f\|_{L^2} \ee
	And the same bound holds for $R_b^-(z)$ with the corresponding $z$.
\end{lemma}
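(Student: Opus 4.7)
My plan is to work entirely in the Fourier domain, reducing the weighted $L^2$ estimate to a Sobolev estimate on $\hat u = \widehat{R_b^+(z) f}$. By Plancherel, $\|\la x\ra^{\delta'} g\|_{L^2_x} \sim \|\hat g\|_{H^{\delta'}_\xi}$ for any $\delta' \in [0,1]$, so it suffices to prove $\|\hat u\|_{H^{\delta'}_\xi} \lesssim \|\hat f\|_{H^\delta_\xi}$. The Fourier representation \eqref{eqRb+f} from Lemma~\ref{lemRbbdd} gives (focusing on $b>0$; the case $b<0$ is analogous)
\[
\hat u(r\omega) = \frac{i}{b}\, r^{-\frac{d}{2}+\frac{\Im z}{b}}\, e^{i\phi(r)} \int_r^\infty \rho^{\frac{d}{2}-1-\frac{\Im z}{b}}\, e^{-i\phi(\rho)}\, \hat f(\rho\omega)\, d\rho,
\]
with $\phi(r) = r^2/(2b) - (\Re z/b)\ln r$. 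The $L^2$ part $\|\hat u\|_{L^2_\xi}$ is controlled by a Hardy-type argument (Cauchy--Schwarz in $\rho$ with a suitable power weight, using $\Im z/|b| > \delta > 0$ for integrability near infinity).

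For the fractional part $\||D_\xi|^{\delta'}\hat u\|_{L^2_\xi}$ I would adapt the decomposition strategy from the proof of Lemma~\ref{lemanalyticity}. Splitting $\hat f = g_1 + g_2 + g_3$ via the smooth cutoff $\varphi$ and the high-frequency mollification at the $\xi$-dependent scale $|\xi|^{-1}$ as in \eqref{eqanadecomp1}--\eqref{eqanadecomp4} yields $\hat u = G_1 + G_2 + G_3$. The low-frequency contribution $G_1$ is bounded directly by Cauchy--Schwarz with the weight $\rho^{d/2-1-\Im z/b}$; for the smoothed high-frequency contribution $G_2$ I would integrate by parts once in $\rho$ against the quadratic phase $e^{-i\rho^2/(2b)}$ using $\partial_\rho e^{-i\rho^2/(2b)} = -i(\rho/b) e^{-i\rho^2/(2b)}$, gaining one factor of $\rho^{-1}$ to pair with the smoothness produced by the mollification; the high-frequency error $G_3$ is controlled directly by the $\delta$-Sobolev regularity of $\hat f$ (as encoded in the fractional difference $\hat f - \hat f\ast\psi_{|\xi|^{-1}}$).

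The margin $\delta > 3\delta'$ in the hypothesis is used to absorb three distinct sources of loss: one factor of $\delta'$ for the target derivative $|D_\xi|^{\delta'}$ on $\hat u$, one factor for the loss incurred by the small-$r$ weight $r^{-d/2 + \Im z/b}$ when $\Im z/b$ barely exceeds $\delta$, and one factor for the mollification error appearing in the $G_3$ term. The principal obstacle is that the multiplier $e^{\pm i\rho^2/(2b)}$ inside the integrand is oscillatory and not uniformly $C^{\delta'}$ in $\xi$, so a naive Kato--Ponce/Leibniz product rule for $|D_\xi|^{\delta'}$ fails; this difficulty is resolved precisely by matching the mollification scale $|\xi|^{-1}$ to the oscillation scale of the quadratic phase, which is exactly the structural reason the scheme of Lemma~\ref{lemanalyticity} transfers to the present weighted setting. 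The boundary terms produced by the integration by parts in $\rho$ vanish at $\rho = \infty$ thanks to $\Im z/b > \delta > 3\delta' > 0$ and at $\rho = r$ upon pairing with the leading $r^{-d/2+\Im z/b}$ prefactor, completing the bound.
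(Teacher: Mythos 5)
Your opening reduction is the same as the paper's: by Plancherel it suffices to show $\|\widehat{R_b^+(z)f}\|_{H^{\delta'}}\lesssim \|\hat f\|_{H^{\delta}}$, and the $L^2$ part is indeed free from Lemma \ref{lemRbbdd} since $\Im z>0$. The gap is in the fractional part. The decomposition $\hat f=g_1+g_2+g_3$ with mollification at scale $|\xi|^{-1}$ and integration by parts against the quadratic phase, as in Lemma \ref{lemanalyticity}, produces pointwise-in-$\xi$ (hence Lebesgue-norm) bounds on the \emph{output} $\hat u$; it says nothing about the $\xi$-regularity of $\hat u$, which is what $\||D_\xi|^{\delta'}\hat u\|_{L^2}$ requires. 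The operator $|D_\xi|^{\delta'}$ does not pass through the structure of \eqref{eqRb+f}: it hits the oscillatory prefactor $e^{ir^2/(2b)}$ (whose local H\"older-$\delta'$ constant grows like $|\xi|^{\delta'}$, so a Leibniz/Kato--Ponce splitting loses a factor that nothing in your scheme recovers in the intermediate range of $|\xi|$), the $|\xi|$-dependent lower limit of the $\rho$-integral, and the angular argument $\omega=\xi/|\xi|$ inside $\hat f(\rho\omega)$. Saying that the difficulty ``is resolved by matching the mollification scale $|\xi|^{-1}$ to the oscillation scale'' conflates two different uses of mollification: in Lemma \ref{lemanalyticity} it serves to gain $\rho$-decay after integration by parts, not to control differences of $\hat u$ in $\xi$.

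What the paper actually does, and what is missing from your sketch, is to estimate the finite difference directly: using the characterization \eqref{eqHdelta}, it reduces to the key bound \eqref{eqimprove3}, $\int|\hat u(\xi-\eta)-\hat u(\xi)|^2d\xi\lesssim|\eta|^{2\delta/3}|\ln|\eta||\,\|\hat f\|_{H^\delta}^2$, proved by splitting $\xi$ at the scales $|\eta|^{\pm 1/3}$. In the outer regions one gains from the small-$r$ weight and from integration by parts (there the $|\xi|^{-1}$-scale mollification of Lemma \ref{lemanalyticity} reappears), but in the crucial intermediate region $20|\eta|^{1/3}\le|\xi|\le(20|\eta|^{1/3})^{-1}$ the argument is genuinely bilinear in $(\xi,\eta)$: the difference is decomposed as in \eqref{eqimp31}--\eqref{eqimp33}, the prefactor differences are handled via $|A'(r)|\lesssim (r+r^{-1})|A(r)|$ (gaining $|\eta|$ but paying $|\eta|^{-1/3}$, hence the cube root), the input is mollified at scale $|\eta|^{1/3}$ (not $|\xi|^{-1}$), and the shifted argument $(\rho+\tilde r-r)\tilde\omega$ is compared to $\rho\omega$ through the near-identity change of variables \eqref{eqimprovevar} with Jacobian control. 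None of these steps, nor substitutes for them, appear in your proposal, and your accounting of the hypothesis $\delta>3\delta'$ (one $\delta'$ each for the target derivative, the small-$r$ weight, and the mollification error) does not correspond to a computation: in the paper the factor $3$ arises because the $|\eta|^{1/3}$ scale converts each unit of $\delta$-regularity of $\hat f$ into only $|\eta|^{\delta/3}$ of smallness, and $2\delta/3>2\delta'$ is exactly what makes the $\eta$-integral against $|\eta|^{-d-2\delta'}$ converge. As written, the proposal does not close.
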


\begin{proof}We will prove the equivalent formulation on the Fourier side 
	\bea \left\| \widehat{R_{b}^+(z) f} \right\|_{H^{\delta'}}\lesssim_{b,d, z, \delta, \delta'} \|\hat{f}\|_{H^\delta}. \label{eqimpreg1}
	\eea
	Also since $\Im z > 0$, the $L^2$ bound from Lemma \ref{lemRbbdd} reduces the task to bound the $\dot{H}^{\delta'}$-norm of left-hand side.
	Recall the equivalent expression of $\dot{H}^\delta$-norm (see for example \cite[Proposition 1.37]{bahouri2011fourier})
	\be \| f \|_{\dot{H}^\delta}^2 = C_{\delta, d} \int_{\RR^d}  \int_{\RR^d} \frac{\left|f(x - y) - f(x)\right|^2}{|y|^{d+2\delta}} dy dx.\label{eqHdelta} \ee
	We have 
	\begin{align}
		\left\| \widehat{R_{b}^+(z) f} \right\|_{\dot H^{\delta'}}^2 &\lesssim_{d, \delta'}  \int_{|\eta| \ge \e_0} |\eta|^{-(d+2\delta')} \int_{\RR^d} 2\left( |\widehat{R_{b}^+(z) f}(\xi -\eta)|^2 +  |\widehat{R_{b}^+(z) f}(\xi)|^2\right) d\xi d\eta \label{eqimprove1}\\
		&+\int_{|\eta|\le \e_0} \int_{\RR^d} \frac{|\widehat{R_{b}^+(z) f}(\xi-\eta) - \widehat{R_{b}^+(z) f}(\xi)|^2 }{|\eta|^{d+2\delta'}} d\xi d\eta \label{eqimprove2}
	\end{align}
	The $\e_0 = \e_0(d)$ will be taken small enough. Notice that the $L^2$ bound implies $ |\eqref{eqimprove1}| \lesssim_{\delta', d} \| \hat{f}\|_{L^2}^2$. For \eqref{eqimprove2}, we will show
	\be \int_{\RR^d} \left|\widehat{R_{b}^+(z) f}(\xi-\eta) - \widehat{R_{b}^+(z) f}(\xi)\right|^2 d\xi \lesssim_{d, b, z, \delta} |\eta|^{\frac{2\delta}{3}} |\ln |\eta|| \| \hat{f}\|_{H^\delta}^2,   \label{eqimprove3}
	\ee
	for fixed $\eta \le \e_0$, and it concludes the proof of \eqref{eqimpreg1} because $\delta > 3 \delta'$.
	We will decompose the domain of $\xi$ to show \eqref{eqimprove3}. We first let $100\e_0^{\frac 13} < 1$ such that $50 \e_0^\frac 13 < 1 < (50\e_0^{\frac 13})^{-1}$.  \\
	
	\underline{1. For $|\xi| \le 20|\eta|^{\frac 13}$.} Since $\| \hat{f} \|_{L^{p}} \lesssim \| \hat{f}\|_{H^\delta}$ with $\frac 12 - \frac 1p = \frac \delta d$, we have
	\bee  \int_r^\infty \rho^{\frac d2 - \frac{\Im z}{b} - 1} |\hat{f}(\rho \omega)| d\rho 
	\lesssim_{z, b, d, p,\delta} r^{\delta - \frac{\Im z}{b}} \left( \int_r^\infty |\hat{f}(\rho \omega)|^p \rho^{d-1} \right)^{\frac 1p}
	\eee
	where we used $ \frac{Im z}{b} > \delta$. Then 
	\bee
	&& \int_{B_{20|\eta|^{1/3}}} |\widehat{R_{b}^+(z) f}(\xi-\eta) - \widehat{R_{b}^+(z) f}(\xi)|^2  d\xi
	\le  \int_{B_{30|\eta|^{1/3}}} |\widehat{R_{b}^+(z) f}(\xi)|^2 d\xi \\
	&\lesssim_{z, b, d, p,\delta}& \int_0^{30|\eta|^{1/3}}  r^{2\left(-\frac d2 +\frac{\Im z}{b} \right) + 2\left(\delta - \frac{\Im z}{b}\right)} r^{d-1} dr \left( \int_{\mathbb{S}^{d-1}}  \left( \int_r^\infty |\hat{f}(\rho \omega)|^p \rho^{d-1} \right)^{\frac 2p} d\omega \right) \\
	& \lesssim_{d, p, \delta} & \eta^{\frac{2\delta}{3}} \| \hat{f} \|_{H^\delta}^2
	\eee 
	as required in \eqref{eqimprove3}. \\
	
	\underline{2. For $|\xi| \ge (20|\eta|^{\frac 13})^{-1}$.}
	Here we mollify $\hat{f}$ to use integration by parts. Let $\hat{g}_1(\xi) = \hat{f} * \psi_{|\xi|^{-1}} (\xi)$, and $\hat{g}_2 (\xi) = \hat{f}(\xi) - \hat{g}_1(\xi)$. Then like the estimate in Lemma \ref{lemanalyticity}, we have
	\bee
	&&\int_r^\infty e^{-i \frac{\rho^2}{2b} + i\frac zb \ln \rho}\rho^{\frac d2 - 1} \hat{g}_1(\rho \omega)  d\rho
	=
	\int_r^\infty \left( \frac{-b}{i\rho}\partial_\rho e^{-i \frac{\rho^2}{2b}}\right) e^{i\frac zb \ln \rho}\rho^{\frac d2 - 1} \hat{g}_1(\rho \omega)  d\rho  \\
	&=& -ibr^{\frac d2 -2}e^{-i \frac{r^2}{2b} + i\frac zb \ln r } \hat{g}_1(r \omega) -ib \int_r^\infty e^{-i \frac{\rho^2}{2b} + i\frac zb \ln \rho}\rho^{\frac d2 - 2} \left( \rho^{-1}\left( \frac d2 - 2 + i\frac zb \right) \hat{g}_1(\rho \omega) + \pa_\rho \hat{g}_1(\rho \omega) \right)  d\rho
	\eee
	Thus 
	\bee |\widehat{R^+_b(z)g_1}(r\omega)| &\lesssim_{z, b, d}& r^{-2}|\hat{g}_1(r\omega)| + r^{-\frac d2 + \frac{\Im z}{b}} \int_r^\infty \rho^{\frac d2 - 2 - \frac{\Im z}{b}} \left( \rho^{-1}|\hat{g}_1(\rho\omega)| + |\partial_\rho \hat{g}_1(\rho \omega)| \right) d\rho 
	\eee
	So applying \eqref{eqfrac1} and \eqref{eqfrac4} from Lemma \ref{lemfrac} and using $\frac{\Im z}{b} > \delta$,
	\bee
	\int_{B_{ 10^{-1}|\eta|^{-1/3} }^c} |\widehat{R_{b}^+(z) g_1}(\xi)|^2 d\xi 
	&\lesssim_{z, b, d, \delta}& |\eta|^{\frac 23} \| \hat{g}_1 \|_{L^2(B_1^c)}^2  + |\eta|^{\frac{2\delta}{3}} \| \pa_r \hat{g}_1 |\cdot|^{-1+\delta} \|_{L^2(B_1^c)}^2 \\
	&\lesssim_{b, \delta} & |\eta|^{\frac{2\delta}{3}} \| \hat{f}\|_{H^\delta}^2.
	\eee
	The $\hat{g}_2$ part has the same bound as above due to direct estimate via \eqref{eqfrac3}.\\
	
	\underline{3. For $ |\xi| \in (20|\eta|^{\frac 13}, (20|\eta|^{\frac 13})^{-1})$.}
	We begin with decomposing $\hat{f}$ into $\hat{f}_- := \hat{f} \chi_{(10|\eta|^{\frac 13})^{-1}}$ and $\hat{f}_+ := \hat{f} - \hat{f}_-$. Then since $\hat{f}_+$ supported outside $B_{(20|\eta|^{\frac 13})^{-1}}$, we can estimate the integral
	in $\widehat{R_b^+(z)f_+}$ for $r = |\xi| < (20|\eta|^{\frac 13})^{-1}$ like the previous case,
	\bee 
	&&\left| \int_r^\infty e^{-i \frac{\rho^2}{2b} + i\frac zb \ln \rho}\rho^{\frac d2 - 1} \hat{f}_+(\rho \omega)  d\rho  \right|  \\
	&\lesssim_{z, b, d, \delta} & |\eta|^{\frac 23 +\frac{\Im z}{3b}} \left( \int_1^\infty \left| \hat{f}_+ * \psi_{\rho^{-1}}(\rho \omega)\right|^2 \rho^{d-1}d\rho \right)^{\frac 12} \\
	&+&  |\eta|^{\frac{\delta}{3}+\frac{\Im z}{3b}} \left( \int_1^\infty \left[\left| \rho^{-1} \pa_\rho \left( \hat{f}_+ * \psi_{\rho^{-1}}\right)(\rho \omega) \right|^2 + \left| \left(\hat{f}_+ - \hat{f}_+ * \psi_{\rho^{-1}}\right)(\rho \omega) \right|^2\right]  \rho^{d-1+2\delta}d\rho \right)^{\frac 12} 
	\eee
	So similarly by Lemma \ref{lemfrac},
	\bee &&\int_{B_{ 10^{-1}|\eta|^{-1/3} } - B_{ 10|\eta|^{1/3}}} |\widehat{R_{b}^+(z) f_+}(\xi)|^2 d\xi  \\
	&\lesssim_{z, b,d,\delta}& \int_{10|\eta|^{1/3}}^{10^{-1}|\eta|^{-1/3}} r^{-d+\frac{2\Im z}{b}} r^{d-1} dr \cdot  |\eta|^{\frac{2\delta}{3}+\frac{2\Im z}{3b}} \| \hat{f}_+\|_{H^\delta}^2 \lesssim_{z, b} |\eta|^{\frac{3\delta}{2}} \| \hat{f}\|_{H^\delta}^2
	\eee
	which is the desired bound in \eqref{eqimprove3}. 
	
	Finally, for $\hat{f}_-$, we need to use the interacton. Let $\xi = r\omega$ and $\xi - \eta = \tilde{r}\tilde{\omega}$ written in polar coordinates, and for convenience, define $A(r) := r^{-\frac d2} e^{i\frac{r^2}{2b}-i\frac{z}{b}\ln r}$ to write 
	\[ \widehat{R_b^+(z)f}(r\omega) = \frac{i}{b} A(r) \int_r^\infty \left(A(\rho)\rho\right)^{-1}\hat{f}(\rho\omega)d\rho. \]
	Now we decompose
	\bea &&-ib\left(\widehat{R_{b}^+(z) f_-}(\xi-\eta) - \widehat{R_{b}^+(z) f_-}(\xi)\right) \nonumber\\
	&=& \left( A(\tilde{r}) - A(r) \right) \int_{\tilde{r}}^\infty \left(A(\rho)\rho\right)^{-1} \hat{f}_-(\rho \tilde{\omega}) d\rho \label{eqimp31} \\
	&+& A(r)  \int_{\tilde{r}}^\infty \left[ \left(A(\rho)\rho\right)^{-1} -\left(A\left(\rho +r-\tilde{r}\right) \left(\rho +r-\tilde{r}\right) \right)^{-1} \right] \hat{f}_-(\rho \tilde{\omega}) d\rho  \label{eqimp32}\\
	&+& A(r) \int_r^\infty \left(A(\rho)\rho\right)^{-1} \left[ \hat{f}_- ((\rho + \tilde{r} - r)\tilde{\omega}) - \hat{f}_- (\rho\omega) \right]d\rho \label{eqimp33}
	\eea
	and discuss their contributions to \eqref{eqimprove3} respectively. \\
	
	\emph{(1) Esimate on \eqref{eqimp33}.}
	We plug in a mollified $\hat{f}_-$ to get the $H^\delta$-norm directly.
	\bea 
	\eqref{eqimp33} &=& A(r) \int_r^\infty \left(A(\rho)\rho\right)^{-1} \left[ \hat{f}_- ((\rho + \tilde{r} - r)\tilde{\omega}) - \hat{f}_- * \psi_{|\eta|^{\frac 13}} (\rho\omega) \right]d\rho \label{eqimp35}\\
	&+& A(r) \int_r^\infty \left(A(\rho)\rho\right)^{-1} \left[ \hat{f}_- (\rho\omega) - \hat{f}_- * \psi_{|\eta|^{\frac 13}} (\rho\omega) \right]d\rho \label{eqimp36}
	\eea
	We only consider \eqref{eqimp35} because \eqref{eqimp36} will follow likewise.
	Since $\rho \le (5|\eta|^{\frac 13})^{-1}$, $|\xi| \ge 20|\eta|^{\frac 13}$ and $|\eta|\ll 1$, we have
	\[ |(\rho + \tilde{r} - r)\tilde{\omega} - \rho\omega| = \left| (\rho + |\xi-\eta| - |\xi|) \frac{\xi - \eta}{|\xi-\eta|} - \rho \frac{\xi}{|\xi|} \right| \le \left(\frac{2\rho}{|\xi - \eta|} + 1\right)|\eta| \le \frac{1}{5}|\eta|^{\frac 13}  \]
	where the ${\frac 1 3}$-power is used. 
	Define 
	\be\gamma\left(\rho, \frac{\xi}{|\xi|}; |\xi|, \eta\right) :=  (\rho + |\xi-\eta| - |\xi|) \frac{\xi - \eta}{|\xi-\eta|}. \label{eqimprovevar}\ee
	Similar as the proof of \eqref{eqfrac3}, we have
	\bee &&|\eqref{eqimp35}| \le r^{-\frac d2 + \frac{\Im z}{b}} \int_r^\infty \rho^{\frac d2 - 1 - \frac{\Im z}{b}} \int_{B_{|\eta|^{\frac 13}}} \psi_{|\eta|^{\frac 13}}(\sigma) \left| \hat{f}_-\left( \gamma(\rho, \omega; r, \eta) \right) - \hat{f}_-\left(\rho \frac{\xi}{|\xi|} - \sigma\right)   \right| d\sigma d\rho \\
	&\le &r^{-\frac d2 + \frac{\Im z}{b}} \int_r^\infty \rho^{\frac d2 - 1 - \frac{\Im z}{b}} \left( \int_{B_{\frac{3|\eta|^{1/3}}{2}}} \frac{\left| \hat{f}_-\left( \gamma(\rho, \omega; r, \eta) \right) - \hat{f}_-\left( \gamma(\rho, \omega; r, \eta) -\sigma \right) \right|^2}{|\sigma|^{d+2\delta}} d\sigma \right)^{\frac 12} |\eta|^{\frac{\delta}{3}}d\rho \\
	&\lesssim_{d, b, z, \delta}& r^{-\frac d2} |\eta|^{\frac \delta 3}
	\left( \int_{5|\eta|^{\frac 13}}^{5^{-1}|\eta|^{-\frac 13}} \int_{B_\frac{3|\eta|^{1/3}}{2}} \frac{\left| \hat{f}_-\left( \gamma(\rho, \omega; r, \eta) \right) - \hat{f}_-\left( \gamma(\rho, \omega; r, \eta) -\sigma \right) \right|^2}{|\sigma|^{d+2\delta}} d\sigma \rho^{d-1} d\rho \right)^{\frac 12}
	\eee
	where the integration domain of $\rho$ comes from support of $\hat{f}_-$ and that $|\gamma(\rho, \omega; r, \eta)| = \rho + |\xi - \eta| - |\xi|$. 
	
	Let $\rho \omega = \tilde{\xi}$. For any $r \in  (20|\eta|^{\frac 13}, (20|\eta|^{\frac 13})^{-1})$ and $\eta \ll 1$, it is elementary to check that $\gamma(\cdot; r, \eta)$ is almost an identity map on that domain of $\rho$. More precisely, it maps $B_{5^{-1}|\eta|^{-\frac 13}} - B_{5|\eta|^{\frac 13}}$ into $B_{4^{-1}|\eta|^{-\frac 13}} - B_{4|\eta|^{\frac 13}}$ and $\left| \frac{\partial_i \gamma(\cdot;r, \eta)}{\partial \tilde{\xi}_j} - \delta_{ij} \right| \lesssim_{d} |\eta|^{\frac 23} \ll 1$ uniformly for $\tilde{\xi}$, $r$ in the above domain and $|\eta| \ll 1$, which implies that $\gamma$ is a bijection from $B_{5^{-1}|\eta|^{-\frac 13}} - B_{5|\eta|^{\frac 13}}$ onto its range with uniformly bounded Jacobian. Therefore, after further decreasing $\e_0$ if necessary, we have
	\bee 
	&&\int_{B_{ 20^{-1}|\eta|^{-1/3} } - B_{ 20|\eta|^{1/3}}} |\eqref{eqimp35}|^2 d\xi \\ 
	&\lesssim_{b, d, z, \delta}& |\eta|^\frac{2\delta}{3} \int^{20^{-1}|\eta|^{-1/3}}_{20|\eta|^{1/3} } r^{-1} \int_{B_{\frac{1}{5|\eta|^{\frac 13}}} - B_{5|\eta|^{\frac 13}}} \int_{\RR^d} \frac{\left| \hat{f}_-\left( \gamma(\tilde{\xi}; r, \eta) \right) - \hat{f}_-\left( \gamma(\tilde{\xi}; r, \eta) -\sigma \right) \right|^2}{|\sigma|^{d+2\delta}} d\sigma d\tilde{\xi} dr \\ 
	& \lesssim_{d} & |\eta|^\frac{2\delta}{3} \int^{10^{-1}|\eta|^{-1/2}}_{10|\eta|^{1/2} } r^{-1} \int_{B_{\frac{1}{4|\eta|^{\frac 13}}} - B_{4|\eta|^{\frac 13}}} \int_{\RR^d} \frac{\left| \hat{f}_-( \gamma) - \hat{f}_-( \gamma -\sigma ) \right|^2}{|\sigma|^{d+2\delta}} d\sigma d\gamma dr \\
	& \lesssim_{b, d,z, \delta}& |\eta|^\frac{2\delta}{3} \left| \ln |\eta| \right| \| \hat{f}\|_{H^\delta}^2.
	\eee
	This concludes the estimate for \eqref{eqimp35}. \eqref{eqimp36} follows likewise.
	\\
	
	\emph{(2) Estimates on \eqref{eqimp31} and \eqref{eqimp32}.}
	Notice that $|A'(r)| \lesssim_{d, z, b} \left( r^{-1} + r \right) |A(r)|$ and $|\tilde{r} - r| \le |\eta|$. Using the fundamental theorem of calculus to bound the difference term in \eqref{eqimp31} and \eqref{eqimp32}, we will gain $|\eta|$ and a factor $r+r^{-1}$ or $\rho + \rho^{-1}$ compared with the original estimate for $|\widehat{R^+_b(z)f_-}|$. Since $\supp \hat{f}_- \subset B_{5^{-1}|\eta|^{-1/3}}$ and $r = |\xi| \ge 20|\eta|^{\frac 13}$, the extra term can be bounded by $|\eta|^{-1/3}$. Thus we can estimate trivially
	\bee
	\left|\eqref{eqimp31}\right| + \left|\eqref{eqimp32}\right| 
	&\lesssim_{b, d, z}& |\eta|^{\frac 23} r^{-\frac d2 + \frac{\Im z}{b}} \int_{\tilde{r}}^{(5|\eta|^{1/2})^{-1}} \rho^{\frac d2 - 1 - \frac{\Im z}{b}} |\hat{f}_-(\rho \tilde{\omega})| d\rho  \\
	& \lesssim_{d}& |\eta|^{\frac 23} r^{-\frac d2} \left(\int_{10|\eta|^{\frac 13}}^{5^{-1}|\eta|^{-\frac 13}} |\hat{f}_-(\rho \tilde{\omega})|^2 \rho^{d-1} d\rho \right)^{\frac 12}.
	\eee
	Here the decay in $|\eta|$ is enough, and in the integration over $\xi$, we can perform a similar change of variable like \eqref{eqimprovevar} to make $\rho\omega \mapsto \rho \tilde{\omega}$ with the given $r$ and $\eta$. This implies the last piece in the bound \eqref{eqimprove3} and concludes the proof of this lemma.
\end{proof}

\subsection{Conjugation with $D^\sigma$}
	
	Finally, we mention the simple but important fact about conjugating $\Delta_b$, $e^{it\Delta_b}$ and $R_b^\pm(z)$ with $D^\sigma$. This implies the $\dot{H}^\sigma$-energy decay of $e^{it\Delta_b}$.
	
	\begin{lemma}\label{lemcom} For $b \neq 0$ and $\sigma \ge 0$, we have
		\bea
		  D^\sigma \Delta_b D^{-\sigma}&=& \Delta_b + ib\sigma. \label{eqcomDeltab} \\
		  \label{eqcomgroup} D^\sigma e^{it\Delta_b} D^{-\sigma} &=& e^{it\Delta_b} e^{-b\sigma t}. \\
		\label{eqcomres} D^\sigma R_b^\pm(z) D^{-\sigma} &=& R_b^\pm (z + ib\sigma).
		\eea
		\end{lemma}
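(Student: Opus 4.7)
The three identities are linked: once \eqref{eqcomDeltab} is established at the level of Fourier multipliers, \eqref{eqcomgroup} and \eqref{eqcomres} will follow by inserting the conjugation into the explicit representation \eqref{eqrepuhat} and the Laplace transform definition \eqref{eqRb+}, respectively. I will carry out all computations on a suitable dense subset (say Schwartz functions whose Fourier transform vanishes near the origin, to avoid any issue with $D^{-\sigma}$), and then extend by density using the $L^2$/$\dot{H}^\sigma$ boundedness already established.

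For \eqref{eqcomDeltab}, the plan is to compute $D^\sigma \Delta_b D^{-\sigma}$ on the Fourier side. Recalling \eqref{eqLambdafourier}, the symbol of $\Delta_b$ is the differential operator $\xi \mapsto -|\xi|^2 - ib(d/2 + \xi\cdot \nabla_\xi)$ acting on $\hat f$. Since $D^{\pm\sigma}$ becomes multiplication by $|\xi|^{\pm\sigma}$, I need only compute
\[
|\xi|^\sigma\bigl(\tfrac{d}{2} + \xi\cdot\nabla_\xi\bigr)\bigl(|\xi|^{-\sigma}\hat f\bigr) = \bigl(\tfrac{d}{2}-\sigma\bigr)\hat f + \xi\cdot\nabla_\xi \hat f,
\]
using $\xi\cdot\nabla_\xi |\xi|^{-\sigma}=-\sigma|\xi|^{-\sigma}$. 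Multiplying through by $-ib$ and adding the unchanged $-|\xi|^2\hat f$ term yields exactly the symbol of $\Delta_b + ib\sigma$, which proves \eqref{eqcomDeltab}.

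For \eqref{eqcomgroup}, the cleanest route is to plug the explicit formula \eqref{eqrepuhat} directly: for $f$ with $\hat f$ smooth and supported away from $0$,
\[
\widehat{D^\sigma e^{it\Delta_b} D^{-\sigma} f}(\xi) = |\xi|^\sigma e^{bdt/2} |e^{bt}\xi|^{-\sigma}\hat f(e^{bt}\xi)\, e^{-i\frac{e^{2bt}-1}{2b}|\xi|^2} = e^{-b\sigma t}\,\widehat{e^{it\Delta_b}f}(\xi),
\]
and one extends by density using the boundedness of $e^{it\Delta_b}$ and $D^{\pm\sigma}$ on appropriate spaces. Finally, \eqref{eqcomres} follows by inserting \eqref{eqcomgroup} into \eqref{eqRb+}:
\[
D^\sigma R_b^+(z) D^{-\sigma} = i\int_0^\infty D^\sigma e^{it\Delta_b}D^{-\sigma}\, e^{itz}\,dt = i\int_0^\infty e^{it\Delta_b} e^{it(z+ib\sigma)}\,dt = R_b^+(z+ib\sigma),
\]
where the interchange of $D^\sigma$ with the integral is legitimate on test functions since the integrand converges absolutely in the relevant operator topology from Subsection 2.3. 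The $R_b^-$ case is identical.

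The only mild obstacle is domain bookkeeping: $D^{-\sigma}$ is only well defined on Fourier side off the origin, so one must fix a dense class (Schwartz functions with $\hat f \in C_c^\infty(\RR^d\setminus\{0\})$) on which all three sides make unambiguous sense, prove the pointwise Fourier identities there, and then appeal to the already-established boundedness of $\Delta_b$, $e^{it\Delta_b}$, and $R_b^\pm(z)$ (from Lemma \ref{corospec1} and Subsection 2.3) to extend the identities to the full operator domains.
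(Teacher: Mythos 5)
Your proof is correct and follows essentially the same route as the paper: the conjugation identity for $\Delta_b$ via the Fourier-side computation (equivalent to the paper's commutator $[D^\sigma, x\cdot\nabla]=\sigma D^\sigma$), then \eqref{eqcomgroup} from the explicit formula \eqref{eqrepuhat}, and \eqref{eqcomres} by inserting \eqref{eqcomgroup} into the Laplace-transform definition \eqref{eqRb+}. The extra density/domain bookkeeping you include is fine but not needed beyond what the paper already establishes.
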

	\begin{proof}
		The first identity \eqref{eqcomDeltab} easily follows $[D^\sigma, x\cdot \nabla] = \sigma D^\sigma$. 
	The other two relations also come from the represntation formulas of $e^{it\Delta_b}$ \eqref{eqrepuhat} and of $R_b^\pm(z)$ \eqref{eqRb+} respectively:
	\bee \mathscr{F} D^\sigma e^{it\Delta_b} D^{-\sigma} f &=& |\xi|^{\sigma }e^{\frac{bdt}{2}} \hat{f} (e^{bt} \xi)|e^{bt}\xi|^{-\sigma} e^{-i\frac{e^{2bt} - 1}{2b}|\xi|^2 } = e^{-b\sigma t}\mathscr{F} e^{it\Delta_b} f, \\
	 D^\sigma R_b^\pm(z) D^{-\sigma} &=&\pm i\int_0^\infty e^{\pm itz}D^\sigma e^{\pm it\Delta_b} D^{-\sigma} dt  \\
	&=& \pm i\int_0^\infty e^{\pm itz} e^{\pm it\Delta_b} e^{\mp b\sigma t} = R_b^\pm(z + ib\sigma).
	\eee
	\end{proof}

\section{Resolvent estimates for $\Delta_b$ with potential}\label{s3}

In this section, we consider the compactness and decay in $z$ of operator families $V_2 R_b^\pm(z) V_1$ on $\dot{H}^\sigma$ where $V_1$, $V_2$ are scalar potentials. From the commutator relation \eqref{eqcomres}, it's equivalent to discuss 
$$D^\sigma V_2 R_b^\pm(z) V_1 D^{-\sigma} = D^\sigma V_2 D^{-\sigma}  R_b^\pm(z + ib\sigma)  D^{\sigma}V_1 D^{-\sigma} $$
as an operator on $L^2$. The potential  will satisfy polynomial decay 
\be |V(x)|\lesssim \la x \ra^{-\a},\quad |\nabla V(x)| \lesssim \la x \ra^{-\a-1} \tag{\ref{eqpotbd0}} \ee
for some $\a > 0$ as required in Theorem \ref{thmStrichartz}.

We also remark that for every lemma and proposition involving $R_b^\pm(z)$ in this section, we will only prove for the case $R_b^+(z)$ with $b > 0$, because the other three cases ($b < 0$ or $R^-_b(z)$) can be done in almost the same way.

\subsection{Compactness}

In this subsection, we prove compactness results for the extended resolvent coupled with potential. In contrast to $b=0$ case, here any small polynomial decay of $V$ is enough to guarantee compactness.

\begin{lemma}
	\label{lempotcpt2} 
	For $d \ge 1$, $0 < \sigma<  \min \{\frac d2, 1\}$ and $V \in C^1(\RR^d)$ satisfying \eqref{eqpotbd0} with $ \a > 0$, then
    \begin{enumerate}
    	\item[\rm (i)] The operator $D^\sigma V D^{-\sigma} R^\pm_b(z)$ for $\pm \Im z > 0 $ is bounded and compact on $L^2(\RR^d)$.
    	\item[\rm (ii)] The operator $D^\sigma V_1 D^{-\sigma} R^+_b(z) D^\sigma V_2 D^{-\sigma}$ for $\Im z > - |b| \min\{\a, 1, \frac{d}{2}-\sigma \}$ (and $D^\sigma V_1 D^{-\sigma} R^-_b(z) D^\sigma V_2 D^{-\sigma}$ for $\Im z < |b| \min\{\a, 1, \frac{d}{2}-\sigma \}$) is bounded and compact on $L^2(\RR^d)$.
    	\end{enumerate}
\end{lemma}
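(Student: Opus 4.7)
The plan is a unified cutoff and Rellich argument for both parts, leveraging the smoothing of the (extended) resolvent and the polynomial decay of the potentials. Throughout I keep $\sigma\in(0,\min\{d/2,1\})$ and fix a smooth cutoff $\chi_R\in C_c^\infty(\RR^d)$ equal to $1$ on $B_R$.

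First, I would verify boundedness. The conjugation $D^\sigma V D^{-\sigma}$ is bounded on $L^2$ for $V\in W^{1,\infty}$ and $\sigma\in(0,1)$ by fractional Leibniz / Kato--Ponce estimates. For (i), since $\pm\Im z>0$, the resolvent $R_b^\pm(z)$ is bounded on $L^2$ by Lemma \ref{lemresprop}, so the composition is bounded on $L^2$. For (ii), I pick $p\in(2,2^*)$ close enough to $2$ so that $d/2-d/p<\min\{\alpha,1,d/2-\sigma\}$ (which by Lemma \ref{lemRbbdd} gives $R_b^+(z):L^{p'}\to L^p$ bounded in the stated range of $\Im z$), and simultaneously $\alpha\cdot p/(p-2)>d$ (so that H\"older makes $V_j:L^p\to L^{p'}$ bounded via $|V_j|\lesssim\langle x\rangle^{-\alpha}$). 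Both requirements can be met since any $\alpha>0$ admits $p-2$ sufficiently small; the remaining $D^{\pm\sigma}$ factors are dealt with via Sobolev embeddings (using $\sigma<d/2$).

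Second, I would establish compactness by approximation. Write $V=V\chi_R+V(1-\chi_R)$ (and similarly for $V_1,V_2$ in (ii)). The tail factor satisfies $\|V(1-\chi_R)\|_{L^\infty}+\|\nabla(V(1-\chi_R))\|_{L^\infty}=O(R^{-\alpha})$, so the boundedness estimates from Step 1 show that the tail contribution to the full operator has $L^2\to L^2$ norm $O(R^{-\alpha})\to 0$; hence it suffices to show that the localized operator (with $V$ or $V_j$ replaced by $V\chi_R$, $V_j\chi_R$) is compact. For the localized operator, interior elliptic regularity for $\Delta_b$ (whose lower-order transport piece $ibx\cdot\nabla$ is smooth) shows that $R_b^\pm(z)$ maps $L^2$ into $H^2_{\rm loc}$ in (i), and $L^{p'}$ into $W^{2,p}_{\rm loc}$ in (ii); multiplication by the $C^1$, compactly supported cutoff yields a bounded family in $H^1_c(B_R)$; the Rellich--Kondrachov compact embedding $H^1_c(B_R)\hookrightarrow L^2(\RR^d)$ then delivers compactness. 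In (ii), this scheme is applied in both "sandwich" factors, with the extended resolvent used in the middle.

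The main obstacle is the exponent juggling in part (ii): the range constraint $d/2-d/p<\min\{\alpha,1,d/2-\sigma\}$ from the extended resolvent and the H\"older condition on $V_j$ must simultaneously hold for a single $p\in(2,2^*)$, and this matching is exactly what produces the bound $\Im z>-|b|\min\{\alpha,1,d/2-\sigma\}$ stated in the lemma. Here the three hypotheses $\alpha>0$ (decay of the potential), $\sigma<d/2$ (enough room for Sobolev embeddings) and $\sigma<1$ (for Kato--Ponce / multiplication on $\dot H^\sigma$) all enter essentially, guaranteeing the set of admissible $p$ is nonempty.
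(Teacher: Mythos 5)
Your overall strategy (approximate the potential by a compactly supported one, show the tail is small in operator norm, then extract compactness of the localized piece from a local regularity gain of the resolvent plus Rellich) is a legitimate alternative to the paper's route, which instead verifies the Riesz--Kolmogorov criterion directly and obtains the ``local'' compactness from oscillatory-integral estimates in time (for (i)) and from the explicit kernel of $e^{it\Delta_b}$ (for (ii)). Your boundedness step and the tail reduction are essentially the paper's Lemma \ref{lempotbdd} and its approximation argument, and interior elliptic regularity for $-\Delta_b-z$ (whose principal symbol is $|\xi|^2$, the transport term being lower order with locally bounded coefficients) does give $R_b^\pm(z)f\in H^2_{\rm loc}$, resp.\ $W^{2,p'}_{\rm loc}$ (not $W^{2,p}_{\rm loc}$ as you wrote), once one knows the resolvent equation holds distributionally -- which for the \emph{extended} resolvent in (ii) is not automatic and is exactly Lemma \ref{leminv}.

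The genuine gap is at the Rellich step: the operators in the lemma are $D^\sigma (V\chi_R) D^{-\sigma} R_b^\pm(z)$ (and the sandwiched version in (ii)), not $(V\chi_R)R_b^\pm(z)$, and you silently discard the conjugation by $D^{\pm\sigma}$ when you claim that ``multiplication by the $C^1$, compactly supported cutoff yields a bounded family in $H^1_c(B_R)$.'' This is false for the actual operator: first, $D^{-\sigma}$ is nonlocal, so the function being multiplied by $V\chi_R$ is $D^{-\sigma}R_b^\pm(z)f$, whose local regularity must be extracted from the global $L^p$ bound plus the local $W^{2,p'}$ bound (doable, but it needs the splitting of the Riesz kernel and is not stated); second, after multiplying by the $C^1$ cutoff one has at most one derivative, and the final $D^\sigma$ both eats $\sigma$ of that regularity and destroys compact support -- the output decays only like $|x|^{-d-\sigma}$ and lies in roughly $W^{1-\sigma,q}$, not in $H^1_c(B_R)$. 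So neither the uniform smallness at spatial infinity nor the local precompactness is established as written; the former is precisely what the paper proves in \eqref{eqcpt02}--\eqref{eqcpt21} via the kernel representation of $D^\sigma$, and the latter requires either a commutator estimate for $[D^\sigma, V\chi_R]$ (in the spirit of Appendix \ref{appB}) or the paper's equicontinuity argument \eqref{eqcpt03}. With those two ingredients supplied, your elliptic-regularity route would close and would indeed avoid the paper's integration by parts in the time variable; without them, the compactness claim does not follow.
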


\begin{proof}
	(i) From Riesz-Kolmogorov compactness criterion, it suffices to prove 
	\begin{align}
		\| D^\sigma V D^{-\sigma} R^+_b(z) \|_{L^2 \to L^2} <& \infty, \label{eqcpt01} \\
		\| \mathbbm{1}_{B_R^c} D^\sigma V D^{-\sigma} R^+_b(z) \|_{\L(L^2)} \to 0,&\quad \mathrm{as}\,\,R\to \infty,\label{eqcpt02}\\
		\| \mathbbm{1}_{B_R^c}\mathscr{F}  D^\sigma V D^{-\sigma} R^+_b(z)  \|_{\L(L^2)}\to 0,&\quad \mathrm{as}\,\,R\to \infty. \label{eqcpt03}
	\end{align}

    (1) The boundedness \eqref{eqcpt01} follows from Lemma \ref{lemRbbdd}, Lemma \ref{lempotbdd} and $\Im z > 0$ so that $D^\sigma V D^{-\sigma} \in \L(L^2)$ and $R^+_b(z) \in \L(L^2)$.\\
    
    (2) For the spatial localization \eqref{eqcpt02}, we will prove
    \be \| (1 - \chi(R^{-1}|x|))D^\sigma V D^{-\sigma} \|_{L^p \to L^2} \to 0,\quad \mathrm{as} \,\,R\to +\infty \label{eqcpt21}  \ee
    with $\frac 12 \ge \frac 1p >\max\{0,  \frac 12 - \frac{\a}{d}, \frac{\sigma}{d} \}$
    and $\chi(r) \in C^\infty_{c, rad}(\RR^d)$ to be $1$ when $0 \le r \le 1$ and $0$ when $r \ge \frac 32$. Then \eqref{eqcpt02} follows the case $p = 2$.

    From Lemma \ref{lempotbdd}, we can approximate $V$ by some $C^\infty_c(\RR^d)$ function and only prove these asymtotics for such good $V$. 
    Suppose $\supp V \subset B_M$ with $M \gg 1$ and $R \gg M$. We act the operator on $f \in \mathscr{S}(\RR^d)$. Recall the physical representation of $D^\sigma$
    \[ D^\sigma h (x) = C_{d, \sigma} \lim_{\e \to 0+}\int_{B_{\e}^c} \frac{h(x+z) - h(x)}{|z|^{d+\sigma}}dz.\]
    Note that here $VD^{-\sigma}f$ is supported in $B_M$ while $|x| \ge R \gg M$, so
    \bee \left(1-\chi(R^{-1}x)\right) D^\sigma V D^{-\sigma} f (x)
    = \left(1-\chi(R^{-1}x)\right) C \int_{\RR^d} \int_{\RR^d} \frac{V(x+z)f(y)}{|x+z-y|^{d-\sigma}|z|^{d+\sigma}}dydz
    \eee
    which immediately implies the pointwise estimate
    \bee 
    &&\left| \left(1-\chi(R^{-1}x)\right) D^\sigma V D^{-\sigma} f (x) \right| \\
    &\lesssim& \mathbbm{1}_{B_R^c}(x) |x|^{-{d+\sigma}} \| V\|_{L^\infty} \int_{\RR^d} \int_{\RR^d}\frac{|f(y)|}{|z'-y|^{d-\sigma}}\chi(M^{-1}z') dydz'\\
    &\le& \mathbbm{1}_{B_R^c}(x) |x|^{-{d+\sigma}} \| V\|_{L^\infty} \| f\|_{L^p} \| D^{-\sigma}\left(\chi(M^{-1}\cdot)\right) \|_{L^{p'}}
    \eee
    Since $\frac 1q:= \frac{1}{p'} + \frac{\sigma}{d} < 1$, we know $\| D^{-\sigma}\left(\chi(M^{-1}\cdot)\right) \|_{L^{p'}} \lesssim \| \chi(M^{-1}\cdot)\|_{L^{q}} < \infty$ and hence 
    \bee &&\| \left(1-\chi(R^{-1}x)\right) D^\sigma V D^{-\sigma} f \|_{L^2} \\
    &\lesssim& \||x|^{-(d+\sigma)}\|_{L^2(B^c_R)} \| V\|_{L^\infty} \| f\|_{L^p} \| \chi(M^{-1}\cdot)\|_{L^{q}} \lesssim R^{-\frac d2 - \sigma} \| f\|_{L^p} \eee
    which concludes \eqref{eqcpt21} and hence \eqref{eqcpt02}. \\
    
    (3) Finally, we will prove the Fourier localization (or called equicontinuity) \eqref{eqcpt03}. 
    
	We again begin with some reductions. Firstly, we can replace $V$ by a Schwartz approximation (still denoted by $V$) with $\hat{V}$  compactly supported.
	Secondly, we apply the semigroup decomposition of $R_b^+(z)$ \eqref{eqRb+} and remove small intervals near $0$ and $\infty$ because 
	\[ \left\| \int_0^{K^{-1}} e^{it\Delta_b}e^{itz} dt \right\|_{\L(L^2)} + \left\|\int_K^\infty e^{it\Delta_b}e^{itz} dt \right\|_{\L(L^2)} = o_{K\to \infty} (1)\]
	for our $\Im z > 0$. 
	Therefore \eqref{eqcpt03} is reduced to: for any $[a_1, a_2] \subset (0,\infty)$ and $V \in \mathscr{S}(\RR^d)$, $\hat{V} \in C^\infty_c(\RR^d)$, 
	\be \left\| \mathscr{F} D^\sigma V D^{-\sigma} \int_{a_1}^{a_2} e^{it\Delta_b} e^{itz} f dt \right\|_{L^2(B_R^c)} \le o_{R\to +\infty}(1) \|f\|_{L^2}. \ee
    Via direct computation and integration by parts over the quadratic phase, we get
	\bee && \left[\mathscr{F} D^\sigma V D^{-\sigma} \int_{a_1}^{a_2} e^{it\Delta_b} e^{itz} f dt \right] (\xi)  \\
	&=& |\xi|^{\sigma} \int_{\RR^d} \int_{a_1}^{a_2} \hat{V}(\xi - e^{-bt} \zeta) |\zeta|^{-\sigma} e^{t\left(b\sigma - \frac{bd}{2} + iz \right) } e^{-i\frac{1-e^{-2bt}}{2b}|\zeta|^2 } \hat{f}(\zeta) dt  d\zeta \\
	&=& |\xi|^\sigma \int_{\RR^d} |\zeta|^{-\sigma} \hat{f}(\zeta) \int_{a_1}^{a_2}  e^{t\left(b\sigma - \frac{bd}{2} + iz \right) } \frac{ie^{2bt}}{|\zeta|^{2}}  \frac{d}{dt}\left(e^{-i\frac{1-e^{-2bt}}{2b}|\zeta|^2 }\right) \hat{V}(\xi - e^{-bt} \zeta) dt d\zeta \\
	&=& i|\xi|^\sigma \int_{\RR^d} |\zeta|^{-\sigma-2} \hat{f}(\zeta) \Bigg[ \left( e^{t\left(2b + b\sigma - \frac{bd}{2} + iz \right) } e^{-i\frac{1-e^{-2bt}}{2b}|\zeta|^2 } \hat{V}(\xi - e^{-bt} \zeta)  \right)\bigg|_{a_1}^{a_2}  \\
	&& - \int_{a_1}^{a_2}e^{-i\frac{1-e^{-2bt}}{2b}|\zeta|^2 } \frac{d}{dt} \left( e^{t\left(2b + b\sigma - \frac{bd}{2} + iz \right) }  \hat{V}(\xi - e^{-bt}\zeta)\right) dt  \Bigg].
	\eee
	Suppose $\supp \hat{V} \subset B_{M}$. Then if $|\xi| \ge 2M$, the support of $\hat{V}$ indicates $|\xi - e^{-bt}\zeta| \le M$ and hence $|\xi| \sim |e^{-bt}\zeta| \sim_{a_1, a_2} |\zeta|$. Hence taking $R \ge 2M$ and letting $C = 2b+b\sigma - \frac{bd}{2}$, we have
	\bee && \left| \left[\mathscr{F} D^\sigma V D^{-\sigma} \int_{a_1}^{a_2} e^{it\Delta_b} e^{itz} f dt \right] (\xi)  \right| \\
	&\lesssim_{a_1, a_2}& |\xi|^{-2} \int_{\RR^d} |\hat{f}(\zeta)| \Bigg[ \int_{a_1}^{a_2}C e^{Ct} \left(|\hat{V}(\xi - e^{-bt}\zeta)| + |\xi||\nabla \hat{V}(\xi - e^{-bt}\zeta)| \right)dt \\
	&& + \sum_{i = 1, 2} e^{Ca_i} |\hat{V}(\xi - e^{-ba_i}\zeta )| \Bigg]d\zeta \\
	& \lesssim_{a_1, a_2, C}& |\xi|^{-1} \sup_{\tau \in [e^{-ba_2}, e^{-ba_1}]} \int_{\RR^d} |\hat{f}(\zeta)| \left(|\hat{V}(\xi - \tau \zeta)| +  | \nabla \hat{V} (\xi - \tau \zeta)| \right) d\zeta.
	\eee 
	Using Young's inequality and $\tau \sim_{b, a_1, a_2} 1$, we get 
	\bee \left\| \mathscr{F} D^\sigma V D^{-\sigma} \int_{a_1}^{a_2} e^{it\Delta_b} e^{itz} f dt \right\|_{L^2(B_R^c)} 
	\lesssim_{a_1, a_2, b, C} R^{-1} \| f\|_{L^2} \|\hat{V}\|_{W^{1, 1}} \eee
	when $R \ge 2M$. That confirms \eqref{eqcpt03} and hence part (i) of this lemma.\\
	
	(ii) Let $L :=  D^\sigma V_1 D^{-\sigma} R^+_b(z) D^\sigma V_2 D^{-\sigma} $. We again exploit Riesz-Kolmogorov compactness criterion. Notice that the condition on $\Im z$ enables us to pick 
	\be \frac{1}{p} \in \left( \frac 12 - \frac \a d, \frac 12\right] \cap \left( \frac \sigma d, \frac 12\right] \cap \left( \frac{1}{2} - \frac{1}{2^*},\frac 12 + \frac{\Im z}{|b|d} \right)\label{eqrangep}\ee
	Then the boundedness of $L$ comes from
	Lemma \ref{lemRbbdd} and Lemma \ref{lempotbdd}
	\[ \| L \|_{L^2 \to L^2} \le \| D^\sigma V_2 D^{-\sigma}\|_{L^2\to L^{p'}} \| R_b^+(z) \|_{L^{p'} \to L^p} \| D^\sigma V_1 D^{-\sigma}\|_{L^p\to L^{2}} < \infty. \]
	And the spatial localization follows \eqref{eqcpt21}. 
	
	Finally, we need to show the following equicontinuity condition
	\be \left\| Lf(\cdot + h) - Lf \right\|_{L^2} \le o_{|h|\to 0}(1) \| f \|_{L^2}. \label{eqcpt3}\ee
		Similar to the proof of Lemma \ref{lempotcpt2}, we begin with several reductions. Again, we replace $V_1$, $V_2$ by their $C^\infty_c$ approximations and peel off small neighborhoods of $0$ and $\infty$ in the semigroup decomposition of $R_b^+(z)$ \eqref{eqRb+}. In particular, from the compactness of interval and uniform boundedness of $e^{itz}$, it reduces to consider a uniform pointwise estimate: for any $[a_1, a_2] \subset (0, \infty)$,
	\[ \sup_{t \in [a_1, a_2]} \| (D^\sigma V_1 D^{-\sigma} e^{it\Delta_b} D^\sigma V_2 D^{-\sigma} f)(\cdot - h) - D^\sigma V_1 D^{-\sigma} e^{it\Delta_b} D^\sigma V_2 D^{-\sigma} f\|_{L^2} = o_{|h|\to 0}(1) \| f \|_{L^2}, \]
	By commutator formula \eqref{eqcomgroup} and uniform boundedness of $e^{b\sigma t}$, it further comes down to
	\[ \sup_{t \in [a_1, a_2]} \| (D^\sigma V_1 e^{it\Delta_b} V_2 D^{-\sigma} f)(\cdot - h) - D^\sigma V_1 e^{it\Delta_b}  V_2 D^{-\sigma} f\|_{L^2} = o_{|h|\to 0}(1) \| f \|_{L^2}. \]
	Next, notice that we may further decompose $V_i = V_{i1}V_{i2}$ with $V_{i1}, V_{i2} \in C^\infty_c$ for $i = 1, 2$. Using the boundedness $\|D^\sigma V_{11} D^{-\sigma}\|_{L^\frac{4d}{2d-1}\to L^{2}}$ and $\|D^\sigma V_{22}D^{-\sigma}\|_{L^2\to L^{\frac{4d}{2d+1}}}$ from Lemma \ref{lempotbdd}, it suffices to prove 
	\bee \sup_{t \in [a_1, a_2]} \| (D^\sigma V_{12} e^{it\Delta_b}  V_{21} D^{-\sigma} f)(\cdot - h) - D^\sigma V_{12}  e^{it\Delta_b}V_{21} D^{-\sigma} f\|_{L^\frac{4d}{2d-1}} = o_{|h|\to 0}(1) \| f \|_{L^\frac{4d}{2d+1}}.\label{eqcpt31} \eee
	Finally, with the $L^2\to L^2$ uniform boundedness
	\[ \sup_{t \in [a_1, a_2]} \| D^\sigma V_{12} D^{-\sigma} e^{it\Delta_b} D^\sigma V_{21} D^{-\sigma}\|_{L^2 \to L^2} <\infty \]
	and interpolation, \eqref{eqcpt3} boils down to
	\be \sup_{t \in [a_1, a_2]} \| (D^\sigma V_{12}  e^{it\Delta_b} V_{21} D^{-\sigma} f)(\cdot - h) - D^\sigma V_{12}  e^{it\Delta_b}  V_{21} D^{-\sigma} f\|_{L^\infty} = o_{|h|\to 0}(1) \| f \|_{L^1}. \label{eqcpt32}\ee
	For convenience, we denote $V_{12} =: W_1$ and $V_{21} =: W_2$. 
	
	Suppose $f \in \mathscr{S}(\RR^d)$, using representation formula of $D^\sigma$, $D^{-\sigma}$ and $e^{it\Delta_b}$ \eqref{eqrepu}, we can compute 
	\bee 
	&& W_1 e^{it\Delta_b} W_2 D^{-\sigma} f(\cdot) \\
	&=& C W_1(\cdot) \int_{\RR^d} |\sinh{bt}|^{-\frac d2}  e^{i\frac{b|(\cdot)e^{-bt}- y|^2}{2(1-e^{-2bt})} }W_2(y)\int_{\RR^d} f(z)|y-z|^{-(d-\sigma)} dzdy  
	\eee
	Let 
	\be \Phi_t(x, y):=e^{i\frac{b|x e^{-bt}- y|^2}{2(1-e^{-2bt})} },\quad A(t):= |\sinh{bt}|^{-\frac d2}. \label{eqaugfunc1}\ee
	We can write
	\bee 
	&& (D^\sigma W_1 e^{it\Delta_b} W_2 D^{-\sigma} f)(x)\\
	&=& C' A(t) \int_{\RR^d} \int_{\RR^d} \int_{\RR^d} \frac{W_1(x+w) \Phi_t(x+w, y)-W_1(x)\Phi_t(x,y) }{|w|^{d+\sigma}} W_2(y)\frac{f(z)}{|y-z|^{d-\sigma}}dz dy dw
	\eee
	Define 
	\be F_t(x, y):= W_1(x)\Phi_t(x, y) W_2(y).\label{eqaugfunc2}\ee
	Then 
	\bee
	&& (D^\sigma W_1 e^{it\Delta_b} W_2 D^{-\sigma} f)(x-h) -(D^\sigma W_1 e^{it\Delta_b} W_2 D^{-\sigma} f)(x) \\
	&=& C' A(t) \int_{\RR^d} \int_{\RR^d} \int_{\RR^d} \frac{F_t(x+w-h, y) - F_t(x-h, y) - F_t(x+w, y) + F_t(x, y) }{|w|^{d+\sigma}} \\
	&&\qquad \qquad \qquad \qquad \qquad \frac{f(z)}{|y-z|^{d-\sigma}}dz dy dw
	\eee
	Note that $\Phi_t(x, y) \in C^\infty_{loc}(\RR^d \times \RR^d)$ and thus $F_t(x, y) \in C^\infty_c(\RR^d \times \RR^d)$ as $W_1, W_2 \in C^\infty_c(\RR^d)$. Suppose $\supp F_t \subset B_R \times B_R$ and  $|h|\le 1$. Using
	\[ |F_t(q-h, y) - F_t(q, y)| \le \| F_t\|_{C^1(\RR^{2d})} |h| \mathbbm{1}_{B_{R+1}}(y) \]
	for $|w| \ge 1$
	and 
	\bee &&|F_t(x+w-h, y) - F_t(x+w-h, y) -F_t(x+w,h) + F_t(x, y)| \\
	&\le &\| F_t\|_{C^2(\RR^{2d})} |h||w| \mathbbm{1}_{B_{R+1}}(y) \eee
	for $|w| \le 1$, we obtain the pointwise estimate
	\bee 
	|(D^\sigma W_1 e^{it\Delta_b} W_2 D^{-\sigma} f)(x-h) -(D^\sigma W_1 e^{it\Delta_b} W_2 D^{-\sigma} f)(x)|
	\lesssim_{R, d, \sigma}  \| F_t\|_{C^2}A(t) |h| \| f \|_{L^1}
	\eee
	Since $\| F_t\|_{C^2}$ and $A(t)$ depend on $t$ continuously, this confirms \eqref{eqcpt32} and thus \eqref{eqcpt3}. The proof of this lemma is concluded.
\end{proof}

We remark that this lemma used two mechanisms of Fourier localization: for (i) we used the oscillation in time of $e^{it\Delta_b}$, while for (ii) it boils down to a smoothing effect of Schr\"odinger propagation with limited speed.

\subsection{Decay as $z \to \infty$}

In this subsection, we discuss the qualitative decay for $R_b^+(z)$ as $|z|$ large. 

\begin{lemma}[Qualitative decay of extended resolvent families]\label{lemopdecay}
	Let $d \ge 1$,  $0 < \sigma < \min \{\frac d2, 1\}$, and $V_1$, $V_2 \in C^1(\RR^d)$ satisfying \eqref{eqpotbd0} with $ \a > 0$. For any $\delta > 0$, we have for  $\Im z \ge \delta - |b| \min\{\a, 1\}$ 
	\be \| D^\sigma V_1 D^{-\sigma} R^+_b(z) D^\sigma V_2 D^{-\sigma}\|_{L^2 \to L^2} \to 0, \quad \mathrm{as} \,\, |z| \to +\infty. \ee 
	The similar result hold for $D^\sigma V_1 D^{-\sigma} R^-_b(z) D^\sigma V_2 D^{-\sigma}$ with $\Im z \le -\delta + |b| \min\{\a, 1\}$.
\end{lemma}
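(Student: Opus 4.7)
The plan is a two-stage reduction followed by the main estimate. First, I would separate the behavior as $\Im z \to +\infty$ from the behavior as $|\Re z| \to \infty$ with $\Im z$ bounded. The vanishing \eqref{eqresvan} in Lemma~\ref{lemRbbdd} gives $\|R_b^+(z)\|_{L^{p'}\to L^p} \to 0$ as $\Im z \to +\infty$ for any $p$ in the admissible range \eqref{eqrangep}, which combined with the boundedness of $D^\sigma V_1 D^{-\sigma}: L^p \to L^2$ and $D^\sigma V_2 D^{-\sigma}: L^2 \to L^{p'}$ from Lemma~\ref{lempotbdd} immediately handles the first regime. It remains to consider $\Im z \in [\delta - |b|\min\{\alpha,1\}, M]$ bounded with $|\Re z| \to \infty$.

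Second, I would reduce to smooth compactly supported potentials. The norm controlling $D^\sigma V_i D^{-\sigma}$ in Lemma~\ref{lempotbdd} is of the form $\|\langle x\rangle^\alpha V_i\|_{L^\infty} + \|\langle x\rangle^{\alpha+1}\nabla V_i\|_{L^\infty}$, in which $C^\infty_c(\RR^d)$ is dense among potentials satisfying \eqref{eqpotbd0}. Together with the uniform bound \eqref{eqresunibdd} on the middle resolvent factor throughout the admissible region, this gives $\|T(z) - T_n(z)\|_{L^2\to L^2} \to 0$ uniformly in $z$ for approximations $V_i^n \to V_i$ in that norm, so it suffices to prove the decay for $V_1, V_2 \in C^\infty_c(\RR^d)$.

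For $V_1, V_2 \in C^\infty_c$, I would work from the Laplace representation
\[ T(z) = i\int_0^\infty e^{itz} A(t) \, dt, \qquad A(t):= D^\sigma V_1 D^{-\sigma} e^{it\Delta_b} D^\sigma V_2 D^{-\sigma} \]
and split at some large $T_0$. The tail $\int_{T_0}^\infty$ is uniformly small as $T_0 \to \infty$: by \eqref{eqdispersiveest} and Lemma~\ref{lempotbdd}, $\|A(t)\|_{L^2\to L^2} \lesssim \|e^{it\Delta_b}\|_{L^{p'}\to L^p}$ decays exponentially for $t \ge |b|^{-1}$, hence $\int_{T_0}^\infty \|A(t)\|\, e^{-t\Im z}\, dt \to 0$ uniformly in $z$ in the strip (with $p$ chosen as in \eqref{eqrangep}). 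For the head $\int_0^{T_0}$, I would integrate by parts in $t$ against $e^{itz} = (iz)^{-1}\partial_t e^{itz}$. The formal derivative $A'(t) = iD^\sigma V_1 D^{-\sigma} e^{it\Delta_b} \Delta_b D^\sigma V_2 D^{-\sigma}$ contains the unbounded $\Delta_b$, which I would absorb using \eqref{eqcomDeltab}: the $ib\sigma$ contributions from commuting $\Delta_b$ past $D^\sigma$ and $D^{-\sigma}$ cancel, yielding the clean commutator
\[ \Delta_b D^\sigma V_2 D^{-\sigma} = D^\sigma V_2 D^{-\sigma}\, \Delta_b + D^\sigma [\Delta_b, V_2] D^{-\sigma}, \]
where $[\Delta_b, V_2] = \Delta V_2 + 2\nabla V_2 \cdot \nabla + ib\, x\cdot\nabla V_2$ is a first-order differential operator with $C^\infty_c$ coefficients. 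Combining this with the resolvent relation $R_b^+(z)(-\Delta_b) = I + z R_b^+(z)$ absorbs the unbounded piece back into $T(z)$ itself plus a bounded correction, producing an estimate of the form $\|T(z)\|\le C|\Re z|^{-1}\|T(z)\| + C|\Re z|^{-1}$ that yields the desired decay.

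The main obstacle is the careful rearrangement of $\Delta_b$ in the integration by parts: while $T(z)$ is bounded on $L^2$, $A'(t)$ is not, and only the commutator calculus above together with the resolvent identity produces a genuinely bounded expression retaining the gain of $|\Re z|^{-1}$. Should this path prove technically unwieldy, an alternative is to work on the Fourier side via \eqref{eqRb+f} and integrate by parts in the spectral variable $\rho$ against the phase $\tfrac{1}{b}(-\rho^2/2 + \Re z\cdot \ln\rho)$, which is rapidly oscillating for $|\Re z|$ large away from the stationary point $\rho=\sqrt{|\Re z|}$; the smoothness and decay of $\widehat{V_2 f}$ inherited from $V_2\in C^\infty_c$ then handle both the nonstationary and stationary contributions.
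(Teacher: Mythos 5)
Your overall scaffolding (dispatching $\Im z \to +\infty$ via \eqref{eqresvan}, reducing to $V_1,V_2\in C^\infty_c$ by density against the norm of Lemma~\ref{lempotbdd} and the uniform bound \eqref{eqresunibdd}, splitting the Laplace integral in time, and gaining $|\Re z|^{-1}$ by integrating by parts in $t$) matches the paper's strategy. But the core step — making the $t$-derivative of the sandwiched propagator a bounded object — has a genuine gap as you set it up. First, the correction term $D^\sigma[\Delta_b,V_2]D^{-\sigma}$ is \emph{not} a bounded correction on $L^2$: $[\Delta_b,V_2]=\Delta V_2+2\nabla V_2\cdot\nabla+ib\,x\cdot\nabla V_2$ contains the genuinely first-order piece $2\nabla V_2\cdot\nabla$, and conjugation by $D^{\pm\sigma}$ does not lower its order; since $e^{it\Delta_b}$ is unitary on $L^2$ and gains no derivatives (local smoothing would only give half a derivative in an averaged sense, and you do not invoke it), this term loses a full derivative on an input that is merely $L^2$. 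Second, the main term produced by your commutator identity is $\bigl(\int_0^{T_0}e^{itz}D^\sigma V_1D^{-\sigma}e^{it\Delta_b}D^\sigma V_2D^{-\sigma}\,dt\bigr)\Delta_b$, i.e.\ $\Delta_b$ lands directly on the raw $L^2$ datum. The resolvent relation $R_b^+(z)(-\Delta_b-z)=I$ cannot absorb it: the time integral is truncated at $T_0$, so it is not a resolvent, and even on the full half-line the propagator is sandwiched between the two potentials, so what appears is $T(z)\Delta_b$, which is unbounded and is not expressible through $T(z)$ alone. Hence the claimed self-improving inequality $\|T(z)\|\le C|\Re z|^{-1}\|T(z)\|+C|\Re z|^{-1}$ does not follow, and the operator-level integration by parts collapses.

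The paper's proof resolves exactly this obstacle by one more reduction before integrating by parts: using the factorization $V_i=V_{i1}V_{i2}$, interpolation, and the removal of small time intervals near $0$ and $\infty$, the problem is reduced (as in the proof of \eqref{eqcpt3} in Lemma~\ref{lempotcpt2}(ii)) to an $L^1\to L^\infty$ bound for $\int_{a_1}^{a_2}e^{itz}\,e^{b\sigma t}D^\sigma W_1 e^{it\Delta_b}W_2D^{-\sigma}f\,dt$ with $W_1,W_2\in C^\infty_c$ and $[a_1,a_2]\subset(0,\infty)$ compact. At that level the operator has an explicit kernel $K_t(x,z)$ built from $F_t(x,y)=W_1(x)\Phi_t(x,y)W_2(y)$, and both $K_t$ and $\partial_t K_t$ are uniformly bounded: differentiating the phase $\Phi_t$ in $t$ produces polynomial growth in $x,y$, which is killed by the compact supports, so the "loss of a derivative" never materializes. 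The decay in $\Re z$ then comes from integrating by parts in $t$ against $e^{it\Re z}$ at the kernel level (packaged in the paper as a compactness/contradiction argument). Your closing alternative — stationary phase in $\rho$ on the Fourier side via \eqref{eqRb+f} — is closer in spirit to something workable, but as stated it is a one-sentence sketch: the input is only $L^2$, the conjugation by $D^{\pm\sigma}$ must be tracked, and the stationary-point contribution at $\rho\sim\sqrt{\Re z}$ needs a quantitative argument; none of this is supplied. As it stands, the proposal does not close the proof.
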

\begin{proof}
    From Lemma \ref{lemRbbdd}, Lemma \ref{lempotbdd} and the $\delta$ gap, we know $D^\sigma V_1 D^{-\sigma} R^+_b(z) D^\sigma V_2 D^{-\sigma}$ is uniformly bounded in $L^2\to L^2$ for $\Im z \ge \delta - |b| \min\{\a, 1\}$. Then via the reduction as in proving \eqref{eqcpt3} in Lemma \ref{lempotcpt2} (ii), it suffices to show for $W_1, W_2 \in C^\infty_c(\RR^d)$, any $[a_1, a_2] \subset (0, \infty)$,
	\be \left\| \int_{a_1}^{a_2} e^{b\sigma t} e^{itz} D^\sigma W_{1}  e^{it\Delta_b} W_{2} D^{-\sigma} f dt \right\|_{L^\infty} = o_{|z|\to +\infty}(1) \| f \|_{L^1}. \label{eqdecay1}\ee

	Following the computation in proving \eqref{eqcpt3}, we define 
	    \be \Phi_t(x, y):=e^{i\frac{b|x e^{-bt}- y|^2}{2(1-e^{-2bt})} },\,\, A(t):= \left|\frac{e^{bt} - e^{-bt}}{2b} \right|^{-\frac{d}{2}}, \,\, F_t(x, y):= W_1(x)\Phi_t(x, y) W_2(y).\label{eqdecay21}\ee
	and then we can write 
	\bee
	 (D^\sigma W_1 e^{it\Delta_b} W_2 D^{-\sigma} f)(x)
	= \int_{\RR^d} K_t(x, z)  f(z)dz
	\eee
	where
	\[ K_t(x, z) := A(t) e^{b\sigma t} \int_{\RR^d} \int_{\RR^d} \frac{F_t(x+w, y) - F_t(x, y) }{|w|^{d+\sigma}}\frac{1}{|y-z|^{d-\sigma}} dy dw. \] 
	We claim
		\be \sup_{t \in [a_1, a_2]} \left (\|K_t (x, z) \|_{L^\infty(\RR^d \times \RR^d)} + \|\pa_t K_t (x, z) \|_{L^\infty(\RR^d \times \RR^d)} \right )  < \infty. \label{eqdecay23} \ee
	Indeed, note that $F_t, \pa_t F_t \in C^\infty_c(\RR^d \times \RR^d)$ whose support is independent of $t$ and $A(t) \in C^\infty([a_1, a_2])$. We can estimate them similarly as in Lemma \ref{lempotcpt2} (ii) to conclude \eqref{eqdecay23}. 
	
	Now we are in place to use a contradictory argument to show \eqref{eqdecay1}. If not, then there exists $\epsilon_0 > 0$ and sequences of $t_n \in [a_1, a_2]$, $z_n \in \{\Im z \ge \delta_0 - |b|\min\{\a, 1\}\}$ such that $|z_n| \to +\infty$ and for any $n$,
	\be  \left\| \int_{a_1}^{a_2} e^{b\sigma t} e^{itz_n} D^\sigma W_{1}  e^{it\Delta_b} W_{2} D^{-\sigma} f dt \right\|_{L^\infty} \ge \epsilon_0 \| f \|_{L^1}.  \label{eqdecay2}\ee 
	Firstly, \eqref{eqdecay2} and \eqref{eqdecay23} indicate a uniform bound of $\Im z_n$ since
	 \bee 
	  &&\left\| \int_{a_1}^{a_2} e^{b\sigma t} e^{itz_n} D^\sigma W_{1}  e^{it\Delta_b} W_{2} D^{-\sigma} f dt \right\|_{L^\infty} \\
	  &\le& \sup_{x\in \RR^d} \int_{a_1}^{a_2} e^{-t\Im z_n} \int_{\RR^d} |K_t(x, z)| |f(z)| dz dt 
	  = o_{\Im z_n \to +\infty}(1) \|f\|_{L^1}.
	 \eee
	 So we can take a subsequence such that $\Im z_n \to c_1 \in [\delta - |b| \min\{\a, 1\}, \infty)$ and $\Re z_n \to +\infty$. Now decompose
	\bea
	 && \int_{a_1}^{a_2} e^{itz_n} D^\sigma W_{1}  e^{it\Delta_b} W_{2} D^{-\sigma} f dt  \nonumber\\ 
	 &=& \int_{a_1}^{a_2} (e^{itz_n} - e^{it(\Re z_n + ic_1)}) D^\sigma W_{1}  e^{it\Delta_b} W_{2} D^{-\sigma} f dt \label{eqdecay3} \\
	 && + 
	 \int_{a_1}^{a_2} e^{it(\Re z_n + ic_1)} D^\sigma W_{1}  e^{it\Delta_b} W_{2} D^{-\sigma} f dt. \label{eqdecay4}
	\eea
	 Evidently, \eqref{eqdecay3} is $o_n(1) \| f\|_{L^1}$ since $\sup_{t \in [a_1, a_2]} |e^{itz_n} - e^{it(\Re z_n + ic_1)}| = o_n(1)$. For \eqref{eqdecay4}, we first estimate the kernel 
	 \bee 
	  && \left| \int_{a_1}^{a_2} e^{it\Re z_n} e^{-tc_1} K_t(x, z) dt  \right| \\
	  &=& (i\Re z_n)^{-1}\left|  \left(e^{it\Re z_n} e^{-tc_1} K_t(x, z)\right)\bigg|^{a_2}_{t = a_1} - \int_{a_1}^{a_2} e^{it\Re z_n} \partial_t (e^{-tc_1} K_t(x, z)) dt  \right| \\
	  & \lesssim_{a_1, a_2, c_1} & (\Re z_n)^{-1} \sup_{t\in[a_1, a_2]} \left( \|K_t(x, z)\|_{L^\infty}  + \|\pa_t K_t(x, z)\|_{L^\infty} \right)  = o_{n}(1)
	 \eee
	 where the boundedness comes from \eqref{eqdecay23}.
	 These two vanishing integrals contradict with \eqref{eqdecay2} and hence close this lemma. 
\end{proof}

Using the inversion property of $R_b^\pm(z)$ Lemma \ref{leminv}, we have the following stronger qualitative decay for $f$ with better regularity. 

\begin{lemma}[Quantitative decay of extended resolvent families]\label{lemopdecay2}
    For $d \ge 1$, $p \in \left[2, 2^*\right)$ and $y > -|b|\left(\frac d2 - \frac dp\right)$, we have for $f \in \mathscr{S}(\RR^d)$
    \begin{align}
     \label{eqCauchy1} \left\| R_b^\pm(\l \pm iy) f \right\|_{L^{p}} &\lesssim  \la \l \ra^{-1} \left( \| f \|_{L^p \cap L^{p'}} +  \|\Delta_b f \|_{L^{p'}} \right) \\
    \| (R_b^+(\l + iy) - R_b^-(\l - iy) ) f \|_{L^p} &\lesssim \la \l \ra^{-2} \left( \| f \|_{L^p \cap L^{p'}} +  \|\Delta_b f \|_{L^{p'}} + \|\Delta_b^2 f \|_{L^{p'}} \right) \label{eqCauchy2}
    \end{align}
\end{lemma}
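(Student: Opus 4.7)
The plan is to prove both bounds by iterating the inversion identity of Lemma~\ref{leminv} and combining it with the uniform boundedness \eqref{eqresunibdd} of the extended resolvents from $L^{p'}$ to $L^p$; since $f \in \mathscr{S}(\RR^d)$ every $\Delta_b^k f$ lies in $L^{p'}$, so all the manipulations are legitimate.

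For \eqref{eqCauchy1} I split on $|\lambda|$. When $|\lambda|\le 1$ the hypothesis $y>-|b|(d/2-d/p)$ leaves a strictly positive gap, so \eqref{eqresunibdd} gives $\|R_b^\pm(z) f\|_{L^p}\lesssim \|f\|_{L^{p'}}$, which matches $\la\lambda\ra^{-1}\sim 1$. When $|\lambda|\ge 1$, Lemma~\ref{leminv} rewrites
\[ R_b^\pm(z) f = -z^{-1} f - z^{-1} R_b^\pm(z) \Delta_b f; \]
taking $L^p$-norms and using \eqref{eqresunibdd} on the second term yields $\|R_b^\pm(z) f\|_{L^p} \lesssim |z|^{-1}(\|f\|_{L^p}+\|\Delta_b f\|_{L^{p'}})$, and $|z|^{-1}\sim\la\lambda\ra^{-1}$ closes the estimate.

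For \eqref{eqCauchy2} I treat $y>0$ in detail ($y<0$ is symmetric by interchanging the roles of the two families, and $y=0$ is handled by pure iteration). With $z=\lambda+iy$ and $w=\lambda-iy$, the resolvent identity of Lemma~\ref{lemresident}(ii) gives $R_b^+(z)-R_b^-(w) = 2iy\,R_b^+(z)R_b^-(w)$. Iterating Lemma~\ref{leminv} twice on $R_b^-(w)f$ yields
\[ R_b^-(w)f = -w^{-1}f + w^{-2}\Delta_b f + w^{-2}R_b^-(w)\Delta_b^2 f, \]
and re-applying the identity to rewrite $R_b^+(z)R_b^-(w)\Delta_b^2 f = (2iy)^{-1}(R_b^+(z)-R_b^-(w))\Delta_b^2 f$, I arrive at
\[ R_b^+(z)f - R_b^-(w)f = -2iy\,w^{-1}R_b^+(z)f + 2iy\,w^{-2}R_b^+(z)\Delta_b f + w^{-2}(R_b^+(z)-R_b^-(w))\Delta_b^2 f. \]
For $|\lambda|\ge 1$ the three $L^p$-norms are controlled respectively by $|yw^{-1}|\cdot\la\lambda\ra^{-1}(\|f\|_{L^p\cap L^{p'}}+\|\Delta_b f\|_{L^{p'}})$ via \eqref{eqCauchy1}, by $|yw^{-2}|\,\|\Delta_b f\|_{L^{p'}}$ via \eqref{eqresunibdd}, and by $|w|^{-2}\,\|\Delta_b^2 f\|_{L^{p'}}$ via \eqref{eqresunibdd}; each is $\lesssim\la\lambda\ra^{-2}$ times the right-hand side of \eqref{eqCauchy2}. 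For $|\lambda|\le 1$ one simply applies \eqref{eqCauchy1} to each term; for $y=0$ pure iteration gives $T(f) := R_b^+(\lambda)f - R_b^-(\lambda)f = \lambda^{-2} T(\Delta_b^2 f)$, which closes likewise.

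The main delicacy is the choice of decomposition. A na\"ive double iteration of Lemma~\ref{leminv} inside each resolvent separately produces an $O(\lambda^{-3})\|\Delta_b f\|_{L^p}$ contribution that is \emph{not} controlled by the right-hand side of \eqref{eqCauchy2}. Routing through Lemma~\ref{lemresident} first extracts the factor $2iy$ upfront, ensuring that every remaining operator is $R_b^+(z)$ (or the difference) acting on $\Delta_b^k f$, whose $L^{p'}\to L^p$ norm is already uniformly bounded --- this is how the mixed $L^p \cap L^{p'}$ versus $L^{p'}$ structure on the right-hand side emerges.
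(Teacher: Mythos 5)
Your proof of \eqref{eqCauchy1}, and of \eqref{eqCauchy2} for $y>0$ and $y=0$, is correct, and it follows a genuinely different route from the paper. The paper iterates the inversion identity organized around the real part only, i.e.\ it writes $-\Delta_b-(\lambda\pm iy)=(-\Delta_b\mp iy)-\lambda$ and uses \eqref{eqCauchy3} twice; then in the difference the $\lambda^{-1}f$ terms and, crucially, the $\lambda^{-2}\Delta_b f$ terms cancel identically, leaving only $2iy\,\lambda^{-2}f$ plus $\lambda^{-2}R_b^{\pm}$ applied to $(\Delta_b\pm iy)^2f$, so \eqref{eqCauchy2} follows from \eqref{eqresunibdd} alone, uniformly in the whole admissible range of $y$ and with no cross-family identity. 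You instead iterate in $z$, correctly observe that this leaves an $O(\lambda^{-3})\|\Delta_b f\|_{L^p}$ residue not present on the stated right-hand side, and repair it by extracting the factor $2iy$ through Lemma~\ref{lemresident}(ii); this works, at the cost of an extra ingredient. One technical caveat: in your application $z'=-w=-\lambda+iy$ has the same imaginary part as $z$, so the blanket hypothesis $\Im z\neq\Im z'$ of Lemma~\ref{lemresident} is not literally satisfied; the proof of part (ii) only uses $\Im z>\Im w$ together with the condition on $p$ (which is exactly $y>-|b|\left(\frac d2-\frac dp\right)$), so this is harmless, but you should point it out rather than cite the lemma as a black box.

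There is, however, a genuine gap at $y<0$, which the statement allows whenever $p>2$. You dismiss this case as ``symmetric by interchanging the roles of the two families'', but no such symmetry exists: Lemma~\ref{lemresident}(ii) is established only when the argument of $R_b^+$ has strictly larger imaginary part than that of $R_b^-$, and its proof genuinely needs $\Im z>\Im w$ for the Fourier-side compositions to converge (conditions \eqref{eqRbcond+}--\eqref{eqRbcond-} fail in the reversed configuration). For $y<0$ one has $\Im(\lambda+iy)<\Im(\lambda-iy)$, which is precisely the forbidden configuration, and swapping the $+$ and $-$ families changes the operator you are estimating, not the hypothesis; duality ($R_b^+(z)^*=R_b^-(\bar z)$) only transfers operator norms, not the $\lambda$-decay obtained from the regularity of $f$. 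The fix is immediate and is exactly the paper's device: iterate \eqref{eqCauchy3} (the $\lambda$-based inversion identity) twice in each family; then the $\Delta_b f$ coefficients match and cancel exactly for every admissible $y$ — your $y=0$ computation is the special case $z=w$ of this — and the remaining terms are controlled by \eqref{eqresunibdd} with constants depending on $y$, as in the statement.
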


\begin{proof}
	Firstly, from the boundedness Lemma \ref{lemRbbdd}, we have the left-hand side of both cases bounded by $\| f \|_{L^{p'}}$ with a constant independent of $\l$. We will then focus on the decay of $\l$ with $\l \neq 0$.
	
	Since $f \in \mathscr{S}(\RR^d)$ and thus $\Delta_b f\in \mathscr{S}(\RR^d)$, the equation \eqref{eqinv2} indicates
	\be R_b^\pm(\l \pm iy) f = -\l^{-1} f + \l^{-1} R_b^\pm(\l \pm iy) \left(-\Delta_b \mp iy \right)f. \label{eqCauchy3}\ee
	It indicates
	\[ \| R_b^\pm (\l + iy) f\|_{L^{p}} \lesssim_y |\l|^{-1}\left( \|  f \|_{L^p \cap L^{p'}} + \| \Delta_b f \|_{L^{p'}} \right) \]
	which confirms \eqref{eqCauchy1}. This equation and $\Delta_b^2 f \in \mathscr{S}(\RR^d)$ also implies
	\bee
	&&\left( R_b^+ (\l + iy) - R_b^-(\l-iy)  \right)f \\
	&=& \l^{-2} \left[ R_b^+(\l + iy) \left( \Delta_b^2 +2iy\Delta_b -y^2 \right)f + R_b^-(\l - iy) \left( -\Delta_b^2 +2iy\Delta_b +y^2 \right)f +2iy f  \right]
	 \eee
	 Hence we get the desired decay in \eqref{eqCauchy2}
	 \[ \| \left( R_b^+ (\l + iy) - R_b^-(\l-iy)  \right)f \|_{L^{p}} \lesssim_y |\l|^{-2} \left( \| f \|_{L^p \cap L^{p'}} + \| \Delta_b f \|_{L^{p'}}+ \| \Delta_b^2 f \|_{L^{p'}} \right). \]
\end{proof}

\section{Spectral analysis of $\calH$}\label{s4}

In this section, we work on the matrix operator $\calH$ \eqref{Hb} with the following assumption as in Theorem \ref{thmStrichartz}:
\be \left \{ \begin{array}{l}
	d \ge 1,\quad  b > 0, \quad 0 < s_c < \min \left \{\frac d2, 1 \right \},\\
	W_1, W_2 {\rm \,\, satisfy \,\,\eqref{eqpotbd0} \,\,with}\,\, \a > 0.
 \end{array} \right .
\label{eqassump}
\ee
 The goal of this section is characterization of the spectrum and spectral projections of $\calH$ (Proposition \ref{prop3} and Proposition \ref{prop4}). As a corollary, we show the singularity in the extended resolvent of $\calH$ can be removed by applying $P_{\rm ess}$ (Corollary \ref{coroSbVbdd}).  \\

As preliminary, we define the matrix counterparts of $\Delta_b$, $e^{it\Delta_b}$ and $R_b^\pm(z)$ as
\bea
\calH_b &:=& \left( \begin{array}{cc} \Delta_b -1 &  \\ &  -\Delta_{-b} +1\end{array} \right)\\
e^{it\calH_b} &=& \left( \begin{array}{ll} e^{it(\Delta_b - 1)} &  \\ & e^{-it(\Delta_{-b} -1)}  \end{array} \right) \\
\calS_b^\pm (z) &:=& \pm i\int_0^{\infty}e^{\mp it\calH_b} e^{\pm itz}dt = \left( \begin{array}{ll} -R_b^\mp(-1-z) &  \\ & R_{-b}^\pm(-1+z)  \end{array} \right).\label{eqSb}
\eea
Then all the results in Section \ref{s2}-\ref{s3} can be easily transferred to the matrix case. In particular, we have the commutator relations from Lemma \ref{lemcom}
	\be
D^\sigma \calH_b D^{-\sigma} = \calH_b + ib\sigma,\quad  D^\sigma e^{it\calH_b} D^{-\sigma} = e^{it\calH_b} e^{-b\sigma t},\quad D^\sigma \calS_b^\pm(z) D^{-\sigma} = \calS_b^\pm (z - ib\sigma). \label{eqcomSb}
\ee
We also decompose the matrix potential $V$ as
\bea V = V_1 V_2,\quad V_1 = \left( \begin{array}{ll} 1 &  \\ &  -1\end{array} \right) \left( \begin{array}{ll} W_1 & W_2  \\ \overline{W}_2 & W_1   \end{array} \right)^{\frac 12},\quad V_2 = \left( \begin{array}{ll} W_1 & W_2  \\ \overline{W}_2 & W_1   \end{array} \right)^{\frac 12}.\label{eqV1V2}
\eea
Then the $V_1$ and $V_2$ will satisfy \eqref{eqpotbd0} for order $\frac{\a}{2} > 0$.

\subsection{Characterization of exceptional sets}

In this subsection, we first use analytic Freholm theory to discuss exceptional values of the operator $I + V_2 \calS_b^\pm(z+ibs_c) V_1$ and identify a spectral gap.\\

To begin with, we take
\be \left| \begin{array}{l}
     \delta_0 := \frac{1}{16} \min \left\{ \frac d2 -s_c , 1-s_c,  \a \right\} > 0,  \\
     p_0 := \left( \max \left\{ \frac{d-\a}{2d}, \frac{d-2}{2d}, \frac{\sigma}{d} \right\} + \frac{\delta_0 s_c}{d^2 (p+1)}\right)^{-1} \in \left ( 2, \frac{2d}{d-2}\right ), \\
     \tilde p_0 := \frac{2d}{d-6\delta_0} \in (2, p_0),
\end{array}\right.\label{eqdelta0} \ee
and analyze $I + V_2 \calS_b^\pm(z+ibs_c) V_1$ in $\dot{H}^\sigma$ for $\sigma \in [s_c, s_c + \delta_0]$ for $\pm \Im z \ge -\delta_0 b$. We stress that $\delta_0$ and therefore the range of $z$ is independent of $\sigma$, while $p_0$ depends on $\sigma$. 

\begin{lemma}
	\label{prop1} Assume \eqref{eqassump}. Let $\delta_0$ given as in \eqref{eqdelta0}. For any $\sigma \in [s_c, s_c+\delta_0]$,
	the two families of operators $I+V_2 \calS_b^{\pm}(z +ibs_c) V_1$ are uniformly bounded analytic Fredholm operators in $\L(\dot{H}^\sigma)$ for $\pm\Im z \ge -\delta_0 b$ respectively. Moreover, they are invertible when $|z|$ is large enough, and their inversions are uniformly bounded. 
\end{lemma}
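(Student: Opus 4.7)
The whole argument runs through a single conjugation and reuses the scalar resolvent analysis of Sections \ref{s2}--\ref{s3}. By the matrix commutator identity \eqref{eqcomSb}, the isometry $D^\sigma:\dot H^\sigma\to L^2$ conjugates
\[
I+V_2\,\calS_b^\pm(z+ibs_c)\,V_1 \quad\text{on }\dot H^\sigma
\]
into
\[
I+\widetilde V_2\,\calS_b^\pm\bigl(z-ib(\sigma-s_c)\bigr)\,\widetilde V_1 \quad\text{on }L^2,
\]
with $\widetilde V_i:=D^\sigma V_i D^{-\sigma}$. For $\sigma-s_c\in[0,\delta_0]$ and $\pm\Im z\ge -\delta_0 b$, the shifted argument $w:=z-ib(\sigma-s_c)$ satisfies $\pm\Im w\ge -2\delta_0 b$. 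The choice \eqref{eqdelta0} is made precisely so that $2\delta_0<\min\{\alpha,1,\tfrac{d}{2}-\sigma\}$ uniformly on this range, placing each scalar entry of $\calS_b^\pm(w)$ in the extended-resolvent regime of Lemmas \ref{lemRbbdd}, \ref{lempotcpt2}, \ref{lemanalyticity}, \ref{lemopdecay}. From now on I argue on $L^2$.

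For the Fredholm property and uniform boundedness, note that the factorization \eqref{eqV1V2} makes each entry of $\widetilde V_2\,\calS_b^\pm(w)\,\widetilde V_1$ an operator of the form $D^\sigma W D^{-\sigma}\,R_b^\mp(\cdot)\,D^\sigma W' D^{-\sigma}$ with $W,W'$ obeying \eqref{eqpotbd0} at order $\alpha/2$. Lemma \ref{lempotcpt2}(ii) then gives compactness on $L^2$, and \eqref{eqresunibdd} together with Lemma \ref{lempotbdd} delivers a bound uniform in $w$ and $\sigma$. Hence $I+V_2\,\calS_b^\pm(z+ibs_c)\,V_1$ is a compact perturbation of the identity on $\dot H^\sigma$, and so Fredholm of index $0$, uniformly in the parameter range. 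For analyticity, the only delicate point is that the scalar resolvents $R_b^\pm(\cdot)$ are not analytic in operator norm $\mathcal L(L^{p'}\to L^p)$, and Lemma \ref{lemanalyticity} only delivers their derivative in the weaker topology $\mathcal L(L^2_{\langle x\rangle^{\delta'}}\to L^{p_0})$ with $p_0,\delta'$ as in \eqref{eqdelta0}. The polynomial decay of $V_1,V_2$ bridges this gap: $\widetilde V_1$ maps $L^2$ into $L^2_{\langle x\rangle^{\delta'}}$ (for any $\delta'<\alpha/2$, using the commutator structure of $D^\sigma V_1 D^{-\sigma}$ together with \eqref{eqpotbd0}) and $\widetilde V_2$ maps $L^{p_0}$ into $L^2$ by Lemma \ref{lempotbdd}. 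Sandwiching the convergence in Lemma \ref{lemanalyticity} between these two operators upgrades the derivative $\widetilde V_2\,\calS_b^\pm(w)^2\,\widetilde V_1$ to a limit in $\mathcal L(L^2)$, giving $z$-analyticity of our family in the operator norm on $\dot H^\sigma$ (and continuity up to the boundary $\pm\Im z=-\delta_0 b$ by the analogous continuity in Lemma \ref{lemRbbdd}).

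Finally, invertibility for $|z|$ large is immediate from the matrix version of Lemma \ref{lemopdecay}: each scalar entry obeys $\|D^\sigma W D^{-\sigma}\,R_b^\mp(\cdot)\,D^\sigma W' D^{-\sigma}\|_{\mathcal L(L^2)}\to 0$ as the spectral parameter runs to infinity with imaginary part bounded below by $-\delta_0 b$, so Neumann series inverts $I+\widetilde V_2\,\calS_b^\pm(w)\widetilde V_1$ with norm of the inverse controlled by a universal constant once $|z|$ is large enough (uniformly in $\sigma$). The step I expect to demand the most care is the operator-norm analyticity: the bookkeeping requires simultaneously keeping track of the $L^{p_0'}$--$L^{p_0}$ mapping window of $\calS_b^\pm$, the weighted-$L^2$ image where Lemma \ref{lemanalyticity} lives, and the conjugation by $D^\sigma$ acting on $V_i$, and it is in reconciling these three that the quantitative choice of $\delta_0$ and $p_0$ in \eqref{eqdelta0} plays its decisive role.
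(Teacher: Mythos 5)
Your proposal is correct and follows essentially the same route as the paper: conjugate by $D^\sigma$ via \eqref{eqcomSb} to work on $L^2$ with shifted argument $z-ib(\sigma-s_c)$, get uniform boundedness from Lemma \ref{lemRbbdd} and Lemma \ref{lempotbdd}, compactness (hence Fredholm) from Lemma \ref{lempotcpt2}(ii), analyticity by sandwiching Lemma \ref{lemanalyticity} between the weighted bound on $D^\sigma V_1 D^{-\sigma}$ (the paper's \eqref{eqpotbdd2}) and $D^\sigma V_2 D^{-\sigma}\in\L(L^{p_0}\to L^2)$, and large-$|z|$ invertibility with uniform inverse by Lemma \ref{lemopdecay} and a Neumann series. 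The only differences are cosmetic (your weight exponent $\delta'<\alpha/2$ versus the paper's $4\delta_0$), and your bookkeeping is consistent with the constraint \eqref{eqanalyticitycond}.
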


\begin{proof}

	Using the commutator relation \eqref{eqcomSb}, the claim can be formulated for 
	\be T^\pm(z) := I + D^{\sigma} V_2 D^{-\sigma} \calS_b^{\pm} (z - ib(\sigma - s_c)) D^{\sigma} V_1 D^{-\sigma}\ee
	to be uniformly bounded, analytic Fredholm operator families on $L^2(\RR^d)$ with the same domain of $z$. For simplicity, we only prove the claim for $T^+(z)$. We denote $\delta := \sigma - s_c \in [0, \delta_0]$ and use the $p_0$ from \eqref{eqdelta0}.
	
	\emph{(1) Uniform boundedness.} From Lemma \ref{lemRbbdd} and the choice of $p_0$ \eqref{eqdelta0}, 
	$$ \sup_{\delta \in [0,\delta_0]} \sup_{\Im z \ge -\delta_0 b}\| \calS_b^{+}(z-ib\delta) \|_{L^{p_0'}\to L^{p_0}} \le \sup_{\Im z \ge -2\delta_0 b}\| \calS_b^{+}(z) \|_{L^{p_0'}\to L^{p_0}} < \infty. $$
	From Lemma \ref{lempotbdd}, $D^\sigma V_{i} D^{-\sigma}$ is bounded from $L^2$ to $L^{p_0'}$ and $L^{p_0}$ to $L^2$ both for $i = 1, 2$. These two bounds imply the uniform boundedness of $\| T^+(z) \|_{\L(L^2(\RR^d))}$ for $\Im z \ge -b\delta_0$. \\
	
	\emph{(2) Analyticity.} 
	Since $4\delta_0 < \min\{\a/2, 1\}$, \eqref{eqpotbdd2} implies that $D^\sigma V_1 D^{-\sigma}$ is bounded from $L^2$ to $L^2_{\la x \ra^{4\delta_0}}$. Noticing that $4 \delta_0 > 3\delta_0 = \frac d2 - \frac d{\tilde p_0} > 2\delta_0 > -\frac{\Im z}{b}$, Lemma \ref{lemanalyticity} indicates that $\pa_z \calS_b^+(z-ib\delta)$ is bounded in $L^2_{\la x \ra^{4\delta_0}}$ to $L^{\tilde p_0}$. Combined with the boundedness $D^{\sigma }V_2 D^{-\sigma}\in \L(L^{\tilde p_0} \to L^2)$, we get the analyticity of $T^+$.  \\

	\emph{(3) Fredholm.} With $2\delta_0 < \min \{ \a, 1, \frac d2 - \sigma \}$, this directly follows the compactness Lemma \ref{lempotcpt2} (ii).\\
	
	\emph{(4) Invertibility and boundedness for $|z| \gg 1$.} Since $2\delta_0 < \min \{ \a/2, 1\}$, Lemma \ref{lemopdecay} implies $\| T^+(z) - I \|_{\L(\dot{H}^\sigma)} \to 0$ as $|z| \to +\infty$. Therefore it is invertible when $|z|$ is large enough and the inverse is uniformly bounded. 
\end{proof}

Now we can define the exceptional sets and values.

\begin{definition}[Exceptional set]
	Assume \eqref{eqassump} and let $\delta_0$ as in \eqref{eqdelta0}. the exceptional sets of $\calH$ in $\dot{H}^\sigma$  are defined as 
	\be \calE^\pm_\sigma := \left\{ z \in \CC, \pm \Im z \ge -b\delta_0: I+V_2 \calS_b^\pm (z+ibs_c)V_1 \in \L(\dot{H}^\sigma) \,\,\text{is  not invertible}  \right\} \label{eqdefexceptional}\ee
	and their elements are called exceptional values. 
	\end{definition}

Lemma \ref{prop1} then implies a uniform characterization of these exceptional sets.

\begin{lemma}[Gap of exceptional set]\label{lemgap}
	Assume \eqref{eqassump} and let $\delta_0$ as in \eqref{eqdelta0}. Then $\bigcup_{\pm} \bigcup_{\sigma \in [s_c, s_c+\delta_0]} \calE^\pm_{\sigma}$ is a finite discrete set. Moreover, there exists $0 < \delta_1 \le \delta_0$ such that 
	\be \left( \bigcup_{\pm} \bigcup_{\sigma \in [s_c, s_c+\delta_0]} \calE^\pm_{\sigma} \right) \cap \{z\in \CC : 0 < \Im z < b\delta_1 \} = \emptyset. \label{eqdelta1} \ee
	\end{lemma}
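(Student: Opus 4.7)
The plan is a three-step argument: first deduce finiteness of each individual $\calE^\pm_\sigma$ from the analytic Fredholm theorem, then show the exceptional sets coincide across $\sigma\in[s_c,s_c+\delta_0]$ by bootstrapping null vectors so that the union collapses to a single finite set, then extract $\delta_1$ as an immediate consequence of finiteness.

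For the first step, fix $\sigma$ and a sign. By Lemma \ref{prop1}, $T^\pm_\sigma(z):=I+V_2\calS_b^\pm(z+ibs_c)V_1$ is an analytic Fredholm family in $\L(\dot{H}^\sigma)$ on $\{\pm\Im z\ge -b\delta_0\}$ (the analyticity in fact extending to a slightly larger open neighborhood, thanks to the slack $\delta_0<|b|\min\{d/2,1\}$ built into \eqref{eqdelta0}), and $T^\pm_\sigma(z)$ is invertible for $|z|$ sufficiently large by Lemma \ref{lemopdecay}. Since $T^\pm_\sigma(z)-I$ is compact by Lemma \ref{lempotcpt2}(ii), the analytic Fredholm theorem forces $\calE^\pm_\sigma$ to be discrete in the open neighborhood; combined with invertibility at infinity, $\calE^\pm_\sigma$ is bounded, hence compact and finite.

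For the second step, suppose $z_0\in\calE^\pm_\sigma$ with a nonzero null vector $f\in\dot{H}^\sigma$ satisfying $f=-V_2\calS_b^\pm(z_0+ibs_c)V_1 f$. I would bootstrap $f$ into additional regularity and decay: each application of $V_i$ contributes polynomial weighted $L^2$ decay via \eqref{eqpotbd0}, and each subsequent application of the extended resolvent converts accumulated weighted decay into additional fractional regularity through Lemma \ref{lemimpreg} (the hypothesis $\Im(z_0+ibs_c)/b>\delta$ holds directly when $s_c>\delta_0$, and is arranged in the remaining case by iterating the identity enough times to accumulate potential decay before invoking the decay improvement). After finitely many iterations, $f$ lies in $\dot{H}^{\sigma'}$ for every $\sigma'\in[s_c,s_c+\delta_0]$, so $z_0\in\calE^\pm_{\sigma'}$ for every such $\sigma'$. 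Combined with step one, the union $S:=\bigcup_\pm\bigcup_\sigma\calE^\pm_\sigma$ is a single finite discrete set.

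For the final step, set $m:=\min\{\Im z:z\in S,\ \Im z>0\}$ when this set is nonempty and $m:=+\infty$ otherwise, and take $\delta_1:=\min\{\delta_0,m/(2b)\}>0$. Then $S\cap\{0<\Im z<b\delta_1\}=\emptyset$ by construction, which is \eqref{eqdelta1}. The main obstacle will be the bootstrap of step two: when $s_c\le\delta_0$ the positivity hypothesis needed for Lemma \ref{lemimpreg} can marginally fail near the lower edge $\Im z_0=-b\delta_0$, forcing one to split $V_i=V_i'V_i''$ via Lemma \ref{lempotbdd} and iterate the fixed-point identity $f=-V_2\calS_b^\pm(z_0+ibs_c)V_1 f$ several times to accumulate enough weighted decay from the potentials before invoking the resolvent's decay improvement at an effectively shifted imaginary part.
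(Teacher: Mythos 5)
Your Step 1 (finiteness of each $\calE^\pm_\sigma$ via Lemma \ref{prop1}, compactness, analytic Fredholm theory and invertibility at infinity) and your Step 3 (extracting $\delta_1$ from finiteness) are fine and match the paper. The gap is in Step 2, which is the heart of the lemma: you claim that a null vector $f\in\dot{H}^\sigma$ of $I+V_2\calS_b^\pm(z_0+ibs_c)V_1$ can be bootstrapped into $\dot{H}^{\sigma'}$ for every $\sigma'\in[s_c,s_c+\delta_0]$ by combining the decay of the potentials with Lemma \ref{lemimpreg}. But Lemma \ref{lemimpreg} is only valid when the (suitably shifted) spectral parameter has imaginary part strictly above a positive threshold, $\Im z/|b|>\delta>3\delta'$; this is exactly why, in Proposition \ref{prop3}(2), the paper applies it only to the \emph{truncated} exceptional set $\tilde{\calE}_\sigma$, at positive distance above the essential spectrum line. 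The exceptional values you must handle here live anywhere in $\{\pm\Im z\ge -b\delta_0\}$ — in particular at or below the relevant threshold and in the very strip $0<\Im z<b\delta_1$ the lemma is about — and for those the decay-improvement estimate is simply unavailable. Your proposed workaround, iterating $f=-V_2\calS_b^\pm(z_0+ibs_c)V_1 f$ to ``accumulate potential decay before invoking the decay improvement at an effectively shifted imaginary part,'' does not repair this: the spectral parameter $z_0$ is fixed, iteration does not shift it, and applying the extended resolvent at an unfavorable $z_0$ does not preserve the weights gained from the potentials (that non-preservation is precisely the content of the restriction in Lemma \ref{lemimpreg}). There is also a directionality problem: weighted decay of $D^\sigma f$ pushes you, via $L^2_{\la x\ra^\gamma}\hookrightarrow L^{2-\e}$ and Sobolev embedding, toward \emph{lower} regularity $\dot{H}^{\sigma'}$ with $\sigma'<\sigma$, so even in the favorable regime your argument does not produce membership in $\dot{H}^{\sigma'}$ for $\sigma'>\sigma$, which you need to claim the sets coincide.

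The paper avoids all of this with a much softer observation: the operator family is defined on the whole scale $\dot{H}^\sigma$, $\sigma\in[s_c,s_c+\delta_0]$, with $z$ ranging over a region independent of $\sigma$, so if $I+V_2\calS_b^\pm(z+ibs_c)V_1$ is invertible on both endpoint spaces $\dot{H}^{s_c}$ and $\dot{H}^{s_c+\delta_0}$, then by interpolation (of the two coinciding inverses) it is invertible on every intermediate $\dot{H}^\sigma$. Contrapositively, $\calE^\pm_\sigma\subset\calE^\pm_{s_c}\cup\calE^\pm_{s_c+\delta_0}$ for all $\sigma$, so the union over $\sigma$ is contained in a finite set and the strip $\{0<\Im z<b\delta_1\}$ can be chosen to avoid it. Note this only needs the containment in the endpoint exceptional sets, not your much stronger (and unproven) claim that the exceptional sets coincide across $\sigma$. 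If you want to salvage your write-up, replace Step 2 by this interpolation argument; the bootstrap machinery belongs to the later analysis of the truncated exceptional set, not here.
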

\begin{proof}
	For every $\sigma \in [s_c, s_c+\delta_0]$ and each sign of $\pm$, Lemma \ref{prop1} and analytic Fredholm theory (see for example \cite[Theorem VI.14]{reed1972methods}) indicate that $\calE^{\pm}_\sigma$ is a discrete, bounded (hence finite) set in $\{z \in \CC: \pm \Im z \ge -b\delta_0 \}$. By interpolation, $I+V_2 \calS_b^\pm (z+ibs_c)V_1$ is invertible on $\dot{H}^{s_c}$ and $\dot{H}^{s_c+\delta_0}$ implies its invertibility on $\dot{H}^\sigma$ for all $\sigma \in [s_c, s_c+\delta_0]$, so
	\[ \bigcup_{\pm} \bigcup_{\sigma \in [s_c, s_c+\delta_0]} \calE^\pm_{\sigma} \subset \bigcup_{\pm} \left( \calE^\pm_{s_c} \cup \calE^\pm_{s_c + \delta_0}  \right) \]
	is still a finite discrete set, which guarantees the existence of $\delta_1$ and \eqref{eqdelta1}.
	\end{proof}

From now on, we will only consider $\sigma \in (s_c, s_c+\delta_1)$ so that the exceptional set is away from $\sigma\left((\calH_b+ibs_c)\big|_{\dot{H}^\sigma}\right) = \RR + ib(\sigma-s_c)$, which will be identified as $\sigma_{\rm ess}(\calH\big|_{\dot{H}^\sigma} )$.
 
\subsection{Characterization of the spectrum and spectral projections of $\calH$}
In this subsection, we will connect the operator $I + V_2 \calS_b^\pm(z+ibs_c)V_1$ to extended resolvents of $\calH = \calH_{b} -ibs_c + V$ on $\dot{H}^\sigma$ via (formal) resolvent identities \eqref{eqSbV}, and then prove the goal of this section Proposition \ref{prop3} and Proposition \ref{prop4}. This connection is made clear in the following lemma.

\begin{lemma}[Extended resolvents for $\calH$] \label{prop2} Assume \eqref{eqassump} and take $\delta_0, p_0, \tilde p_0$ from \eqref{eqdelta0} and $\delta_1$ from Lemma \ref{lemgap}.
	For $\sigma \in (s_c, s_c+\delta_1)$, we define 
	\be \calO_\sigma^{\pm} := \{ z\in \CC: \pm \Im z \ge -b\delta_0\} - \calE_\sigma^\pm \label{eqzrange} \ee
	and define the extended resolvent families of $\calH$ as
	\be \calS_{b,V}^\pm(z+ibs_c) := \calS_b^\pm (z+ibs_c) -  \calS_b^\pm (z+ibs_c) V_1 (I + V_2  \calS_b^\pm (z+ibs_c) V_1 )^{-1} V_2 \calS_b^\pm(z+ibs_c)   \label{eqSbV} \ee
	for $z \in \calO_{\sigma}^\pm$. Then they satisfy the following properties:
	\begin{enumerate}
		\item {\rm Well-definedness and boundedness:} $\calS^\pm_{b, V}(z+ibs_c) \in \L(\dot{W}^{\sigma, p_0'} \to \dot{W}^{\sigma, p_0})$. Moreover, we have the following uniform boundedness for every $\nu > 0$
		\bea \sup_{\substack{ z: \pm \Im z \ge -b\delta_0 \\ \mathrm{dist}(z, \calE_\sigma^\pm) > \nu} } 
		\| \calS_{b, V}^\pm (z+ibs_c)\|_{\dot{W}^{\sigma, p_0'} \to \dot{W}^{\sigma, p_0} } < \infty;&& \\
		\sup_{\substack{ z: \pm (\Im z -  b(\sigma-s_c))\ge 0 \\ \mathrm{dist}(z, \calE_\sigma^\pm) > \nu} } 
		\| \calS_{b, V}^\pm (z+ibs_c)\|_{\dot{W}^{\sigma, p'} \to \dot{W}^{\sigma, p} } < \infty, && \forall \,2 < p \le p_0. \label{eqSbVbdd}
		\eea
		\item {\rm Resolvent: }when $ \pm \left(\Im z - b(\sigma - s_c)\right) > 0$, $\calS_{b, V}^\pm(z+ibs_c) = \left(\calH - z\right)^{-1} \in \L(\dot{H}^\sigma)$ is the resolvent of $\calH$ on $\dot{H}^\sigma$.
		\item {\rm Relation with $\calS_b^\pm(z)$: }
		\begin{align}
			\calS^\pm_{b, V}(z+ibs_c) &= \calS^\pm_{b}(z+ibs_c)  - \calS^\pm_{b}(z+ibs_c)  V \calS^\pm_{b, V}(z+ibs_c)  \label{eqSb1} \\
			&= \calS^\pm_{b}(z+ibs_c)  - \calS^\pm_{b, V}(z+ibs_c)  V \calS^\pm_{b}(z+ibs_c) \label{eqSb2}
			\\
			I - V_2 \calS_{b, V}^{\pm}(z+ibs_c) V_1 &=(I + V_2 \calS_b^\pm (z+ibs_c) V_1)^{-1} \in \L(\dot{H}^\sigma) \label{eqSb3}
		\end{align}
		\item {\rm Resolvent identity:} for $z, z' \in \calO_\sigma^{\pm}$, we have 
		\be
		\begin{aligned}
			\calS^\pm_{b, V}(z+ibs_c) - \calS^\pm_{b, V}(z'+ibs_c) &= (z-z')\calS^\pm_{b, V}(z+ibs_c) \calS^\pm_{b, V}(z'+ibs_c)\\
			&=(z-z')\calS^\pm_{b, V}(z'+ibs_c) \calS^\pm_{b, V}(z+ibs_c)
		\end{aligned} \label{eqSbVres1}
		\ee
		in $\L(\dot{W}^{\sigma, p_0'} \to \dot{W}^{\sigma, p_0})$. Moreover, if $z \in \calO_\sigma^-$ and $z' \in \calO_\sigma^+$ also satisfy
		$$\Im z' -b(\sigma-s_c) > \Im z - b(\sigma - s_c),$$ 
		then we also have
		\be 
		\begin{aligned}  \calS^-_{b, V}(z+ibs_c) - \calS^+_{b, V}(z'+ibs_c) &= (z-z')\calS^-_{b, V}(z+ibs_c) \calS^+_{b, V}(z'+ibs_c) \\
			&= (z-z')\calS^+_{b, V}(z'+ibs_c) \calS^-_{b, V}(z+ibs_c)
		\end{aligned}  \label{eqSbVres2}
		\ee
		in $\L(\dot{W}^{\sigma, p_0'} \to \dot{W}^{\sigma, p_0})$.
		\item {\rm Differentiability:} $z\mapsto \calS_{b, V}^\pm(z+ibs_c)$ is differentiable on $\{ z\in \CC: \pm \Im z \ge -b\delta_0\} - \calE_\sigma^\pm$ as bounded operators from $\dot{H}^\sigma_{\la x \ra^{4\delta_0}}$ to $\dot{W}^{\sigma, \tilde p_0}$. Moreover, for any $\delta \in (0, \delta_0)$, we define $p_\delta > 2$ as
		\be \frac d2 - \frac{d}{p_\delta} = \frac \delta 2.\ee
		Then $z\mapsto \calS^\pm_{b,V}(z+ibs_c)$ is also differentiable on $\{ z \in \CC : \pm \Im z \ge b(\sigma -s_c) \} - \calE_\sigma^\pm$ as bounded operators from $\dot{H}_{\la x \ra^\delta}^\sigma$ to $\dot{W}^{\sigma, p_\delta}$.
	\end{enumerate}
\end{lemma}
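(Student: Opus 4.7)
The plan is to prove the five assertions by chaining the Birman--Schwinger factorization $V=V_1V_2$, the scalar estimates of Sections \ref{s2}--\ref{s3}, and the analytic Fredholm setup of Lemma \ref{prop1}.

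For (1), the commutator relation \eqref{eqcomSb} combined with $\sigma-s_c\le\delta_0$ puts $z-ib(\sigma-s_c)$ in the admissible range of Lemma \ref{lemRbbdd}, giving $\mathcal{S}_b^\pm(z+ibs_c)\in\mathcal{L}(\dot W^{\sigma,p_0'}\to\dot W^{\sigma,p_0})$ uniformly for $\pm\Im z\ge -b\delta_0$. Lemma \ref{lempotbdd} then gives $V_1\colon\dot H^\sigma\to\dot W^{\sigma,p_0'}$ and $V_2\colon\dot W^{\sigma,p_0}\to\dot H^\sigma$ since $\alpha/2\ge 6\delta_0$, while the definition of $\mathcal{E}_\sigma^\pm$ supplies $(I+V_2\mathcal{S}_b^\pm V_1)^{-1}\in\mathcal{L}(\dot H^\sigma)$. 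Uniform boundedness on $\{\mathrm{dist}(z,\mathcal{E}_\sigma^\pm)>\nu\}$ combines the decay at $|z|\to\infty$ from Lemma \ref{prop1} with analytic Fredholm continuity of the inverse on compact subsets. The enlarged bound \eqref{eqSbVbdd} comes from reapplying Lemma \ref{lemRbbdd} in the better regime $\pm(\Im z-b(\sigma-s_c))\ge 0$, where $p$ may be chosen throughout $(2,p_0]$. Claims (2) and (3) are essentially algebraic: writing $T(z):=V_2\mathcal{S}_b^\pm(z+ibs_c)V_1$, direct expansion yields $V_2\mathcal{S}_{b,V}^\pm V_1=T(I+T)^{-1}=I-(I+T)^{-1}$, which is \eqref{eqSb3}; factoring $V=V_1V_2$ and substituting \eqref{eqSb3} back into the definition gives \eqref{eqSb1} and \eqref{eqSb2}; and for (2), when $\pm(\Im z-b(\sigma-s_c))>0$, $\mathcal{S}_b^\pm(z+ibs_c)$ is the genuine $\dot H^\sigma$-resolvent of $\mathcal{H}_b-ibs_c$ (by \eqref{eqcomSb} and Lemma \ref{leminv}), and applying $(\mathcal{H}-z)=(\mathcal{H}_b-ibs_c-z)+V$ to the defining formula while using \eqref{eqSb3} collapses to $I$ on both sides.

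For (4), I first verify \eqref{eqSbVres1} in the subregion where both $z,z'$ lie in the true resolvent set of $\mathcal{H}|_{\dot H^\sigma}$, where by (2) it reduces to the classical first resolvent identity, and then extend to all of $\mathcal{O}_\sigma^\pm\times\mathcal{O}_\sigma^\pm$ by the analyticity from (5); the mixed identity \eqref{eqSbVres2} is handled identically, substituting Lemma \ref{lemresident}(ii) for the same-family scalar identity. For (5), I differentiate the defining formula by product rule: Lemma \ref{lemanalyticity} together with \eqref{eqcomSb} gives differentiability of $z\mapsto\mathcal{S}_b^\pm(z+ibs_c)$ as an operator $\dot H^\sigma_{\langle x\rangle^{4\delta_0}}\to\dot W^{\sigma,p_0}$, Lemma \ref{prop1} and analytic Fredholm theory give differentiability of $(I+V_2\mathcal{S}_b^\pm V_1)^{-1}$ in $\mathcal{L}(\dot H^\sigma)$, and Lemma \ref{lempotbdd} with $4\delta_0\le\alpha/2$ ensures $V_2$ sends $\dot H^\sigma$ into $\dot H^\sigma_{\langle x\rangle^{4\delta_0}}$ so the composition makes sense. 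The $p_\delta$ variant is the same argument at weight scale $\delta$ rather than $4\delta_0$, which necessitates the slightly stronger restriction $\pm\Im z\ge b(\sigma-s_c)$ in order to invoke Lemma \ref{lemanalyticity} at that scale.

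The main obstacle is the rigorous meaning of the composition $\mathcal{S}_{b,V}^\pm(z)\mathcal{S}_{b,V}^\pm(z')$ in \eqref{eqSbVres1}--\eqref{eqSbVres2}, since $\dot W^{\sigma,p_0}$ does not embed into $\dot W^{\sigma,p_0'}$ and the composition is not a priori bounded between the spaces claimed in (1). My resolution is to expand one factor through \eqref{eqSb1}, reducing every composition to products of the form $V_2\mathcal{S}_b^\pm(z)\mathcal{S}_b^\pm(z')V_1$ flanked by potential factors; such products are bounded between the target spaces by the scalar resolvent identity of Lemma \ref{lemresident}, which collapses them to $(z-z')^{-1}V_2(\mathcal{S}_b^\pm(z)-\mathcal{S}_b^\pm(z'))V_1$. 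The full resolvent identities are then verified as equalities of bounded operators after testing against a dense class such as Schwartz functions and extending by density.
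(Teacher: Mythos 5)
Your proposal is correct and, for items (1)--(3) and (5), it follows essentially the same route as the paper: exceptional-set invertibility together with the bounds of Lemmas \ref{lemRbbdd} and \ref{lempotbdd} for (1), direct algebra with the factorization $V=V_1V_2$ for (2)--(3), and the combination of Lemma \ref{lemanalyticity}, \eqref{eqpotbdd2} and analytic Fredholm theory for (5), including the correct reason why the $p_\delta$ variant needs $\pm\Im z\ge b(\sigma-s_c)$ (condition \eqref{eqanalyticitycond}). The only genuine divergence is item (4): the paper verifies \eqref{eqSbVres1}--\eqref{eqSbVres2} by direct computation, expanding both factors through \eqref{eqSbV}/\eqref{eqSb1} and collapsing the scalar compositions with Lemma \ref{lemresident}, whereas you first invoke the classical first resolvent identity in the genuine resolvent region $\pm(\Im z-b(\sigma-s_c))>0$ and then extend by analyticity and density. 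Be careful with that continuation step: analyticity in $z$ of $z\mapsto\calS^\pm_{b,V}(z+ibs_c)\,\calS^\pm_{b,V}(z'+ibs_c)f$ does \emph{not} follow from item (5), because $\calS^\pm_{b,V}(z'+ibs_c)f$ only lies in $\dot{W}^{\sigma,p_0}$ and not in the weighted space $\dot{H}^\sigma_{\la x\ra^{4\delta_0}}$ that (5) requires; your own fallback --- expanding through \eqref{eqSb1} and collapsing $V_2\calS_b^\pm(z)\calS_b^\pm(z')V_1$ and $\calS_b^\pm(z)\calS_b^\pm(z')$ via Lemma \ref{lemresident} --- is precisely what restores term-by-term analyticity, but once that expansion is done you already have all the ingredients of the paper's direct verification, so the continuation detour buys nothing extra. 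Note also that the same-family scalar identity in Lemma \ref{lemresident} is stated only for $\Im z\neq\Im z'$, so the case $\Im z=\Im z'$ (which \eqref{eqSbVres1} nominally includes) must be handled through the weighted-space limiting argument of Lemma \ref{lemanalyticity}; this caveat applies equally to the paper's terse proof, so it is a point to make explicit rather than a defect of your approach.
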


We remark that the formula \eqref{eqSbV} can be derived formally through resolvent identities. Now we use it as a definition and then check the properties of resolvents via direct algebraic computation.

\begin{proof}\emph{(1) Well-definedness and boundedness.} The definition of exceptional set \eqref{eqdefexceptional} and Lemma \ref{prop1} imply that 
	$ (I + V_2  S_b^\pm (z+ibs_c) V_1 )^{-1} \in \L(\dot{H}^\sigma)$ 
	when $z\in \calO_\sigma^\pm$ and it is uniform bounded away from $\calE_\sigma^\pm$. Thus (1) easily follows the bounds of $\calS_{b}^\pm(z-ib(\sigma -s_c))$, $D^\sigma V_i D^{-\sigma}$ with $i=1,2$ as in the proof of (1) of Lemma \ref{prop1}. \\
	
	\emph{(2) Resolvent.} Similarly, when $\pm \left(\Im z - b(\sigma - s_c)\right) > 0$, Lemma \ref{lempotbdd} and Lemma \ref{lemRbbdd} imply boundedness on $\calS_b^\pm(z+ibs_c)$ and $\calS_{b, V}^\pm (z+ibs_c)$ in $\L(\dot{H}^\sigma)$.
	The inversion property follows $\calS_b^\pm(z+ibs_c) = (\calH_{b}- ibs_c - z)^{-1}$ and direct algebraic computation. For example, we check $(\calH-z) \calS_{b,V}^+(z+ibs_c) = I$ (where we denote $\tilde{z} := z + ibs_c$ for simplicity): 
	\bee 
	&&(\calH-z) \calS_{b,V}^+(\tilde{z})  \\
	&=& \left( \calH_b -\tilde{z} + V \right) \left( \calS_b^\pm (\tilde{z}) -  \calS_b^\pm (\tilde{z}) V_1 \left( I + V_2 \calS_b^\pm (\tilde{z}) V_1\right)^{-1} V_2 \calS_b^\pm(\tilde{z})  \right) \\
	&=& I + V\calS_b^\pm (\tilde{z})  - V_1 \left( I + V_2 \calS_b^\pm (\tilde{z}) V_1\right) \left( I + V_2 \calS_b^\pm (\tilde{z}) V_1\right)^{-1} V_2 \calS_b^\pm(\tilde{z}) = I
	\eee
	where every operator except $\calH_b$ makes sense in $\L(\dot{H}^\sigma)$. \\
	
	\emph{(3) Relation with $\calS_b^\pm(z)$.}  Those three identities \eqref{eqSb1}-\eqref{eqSb3} all follow direct computation, with boundedness in \eqref{eqSb3} indicated in (1). \\
		
	\emph{(4) Resolvent identity.} The resolvent identities for $\calS_{b, V}^\pm$ follow those of $\calS_b^\pm$ (Lemma \ref{lemresident}) and direct computation. \\
	
		\emph{(5) Analyticity.} Lemma \ref{prop1} and analytic Fredholm theory indicate that $(I +V_2 \calS_{b}^\pm(z+ibs_c)V_1 )^{-1}$ is a meromorphic $\L(\dot{H}^\sigma)$-valued function on $\{ z \in \CC: \pm \Im z \ge - b\delta_0 \}$ with finite rank poles $\calE^\pm_\sigma$. 
	As in the above proof of Lemma \ref{prop1} (2), Lemma \ref{lemanalyticity} and \eqref{eqpotbdd2} imply respectively the boundedness of $\pa_z \calS_{b}^\pm(z+ibs_c)$ in $\L(\dot{H}^\sigma_{\la x \ra^{4\delta_0}} \to \dot{W}^{\sigma, p_0})$ and boundedness $V_i \in \L(\dot{H}^\sigma \to \dot{H}^\sigma_{\la x \ra^{4\delta_0}})$, which implies the differentiablity and boundedness for $\pm \Im z \ge -b\delta_0$.
	
	When $\pm \Im z \ge b(\sigma -s_c)$, any triplet $(z, p_\delta, \delta)$ with $0 < \delta < \delta_0$ will satisfy the condition \eqref{eqanalyticitycond} of Lemma \ref{lemanalyticity}. Then the analyticity follows a similar argument.
\end{proof}

We are ready to prove the main proposition of this subsection which characterizes the spectrum and corresponding Riesz projection for $\calH$.  

\begin{proposition}[Characterization of $\sigma(\calH)$] \label{prop3}
	Assume \eqref{eqassump}. Take $p_0$ from \eqref{eqdelta0} and $\delta_1$ from Lemma \ref{lemgap}. Let $\sigma \in (s_c, s_c+\delta_1)$ and view $\calH$ as a closed operator on $\dot{H}^\sigma$. The following statements hold:
	\begin{enumerate}
		\item {\rm Characterization of spectrum:}
		\bea \sigma_{\rm ess}(\calH) = \{ \RR + ib(\sigma - s_c) \},\quad \sigma_{disc}(\calH)  =  \tilde{\calE}_\sigma. \label{eqHspec}
		\eea
		where $\tilde{\calE}_\sigma$ is the truncated exceptional set
		\be \tilde{\calE}^\pm_\sigma := \calE^\pm_\sigma \cap \{ z\in \CC: \pm( \Im z - b(\sigma - s_c) ) > 0 \},\qquad \tilde{\calE}_\sigma := \cup_{\pm}\tilde{\calE}^\pm_\sigma.  \label{eqtruncexceptional} \ee
		\item {\rm Regularity of Riesz projections} There exists $2 < p_1 \le p_0$ and $\gamma_0 > 0$ such that for any $\xi \in \tilde{\calE}_\sigma$,
		the range of\footnote{For convenience, we denote 
        \[ \calS_{b, V}(z+ibs_c) = \calS_{b, V}^\pm (z+ibs_c),\quad {\rm when}\,\, \pm(\Im z - b(\sigma - s_c)) > 0, \] namely making it the resolvent of $\calH$ on $\dot H^\sigma$ according to Lemma \ref{prop2} (2). So is $\calS_b(z+ibs_c)$.
        }
		\be  P_\xi := -\frac{1}{2\pi i} \oint_{|z-\xi|=\e} S_{b, V}(z+ibs_c)  dz \label{eqdefproj1} \ee
		and its $\dot{H}^\sigma$ adjoint $P_\xi^*$ are spanned by functions belonging to $\dot{H}^\sigma_{\la x \ra^{\gamma_0}} \cap \dot{W}^{\sigma, p_1'} \cap \dot{W}^{\sigma, p_0}$.
		Thus, each such projection is bounded from $\dot{W}^{\sigma, p_0'} + \dot{W}^{\sigma, p_1}$ to $\dot{H}^\sigma_{\la x \ra^{\gamma_0}} \cap \dot{W}^{\sigma, p_1'} \cap \dot{W}^{\sigma, p_0}$. 
		\item {\rm Spectral projections:} Define spectral projections of $\calH$ onto $\sigma_{\rm disc}$ and $\sigma_{\rm ess}$ as
		\be P_{\rm disc} := \sum_{\xi \in \tilde{\calE}_\sigma} P_\xi,\qquad P_{\rm ess} = I - P_{\rm disc}. \label{eqdefproj2}\ee
		Then $P_{\rm disc} \in \L (\dot{W}^{\sigma, p_0'} + \dot{W}^{\sigma, p_1} \to \dot{H}^\sigma_{\la x \ra^{\gamma_0}} \cap \dot{W}^{\sigma, p_1'} \cap \dot{W}^{\sigma, p_0})$ and $P_{\rm ess} \in \cap_{p \in [p_1', p_1]}\L(\dot{W}^{\sigma, p})$. Both operators are commutable with $\calH$. 
	\end{enumerate}
\end{proposition}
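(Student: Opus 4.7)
The plan is to exploit the explicit extended resolvent formula \eqref{eqSbV} together with the resolvent identities \eqref{eqSbVres1}–\eqref{eqSbVres2} to transfer all spectral information from the scalar operator $\mathcal{H}_b - ibs_c$ (whose spectrum is understood from Lemma \ref{lemctsspec}) to the perturbed operator $\calH = \calH_b - ibs_c + V$.

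\textbf{Step 1 (essential spectrum).} First I would identify $\sigma(\mathcal{H}_b - ibs_c)$ on $\dot{H}^\sigma$. Conjugating by $D^\sigma$ (the matrix version of \eqref{eqcomDeltab}) shows that this spectrum coincides with $\sigma(\mathcal{H}_b - ibs_c + ib\sigma)$ on $L^2$, which by Lemma \ref{lemctsspec} is exactly $\RR + ib(\sigma - s_c)$, and it is purely continuous. Then to see $\RR + ib(\sigma - s_c) \subset \sigma_{\rm ess}(\calH)$ on $\dot{H}^\sigma$, I would argue by contradiction: if some $\lambda \in \RR + ib(\sigma - s_c)$ were in $\rho(\calH)$, then \eqref{eqSb1} plus the boundedness of $V\calS_{b,V}(\lambda)$ would give a bounded extension of $\calS_b^{\pm}(\lambda - ib(\sigma-s_c))$ to both sides of the real line, contradicting Lemma \ref{lemctsspec} (the argument mirrors the one used to show $\sigma(-\Delta_b) = \RR$).

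\textbf{Step 2 (discrete spectrum and resolvent set).} For $z$ with $\pm(\Im z - b(\sigma - s_c)) > 0$ and $z \notin \tilde{\calE}^{\pm}_\sigma$, Lemma \ref{prop2}(2) directly gives $\calS_{b,V}^{\pm}(z + ibs_c) = (\calH - z)^{-1} \in \L(\dot{H}^\sigma)$, so such $z$ lie in $\rho(\calH)$. Conversely, if $\xi \in \tilde{\calE}^{\pm}_\sigma$, analytic Fredholm theory applied to $(I + V_2 \calS_b^\pm(z + ibs_c)V_1)^{-1}$ near $\xi$ shows that $\xi$ is an isolated pole of finite rank of the meromorphic family $\calS_{b,V}^\pm(\cdot + ibs_c)$, and composing with $(\calH - z)$ identifies $\xi$ as an isolated eigenvalue of finite algebraic multiplicity, i.e.\ as an element of $\sigma_{\rm disc}(\calH)$. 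This completes \eqref{eqHspec}.

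\textbf{Step 3 (regularity of Riesz projections).} For $\xi \in \tilde{\calE}_\sigma$ I would start from \eqref{eqSb1}: near $\xi$ the free piece $\calS_b^{\pm}(z + ibs_c)$ is analytic (since $\xi$ lies strictly off $\sigma(\mathcal{H}_b - ibs_c + ib\sigma)$ thanks to $\sigma > s_c$), so the residue of $\calS_{b,V}^{\pm}(z+ibs_c)$ at $\xi$ factors through $V_1$ on the left and $V_2$ on the right. Thus any $f$ in the range of $P_\xi$ can be written as $f = \calS_b^{\pm}(\xi + ibs_c) V_1 g$ for some $g \in \dot{H}^\sigma$. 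The decay $|V_1(x)| \lesssim \langle x\rangle^{-\alpha/2}$, combined with Lemma \ref{lempotbdd}, Lemma \ref{lemRbbdd} and a bootstrap using the improved decay Lemma \ref{lemimpreg} (applied iteratively to shift $z + ibs_c$ into the regime where exponential/polynomial spatial decay is gained), yields $f \in \dot{H}^\sigma_{\langle x \rangle^{\gamma_0}}$ for some $\gamma_0 > 0$, plus integrability in $\dot{W}^{\sigma, p_1'} \cap \dot{W}^{\sigma, 2^{*-}}$ through Sobolev-type embeddings (the condition $\sigma < \min\{d/2,1\}$ ensures access to the full range up to $2^{*-}$). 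The same argument applied to the adjoint formula, using that $V_1^*, V_2^*$ enjoy the same decay as $V_1, V_2$, gives the corresponding statement for $P_\xi^*$. The duality between $P_\xi$ and $P_\xi^*$ then upgrades range-regularity into the claimed mapping bound $\dot{W}^{\sigma,(2^{*-})'} + \dot{W}^{\sigma, p_1} \to \dot{H}^\sigma_{\langle x\rangle^{\gamma_0}} \cap \dot{W}^{\sigma, p_1'} \cap \dot{W}^{\sigma, 2^{*-}}$ via the decomposition $P_\xi f = \sum_j \langle f, \psi_j^*\rangle \psi_j$ with $\psi_j \in \mathrm{Range}(P_\xi)$ and $\psi_j^* \in \mathrm{Range}(P_\xi^*)$.

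\textbf{Step 4 (global projections).} Since $\tilde{\calE}_\sigma$ is finite, $P_{\rm disc}= \sum_{\xi \in \tilde{\calE}_\sigma} P_\xi$ inherits the mapping properties of Step 3; then $P_{\rm ess} = I - P_{\rm disc}$ is bounded on $\dot{W}^{\sigma, p}$ for $p \in [p_1', p_1]$ because $P_{\rm disc}$ sends this scale into itself. Commutation $\calH P_{\rm disc} = P_{\rm disc}\calH$ on $\dot{H}^\sigma$ is a formal consequence of contour integration of the resolvent, $\calH P_\xi = \frac{1}{2\pi i}\oint (z \, \calS_{b,V}(z+ibs_c) - I)\,dz = P_\xi \calH$, on a punctured neighborhood of $\xi$. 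I expect the main technical obstacle to be Step 3, namely executing the bootstrap to extract simultaneously weighted $L^2$ decay and the full $\dot{W}^{\sigma, 2^{*-}}$ integrability, since each application of Lemma \ref{lemimpreg} improves decay only by a fixed amount and one must check the hypotheses \eqref{eqimpregcond} survive each iteration while remaining compatible with the Sobolev-embedding step needed to close at the $\dot W^{\sigma,2^{*-}}$ endpoint.
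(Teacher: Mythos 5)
Your Steps 2--4 follow essentially the same route as the paper: classification of the off-line points via Lemma \ref{prop2}(2) and analytic Fredholm theory, weighted decay of the generalized eigenfunctions by bootstrapping Lemma \ref{lemimpreg} through the relation $f=-\calS_b(\xi+ibs_c)V_1(V_2f)$, the same treatment of the finite-rank projections, and commutation by contour integration. Two small imprecisions there: the $\dot H^\sigma$-adjoint of multiplication by $V$ is $D^{-2\sigma}\bar V^T D^{2\sigma}$, not a multiplication operator with ``the same decay'', so the adjoint bootstrap needs \eqref{eqpotbdd3} rather than a symmetry argument; and the below-$L^2$ integrability $\dot W^{\sigma,p_1'}$ of the eigenfunctions does not come from a Sobolev embedding (which goes the wrong way) but from the weighted bound via $L^2_{\la x\ra^{\gamma_0}}\hookrightarrow L^{2-\e}$, as the paper notes.

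The genuine gap is in Step 1, the inclusion $\RR+ib(\sigma-s_c)\subset\sigma_{\rm ess}(\calH)$. Your contradiction argument does not close: from \eqref{eqSb1}, $\calS_{b,V}^\pm=\calS_b^\pm(I-V\calS_{b,V}^\pm)$, so recovering a bounded two-sided extension of $\calS_b^\pm$ at a point $\lambda$ on the line from the boundedness of $(\calH-\lambda)^{-1}$ requires inverting $I-V\calS_{b,V}^\pm$, whose formal inverse is $I+V\calS_b^\pm$ --- precisely the operator whose boundedness on $\dot H^\sigma$ at the line is unavailable (at the shifted real spectral parameter $R_b^\pm$ is only bounded $L^{p'}\to L^p$, $p>2$, not on $L^2$), so the argument is circular. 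What one can legitimately extract from $\lambda\in\rho(\calH)$, via \eqref{eqSb3} and taking boundary values from both sides, is only the vanishing of the sandwiched jump $V_2\bigl(\calS_b^+(\lambda+ibs_c)-\calS_b^-(\lambda+ibs_c)\bigr)V_1=0$, and this is not contradictory for general potentials satisfying only \eqref{eqpotbd0} (which may vanish or be degenerate on large sets), so the ``mirror of Lemma \ref{lemctsspec}'' strategy cannot prove the inclusion in the stated generality. The paper avoids this entirely: it identifies the essential spectrum by Weyl's criterion, using that $V_2\calS_b^\pm(z)$ is compact on $\dot H^\sigma$ for $\Im z\neq 0$ (Lemma \ref{lempotcpt2}(i)) together with \eqref{eqSbV}, so that $\sigma_{\rm ess}(\calH)=\sigma_{\rm ess}(\calH_b-ibs_c)\big|_{\dot H^\sigma}=\RR+ib(\sigma-s_c)$; the relative compactness of the potential, which you never invoke, is the missing ingredient. (A smaller fixable point in your Step 2: to see that a pole of $(I+V_2\calS_b^\pm V_1)^{-1}$ at $\xi\in\tilde\calE_\sigma$ is not cancelled in the composition \eqref{eqSbV}, argue by contraposition as the paper does: if $\calH-\xi$ were invertible, then $I-V_2(\calH-\xi)^{-1}V_1$ would furnish a bounded inverse of $I+V_2\calS_b(\xi+ibs_c)V_1$, contradicting $\xi\in\calE_\sigma$.)
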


\begin{proof}
	(1) From Lemma \ref{prop2} (2), we know $\sigma(\calH) \subset \{\RR + ib(\sigma -s_c)\} \cup \tilde{\calE}_\sigma$. Now we show the inverse inclusion and the classification.
	
	Suppose $\xi \notin \sigma(\calH)$ and $\Im \xi \neq b(\sigma - s_c)$. Then by resolvent identities, we can recover \eqref{eqSbV} and \eqref{eqSb3}, implying that $\xi \notin \tilde{\calE}_\sigma$. This indicates $\tilde{\calE}_\sigma \subset \sigma(\calH)$. 
	
	Notice that the operator $\calH_b - ib\sigma $ is self-adjoint on $\dot{H}^\sigma$ with $\sigma_{\rm ess}(\calH_b - ib\sigma ) = \RR$ from Lemma \ref{lemctsspec} and the commutator relation \eqref{eqcomDeltab}. Since Lemma \ref{lempotcpt2} (i) indicates compactness of $V_2 \calS_b^\pm(z-ib\sigma)$ on $\dot{H}^\sigma$ when $\Im z \neq 0$, the formula \eqref{eqSbV} and Weyl's criterion (see for example \cite[Theorem XIII.14]{reed1972methods4}) indicate that $\sigma_{\rm ess}(\calH_b - ib\sigma) = \sigma_{\rm ess}(\calH - ib(\sigma-s_c)) = \RR$. That is  $\sigma_{\rm ess}(\calH) = \RR+ib(\sigma - s_c)$. Thus \eqref{eqHspec} is confirmed.
	\\
	
	(2) Thanks to the finite-rank nature of $P_\xi$ for $\xi \in \tilde{\calE}_\sigma$, its boundedness is equivalent to check the regularity of $\mathrm{Ran}(P_\xi)$ and $\mathrm{Ran}(P_\xi^*)$. 
	
	Since $\mathrm{dist}( \tilde{\calE}_\sigma, \RR+ib(\sigma-s_c))>0$, the boundedness of $\calS_{b, V}(z + ibs_c)$ from \eqref{eqSbVbdd} indicates $P_\xi \in \L(\dot{W}^{\sigma, p'} \to \dot{W}^{\sigma, p})$ and $\mathrm{Ran}(P_\xi)\cup \mathrm{Ran}(P_\xi^*) \subset \dot{W}^{\sigma, p}$ for any $2 \le p \le p_0$. 
    Noticing the embedding $L^2_{\la x \ra^{\gamma}} \hookrightarrow L^{2-\e}$ for some $\e=\e(\gamma) >0$, it suffices to prove $\mathrm{Ran}(P_\xi) \cup \mathrm{Ran}(P_\xi^*) \subset \dot{H}^\sigma_{\la x \ra^\gamma}$ for some $\gamma > 0$.
	
	We first consider $\mathrm{Ran} (P_\xi)$. The range of $P_\xi$ for $\xi \in \sigma_{dist}(\calH)$ are composed of generalized eigenfunctions of finite algebraic multiplicity. Take 
	\[ \beta = \frac 12 \min\left\{ \a, 1, \frac{d}{2} - \sigma,   \mathrm{dist}( \tilde{\calE}_\sigma, \RR+ib(\sigma-s_c)) \right\}, \]
	and 
	\[ \beta_n := 4^{-n}\beta. \]
	We claim that if $f \in \dot{H}^\sigma$ and $(\calH - \xi)^{n} f = 0$, then $D^\sigma f \in L^2_{\la x \ra^{\beta_n}}$. 
	Indeed, if $(\calH - \xi) f = 0$, then acting on $\calS_{b}(\xi+ibs_c)$, we get
	\[ f = -\calS_b(\xi+ibs_c) (Vf).  \]
	Due to our choice of $\beta$, \eqref{eqpotbdd2} indicates that $D^\sigma (Vf) \in L^2_{\la x \ra^{\beta}}$ and Lemma \ref{lemimpreg} implies that $D^\sigma f = -\calS_b(\xi-ib(\sigma - s_c)) D^\sigma (Vf) \in L^2_{\la x \ra^{\beta_1}} $. Inductively, if $(\calH - \xi)f = g$ with $D^\sigma g \in L^2_{\la x \ra^{\beta_{n-1}}}$, then $D^\sigma f =-D^\sigma \calS_b(\xi+ibs_c) (Vf - g) \in L^2_{\la x \ra^{\beta_n}}$. Since there are only finite exceptional values, each of which has finite multiplicity, we conclude that the existence of $\gamma_0 > 0$ such that all generalized eigenfunctions belong to $\dot{H}^\sigma_{\la x \ra^{\gamma_0}}$.
	
		For $P_\xi^*$, 
	it is the Riesz projection onto $\bar{\xi} \in \sigma_{disc}(\calH^*)$
	with $\calH^* = \calH_b - ib(2\sigma - s_c) + D^{-2\sigma} \bar{V}^T D^{2\sigma}$. So for  $f \in \dot{H}^\sigma$ satisfying $(\calH^* - \bar{\xi}) f = g$, we have
	\[ D^\sigma f = \calS_b(\bar{\xi}+ib(\sigma - s_c) ) \left[ D^\sigma g - (D^{-\sigma} \bar{V}^T D^\sigma) D^\sigma f \right]. \]
	\eqref{eqpotbdd3}, Lemma \ref{lemimpreg} and a similar inductive argument indicate the desired regularity for $\mathrm{Ran} \,P_{\rm disc}^*$ and end the proof. 
	
	(3) The boundedness follows (2) and the commutability follows the resolvent property from Lemma \ref{prop2}(2). 
\end{proof}

Next, we will derive a weak representation formula for $P_{\rm ess}$ via extended resolvents $\calS_{b, V}^\pm(z+ibs_c)$ on the essential spectrum.

\begin{proposition}[Representation of $P_{\rm ess}$]\label{prop4} Assume \eqref{eqassump} and take $p_0$ from \eqref{eqdelta0} and $\delta_1$ from Lemma \ref{lemgap}.
	Let $\sigma \in (s_c, s_c+\delta_1)$ and $f, g \in \mathscr{S}'(\RR^d)$ such that 
	\be f, g \in \dot{W}^{\sigma, p} \cap \dot{W}^{\sigma, p'},\quad \calH_b D^\sigma f, \calH_b D^\sigma g, \calH_b^2 D^\sigma f \in L^{p'} \label{eqPccond}
	\ee for some $p \in (2, p_0]$. We have
	\be \la P_{\rm ess} f, g\ra_{\dot{H}^\sigma} = \frac{1}{2\pi i} \int_{\RR} \left\la \left(\calS_{b, V}^+(\l +ib\sigma) -  \calS_{b, V}^-(\l +ib\sigma)\right) f, g  \right\ra_{\dot{H}^\sigma} d\l \label{eqrepPc}\ee
	where the integral converges absolutely.
\end{proposition}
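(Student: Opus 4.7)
The strategy is to prove \eqref{eqrepPc} as a Stone-type identity via contour integration. Since $P_{\rm ess} = I - P_{\rm disc}$ and $P_{\rm disc}$ is a sum of Riesz projections at $\tilde{\calE}_\sigma^\pm$ which lie strictly off the essential spectrum line $\{\Im z = b(\sigma-s_c)\}$ by Lemma \ref{lemgap}, the plan is to fuse the small circles in \eqref{eqdefproj1} into two large rectangles flanking the essential spectrum, and then extract the jump across this line together with an asymptotic contribution at infinity that produces exactly the identity inside $P_{\rm ess}$.

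Concretely, for $R, C$ large and $\epsilon > 0$ small, introduce the positively oriented rectangles
\[ \Gamma_{R,C,\epsilon}^+ := \pa\bigl([-R,R] \times [b(\sigma-s_c)+\epsilon,\, C]\bigr), \quad \Gamma_{R,C,\epsilon}^- := \pa\bigl([-R,R] \times [-C,\, b(\sigma-s_c)-\epsilon]\bigr), \]
which enclose $\tilde{\calE}_\sigma^\pm$ respectively while lying inside the analyticity domains of $\calS_{b,V}^\pm$ of Lemma \ref{prop2}. Setting $P_{\rm disc}^\pm := \sum_{\xi \in \tilde{\calE}_\sigma^\pm} P_\xi$, Cauchy's theorem together with \eqref{eqdefproj1} gives $-2\pi i\, P_{\rm disc}^\pm = \oint_{\Gamma_{R,C,\epsilon}^\pm} \calS_{b,V}^\pm(z+ibs_c)\, dz$. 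I then pair both sides with $f,g$ obeying \eqref{eqPccond} and take the iterated limits $R \to \infty$, $C \to \infty$, $\epsilon \to 0$, analyzing each of the four edges in turn.

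The two edges parallel to the essential spectrum produce, after $\epsilon \to 0$ and accounting for the opposite orientations, the combined integral $\int_\RR \la (\calS_{b,V}^+ - \calS_{b,V}^-)(\l + ib\sigma) f, g\ra_{\dot H^\sigma}\, d\l$, which converges absolutely thanks to the $\la\l\ra^{-2}$ estimate in Lemma \ref{lemopdecay2}\eqref{eqCauchy2} --- this is precisely why $\calH_b^2 D^\sigma f \in L^{p'}$ appears in \eqref{eqPccond}. The vertical edges at $\Re z = \pm R$ vanish as $R \to \infty$ by the $\la\Re z\ra^{-1}$ bound of Lemma \ref{lemopdecay2}\eqref{eqCauchy1}. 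For the far edges at $\Im z = \pm C$, the individual $O(|z|^{-1})$ decay is not integrable in $\Re z$; here I extract the asymptotic via the iterated expansion
\[ \calS_{b,V}(z+ibs_c) f = -z^{-1} f - z^{-2} \calH f + z^{-2} \calH^2 \calS_{b,V}(z+ibs_c) f, \]
obtained from $(\calH - z)\calS_{b,V}(z+ibs_c) = I$. Integrated against $g$ along $\{\Im z = \pm C\}$, the leading $-z^{-1} f$ term yields, via $\int_{-R}^R (x \pm iC)^{-1}\, dx \to \mp i\pi$, a contribution $-i\pi \la f, g\ra_{\dot H^\sigma}$ from each far edge, while the two correction terms are $O(C^{-1})$ uniformly in $R$ by $\int_{-R}^R (x^2+C^2)^{-1}\, dx \lesssim C^{-1}$ together with the conditions $\calH_b D^\sigma f,\ \calH_b^2 D^\sigma f \in L^{p'}$ in \eqref{eqPccond} and the uniform operator bound of $\calS_{b,V}^\pm$.

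Summing the two identities for $P_{\rm disc}^\pm$ yields
\[ -2\pi i \la P_{\rm disc} f, g\ra_{\dot H^\sigma} = \int_\RR \la (\calS_{b,V}^+ - \calS_{b,V}^-)(\l + ib\sigma) f, g\ra_{\dot H^\sigma}\, d\l - 2\pi i \la f, g\ra_{\dot H^\sigma}, \]
which rearranges to \eqref{eqrepPc} via $P_{\rm ess} = I - P_{\rm disc}$. The main technical obstacle is the far-edge asymptotic: one must verify that the remainder $z^{-2}\la \calH^2 \calS_{b,V} f, g\ra$ is $O(C^{-1})$ uniformly for $|\Re z| \le R$, because the leading $z^{-1}\la f, g\ra$ piece is only conditionally integrable in $\Re z$ (as a Cauchy principal value), and the order of limits $R \to \infty$ then $C \to \infty$ matters --- the iterated Neumann expansion is exactly what converts this conditional convergence into absolute convergence of the error plus the explicit principal-value contribution that furnishes the identity.
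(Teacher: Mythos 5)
Your contour-only strategy diverges from the paper's (which first proves the two-line identity \eqref{eqrep1} at $\Im z = \pm y$ by writing $\la e^{it(\calH - ib(\sigma-s_c))}e^{-y|t|}f,g\ra_{\dot H^\sigma}$ as an $L^1$ function of $t$, computing its Fourier transform as $-i\la(\calS^+_{b,V}-\calS^-_{b,V})f,g\ra$, and applying Fourier inversion at $t=0$, so the identity term $\la f,g\ra$ never has to be extracted from a contour at infinity; the contour shift then happens only inside the bounded strips $[-R,R]\times[0,y]$ and $[-R,R]\times[-y,0]$), and it has a genuine gap precisely at the far edges $\Im z = \pm C$. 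Your expansion $\calS_{b,V}(z+ibs_c)f = -z^{-1}f - z^{-2}\calH f + z^{-2}\calH^2\calS_{b,V}(z+ibs_c)f$ is algebraically correct, but to conclude that the remainder is $O(C^{-1})$ uniformly you must bound $\calH^2\calS_{b,V}(z+ibs_c)f$, i.e.\ (after commuting through the resolvent) $\calS_{b,V}(z+ibs_c)\calH^2 f$, uniformly on the edge. The hypotheses \eqref{eqPccond} control only the \emph{free} quantities $\calH_b D^\sigma f$ and $\calH_b^2 D^\sigma f$; the full operator gives $\calH^2 f \ni \calH_b(Vf)$, whose Laplacian part requires $\Delta V$ (equivalently second derivatives of $W_1,W_2$) and gradient information on $f$, neither of which is assumed -- \eqref{eqpotbd0} only bounds $V$ and $\nabla V$. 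Pairing the remainder onto $g$ instead would require $(\calH^*)^2 g$, also not assumed. So as written the key uniform $O(C^{-1})$ bound is not available, and without it the leading principal-value computation does not isolate the $-i\pi\la f,g\ra$ contribution.

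The step can be repaired, but only by doing what the paper effectively does: expand $\calS_{b,V}^\pm$ in terms of the free resolvent via \eqref{eqSb1} (or \eqref{eqSbV}), run your iterated expansion on $\calS_b^\pm$ alone -- for which \eqref{eqPccond} is exactly tailored, this being the content of Lemma \ref{lemopdecay2} -- and then show that the potential correction $\calS_b^\pm V_1(I+V_2\calS_b^\pm V_1)^{-1}V_2\calS_b^\pm$ contributes $o_C(1)$ on the far edge, using the vanishing \eqref{eqresvan} of the free resolvent as $\Im z\to+\infty$ together with the $\la\Re z\ra^{-2}$ integrability obtained as in \eqref{eqdecayl} (note also that your citations of \eqref{eqCauchy1}--\eqref{eqCauchy2} on the vertical and horizontal edges concern the free resolvent, so this same expansion is needed there before they apply to $\calS_{b,V}^\pm$). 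With that substitution your contour argument closes, and the remaining bookkeeping (Riesz circles fused into rectangles, orientation, $\epsilon\to0$ onto the essential line using the spectral gap of Lemma \ref{lemgap}, vanishing vertical edges) is sound; but the far-edge remainder estimate with the full $\calH$ is, as stated, a real gap.
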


\begin{proof}
	Firstly, we define and estimate the semigroup $e^{it(\calH_b -ib\sigma + V)}$ on $\dot{H}^\sigma$. It is equivalent to considering the evolution
	\[ i\partial_t u + \calH_b u - ib\sigma u +Vu = 0 \]
	in $\dot{H}^\sigma$ and bound $u(t)$ with $u(0)$. 
	Let $v = D^{\sigma} u$ and act $D^\sigma$ on the equation. With the commutator relation \eqref{eqcomSb}, it becomes
	\be i\pa_t v + \calH_b v = -D^\sigma V D^{-\sigma} v \label{eqlinearH}\ee
	on $L^2$. 
	Applying Duhamel's formula, a straightforward estimate indicates 
	\[ \| v(t)\|_{L^2} \le \| v(0)\|_{L^2} + \| D^{\sigma} V D^{-\sigma} \|_{\L(L^2)}  \int_0^t \| v(s)\|_{L^2} ds. \]
	This estimate easily enables a contraction argument for local well-posedness of \eqref{eqlinearH} with the lifespan only depending on $\| D^{\sigma} V D^{-\sigma} \|_{\L(L^2)}$. Thus the global well-posedness follows, ensuring $e^{it(\calH_b -ib\sigma + V)}$ a well-defined semigroup on $\dot{H}^\sigma$. By Gronwall's inequality, the bound implies 
	$\| v(t) \|_{L^2} \le e^{t \| D^{\sigma} V D^{-\sigma} \|_{\L(L^2)}} \| v_0\|_{L^2}$, namely 
	\be  \|e^{it(\calH_b -ib\sigma + V)}\|_{\L(\dot{H}^\sigma)} \le  e^{t \| D^{\sigma} V D^{-\sigma} \|_{\L(L^2)}}, \quad t \in \RR \ee 
	where $\| D^{\sigma} V D^{-\sigma} \|_{\L(L^2)} < \infty$ by Lemma \ref{lempotbdd}. 
	
	Next, we prove that for $y \gg 1$ and $f, g \in \mathscr{S}'(\RR^d)$ satisfying \eqref{eqPccond}, 
	\be \la f, g  \ra_{\dot{H}^\sigma} =\frac{-i}{2\pi} \int_{\RR} \left\la \left(\calS_{b, V}^+(\l +ib\sigma + iy) -  \calS_{b, V}^-(\l +ib\sigma-iy) \right) f, g  \right\ra_{\dot{H}^\sigma} d\l. \label{eqrep1}\ee
	We first choose $y$ large enough such that $\| \calS_b^\pm(z+ib\sigma \pm iy) \|_{\L(\dot{H}^\sigma)} \ll 1$ by Lemma \ref{lemopdecay}, leading to $(I + V_2 \calS_b^\pm(\l +ib\sigma \pm iy)V_1)^{-1}$ and hence $\calS_{b, V}^\pm(\l +ib\sigma \pm iy)$ uniformly bounded in $\L(\dot{H}^\sigma)$ for $\l \in \RR$.
	The formula \eqref{eqrep1} therefore makes sense. Increase $y$ if necessary to make $y > 1 + \|D^\sigma V D^{-\sigma} \|_{\L(L^2)}$. So for any $f, g \in \mathscr{S}'(\RR^d)$ satisfying \eqref{eqPccond}, we consider the function of $t$
	$$\la e^{it(\calH_b-ib\sigma+V)}e^{-y|t|} f, g \ra_{\dot{H}^\sigma} =  \la e^{it(\calH-ib(\sigma-s_c)}e^{-y|t|} f, g \ra_{\dot{H}^\sigma}$$
	which belongs to $L^1(\RR)$ due to its exponential decay at $\pm \infty$. Using the integral representation of resolvents (like \eqref{eqSb}), the Fourier transform of this function is
	\be
	\int_\RR e^{-it\l} \la e^{it(\calH-ib(\sigma-s_c)}e^{-y|t|} f, g  \ra_{\dot{H}^\sigma} dt = -i \left\la \left(\calS_{b, V}^+(\l +ib\sigma + iy) -  \calS_{b, V}^-(\l +ib\sigma-iy) \right) f, g  \right\ra_{\dot{H}^\sigma} \label{eqpropr31}
	\ee
	We claim that this is again in $L^1(\RR)$. Indeed, we first expand $\calS_{b, V}^\pm(z)$ with \eqref{eqSbV} to obtain
	\bee
	&&\mathrm{RHS\,\, of \,\,\eqref{eqpropr31}} =
	-i \left\la \left(\calS_{b}^+(\l + iy) -  \calS_{b}^-(\l -iy) \right) D^{\sigma}f, D^\sigma g  \right\ra_{L^2} \\
	&+& i \sum_{\pm} \pm \left\la \calS_{b}^\pm(\l \pm iy) D^{\sigma} V_1 (I + V_2 \calS_{b}^\pm(\l+ib\sigma \pm iy ) V_1)^{-1} V_2 D^{-\sigma} \calS_{b}^\pm(\l \pm iy)  D^{\sigma}f, D^\sigma g  \right\ra_{L^2}.
	\eee
	Now we apply Lemma \ref{lemopdecay2} with $p = p_0$ to gain decay in $\l$. Apply \eqref{eqCauchy2} to the first term and \eqref{eqCauchy1} to the last two terms after transposing the first $\calS_b^\pm(\l \pm iy)$ onto $D^\sigma g$. Together with the uniform boundedness of $(I + V_2 \calS_b^\pm(\l +ib\sigma \pm iy)V_1)^{-1}$ in $\L(\dot{H}^\sigma)$ and $D^\sigma V_i D^{-\sigma} \in \L(L^{p_0} + L^2 \to L^2) \cap \L(L^2\to L^{p_0'} \cap L^2)$, for any $ p \in (2, p_0]$, we have
	\be
	\begin{aligned} 
		\left|\mathrm{RHS\,\, of \,\,\eqref{eqpropr31}}\right| \lesssim \la \l \ra^{-2}& \left( \| D^\sigma f \|_{L^{p} \cap L^{p'}} + \| \calH_b D^\sigma f \|_{L^{p'}} +  \| \calH_b^2 D^\sigma f \|_{L^{p'}} \right) \\
		& \cdot \left( \| D^\sigma g \|_{L^{p} \cap L^{p'}} + \| \calH_b D^\sigma g \|_{L^{p'}} \right) \in L^1(\RR)
	\end{aligned}\label{eqdecayl} 
	\ee
	So \eqref{eqpropr31} implies \eqref{eqrep1} through the Fourier inversion formula. Note that this estimate also holds for $y \ge 0$ whenever $\{\RR + i(b\sigma -bs_c \pm y) \}$ does not intersect with $\tilde{\calE}^\pm_\sigma$.
	
	Finally, we will shift the integration contour in \eqref{eqrep1} toward the essential spectrum $\{\RR + ib(\sigma -s_c)\}$. Lemma \ref{prop2} (5) guaranteed the analyticity of $\left\la\calS_{b, V}^\pm(z +ibs_c) f, g  \right\ra_{\dot{H}^\sigma}$ on the corresponding half-plane except $\tilde{\calE}^\pm_\sigma$, so we take contours consisting of boundaries of $[-R, R] \times [0, y]$, $[-R, R]\times [-y, 0]$ and small circular contours around all exceptional values. Lemma \ref{lemopdecay} and \eqref{eqdecayl} with $y = 0$ guarantees the residual to vanish when $R$ goes to $+\infty$, so we finally arrived at
	\bee \la f, g  \ra_{\dot{H}^\sigma} =\frac{1}{2\pi i} \int_{\RR} \left\la \left(\calS_{b, V}^+(\l +ib\sigma) -  \calS_{b, V}^-(\l +ib\sigma) \right) f, g  \right\ra_{\dot{H}^\sigma} d\l + \sum_{\xi \in \tilde{\calE}} \la P_\xi f, g\ra_{\dot{H}^\sigma} \label{eqrep2} \eee
	which infers \eqref{eqrepPc}. 
\end{proof}

At the end of this step, we use this representation formula to show $P_{\rm ess}$ can remove the singularity of the resolvent $\calS_{b, V}^-(z+ibs_c)$.

\begin{corollary} \label{coroSbVbdd} Assume \ref{eqassump} and take $\delta_1$ from Lemma \ref{lemgap} and $p_1$ from Proposition \ref{prop3}. For $\sigma \in (s_c, s_c+\delta_1)$, we have for any $p \in (2, p_1]$, $ \calS_{b, V}^-(z+ibs_c) P_{\rm ess}$ is a well-defined bounded operator from $\dot{W}^{\sigma, p'}$ to $\dot{W}^{\sigma, p}$ for $\Im z \le b(\sigma - s_c)$, and has the uniform bound 
	\be \sup_{z: \Im z \le b(\sigma-s_c)} \left\|  \calS_{b, V}^{-}(z+ibs_c) P_{\rm ess} \right\|_{\dot{W}^{\sigma, p'} \to \dot{W}^{\sigma, p}} < \infty. \ee
\end{corollary}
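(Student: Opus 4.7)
The plan is to decompose the region $\{z:\Im z\le b(\sigma-s_c)\}$ into a part uniformly separated from the exceptional set (where Lemma~\ref{prop2}(1) gives the bound directly) and small disks around each pole in $\tilde{\calE}_\sigma^-$ (where the singularity must be removed algebraically by $P_{\rm ess}$). By Lemma~\ref{lemgap} and the definition \eqref{eqtruncexceptional}, $\tilde{\calE}_\sigma^-$ is a finite set contained in $\{\Im z\le 0\}$, strictly separated from the essential-spectrum line $\{\Im z=b(\sigma-s_c)\}$. Fix $\nu_0>0$ small enough that the closed disks $\overline{D_{2\nu_0}(\xi)}$, $\xi\in\tilde{\calE}_\sigma^-$, are mutually disjoint and contained in $\{\Im z<b(\sigma-s_c)\}$. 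On the regular region $\Omega_{\rm reg}:=\{\Im z\le b(\sigma-s_c)\}\setminus\bigcup_\xi D_{\nu_0}(\xi)$, the uniform estimate \eqref{eqSbVbdd} of Lemma~\ref{prop2}(1) (applied with $\nu=\nu_0$) bounds $\calS_{b,V}^-(z+ibs_c)$ in $\L(\dot{W}^{\sigma,p'}\to\dot{W}^{\sigma,p})$, and Proposition~\ref{prop3}(3) gives $P_{\rm ess}\in\L(\dot{W}^{\sigma,p'})$ for every $p\in(2,p_1]$, so the desired bound on $\calS_{b,V}^-(\cdot+ibs_c)P_{\rm ess}$ holds throughout $\Omega_{\rm reg}$.

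Near each $\xi\in\tilde{\calE}_\sigma^-$, Lemma~\ref{prop2}(2) identifies $\calS_{b,V}^-(z+ibs_c)=(\calH-z)^{-1}$ on $\dot{H}^\sigma$ in a punctured neighborhood of $\xi$, and standard Riesz functional calculus yields the Laurent expansion
\begin{equation*}
\calS_{b,V}^-(z+ibs_c)=-\sum_{k=0}^{m_\xi-1}\frac{(\calH-\xi)^k P_\xi}{(z-\xi)^{k+1}}+A_\xi(z),
\end{equation*}
where $m_\xi$ is the algebraic multiplicity and $A_\xi(z)$ is holomorphic at $\xi$. Since $P_{\rm ess}=I-\sum_{\xi'}P_{\xi'}$ with mutually orthogonal Riesz projections, we have $P_\xi P_{\rm ess}=0$; as the range of every $(\calH-\xi)^k P_\xi$ lies in $\mathrm{Ran}\,P_\xi$, the entire principal part is annihilated on the right by $P_{\rm ess}$, and the algebraic identity
\begin{equation*}
\calS_{b,V}^-(z+ibs_c)P_{\rm ess}=A_\xi(z)P_{\rm ess}
\end{equation*}
holds in $\L(\dot{H}^\sigma)$ on the punctured disk. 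This is the key observation: the extension of $\calS_{b,V}^-(\cdot+ibs_c)P_{\rm ess}$ across the pole is forced by the spectral projection, not by any cancellation within $\calS_{b,V}^-$ itself.

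To upgrade this to a uniform $\L(\dot{W}^{\sigma,p'}\to\dot{W}^{\sigma,p})$ bound on $\overline{D_{\nu_0}(\xi)}$, I would apply Cauchy's integral formula on $\partial D_{2\nu_0}(\xi)\subset\Omega_{\rm reg}$:
\begin{equation*}
A_\xi(z)=\frac{1}{2\pi i}\oint_{|\zeta-\xi|=2\nu_0}\frac{1}{\zeta-z}\left[\calS_{b,V}^-(\zeta+ibs_c)+\sum_{k=0}^{m_\xi-1}\frac{(\calH-\xi)^k P_\xi}{(\zeta-\xi)^{k+1}}\right]d\zeta,\qquad z\in\overline{D_{\nu_0}(\xi)}.
\end{equation*}
The first summand in the bracket is uniformly bounded in $\L(\dot{W}^{\sigma,p'}\to\dot{W}^{\sigma,p})$ on the contour by Step~1; the finite-rank principal-part terms are bounded in the same norm because $\mathrm{Ran}\,P_\xi\subset\dot{H}^\sigma_{\la x\ra^{\gamma_0}}\cap\dot{W}^{\sigma,p_1'}\cap\dot{W}^{\sigma,2^{*-}}$ by Proposition~\ref{prop3}(2), so each $(\calH-\xi)^k P_\xi$ maps $\dot{W}^{\sigma,p'}+\dot{W}^{\sigma,p_1}$ into $\dot{W}^{\sigma,p}$. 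Hence $A_\xi(z)$, and therefore $A_\xi(z)P_{\rm ess}$, is uniformly bounded on $\overline{D_{\nu_0}(\xi)}$. Combining with Step~1 over the finite cover concludes the proof.

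The main obstacle is reconciling the various operator topologies: the Laurent expansion and Riesz calculus are most naturally formulated in $\L(\dot{H}^\sigma)$, whereas the final estimate demands control in $\L(\dot{W}^{\sigma,p'}\to\dot{W}^{\sigma,p})$. This gap is bridged by the enhanced regularity of the generalized eigenfunctions (Proposition~\ref{prop3}(2)), which promotes the finite-rank principal coefficients to bounded operators between the required Lebesgue spaces, and by the differentiability of $\calS_{b,V}^\pm(\cdot+ibs_c)$ guaranteed by Lemma~\ref{prop2}(4)--(5), which legitimizes the operator-valued Cauchy integral in $\L(\dot{W}^{\sigma,p_0'}\to\dot{W}^{\sigma,p_0})$.
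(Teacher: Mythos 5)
Your argument is correct, but it takes a genuinely different route from the paper. The paper proves the corollary globally, through the weak representation of $P_{\rm ess}$ from Proposition \ref{prop4}: it first verifies that $f$ and an auxiliary function $G$ built from the adjoint resolvent satisfy \eqref{eqPccond}, writes $\la \calS_{b,V}^-(z+ibs_c)P_{\rm ess}f,g\ra_{\dot{H}^\sigma}$ as an absolutely convergent integral over the essential line of $\frac{1}{z-(\l+ib(\sigma-s_c))}\bigl(\calS_{b,V}^+(\l+ib\sigma)-\calS_{b,V}^-(\l+ib\sigma)\bigr)$ using the resolvent identities \eqref{eqSbVres1}--\eqref{eqSbVres2}, and then compares an arbitrary $z_2$ near $\tilde{\calE}^-_\sigma$ with a fixed regular $z_1$ via the kernel $\frac{z_2-z_1}{(z_1-\cdot)(z_2-\cdot)}$, the $\la\l\ra^{-2}$ decay doing the rest. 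You instead argue locally at each pole: the finite-rank Laurent principal part is killed on the right because $P_\xi P_{\rm ess}=0$, and the mixed-norm bound is transported from a contour in the regular region (where \eqref{eqSbVbdd} applies) into the disk by Cauchy's formula, with Proposition \ref{prop3}(2) promoting the finite-rank coefficients to $\L(\dot{W}^{\sigma,p'}\to\dot{W}^{\sigma,p})$. Your route avoids Proposition \ref{prop4} and the somewhat delicate verification of \eqref{eqPccond} for the adjoint data, and is arguably more elementary; in exchange you must (i) quote the fact that each $\xi\in\tilde{\calE}^-_\sigma$ is a pole of finite order whose principal coefficients are exactly $(\calH-\xi)^kP_\xi$ (available from the analytic Fredholm/Riesz structure the paper sets up, but it should be cited), (ii) note that the uniform bound \eqref{eqSbVbdd} on the contour requires distance to all of $\calE^-_\sigma$, which holds thanks to the spectral gap of Lemma \ref{lemgap}, and (iii) justify the operator-valued Cauchy integral in the mixed topology, e.g.\ by pairing against $f\in\dot{W}^{\sigma,p'}\cap\dot{H}^\sigma$ and dense $g$ (weak holomorphy suffices) or by checking norm-continuity of $\zeta\mapsto\calS_{b,V}^-(\zeta+ibs_c)$ on the contour; these are technical but fixable points, and your identification of the topology mismatch as the main obstacle is accurate. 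Also note that the cancellation $(\calH-\xi)^kP_\xi P_{\rm ess}=0$ follows from $P_\xi P_{\rm ess}=0$ together with idempotence of $P_\xi$, not from the location of the range of $(\calH-\xi)^kP_\xi$ as you phrased it, though the conclusion is the same.
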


\begin{proof}
	Fix a $p \in (2, p_1]$. We will use the representation formula \eqref{eqrepPc}, boundedness of $P_{\rm ess} \in \L(\dot{W}^{\sigma, p'})$ and resolvent identities \eqref{eqSbVres1} \eqref{eqSbVres2}. Recalling the boundedness \eqref{eqSbVbdd}, we only need to estimate for $z$ close to $\tilde{\calE}^-_\sigma$. In particular, we can also assume $\Im z - b(\sigma -s_c) \le -\delta_2$ for some $\delta_2 > 0$. 
	
	Firstly, we check the validity of the representation formula
	\be
	\begin{aligned} 
		\left\la \calS_{b, V}^{-}(z + ibs_c) P_{\rm ess} f, g \right\ra_{\dot{H}^\sigma} 
		=
		\frac{1}{2\pi i} \int_{\RR} \left\la  \calS_{b, V}^{-}(z + ibs_c)  \left(\calS_{b, V}^+(\l +ib\sigma) -  \calS_{b, V}^-(\l +ib\sigma) \right)f, g  \right\ra_{\dot{H}^\sigma} 
	\end{aligned} \label{eqSbVrep}\ee
	for $\Im z \le b(\sigma-s_c) - \delta_2$, $z \notin \tilde{\calE}^-_\sigma$ and $f, g \in \mathscr{S}$. Since\footnote{Here the adjoint operator are in the sense of $L^2$ rather than $\dot{H}^\sigma$.}
	\bee 
	&&\left\la \calS_{b, V}^{-}(z + ibs_c) P_{\rm ess} f, g \right\ra_{\dot{H}^\sigma} = \left\la D^\sigma \calS_{b, V}^{-}(z + ibs_c) D^{-\sigma} D^\sigma P_{\rm ess} f, D^\sigma g  \right\ra_{L^2} \\
	&=&  \left\la  D^\sigma P_{\rm ess} f, (D^\sigma \calS_{b, V}^{-}(z + ibs_c) D^{-\sigma})^* D^\sigma g  \right\ra_{L^2} \\
	&=& \left\la  P_{\rm ess} f, D^{-\sigma} (D^\sigma \calS_{b, V}^{-}(z + ibs_c) D^{-\sigma})^* D^\sigma g  \right\ra_{\dot{H}^\sigma}
	\eee
	It suffices to prove $f$ and $G :=  D^{-\sigma} (D^\sigma \calS_{b, V}^{-}(z + ibs_c) D^{-\sigma})^* D^\sigma g$ satisfying \eqref{eqPccond}. 
	From the resolvent property Lemma \ref{prop2} (2), we have 
	\bee &&   (D^\sigma \calS_{b, V}^{-}(z + ibs_c) D^{-\sigma})^* 
	=  \left[(\calH_b + ib(\sigma - s_c) -z + D^\sigma V D^{-\sigma})^{-1}\right]^* \\
	&=& (\calH_b - ib(\sigma - s_c) -\bar{z} + D^{-\sigma} \bar{V}^T D^{\sigma})^{-1} =: (\calH_b - \zeta + A)^{-1} \in \L (L^2)
	\eee
	where $ A := D^{-\sigma} \bar{V}^T D^\sigma$ and $\zeta := ib(\sigma - s_c) + \bar{z}$ which satisfies $\Im \zeta \le - \delta_2$.
	From \eqref{eqpotbdd3}, we know for $0 \le \gamma \ll 1$, $A\in  \L(L^2 \to L^2_{\la x \ra^\gamma})$. Now from resolvent identities
	\[ (\calH_b - \zeta + A )^{-1} - \calS_{b}(\zeta) = -(\calH_b - \zeta + A )^{-1} A \calS_b(\zeta) = -\calS_b (\zeta) A (\calH_b - \zeta + A )^{-1}, \]
	we have 
	\[ (\calH_b - \zeta + A )^{-1} = \calS_{b}(\zeta) - \calS_{b}(\zeta) A \calS_{b}(\zeta) + \calS_{b}(\zeta) A (\calH_b - \zeta + A )^{-1} A \calS_{b}(\zeta). \]
	Using Lemma \ref{lemimpreg}, the boundedness of $A$ above and 
	$g \in \mathscr{S}(\RR^d)$, we can find $0<\tilde{\gamma} \ll 1$, $0 < \tilde{p} -2 \ll 1$ such that 
	$D^\sigma G =  (\calH_b - \zeta + A )^{-1} D^\sigma g \in L^2_{\la x \ra^{\tilde{\gamma}}} \cap L^{\tilde{p}}$. 
	Also noticing that $\calH_b (\calH_b - \zeta + A )^{-1} =  I + (\zeta - A) (\calH_b - \zeta + A )^{-1}$, we know $G$ satisfies $G \in \dot{W}^{\sigma, \tilde{p}} \cap \dot{W}^{\sigma,\tilde{p}'}$ and $\calH_b D^\sigma G \in L^{\tilde{p}'}$. With $f \in \mathscr{S}$, this confirms $f$ and $G$ satisfying \eqref{eqPccond} and therefore the validity of \eqref{eqSbVrep}. 
	
	Now for $z_0$ with $\Im z_0 \le b(\sigma-s_c) - \delta_2$, $z_0 \notin \tilde{\calE}^-_\sigma$, we apply \eqref{eqSbVrep} and the resolvent property of $\calS_{b, V}^-(z_0 + ibs_c)$ to get
	\bee 
	&&\left\la \calS_{b, V}^{-}(z_0 + ibs_c) P_{\rm ess} f, g \right\ra_{\dot{H}^\sigma} \\
	&=&
	\frac{1}{2\pi i} \int_{\RR} \left\la \calS_{b, V}^{-}(z_0 + ibs_c) \left(\calS_{b, V}^+(\l +ib\sigma) -  \calS_{b, V}^-(\l +ib\sigma) \right) f, g  \right\ra_{\dot{H}^\sigma} \\
	&=& \frac{1}{2\pi i} \int_{\RR} \left\la \frac{1}{z_0 - (\l + ib(\sigma - s_c))} \left(\calS_{b, V}^+(\l +ib\sigma) -  \calS_{b, V}^-(\l +ib\sigma) \right) f, g  \right\ra_{\dot{H}^\sigma}
	\eee
	where the integral converges absolutely due to Proposition \ref{prop4}. Now 
	fix an arbitrary $z_1 \in \{ \Im z \le b(\sigma - s_c)-\delta_2 \} - \tilde{\calE}^-_\sigma$ so that $\calS_{b, V}^{-}(z_0 + ibs_c) P_{\rm ess}$ is bounded from $\dot{W}^{\sigma, p'}$ to $\dot{W}^{\sigma, p}$. For any other $z_2 \in \{ \Im z \le b(\sigma - s_c) - \delta_2 \}$, we have
	\bee 
	&&\left\la \calS_{b, V}^{-}(z_1 + ibs_c) P_{\rm ess} f, g \right\ra_{\dot{H}^\sigma} - \left\la \calS_{b, V}^{-}(z_2 + ibs_c) P_{\rm ess} f, g \right\ra_{\dot{H}^\sigma} \\
	&=& \frac{1}{2\pi i} \int_{\RR} \bigg\la \frac{z_2 - z_1}{[z_1 - (\l + ib(\sigma - s_c))][z_2 - (\l + ib(\sigma - s_c))]} \\
	&& \quad\quad\quad \left(\calS_{b, V}^+(\l +ib\sigma) -  \calS_{b, V}^-(\l +ib\sigma) \right) f, g  \bigg\ra_{\dot{H}^\sigma}.
	\eee
	The $\la \l \ra^{-2}$ decay and uniform boundedness of $\calS_{b, V}^{\pm}(\l + ib\sigma) \in \L(\dot{W}^{\sigma,p'} \to \dot{W}^{\sigma,p})$ indicate the boundedness of $\calS_{b, V}^-(z_2 + ibs_c)P_{\rm ess}$, particularly when $z_2$ is close to $\tilde{\calE}^-_\sigma$. This enables us to extend the definition $\calS_{b, V}^-(z+ibs_c)P_{\rm ess}$ to $z \in \tilde{\calE_{\sigma}}$ and implies the uniform boundedness.
\end{proof}

\section{Abstract Wiener's Theorem and Proof of Strichartz estimates}
\label{s5}

In this section, we will prove Theorem \ref{thmStrichartz}. We first recall the abstract Wiener's theorem of Beceanu from \cite{beceanu2011new} (also \cite{beceanu2009critical}), and then present the proof of our Strichartz estimates by verifying conditions for a modified Schr\"odinger system. 

\subsection{Abstract Wiener's Theorem}

In this subsection, we introduce the setting and abstract tool in Beceanu's work \cite{beceanu2011new,beceanu2009critical}, and then present a version for our later use. \\

Let $H$ be a complex Hilbert space, $\L(H)$ be the space of bounded linear operators from $H$ to itself, and $L^1_t \L (H)$ be the Banach space of operator-valued $L^1$ functions on $\RR$. We add a convolutional unit 
$$I_K := \delta_{t=0}I_{H}$$
to define the normed vector space $K$ 
\bea K &:=& L^1_t \L (H) \oplus \CC \{I_K\}; \label{eqdefK} \\
\| \mu + aI_K \|_{K} &:=& \| \mu \|_{L^1_t \L(H)} + |a|, \qquad\forall a \in \CC, \,\,\mu \in L^1_t \L(H).
\eea
Then it satisfies the following properties:
\begin{lemma}[Properties of $K$]
	Let $K$ defined as \eqref{eqdefK}. Then the following statements are true.
	\begin{enumerate}
		\item  $K$ is an unital Banach algebra.
		\item Multiplication by bounded continuous functions is bounded on $K$:
		\be \| fk \|_{K} \le \| f \|_{C^0(\RR)} \|k\|_{K}.  \ee
		\item If $k = \mu(t) + a I_K \in K$, then $k^* = \mu(t)^* + \bar{a} I_K \in K$ as well.
		\item The  Fourier transform is well-defined by 
		\be \hat{k}(\l) := \int_\RR e^{-it\l} k(t) dt, \ee
		which satisfies $ \| \hat{k} \|_{C^0_\l \L(H)} \le \|k  \|_K$.
	\end{enumerate}
\end{lemma}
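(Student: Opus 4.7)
The plan is to verify each of the four properties directly from the definition of $K$, treating elements as pairs $(\mu,a)$ identified with $\mu(t) + a I_K$ and equipping $K$ with the convolution product
\[
(\mu_1 + a_1 I_K) * (\mu_2 + a_2 I_K) := \mu_1 * \mu_2 + a_2 \mu_1 + a_1 \mu_2 + a_1 a_2 I_K,
\]
where $(\mu_1 * \mu_2)(t) := \int_{\RR} \mu_1(s)\mu_2(t-s)\,ds$ uses the composition in $\L(H)$. All four items should reduce to standard arguments once this setup is in place.

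For (1), completeness is immediate since $K$ is a direct sum of two Banach spaces with the natural $\ell^1$-type norm. I would next check that $\mu_1 * \mu_2$ is strongly measurable as an $\L(H)$-valued function (using strong measurability of each factor in the Bochner sense) and then bound it via the operator-valued Young inequality
\[
\|\mu_1 * \mu_2\|_{L^1_t \L(H)} \le \int_{\RR}\!\int_{\RR} \|\mu_1(s)\|_{\L(H)} \|\mu_2(t-s)\|_{\L(H)}\,ds\,dt = \|\mu_1\|_{L^1_t \L(H)} \|\mu_2\|_{L^1_t \L(H)}.
\]
Combining the four terms of the product and applying the triangle inequality yields submultiplicativity $\|k_1 * k_2\|_K \le \|k_1\|_K \|k_2\|_K$. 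The role of $I_K$ as unit is the standard fact that convolution with $\delta_{t=0}$ is the identity on $L^1_t \L(H)$.

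For (2), pointwise multiplication acts as $f\cdot(\mu + aI_K) = f\mu + af(0)I_K$, so $\|f\mu\|_{L^1_t \L(H)} \le \|f\|_{C^0}\|\mu\|_{L^1_t \L(H)}$ and $|af(0)| \le \|f\|_{C^0}|a|$ immediately give the claim. For (3), since $\|\mu(t)^*\|_{\L(H)} = \|\mu(t)\|_{\L(H)}$ pointwise, we get $\mu^* \in L^1_t \L(H)$ with the same norm, and $\bar a \in \CC$ with $|\bar a| = |a|$. For (4), I would define $\hat{k}(\lambda) = \hat{\mu}(\lambda) + a I_H$ with $\hat{\mu}(\lambda)$ given by a Bochner integral; the bound $\|\hat{\mu}(\lambda)\|_{\L(H)} \le \|\mu\|_{L^1_t \L(H)}$ is immediate, and continuity in $\lambda$ follows from dominated convergence with $\|\mu(t)\|_{\L(H)}$ as the dominating function.

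The only mild subtlety — hardly an obstacle — is handling measurability issues for $\L(H)$-valued functions, since $\L(H)$ is not separable in general. This is dispatched by interpreting $L^1_t \L(H)$ in the Bochner sense (strongly measurable, essentially separably valued functions with integrable norm), which is the standard convention and under which the above real-analytic manipulations, in particular Fubini and dominated convergence for Bochner integrals, apply verbatim.
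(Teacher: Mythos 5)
Your proposal is correct and follows the same route as the paper: the paper simply writes down the convolution product $(\mu + aI_K)*(\nu + bI_K) = \mu*\nu + a\nu + b\mu + abI_K$ and declares the four properties trivial, which is exactly the structure you verify. Your extra care with Bochner measurability, operator-valued Young, and dominated convergence just fills in the routine details the paper leaves implicit.
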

\begin{proof}
	The algebra structure is given by $L^1_t$-convolution
	\[ (\mu + a I_K) * (\nu + bI_K) = \mu * \nu + a\nu + b \mu + ab I_K. \]
	Then all these properties are trivial. 
\end{proof}

We stress that the invertibility of $k \in K$ means there exists $k^{-1} \in K$ such that $k * k^{-1} = k^{-1} * k = I_K$.
 \\

Now we are in place to state the abstract Wiener's theorem, a sufficient condition for $k \in K$ to be invertible characterized by invertibility of $\hat{k}(\l) \in \L(H)$ for all $\l$. We also collect the criterion for the upper triangular property (meaning that $A \in K$ is supported on $[0, \infty)$) to be preserved under inversion. 

\begin{theorem}[{\cite[Theorem 3.3]{beceanu2009critical}}, {\cite[Theorem 2.3]{beceanu2011new}}, ] \label{thmWiener}
	Let $K$ be the operator algebra of \eqref{eqdefK}. If $A \in K$ is invertible, then $\hat{A}(\l) \in \L(H)$ is invertible for every $\l \in \RR$. Conversely, assume $\hat{A}(\l)$ is invertible for each $\l \in \RR$, $A - I_K = L \in L^1_t(\L(H))$, and
	\be  \lim_{\e\to 0}\| L(\cdot + \e) - L\|_K = 0,\quad \lim_{R\to \infty}\|1 - \psi(t/R)L(t) \|_K = 0 \label{eqWienercond} \ee
	where $\psi \in C^\infty_c([-2, 2])$ satisfies $\psi(t) = 1$ for $t \in [-1, 1]$. Then $A$ is invertible and $A^{-1} - I_K \in L^1_t(\L (H))$.
\end{theorem}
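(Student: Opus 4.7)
The forward direction is immediate: the Fourier transform on $K$, sending $\mu + aI_K \mapsto \hat\mu(\l) + a\,\mathrm{Id}_H$, is a unital algebra homomorphism from $(K,*)$ to $(C^0_b(\RR,\L(H)),\cdot)$, so invertibility of $A$ in $K$ forces pointwise invertibility of $\hat A(\l)$ in $\L(H)$ for every $\l \in \RR$. For the converse, my plan is to follow the Gelfand--Wiener partition-of-unity strategy, adapted to the non-commutative operator-valued setting. Write $A = I_K + L$ with $L \in L^1_t\L(H)$; the two conditions in \eqref{eqWienercond} will function as quantified forms of uniform continuity under translation and uniform decay at infinity, together providing the substitute for the absolutely-convergent-Fourier-series hypothesis that powers the classical scalar Wiener lemma.

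First I would handle the large-frequency region. Since $L \in L^1_t\L(H)$, a Riemann--Lebesgue argument gives $\|\hat L(\l)\|_{\L(H)} \to 0$ as $|\l| \to \infty$, so there exists $M$ with $\|\hat L(\l)\|_{\L(H)} \le 1/2$ for $|\l| > M$. Next, on the compact set $[-M,M]$, invertibility of $\hat A(\l_0)$ for each $\l_0$ together with continuity of $\hat A$ yields neighborhoods $U_{\l_0}$ on which $\hat A(\l)^{-1}$ exists with uniform operator-norm bound. Extract a finite subcover $\{U_j\}_{j=1}^N$ of $[-M,M]$ and choose a smooth partition of unity $\{\chi_0, \chi_1, \ldots, \chi_N\}$ subordinate to $\{\{|\l|>M\}, U_1, \ldots, U_N\}$. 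The pointwise candidate is
\[ \hat B(\l) := \chi_0(\l)\sum_{n \ge 0}(-\hat L(\l))^n + \sum_{j=1}^N \chi_j(\l)\, \hat A(\l)^{-1}. \]
The heart of the proof is to show that each summand is the Fourier transform of an element of $K$ with controlled $K$-norm. For $\chi_0$ one uses that convolution with the Schwartz inverse Fourier transform $\check\chi_0$ preserves $K$, together with geometric convergence induced by the tail-decay hypothesis of \eqref{eqWienercond}. For each $\chi_j$ one expands $\hat A(\l)^{-1}$ around $\l_j$ as a Neumann series in the small quantity $(\hat A(\l) - \hat A(\l_j))\hat A(\l_j)^{-1}$, small on $\mathrm{supp}\,\chi_j$ thanks to the first hypothesis of \eqref{eqWienercond}, multiply by $\chi_j$, and verify term by term that the resulting operator-valued functions are Fourier transforms of elements of $K$ with summable norms. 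Summing produces $B \in K$ with $\hat B = \hat A^{-1}$, and injectivity of the Fourier transform on $K$ gives $A*B = B*A = I_K$.

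The main obstacle is precisely the partition-of-unity patching in the non-commutative, operator-valued setting: in the classical scalar Wiener lemma one invokes Gelfand theory on a commutative Banach algebra, whereas here $\L(H)$-valuedness precludes any spectrum-by-spectrum argument. The two conditions in \eqref{eqWienercond} are tailored to provide the non-commutative substitutes --- equicontinuity under translation is what makes the local Neumann expansions, after being multiplied by smooth frequency cutoffs, converge in $K$-norm, while decay at infinity supplies the tightness that reduces the patching to finitely many balls and simultaneously makes the high-frequency Neumann series converge without any spectral assumption. Quantifying both carefully so that the cumulative $K$-norm errors stay controlled is the delicate technical step, and exactly matches how Beceanu packaged the hypotheses in \cite{beceanu2011new}.
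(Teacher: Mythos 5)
The paper itself does not prove Theorem \ref{thmWiener}: it is imported verbatim from Beceanu \cite{beceanu2009critical,beceanu2011new}, so the only comparison available is with Beceanu's argument, whose overall architecture — Fourier transform as a unital homomorphism for the easy direction, and for the converse a high-frequency Neumann inversion plus a finite cover of a compact frequency interval with local inversions patched by a smooth partition of unity — your sketch does reproduce. Your forward direction is fine.

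In the converse, however, there are two concrete gaps as written. First, Riemann--Lebesgue is not free in this setting: $L^1_t\L(H)$ is taken with only strong measurability, and decay of $\hat L(\l)$ at infinity is precisely what the translation-continuity hypothesis in \eqref{eqWienercond} restores, via $2\hat L(\l)=\int e^{-it\l}\bigl(L(t)-L(t+\pi/\l)\bigr)dt$, so $\|\hat L(\l)\|_{\L(H)}\le \|L-L(\cdot+\pi/\l)\|_{K}\to 0$; invoking ``$L\in L^1$'' alone does not suffice, though the fix is at hand. Second, and more seriously, the summation step fails as described: pointwise smallness $\|\hat L(\l)\|_{\L(H)}\le \tfrac12$ on $\mathrm{supp}\,\chi_0$ does not make the $K$-norms of the localized powers geometrically small (the element whose transform is $\chi_0(\l)(-\hat L(\l))^n$ has $K$-norm only bounded by $C\|L\|_K^n$ in general), and the time-tail-decay hypothesis does not produce such smallness either, so ``geometric convergence induced by the tail-decay hypothesis'' is not a valid mechanism; also $\chi_0$ is not compactly supported, so $\check\chi_0$ is not Schwartz (one should use $\chi_0=1-(1-\chi_0)$). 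The standard repair, and the one underlying Beceanu's proof, is to first approximate $L$ in $K$ by $L*\varphi$ with $\varphi$ a scalar Fej\'er-type kernel whose Fourier transform is compactly supported — the translation-continuity hypothesis is exactly what makes $\|L-L*\varphi\|_K<\tfrac12$ achievable — so that $I_K+(L-L*\varphi)$ is invertible by a genuine Neumann series in $K$; after factoring this out, the high-frequency region is trivial, and only the compact frequency support of $\widehat{L*\varphi}$ requires the finite cover, where the smallness fed into each local Neumann series must again be measured in the $K$-norm of the element with transform $\tilde\chi_j(\l)\bigl(\hat L(\l)-\hat L(\l_j)\bigr)$ (after modulating $\l_j$ to $0$), which is again supplied quantitatively by translation continuity rather than by sup-norm continuity of $\hat A$. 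With that splitting inserted, your outline becomes essentially the cited proof.
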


\begin{lemma}[{\cite[Lemma 3.4]{beceanu2009critical}}, {\cite[Lemma 2.5]{beceanu2011new}}] \label{lemWiener1}
	Given $A \in K$ upper triangular with $A^{-1} \in K$, $A^{-1}$ is upper triangular if and only if $\hat{A}$ can be extended to a weakly analytic family of invertible operators in the lower half-plane (namely it is continuous up to the boundary, uniformly bounded, and has uniformly bounded inverse). 
\end{lemma}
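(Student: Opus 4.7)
The lemma is an operator-valued Paley--Wiener type characterization of upper triangularity in the algebra $K$. The underlying principle is the following dictionary: for $\mu\in L^1_t\L(H)$ supported on $[0,\infty)$, the Fourier transform $\hat\mu(\lambda)=\int_0^\infty e^{-it\lambda}\mu(t)\,dt$ extends as a weakly analytic, uniformly bounded $\L(H)$-valued function to the closed lower half-plane $\{\Im\lambda\le 0\}$, since $|e^{-it\lambda}|=e^{t\,\Im\lambda}\le 1$ for $t\ge 0$ and $\Im\lambda\le 0$; conversely, the classical scalar Paley--Wiener theorem says that an $L^1(\RR)$ function whose Fourier transform extends to a bounded holomorphic function on the lower half-plane must itself be supported on $[0,\infty)$. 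Both directions of the lemma stem from this dictionary.

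\emph{Necessity.} Suppose $A^{-1}$ is upper triangular and write $A^{-1}=B+bI_K$ with $B\in L^1_t\L(H)$ supported on $[0,\infty)$. The Laplace-type integral $\widehat{A^{-1}}(\lambda)=b+\int_0^\infty e^{-it\lambda}B(t)\,dt$ then defines a weakly analytic extension to $\{\Im\lambda<0\}$, continuous up to $\RR$, with the uniform bound $\|\widehat{A^{-1}}(\lambda)\|_{\L(H)}\le \|A^{-1}\|_K$. The same applies to $\hat A$ since $A$ is upper triangular. Since $\hat A(\lambda)\widehat{A^{-1}}(\lambda)=\widehat{A^{-1}}(\lambda)\hat A(\lambda)=I_H$ for real $\lambda$, weak analytic continuation propagates this identity throughout the open lower half-plane, so $\hat A(\lambda)$ is invertible there with uniformly bounded inverse $\widehat{A^{-1}}(\lambda)$.

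\emph{Sufficiency.} Conversely, assume $\hat A$ admits an extension with the stated properties. By the standing hypothesis $A^{-1}\in K$, write $A^{-1}=B+bI_K$ with $B\in L^1_t\L(H)$, so that $\hat B(\lambda)+b=\hat A(\lambda)^{-1}$ on $\RR$. The assumed extension of $\hat A(\lambda)^{-1}$ then transfers to a weakly analytic, uniformly bounded extension of $\hat B(\lambda)$ to $\{\Im\lambda<0\}$, continuous up to $\RR$. To conclude that $B$ is supported on $[0,\infty)$ I would reduce to the scalar Paley--Wiener theorem by duality: for each pair $x,y\in H$ the function $f_{x,y}(t):=\langle B(t)x,y\rangle_H$ lies in $L^1(\RR)$, and its Fourier transform $\hat f_{x,y}(\lambda)=\langle \hat B(\lambda)x,y\rangle_H$ extends to a bounded holomorphic function on the lower half-plane. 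The scalar theorem then forces $f_{x,y}(t)=0$ for a.e.\ $t<0$; varying $x,y\in H$ gives $B(t)=0$ as an operator in $\L(H)$ for a.e.\ $t<0$, i.e.\ $A^{-1}$ is upper triangular.

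\emph{Main difficulty.} The substantive direction is sufficiency. The obstacle is that Paley--Wiener is a scalar statement, whereas the hypothesis provides only weak analyticity of $\hat A^{-1}$ (rather than, say, norm analyticity). The approach circumvents this by testing against vectors $x,y\in H$ and reducing to the scalar theorem; the crucial structural input for the reduction is that $B$ already lies in $L^1_t\L(H)$, which is guaranteed a priori by the hypothesis $A^{-1}\in K$ together with Theorem~\ref{thmWiener}.
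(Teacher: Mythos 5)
The paper offers no proof of this lemma — it is quoted verbatim from Beceanu — so there is nothing in-paper to compare against; judged on its own, your argument is the standard Fourier–Laplace characterization underlying the cited result and is correct in outline. Three points deserve tightening. First, the scalar fact you invoke is not the textbook ($L^2$ or compact-support) Paley–Wiener theorem but the statement that an $L^1(\RR)$ function whose Fourier transform admits a bounded holomorphic extension to the open lower half-plane, continuous up to $\RR$, is supported on $[0,\infty)$; this is true but should be proved or referenced precisely, e.g.\ by splitting $f=f_-+f_+$, gluing $\widehat{f_-}$ with $F-\widehat{f_+}$ across $\RR$ via Morera, and concluding by Liouville together with the decay of $\widehat{f_-}(\lambda)$ as $\Im\lambda\to+\infty$. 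Second, in the sufficiency step you implicitly use that $\hat A(\lambda)^{-1}$ is analytic in the open lower half-plane and continuous up to the boundary: analyticity follows from Dunford's theorem (weak analyticity plus local boundedness gives norm analyticity) and a local Neumann-series argument, while boundary continuity follows from the identity $\hat A(\lambda)^{-1}-\hat A(\mu)^{-1}=\hat A(\lambda)^{-1}\bigl(\hat A(\mu)-\hat A(\lambda)\bigr)\hat A(\mu)^{-1}$ combined with the assumed uniform bound on the inverses; these one-line verifications should be recorded. Third, in the necessity direction, "propagating" $\hat A\,\widehat{A^{-1}}=I$ from the real line into the interior is not a direct application of the identity theorem (agreement holds only on the boundary); it is cleaner to compute by Fubini that $\hat A(\lambda)\widehat{A^{-1}}(\lambda)=\widehat{A*A^{-1}}(\lambda)=I_H$ for all $\Im\lambda\le 0$, which is legitimate precisely because both kernels are supported on $[0,\infty)$. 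Finally, note that $A^{-1}\in K$ is a hypothesis of the lemma, so the appeal to Theorem \ref{thmWiener} at the end is unnecessary.
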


The following lemma was also implicitly used in Beceanu's work \cite{beceanu2011new}. It bridges such invertibility with our Strichartz estimates. We present a proof for completeness. 

\begin{lemma}\label{lemWiener2}
	If $A, A^{-1} \in K$ and both are upper triangular, then $\int_0^t A(t-s) ds \in \L(L^2_t([0, \infty), H))$ has an inversion $\int_0^t A^{-1}(t-s)ds \in \L(L^2_t([0, \infty), H))$.
\end{lemma}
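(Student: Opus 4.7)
The plan is to realize the operator $f \mapsto \int_0^t A(t-s)f(s)\,ds$ as the restriction to causal functions of the full convolution operator $T_A: f \mapsto A * f$ on $L^2_t(\RR, H)$, and then transfer the identity $A * A^{-1} = A^{-1} * A = I_K$ in $K$ to an operator identity on $L^2_t([0,\infty), H)$.

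First, for any $A = \mu + aI_K \in K$ with $\mu \in L^1_t\L(H)$, define
\[ T_A f(t) := a\,f(t) + \int_\RR \mu(t-s) f(s)\,ds. \]
By Minkowski's inequality applied pointwise in $H$, followed by Young's inequality for scalar convolutions,
\[ \| T_A f \|_{L^2_t(\RR, H)} \le \bigl(|a| + \|\mu\|_{L^1_t\L(H)}\bigr) \|f\|_{L^2_t(\RR, H)} = \|A\|_K \|f\|_{L^2_t(\RR, H)}, \]
so $T_A \in \L(L^2_t(\RR, H))$. A direct change of variables shows $T_{A*B} = T_A \circ T_B$, so the assignment $A \mapsto T_A$ is an algebra homomorphism from $K$ into $\L(L^2_t(\RR, H))$; in particular $T_{I_K} = \mathrm{Id}$ and hence $T_A \circ T_{A^{-1}} = T_{A^{-1}} \circ T_A = \mathrm{Id}$ on $L^2_t(\RR, H)$.

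Next, I exploit upper triangularity. If $A$ is supported on $[0,\infty)$ and $f$ is extended by zero outside $[0,\infty)$, then
\[ T_A f(t) = a\,f(t) + \int_0^\infty \mu(\tau) f(t-\tau)\,d\tau = a\,f(t) + \int_0^t \mu(t-s) f(s)\,ds, \]
which vanishes for $t < 0$. Hence $T_A$ preserves the closed subspace $L^2_t([0,\infty), H) \subset L^2_t(\RR, H)$, and on that subspace it coincides exactly with $f \mapsto \int_0^t A(t-s) f(s)\,ds$ (understood inclusive of the delta contribution at $s=t$). The same applies to $A^{-1}$ by hypothesis.

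Finally, restricting the identities $T_A \circ T_{A^{-1}} = T_{A^{-1}} \circ T_A = \mathrm{Id}$ to the invariant subspace $L^2_t([0,\infty), H)$ yields the claimed inverse relationship between $\int_0^t A(t-s)\,ds$ and $\int_0^t A^{-1}(t-s)\,ds$. There is essentially no obstacle here beyond bookkeeping: the boundedness is Young's inequality, the algebra structure is built into $K$, and upper triangularity is precisely what guarantees that causal inputs produce causal outputs so that both operators descend to $L^2_t([0,\infty), H)$.
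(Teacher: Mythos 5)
Your proof is correct and follows essentially the same route as the paper: boundedness via Young's (Minkowski's) inequality and invertibility transferred from the identity $A*A^{-1}=A^{-1}*A=I_K$. Your homomorphism-plus-invariant-subspace framing merely makes explicit the bookkeeping that the paper compresses into ``can be checked using $A^{-1}*A=A*A^{-1}=I_K$.''
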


\begin{proof}
	We first show the boundedness. Since $A = I_K + L$ with $L \in L^1_t \L(H)$, for $u \in L^2_t([0, \infty), H)$, 
	\bee 
	&&\left\| \int_0^t A(t-s) u(s) ds \right\|_{L^2_t([0, \infty), H)}  \\
	&\le& \left\| \int_0^t \|L(t-s)\|_{\L(H)} \|u(s)\|_H ds \right\|_{L^2_t(\RR)}  + \| u\|_{L^2_t([0, \infty), H )} 
	 = \| A \|_K \| u \|_{L^2_t ([0, \infty), H ) }
	\eee
	from Young's inequality. And similar for $\int_0^t A^{-1}(t-s)ds$. That $\int_0^t A^{-1}(t-s)ds$ is the inverse of $\int_0^t A(t-s)ds$ can be checked using $A^{-1} *A= A*A^{-1} = I_K$.
\end{proof}

To conclude this subsection, we present the following version of abstract Wiener's theorem for operators with exponential decay in $t$. 

\begin{corollary}[Abstract Wiener's theorem with exponential decay]\label{coroWiener}
	Let $K$ be the Banach algebra of \eqref{eqdefK} and $\epsilon \in \RR$. If $A = I_K + L \in K$ is upper triangular with $L \in L^1_{t,loc}\L(H)$ and satisfies
	\begin{enumerate}
		\item $\| L \|_{L^1_t e^{\epsilon t}\L(H)} = \int_0^\infty \| L(t) \|_{\L(H)}e^{\epsilon t} dt < \infty$.
		\item $ \lim_{\epsilon\to 0}\| L(\cdot + \epsilon) - L\|_{L^1_t e^{\epsilon t} \L(H)} = 0.$
		\item The Fourier transform $\hat{A}(\l)$ can be extended to a weakly analytic family of invertible operators on $H$ for $\{z \in \CC: \Im z \le \epsilon\}$.
	\end{enumerate}
    Then $\int_0^t A(t-s)ds \in \L (L^2_t([0, \infty), e^{\epsilon t} H))$ has inversion $\int_0^t A^{-1}(t-s)ds \in \L (L^2_t([0, \infty), e^{\epsilon t} H))$.
\end{corollary}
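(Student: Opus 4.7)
The plan is to reduce the corollary to the abstract Wiener's theorem (Theorem~\ref{thmWiener}) together with Lemmas~\ref{lemWiener1}--\ref{lemWiener2}, by conjugating $A$ with the exponential weight $e^{\e t}$. Set $\tilde{L}(t) := e^{\e t} L(t)$ and $\tilde{A} := I_K + \tilde{L}$. Hypothesis~(1) gives $\tilde{L} \in L^1_t \L(H)$, and $\tilde{A}$ remains upper triangular since $\supp L \subset [0,\infty)$ is preserved by the weight. The key algebraic identity, obtained by a complex shift in the integral defining $\hat{\tilde{L}}$, is
\[
\hat{\tilde{A}}(\l) \;=\; \hat{A}(\l - i\e), \qquad \l \in \RR.
\]

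First I would verify the hypotheses of Theorem~\ref{thmWiener} for $\tilde{A}$. Splitting
\[
\tilde{L}(t+\epsilon) - \tilde{L}(t) = e^{\e \epsilon}\bigl[\, e^{\e t} L(t+\epsilon) - e^{\e t} L(t)\,\bigr] + (e^{\e \epsilon}-1)\, e^{\e t} L(t),
\]
the $L^1_t \L(H)$-norm of the first bracket tends to $0$ by hypothesis~(2) and of the second by hypothesis~(1), yielding translation continuity. The truncation bound $\|(1-\psi(t/R))\tilde{L}\|_{L^1_t \L(H)} \to 0$ is immediate from dominated convergence since $\tilde{L} \in L^1_t \L(H)$. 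Pointwise invertibility of $\hat{\tilde{A}}(\l)$ for $\l \in \RR$ is the content of hypothesis~(3) evaluated at height $\Im(\l - i\e) = -\e \le \e$. Theorem~\ref{thmWiener} then produces $\tilde{A}^{-1} = I_K + \tilde{M} \in K$ with $\tilde{M} \in L^1_t \L(H)$.

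Upper triangularity of $\tilde{A}^{-1}$ is obtained from Lemma~\ref{lemWiener1}: the map $z \mapsto \hat{A}(z-i\e)$ is a weakly analytic family of uniformly invertible operators on the closed lower half-plane $\{\Im z \le 0\}$, because hypothesis~(3) supplies this structure on $\{\Im w \le \e\}$ and the shift $w = z - i\e$ maps $\{\Im z \le 0\}$ into it. Lemma~\ref{lemWiener2} then yields invertibility of $u \mapsto \int_0^t \tilde{A}(t-s)\, u(s)\, ds$ on $L^2_t([0,\infty), H)$ with inverse $u \mapsto \int_0^t \tilde{A}^{-1}(t-s)\, u(s)\, ds$.

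Finally, multiplication by $e^{\e t}$ is a convolutional algebra homomorphism on upper triangular elements of $K$ and induces an isometric isomorphism $J: L^2_t([0,\infty), e^{\e t} H) \to L^2_t([0,\infty), H)$, $(Ju)(t) := e^{\e t} u(t)$. The intertwining identity
\[
e^{\e t} \int_0^t A(t-s)\, u(s)\, ds \;=\; \int_0^t \tilde{A}(t-s)\, (Ju)(s)\, ds
\]
then transports invertibility of convolution by $\tilde{A}$ on $L^2_t([0,\infty), H)$ to invertibility of convolution by $A$ on $L^2_t([0,\infty), e^{\e t} H)$, with inverse kernel $A^{-1} := I_K + e^{-\e t}\tilde{M}(t)$. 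The only subtle bookkeeping lies in hypothesis~(3), which simultaneously supplies the pointwise invertibility on $\RR$ needed for Theorem~\ref{thmWiener} and the analytic extension into the lower half-plane needed for Lemma~\ref{lemWiener1}; analyticity up to the boundary $\Im z = \e$ is exactly the right assumption to keep these two uses consistent.
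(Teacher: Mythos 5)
Your proposal follows essentially the same route as the paper: conjugate $A$ by the exponential weight, apply Theorem \ref{thmWiener} together with Lemmas \ref{lemWiener1} and \ref{lemWiener2} to the weighted kernel, and transfer the invertibility back to $L^2_t([0,\infty), e^{\e t}H)$ via the intertwining identity $(e^{-\e\cdot}\mu)*(e^{-\e\cdot}\nu)=e^{-\e\cdot}(\mu*\nu)$. The structure, the verification of the two Wiener conditions from hypotheses (1)--(2), and the final conjugation step all match the paper's proof and are sound.

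One correction: with the paper's convention $\hat{k}(\l)=\int_\RR e^{-it\l}k(t)\,dt$, the weighted kernel satisfies $\hat{\tilde{A}}(\l)=\hat{A}(\l+i\e)$, not $\hat{A}(\l-i\e)$ (indeed $e^{-it(\l+i\e)}=e^{-it\l}e^{\e t}$). Your conclusions survive because hypothesis (3) gives invertibility and weak analyticity on all of $\{\Im z\le\e\}$, which contains both the heights you cite and the correct ones; but with the correct sign, checking $\hat{\tilde{A}}(\l)$ for $\l\in\RR$ means evaluating $\hat{A}$ exactly on the line $\Im z=\e$, and Lemma \ref{lemWiener1} for $\hat{\tilde{A}}$ requires $\hat{A}$ precisely on $\{\Im z\le\e\}$. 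This is why hypothesis (3) is stated with the extension up to $\Im z\le\e$; under your sign the region $0<\Im z\le\e$ would never be used, so your closing remark about the assumption being "exactly right" only becomes accurate once the sign is fixed.
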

\begin{proof}
	Let $B(t) := A(t) e^{\epsilon t}$. Then $B = I_K + L_B$ is upper triangular with $L_B \in L^1_t \L(H)$. Since
	\[\hat{B}(\l) = \int_0^\infty e^{-it\l} e^{\epsilon t} A(t) dt = \hat{A}(\l +i\epsilon)\]
	is a weak analytic family in the lower-half plane,
	$B$ meets the conditions of Theorem \ref{thmWiener} (the second limit in \eqref{eqWienercond} follows (1) here), Lemma \ref{lemWiener1} and Lemma \ref{lemWiener2}. Hence there exists $B^{-1} \in K$. For $\mu, \nu \in K$, a computation as in Lemma \ref{lemWiener2} indicates
	\bee \left((e^{-\epsilon (\cdot)} \mu) * (e^{-\epsilon (\cdot)} \nu)\right) (t) = \int_0^t e^{-\epsilon(t-\tau)} (\mu* \nu)(t-\tau) d\tau.
	\eee
	Thus $B^{-1} * (e^{\epsilon (\cdot)} A) = (e^{\epsilon (\cdot)} A) * B^{-1} = I_K$ implies $(e^{-\epsilon (\cdot)}B^{-1}) * A = A * (e^{-\epsilon (\cdot)}B^{-1}) = I_K$. By the uniqueness of inversion in $K$, we know $B^{-1} = e^{\epsilon (\cdot)} A^{-1} \in L^1_t \L(H)$. Finally, for the boundedness, by acting $\int_0^t e^{\epsilon(t-s)} A(t-s) ds$ on $e^{\epsilon (\cdot)} u$, Lemma \ref{lemWiener2} indicates 
	\[ \left\|  \int_0^t A(t-s) ds \right\|_{\L (L^2_t([0, \infty), e^{\epsilon t}H ))} \le \| e^{\epsilon(\cdot)}A \|_{L^1_t\L(H)} < \infty.  \]
	Replacing $A$ by $A^{-1}$, we obtain the same bound which concludes the proof.
\end{proof}

\subsection{Modified Schr\"odinger system and Proof of Theorem \ref{thmStrichartz}} 
We consider the modified system 
\be i\pa_t \tilde{Z} + \calH P_{\rm ess} \tilde{Z}+ i\mu P_{\rm disc} \tilde{Z} = \tilde{F} \label{eqmod} \ee
where $\mu  > b(\sigma-s_c)$ and  $P_{\rm disc}$ and $P_{\rm ess}$ are spectral projections of $\calH$ defined in \eqref{eqdefproj2}. Our aim of this section is to apply Corollary \ref{coroSbVbdd} to verify the condition in Corollary \ref{coroWiener} and prove the Strichartz estimate.

As a preparation, since 
\be \calH P_{\rm ess} + i\mu P_{\rm disc} = \calH_b - ibs_c + V - P_{\rm disc} (\calH - i\mu) =: \calH_b - ibs_c + \tilde{V}, \label{eqtildeH} \ee
we first derive a decomposition of the modified potential $\tilde{V} := V - P_{\rm disc} (\calH - i\mu)$.
\begin{lemma}[Decomposition of modified potential]\label{lemdecompV} Assume \eqref{eqassump} and take $\delta_1$ from Lemma \ref{lemgap} and $p_1$ from Proposition \ref{prop3}.
	For $\sigma \in (s_c, s_c+\delta_1)$ and $\mu  > b(\sigma-s_c)$, there exist $p_2 \in (2, 2^*)$, $\gamma_2 > 0$ and a decomposition 
	\be \tilde{V} = \tilde{V}_1\tilde{V}_2 \ee
	such that $\tilde{V_1} \in \L(\dot{H}^\sigma, \dot{W}^{\sigma, p_2'} \cap \dot{H}^\sigma_{\la x \ra^{\gamma_2}})$ and $\tilde{V}_2 \in \L(\dot{W}^{\sigma, p_2} + \dot{H}^\sigma, \dot{H}^\sigma)$.
\end{lemma}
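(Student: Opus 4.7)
The plan is to write $\tilde{V} = V + F$ where $V = V_1 V_2$ is the matrix multiplication from \eqref{eqV1V2} (with factors of polynomial decay $\alpha/2$) and $F := -(\calH - i\mu)P_{\rm disc}$ is a finite-rank correction. Since $P_{\rm disc}$ commutes with $\calH$ by Proposition \ref{prop3}(3), $\mathrm{Range}(F) \subset \mathrm{Range}(P_{\rm disc}) \subset Z := \dot{H}^\sigma_{\la x\ra^{\gamma_0}} \cap \dot{W}^{\sigma, p_1'} \cap \dot{W}^{\sigma, 2^{*-}}$ by Proposition \ref{prop3}(2). Choosing a basis $\{\phi_j\}_{j=1}^N$ of $\mathrm{Range}(P_{\rm disc})$ and dual functionals $\psi_j \in \mathrm{Range}(P_{\rm disc}^*) \subset Z$, one writes $F = \sum_{j=1}^N \tilde{\phi}_j \langle \cdot, \psi_j\rangle_{\dot{H}^\sigma}$ with $\tilde{\phi}_j := -(\calH - i\mu)\phi_j$, still in $Z$ as a linear combination of the $\phi_k$'s (since $\calH$ leaves $\mathrm{Range}(P_{\rm disc})$ invariant). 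Next fix $p_2 \in (2, p_1]$ close enough to $2$ and $\gamma_2 \in (0, \min\{\gamma_0, \alpha/2\})$ so that Lemma \ref{lempotbdd} (applied entry-wise) delivers $V_1 \in \L(\dot{H}^\sigma, \dot{W}^{\sigma, p_2'} \cap \dot{H}^\sigma_{\la x\ra^{\gamma_2}})$ and $V_2 \in \L(\dot{W}^{\sigma, p_2} + \dot{H}^\sigma, \dot{H}^\sigma)$, with the automatic inclusions $Z \hookrightarrow \dot{W}^{\sigma, p_2'} \cap \dot{H}^\sigma_{\la x\ra^{\gamma_2}}$ and $\psi_j \in \dot{W}^{\sigma, p_2'} \cap \dot{H}^\sigma$, so that the pairings $\langle \cdot, \psi_j\rangle_{\dot{H}^\sigma}$ extend boundedly to $\dot{W}^{\sigma, p_2} + \dot{H}^\sigma$.

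To realize the decomposition as a single composition through $\dot{H}^\sigma$ accommodating both $V$ and $F$ simultaneously, I exploit the separable infinite-dimensional structure of $\dot{H}^\sigma$: split it orthogonally as $\dot{H}^\sigma = H_\infty \oplus H_N$ with $\dim H_N = N$, fix a unitary $U : \dot{H}^\sigma \to H_\infty$, and an orthonormal basis $\{e_j\}_{j=1}^N \subset \dot{H}^\sigma$ of $H_N$, and set
\begin{align*}
\tilde{V}_2 f &:= U V_2 f + \sum_{j=1}^N \langle f, \psi_j\rangle_{\dot{H}^\sigma}\, e_j, \\
\tilde{V}_1 g &:= V_1\bigl(U^{-1} P_{H_\infty} g\bigr) + \sum_{j=1}^N \langle g, e_j\rangle_{\dot{H}^\sigma}\, \tilde{\phi}_j.
\end{align*}
The orthogonality $H_\infty \perp H_N$ forces every cross-term to cancel — $P_{H_\infty} U V_2 f = U V_2 f$, $P_{H_\infty} e_k = 0$, $\langle U V_2 f, e_j\rangle_{\dot{H}^\sigma} = 0$, and $\langle e_k, e_j\rangle_{\dot{H}^\sigma} = \delta_{jk}$ — so that a direct expansion gives $\tilde{V}_1 \tilde{V}_2 = V + F = \tilde{V}$. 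Boundedness of each factor between the prescribed spaces then reduces to the multiplication bounds above together with the trivial boundedness of $U$, $U^{-1}$, and $P_{H_\infty}$ on $\dot{H}^\sigma$.

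The main conceptual obstacle bypassed by the abstract unitary $U$ is that the naive attempt $\tilde{V}_1 = V_1$, $\tilde{V}_2 = V_2 + \sum_j \langle \cdot, \psi_j\rangle \eta_j$ would force $V_1 \eta_j = \tilde{\phi}_j$, i.e.\ $\eta_j = V_1^{-1}\tilde{\phi}_j \in \dot{H}^\sigma$. Since $V_1(x)^{-1}$ grows like $\la x\ra$ at infinity (from $|Q_b(x)| \sim \la x\ra^{-2/(p-1)}$ in \eqref{selfsimilardecay}), this would demand pointwise decay of $\tilde{\phi}_j$ strictly faster than $\la x\ra^{-1}$ — more than the decay $\gamma_0 \lesssim \alpha/2$ secured by the bootstrap in Proposition \ref{prop3}(2) in the small-$\alpha$ regime. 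The $U$-trick sidesteps this by housing the finite-rank correction inside the auxiliary subspace $H_N$, orthogonal to $U(V_2(\dot{H}^\sigma))$, thereby decoupling it from the multiplication structure.
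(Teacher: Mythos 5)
Your construction is correct, but it is genuinely different from (and considerably heavier than) the paper's. The paper simply takes $\tilde{V}_1 = \la x \ra^{-\gamma_1}$ with $0<\gamma_1<\min\{\a,\gamma_0\}$ and $\tilde{V}_2 = \la x \ra^{\gamma_1}\bigl(V - P_{\rm disc}(\calH - i\mu)\bigr)$: the left factor is an arbitrarily weakly decaying weight, so Lemma \ref{lempotbdd} and \eqref{eqpotbdd2} give $\tilde V_1\in\L(\dot H^\sigma,\dot W^{\sigma,p_2'}\cap\dot H^\sigma_{\la x\ra^{\gamma_2}})$; the term $\la x\ra^{\gamma_1}V$ still decays at rate $\a-\gamma_1>0$, hence maps $\dot W^{\sigma,p_2}+\dot H^\sigma\to\dot H^\sigma$; and the finite-rank piece is handled by noting $P_\xi(\calH-\xi)$ has range inside $\mathrm{Ran}\,P_\xi$, so $P_{\rm disc}(\calH-i\mu)$ maps into $\dot H^\sigma_{\la x\ra^{\gamma_0}}$ and the weight $\la x\ra^{\gamma_1}$, $\gamma_1<\gamma_0$, is absorbed via \eqref{eqpotbdd4}. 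Your route instead keeps the original factorization $V=V_1V_2$ and grafts the finite-rank correction onto an auxiliary $N$-dimensional subspace through a unitary identification $U:\dot H^\sigma\to H_\infty$; the algebra $\tilde V_1\tilde V_2 = V + F$ checks out, the representation $F=\sum_j\langle\cdot,\psi_j\rangle\tilde\phi_j$ with $\psi_j\in\mathrm{Ran}\,P_{\rm disc}^*$ and $\tilde\phi_j\in\mathrm{Ran}\,P_{\rm disc}$ is legitimate (invariance of $\mathrm{Ran}\,P_{\rm disc}$ under $\calH$), and the boundedness claims follow from Proposition \ref{prop3}(2), Lemma \ref{lempotbdd}, \eqref{eqpotbdd2} and interpolation to get $\dot W^{\sigma,p_2'}$-membership for $p_2\in(2,p_1]$. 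What the paper's choice buys, and yours does not, is that both factors remain multiplication-type operators (a weight, and a weighted potential plus finite rank): the later arguments — the compactness/analyticity in Lemma \ref{lemdecompV2} and especially the equicontinuity verification in Step 1(2) of the proof of Theorem \ref{thmStrichartz}, which approximates the factors by Schwartz potentials with compactly supported Fourier transforms — are written for exactly that structure, whereas your $U$-based factors would force a (routine but nontrivial) rewriting of those steps. Also note that the ``obstacle'' you describe is only an obstacle for your particular ansatz $\tilde V_1=V_1$; the paper sidesteps it not by any decay bookkeeping on $V_1^{-1}$ but by refusing to factor through $V_1$ at all and using an arbitrarily weak weight instead, which is why only $\gamma_1<\min\{\a,\gamma_0\}$ is ever needed.
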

\begin{proof}
	We will specify $0 < \gamma_1 < \min\{\a, \gamma_0\}$ with $\gamma_0$ from Proposition \ref{prop3} and decompose trivially 
	\be \tilde{V_1} = \la x \ra^{-\gamma_1},\quad \tilde{V_2} = \la x \ra^{\gamma_1} \left(  V - P_{\rm disc}(\calH - i\mu)  \right). \label{eqdecompV} \ee
	Lemma \ref{lempotbdd} and \eqref{eqpotbdd2} implies $\tilde{V}_1 \in \L(\dot{H}^\sigma, \dot{W}^{\sigma, p_2'} \cap \dot{H}^\sigma_{\la x \ra^{\gamma_2}})$ and $\la x \ra^{\gamma_1} V \in \L(\dot{W}^{\sigma, p_2} + \dot{H}^\sigma, \dot{H}^\sigma)$ for some $p_2 > 2$ and $\gamma_2 > 0$. Notice that for $\xi \in \tilde{\calE}_\sigma = \sigma_{disc}(\calH)$, 
	\[P_\xi^1 := P_\xi (\calH - \xi) = -\frac{1}{2\pi i} \int_{|z-\xi|=\e} (z - \xi) \calS_{b, V}(z+ibs_c) dz\]
	satisfies $P_\xi^1 P_\xi = P_\xi P_\xi^1 = P_\xi^1$ by functional calculus (see for example \cite[Theorem V.8.1]{MR564653}) and similar for $(P_\xi^1)^*$ and $P_\xi^*$. In particular, we have $\mathrm{Ran} P_\xi^1 \subset \mathrm{Ran} P_\xi$ and $\mathrm{Ran} (P_\xi^1)^* \subset \mathrm{Ran} P_\xi^*$. Thus Proposition \ref{prop3} indicates that $P_{\rm disc} (\calH - i\mu)$ is bounded from $\dot{W}^{\sigma, p_1} +\dot{H}^\sigma$ to $\dot{H}^\sigma_{\la x \ra^{\gamma_0}}$ and the bound of $\tilde{V_2}$ follows \eqref{eqpotbdd4}. 
\end{proof}

Then we can define the resolvent of \eqref{eqtildeH} and derive its relation with $\calS_{b,V}(\l+ibs_c)$ and $\calS_{b}(\l+ibs_c)$. A formal computation suggests that
\be (P_{\rm ess} \calH + i\mu P_{\rm disc} - z) \left( \calS_{b, V}^{-}(z+ibs_c) P_{\rm ess} - (z - i\mu)^{-1} P_{\rm disc} \right) = I. \ee
This motivates the following definition and lemma.

\begin{lemma}[Extended resolvents for $\tilde{\calH}$] \label{lemdecompV2} Assume \eqref{eqassump} and take $\delta_1$ from Lemma \ref{lemgap}.
	For  $\sigma \in (s_c, s_c+\delta_1)$, $\mu > b(\sigma - s_c)$ and $\tilde{V}_1$, $\tilde{V}_2$ obtained in Lemma \ref{lemdecompV}, we define 
	\be \tilde{\calS}^-_{b, V}(z + ibs_c) := \calS_{b, V}^-(z+ibs_c) P_{\rm ess} - (z - i\mu)^{-1} P_{\rm disc} 
	\label{eqdefSbVt} \ee
	for $\Im z  \le b(\sigma - s_c)$. It satisfies the following properties.
	\begin{enumerate}
		\item {\rm Uniform boundedness:} for some $p_3 \in (2, 2^*)$, $\tilde{S}_{b, V}^-(z+ibs_c)$ is well-defined and uniformly bounded in $\L(\dot{W}^{\sigma, p_3'}\to \dot{W}^{\sigma, p_3})$ for $\Im z  \le b(\sigma - s_c)$.
		\item {\rm Relation with $\calS_b^-$:} the following identities hold for $\Im z  \le b(\sigma - s_c)$
			\begin{align} 
			\tilde{\calS}_{b,V}^-(z+ibs_c) = \calS_b^- (z+ibs_c) -&  \calS_b^- (z+ibs_c) \tilde{V}_1 (I + \tilde{V}_2  \calS_b^- (z+ibs_c) \tilde{V}_1 )^{-1} \tilde{V}_2 \calS_b^-(z+ibs_c)   \label{eqSbVt} \\
			I - \tilde{V}_2 \tilde{\calS}_{b, V}^{-}(z+ibs_c) \tilde{V}_1 &=(I + \tilde{V}_2 \calS_b^- (z+ibs_c) \tilde{V}_1)^{-1}. \label{eqSbVt2}
		\end{align}
	Moreover, $I + \tilde{V}_2 \calS_b^- (z+ibs_c) \tilde{V}_1$ and $I - \tilde{V}_2 \tilde{\calS}_{b, V}^{-}(z+ibs_c) \tilde{V}_1$ are both analytic and uniformly bounded in $\L(\dot{H}^\sigma)$ for $\Im z \le b(\sigma - s_c)$.
	\end{enumerate}
\end{lemma}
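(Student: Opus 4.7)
\textbf{Proof plan for Lemma \ref{lemdecompV2}.}

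For Part (1), the boundedness of $\tilde{\calS}^-_{b,V}(z+ibs_c)$ splits naturally according to the two spectral projections. Take $p_3 := p_1$ from Proposition \ref{prop3}. The first summand $\calS^-_{b,V}(z+ibs_c) P_{\rm ess}$ is uniformly bounded from $\dot W^{\sigma,p_1'}$ to $\dot W^{\sigma,p_1}$ on the full half-plane $\Im z\le b(\sigma-s_c)$ by Corollary \ref{coroSbVbdd}. For the second summand, Proposition \ref{prop3} gives $P_{\rm disc}\in\L(\dot W^{\sigma,p_1'}+\dot W^{\sigma,p_1}\to \dot H^\sigma_{\la x\ra^{\gamma_0}}\cap\dot W^{\sigma,p_1'}\cap\dot W^{\sigma,p_1})$, and the scalar factor $(z-i\mu)^{-1}$ is uniformly bounded on the relevant half-plane because $|z-i\mu|\ge \mu-b(\sigma-s_c)>0$. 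Adding the two contributions concludes Part (1).

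For Part (2), the cleanest route is to first recognize $\tilde{\calS}^-_{b,V}(z+ibs_c)$ as the genuine resolvent of the modified operator $\calH P_{\rm ess}+i\mu P_{\rm disc}=\calH_b-ibs_c+\tilde V$. Using $P_{\rm ess}P_{\rm disc}=0$, the commutativity of $P_{\rm ess}$ and $P_{\rm disc}$ with $\calH$, and the resolvent property of $\calS^-_{b,V}$ on $P_{\rm ess}\dot H^\sigma$ (guaranteed by Lemma \ref{prop2}(2) for non-exceptional $z$ and extended by Corollary \ref{coroSbVbdd} across $\tilde{\calE}^-_\sigma$), a direct algebraic check gives
\[ (\calH_b-ibs_c+\tilde V-z)\,\tilde{\calS}^-_{b,V}(z+ibs_c) = I,\qquad \tilde{\calS}^-_{b,V}(z+ibs_c)\,(\calH_b-ibs_c+\tilde V-z)=I \]
(interpreted on the appropriate weighted Sobolev spaces). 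Subtracting $\calS^-_b(z+ibs_c)$, which is the resolvent of $\calH_b-ibs_c$, yields the second-resolvent identity
\[ \tilde{\calS}^-_{b,V}(z+ibs_c)=\calS^-_b(z+ibs_c)-\calS^-_b(z+ibs_c)\,\tilde V\,\tilde{\calS}^-_{b,V}(z+ibs_c) \]
and symmetrically with $\tilde V$ on the right.

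From this second-resolvent identity, the Birman--Schwinger-type identities \eqref{eqSbVt} and \eqref{eqSbVt2} follow by a sandwich argument. Factoring $\tilde V=\tilde V_1\tilde V_2$ (Lemma \ref{lemdecompV}), left-multiplying the second-resolvent identity by $\tilde V_2$ and right-multiplying by $\tilde V_1$, and regrouping, one obtains the algebraic relation
\[ (I+\tilde V_2\,\calS^-_b(z+ibs_c)\,\tilde V_1)(I-\tilde V_2\,\tilde{\calS}^-_{b,V}(z+ibs_c)\,\tilde V_1)=I, \]
together with its mirror image. This simultaneously establishes \eqref{eqSbVt2} and, substituting into the second-resolvent identity, produces \eqref{eqSbVt}. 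Analyticity and uniform boundedness of both $I+\tilde V_2\calS^-_b(z+ibs_c)\tilde V_1$ and $I-\tilde V_2\tilde{\calS}^-_{b,V}(z+ibs_c)\tilde V_1$ in $\L(\dot H^\sigma)$ then reduce, by the mapping properties of $\tilde V_1,\tilde V_2$ from Lemma \ref{lemdecompV}, to the analyticity and uniform boundedness of $\calS^-_b(z+ibs_c):\dot W^{\sigma,p_2'}\to\dot W^{\sigma,p_2}$ (the matrix analogues of Lemma \ref{lemRbbdd} and Lemma \ref{lemanalyticity}) and of the already-constructed $\tilde{\calS}^-_{b,V}(z+ibs_c)$ from Part (1).

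The main obstacle is the rigorous identification of $\tilde{\calS}^-_{b,V}(z+ibs_c)$ as a resolvent at points $z$ that are exceptional values for the original $\calH$: away from $\tilde{\calE}^-_\sigma$ everything reduces to the standard functional calculus of Lemma \ref{prop2}(2), but at $\xi\in\tilde{\calE}^-_\sigma$ one needs the singularity of $\calS^-_{b,V}(\cdot+ibs_c)$ at $\xi$ to be killed precisely by the pole contribution $-(z-i\mu)^{-1}P_{\rm disc}$. This is exactly the content of Corollary \ref{coroSbVbdd}, which asserts that $\calS^-_{b,V}(z+ibs_c)P_{\rm ess}$ extends as a uniformly bounded operator across the exceptional set; combined with the fact that $(\calH-i\mu)$ annihilates the ``removed'' singular part (so that the pole residues match), this seals the resolvent identity and all subsequent algebra is routine.
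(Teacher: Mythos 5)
Your Part (1) and the downstream Birman--Schwinger algebra (sandwiching the second-resolvent identity with $\tilde V_2$, $\tilde V_1$ to obtain \eqref{eqSbVt2} and then \eqref{eqSbVt}) are fine and coincide with the paper's. The genuine gap is in how you establish the second-resolvent identity $\tilde{\calS}^-_{b,V}=\calS^-_b-\calS^-_b\tilde V\tilde{\calS}^-_{b,V}$ on the \emph{closed} half-plane $\Im z\le b(\sigma-s_c)$. Your route --- first identify $\tilde{\calS}^-_{b,V}$ as the genuine two-sided resolvent of $\calH P_{\rm ess}+i\mu P_{\rm disc}$ and then ``subtract'' the resolvent $\calS^-_b$ --- is honest resolvent algebra only for $\Im z<b(\sigma-s_c)$, $z\notin\tilde{\calE}^-_\sigma$, where Lemma \ref{prop2}(2) applies. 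On the line $\Im z=b(\sigma-s_c)$ and at exceptional points, $\calS^-_b$ and $\calS^-_{b,V}$ are only \emph{extended} resolvents acting $\dot W^{\sigma,p'}\to\dot W^{\sigma,p}$, and the subtraction step requires $\calS^-_b(z+ibs_c)(\calH_b-ibs_c-z)w=w$ for $w$ in the range of $\tilde{\calS}^-_{b,V}$; by Lemma \ref{leminv} this inversion is available only when $D^\sigma w$ and $\calH_bD^\sigma w$ lie in $L^{p'}$, which fails on the range of $\calS^-_{b,V}P_{\rm ess}$ (it lives on the $\dot W^{\sigma,p}$ side). The paper avoids this by verifying \eqref{eqSbVt3}--\eqref{eqSbVt4} directly from the definition \eqref{eqdefSbVt} using \eqref{eqSb1}--\eqref{eqSb2} and the commutation \eqref{eqPccom}; the single non-formal step is $\calS^-_b(\calH_b-ibs_c-z)P_{\rm disc}=P_{\rm disc}$, justified by \eqref{eqinv2} because $\mathrm{Ran}\,P_{\rm disc}$ consists of generalized eigenfunctions with the extra integrability of Proposition \ref{prop3}(2) --- an ingredient absent from your write-up. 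Your plan could be salvaged by proving the identities in the open region and passing to the closed half-plane by continuity in $z$ (using the uniform bounds of Part (1) and continuity of $\calS^-_b$), but you do not carry out such a limiting argument; moreover your explanation that the singularity at $\xi\in\tilde{\calE}^-_\sigma$ is cancelled by ``the pole contribution $-(z-i\mu)^{-1}P_{\rm disc}$'' with matching residues misidentifies the mechanism: that term is regular there, and the singularity is removed by composing with $P_{\rm ess}$, which is precisely Corollary \ref{coroSbVbdd}.

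Two smaller points. First, take $p_3=\min\{p_1,p_2\}$ rather than $p_3=p_1$: the mapping properties of $\tilde V_1,\tilde V_2$ in Lemma \ref{lemdecompV} are relative to $p_2$, and with $p_1>p_2$ the boundedness of $\tilde V_2\tilde{\calS}^-_{b,V}\tilde V_1$ on $\dot H^\sigma$ would not follow from your Part (1). Second, analyticity of $z\mapsto\tilde V_2\calS^-_b(z+ibs_c)\tilde V_1$ cannot be read off from $\calS^-_b$ in $\L(\dot W^{\sigma,p_2'}\to\dot W^{\sigma,p_2})$, where differentiability fails; it must pass through the weighted space, i.e. $\tilde V_1:\dot H^\sigma\to\dot H^\sigma_{\la x\ra^{\gamma_2}}$ and $\partial_z\calS^-_b:\dot H^\sigma_{\la x\ra^{\gamma_2}}\to\dot W^{\sigma,p}$ as in Lemma \ref{lemanalyticity}. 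Correspondingly, analyticity of $I-\tilde V_2\tilde{\calS}^-_{b,V}\tilde V_1$ should be deduced from \eqref{eqSbVt2} (it is the inverse of an analytic, uniformly invertible family), since Part (1) supplies only a uniform bound, not analyticity, for $\tilde{\calS}^-_{b,V}$.
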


\begin{proof}
   Firstly we discuss the commutability issue. Let $z \in \CC - \tilde{\calE}_\sigma$ with $\Im z \le b(\sigma - s_c)$. The resolvent identities \eqref{eqSbVres1} and \eqref{eqSbVres2} indicate that $\calS_{b, V}^-(z +ibs_c)$ is commutable with all $\calS_{b, V}(z' + ibs_c)$ for $\Im z' \neq b(\sigma - s_c)$ and $z' \notin \tilde{\calE}_{\sigma}$. This implies
   \[ P_{\rm disc} \calS_{b, V}^- (z+ibs_c) = \calS_{b, V}^- (z+ibs_c) P_{\rm disc} \]
   and hence 
   \be P_{\rm ess} \calS_{b, V}^- (z+ibs_c) = \calS_{b, V}^- (z+ibs_c) P_{\rm ess}. \label{eqPccom} \ee
   From the uniform boundedness Corollary \ref{coroSbVbdd}, the identity \eqref{eqPccom} makes sense for all $\Im z \le b(\sigma - s_c)$. 

   From now on, we suppose $z\in \CC$ only satisfying $\Im z \le b(\sigma -s_c)$ and take $p_3 = \min\{ p_1, p_2\}$ with $p_1$ from Proposition \ref{prop3} and $p_2$ from Lemma \ref{lemdecompV}. Then Lemma \ref{prop2} and Corollary \ref{coroSbVbdd} imply (1).
   
   For (2), we claim the following resolvent identities
   \bea 
     \tilde{\calS}_{b, V}^- (z +ibs_c) - \calS_b^-(z+ibs_c) &=& -\calS_b^-(z+ibs_c) \tilde{V} \tilde{\calS}_{b, V}^- (z +ibs_c)  \label{eqSbVt3} \\ 
     &=& -\tilde{\calS}_{b, V}^- (z +ibs_c) \tilde{V} \calS_b^-(z+ibs_c),  \label{eqSbVt4}
   \eea	
   which imply \eqref{eqSbVt} and \eqref{eqSbVt2} easily. 
   These two resolvent identities will follow some computation involving the above commutability and inversion property of $\calS_{b}^-$. Take \eqref{eqSbVt3} as an example. From definitions of $\tilde{\calS}_{b, V}^-$, $\calS_{b, V}^-$, $\tilde{V}$ and identities \eqref{eqSb1}, \eqref{eqSb2} and \eqref{eqPccom}, we have (the variable $z +ibs_c$ of $\calS_b^-$, $\calS_{b, V}^-$ or $\tilde{\calS}_{b, V}^-$ are omitted in the following computation for convenience)
   \bee
     \mathrm{LHS \,\,of \,\,\eqref{eqSbVt3}} &=& \left( \calS_{b ,V}^- - \calS_b^-  \right)P_{\rm ess} - \left((z-i\mu)^{-1} + \calS_b^- \right) P_{\rm disc}, \\
     \mathrm{RHS \,\,of \,\,\eqref{eqSbVt3}} &=& -\calS_b^- \left(V - P_{\rm disc}(\calH - i\mu) \right) \left( \calS_{b, V}^- P_{\rm ess} - (z-i\mu)^{-1} P_{\rm disc}  \right) \\
     &=& -\calS_b^- V \calS_{b, V}^- P_{\rm ess} - \calS_b^- \left[ -V (z-i\mu)^{-1} + (\calH - i\mu)(z-i\mu)^{-1} \right] P_{\rm disc} \\
     &=& \left( \calS_{b ,V}^- - \calS_b^-  \right)P_{\rm ess} - \calS_b^- P_{\rm disc} - (z-i\mu)^{-1} \calS_b^- (\calH_{b} - z - ibs_c) P_{\rm disc}
   \eee
   Noticing that for $u \in \dot{W}^{\sigma, p_3'}$, we have $P_{\rm disc} u \in \dot{W}^{\sigma, p_3'}$ and $\calH_b P_{\rm disc} u \in \dot{W}^{\sigma, p_3'}$ since $P_{\rm disc} u$ is a generalized eigenfunction of $\calH = \calH_b - ibs_c + V$. Then the last term is just $(z-i\mu)^{-1} P_{\rm disc}$ by the inversion property \eqref{eqinv2}, and \eqref{eqSbVt3} follows. The identity \eqref{eqSbVt4} comes similarly with \eqref{eqinv1}.

   Finally, we prove the analyticity and uniform boundedness of $I + \tilde{V}_2 \calS_b^- (z+ibs_c) \tilde{V}_1$ and $I - \tilde{V}_2 \tilde{\calS}_{b, V}^{-}(z+ibs_c) \tilde{V}_1$. We first see the analyticity of $\tilde{V}_2 \calS_{b}^- (z+ibs_c) \tilde{V}_1$ by specifying a $0 < \gamma_3 \le \gamma_2$ small enough such that  $p_{\gamma_3} := \frac{2d}{d-\gamma_3} \le p_3$. Then differentiability of $\calS_{b}^-(z+ibs_c)$ (Proposition \ref{prop3} (5)) and boundedness of $\tilde{V}_1$ and $\tilde{V}_2$ (Lemma \ref{lemdecompV2}) imply the desired analyticity. 
   Next, Lemma \ref{lempotcpt2} (ii) and finite-rankness of $P_{\rm disc}$ easily indicate that $\tilde{V}_2 \tilde{\calS}_{b, V}^- (z+ibs_c) \tilde{V}_1$ is a compact operator on $\dot{H}^\sigma$. Hence we can use a similar argument as in Lemma \ref{prop1} plus the uniform boundedness of $\tilde{\calS}_{b, V}^-(z+ibs_c)$ to finish the proof.
\end{proof}

Now we are ready to prove the Strichartz estimate.\\

\begin{proof}[Proof of Theorem \ref{thmStrichartz}]

\underline{Step 1. Application of abstract Wiener's theorem.}
Let $H = \dot{H}^\sigma$ and $K$ be the Banach algebra as in \eqref{eqdefK}.
Define 
\be L(t) := -i\tilde{V}_2 e^{it(\calH_{b}-ibs_c)} \tilde{V}_1 \ee
for $t \ge 0$ and 
\be A = I_K + L.\ee
We will check the three conditions in Corollary \ref{coroWiener} with $\epsilon = b(\sigma - s_c)$ to apply it.  \\

(1) Firstly, the dispersive estimate \eqref{eqdispersiveest} and commutator relation \eqref{eqcomgroup} indicate that $e^{it(\calH_b - ibs_c)} \in L^1_t e^{b(\sigma - s_c)t}\L (\dot{W}^{\sigma, p_2'} \to \dot{W}^{\sigma, p_2})$. Combined with Lemma \ref{lemdecompV}, we have $L \in L^1_t e^{b(\sigma - s_c)t} \L(\dot{H}^\sigma)$ and hence $A \in K$ is upper triangular.  \\

(2) Next, we show that for any $[a_1, a_2] \in (0,\infty)$, 
\be \sup_{t \in [a_1, a_2]} \| L(\cdot + \epsilon) - L(\cdot) \|_{\L(\dot{H}^\sigma)} = o_{\epsilon}(1). \label{eqinteract00} \ee
It implies the second condition thanks to the integrability $L \in  L^1_t e^{b(\sigma - s_c)t} \L(\dot{H}^\sigma)$. Recall \eqref{eqdecompV} that 
$$\tilde{V}_2 =  \la x \ra^{\gamma_1}  V + \left(-\la x \ra^{\gamma_1} P_{\rm disc}(\calH - i\mu)\right) =: \tilde{V}_{2, 1} + \tilde{V}_{2,2}. $$
We will treat these two parts separately. 

\emph{(2.a) The multiplication part $\tilde{V}_{2, 1} := \la x \ra^{\gamma_1} V$.} We first approximate $\tilde{V}_1$ and $\tilde{V}_{2, 1}$ by $W_1, W_2 \in \mathscr{S}$ with $\hat{W}_1, \hat{W}_2$ compactly supported. Supposing $\hat{W}_1, \hat{W}_2 \in C^\infty_c (B_M)$ with $M > 0$, we consider $\epsilon < \min\{(10b)^{-1}, (2e^{b(a_2 + 1)}M)^{-4}\}$ and let $R = \epsilon^{-\frac 14}$. We decompose for any $t \in [a_1, a_2]$ as
\bea &&\|  W_2 (e^{i(t+\epsilon)(\calH_b - ibs_c)} - e^{it(\calH_b -ibs_c)}) W_1 f \|_{\dot{H}^\sigma} \label{eqinteract0}\\
&\le& \left\|  W_2   \chi(R^{-1}D) (e^{i(t+\epsilon)(\calH_b - ibs_c)} - e^{it(\calH_b -ibs_c)})  W_1 f  \right\|_{\dot{H}^\sigma} \label{eqinteract1} \\
&+& \sum_{\pm} \left\|  W_2   (1- \chi(R^{-1}D)) e^{i(t\pm \frac{\epsilon}{2})(\calH_b - ibs_c)}  W_1 f  \right\|_{\dot{H}^\sigma} \label{eqinteract2}
\eea 
where  $\chi$ is the smooth cutoff from \eqref{eqdefchiR}. 
For the low-frequency part \eqref{eqinteract1}, notice that 
\bee &&\mathscr{F} \left(\chi(R^{-1}D) (e^{i(t+\epsilon)\Delta_b} - e^{it\Delta_b})  W_1 f\right) (\eta) \\
&=& \chi(R^{-1}\eta) \int_{\RR^d} \bigg[ \left(e^{b(t+\epsilon)\frac d2} e^{-i\frac{e^{2b(t+\epsilon)} -1}{2b} |\eta|^2}   \hat{W}_1 (e^{b(t+\epsilon)} \eta - \zeta)  \right)  \\
&& \quad \quad - \left(e^{bt\frac d2} e^{-i\frac{e^{2bt} -1}{2b} |\eta|^2}   \hat{W}_1 (e^{bt} \eta - \zeta) \right) \bigg] |\zeta|^{-\sigma} \widehat{D^{\sigma}f}(\zeta) d\zeta
\eee
where the absolute value of the bracket can be bounded by
\[ 2(b+b^{-1}) \epsilon R^2 e^{b(a_2 + 1)\left(\frac d2 + 2\right)} \left[\int_0^1 |\nabla \hat{W}_1(e^{b(t+\epsilon s)}\eta - \zeta )| ds  + \sum_{\pm} |\hat{W}_1(e^{b(t\pm \frac{\epsilon}{2})}\eta - \zeta )| \right]  \]
where we get smallness $\epsilon R^2 = \epsilon^{\frac 12}$.
The compact support of $\hat{W}_1$ and $\hat{W}_2$ also indicates $|\xi| \sim |\eta| \sim |\zeta|$ when $|\xi| \gg M$ so that $|\zeta|^{-\sigma}$ cancels $|\xi|^\sigma$ from the $\dot{H}^\sigma$ norm. Therefore, Young's inequality implies
\[ \eqref{eqinteract1} \lesssim_{d, b, a_2, W_1, W_2} \epsilon^{\frac 12} \| f \|_{\dot{H}^\sigma}. \]

For the high-frequency part, we again estimate pointwisely in the Fourier space. Let $\tilde{f} = D^\sigma f$ and $\tilde{t}$ to be either $t \pm \frac{\epsilon}{2}$. Compute
\bea &&\mathscr{F} D^{\sigma} W_2 D^{-\sigma}\left( 1- \chi(R^{-1}D)\right) e^{i\tilde t\Delta_b} D^{\sigma} W_1 D^{-\sigma} \tilde f (\xi) \label{eqfinitespeed} \\
&=& C |\xi|^{\sigma} \int_{\RR^d} \hat{W}_2(\xi - \eta) (1- \chi(R^{-1} \eta)) e^{b\tilde t\left(\sigma + \frac d2\right)}e^{-i \frac{e^{2b\tilde t} - 1}{2b} |\eta|^{2}} \left(\widehat{ W_1 D^{-\sigma} \tilde f}\right)(e^{b\tilde t}\eta) d\eta \nonumber \\
&=& -C \frac{be^{b\tilde t\left(\sigma + \frac d2\right)}}{e^{2b\tilde t}-1} |\xi|^\sigma \int_{\RR^d} \int_{\RR^d} e^{-i \frac{e^{2b\tilde t} - 1}{2b} |\eta|^{2}} \nonumber\\
&& \qquad \nabla_{\eta} \cdot \left[ \hat{W}_2(\xi - \eta) (1- \chi(R^{-1} \eta))\frac{\eta}{|\eta|^2} \hat{W}_1 (e^{b\tilde t}\eta - \zeta)  \right] |\zeta|^{-\sigma} \hat{\tilde f}(\zeta) d\zeta d\eta \nonumber
\eea
where we integrate by parts over $e^{-i \frac{e^{2b\tilde t} - 1}{2b} |\eta|^{2}}$. So from the support of $1-\chi(R^{-1} \eta)$ and $\hat{V}_1$, $\hat{V}_2$, we know $|\xi| \sim |\zeta| \sim |\eta| \ge R$. Hence by Young's inequality,
\bee |\eqref{eqfinitespeed}| \lesssim_{d, \sigma} R^{-1} \left|\frac{be^{b\tilde t\left(\sigma+1 + \frac d2\right)}}{e^{2b\tilde t}-1}\right|\left\{ (|\hat{W}_2| + |\nabla \hat{W}_2|) * \left[ (|\hat{W}_1| + |\nabla \hat{W}_1|) * |\hat{\tilde f}|\right](e^{b\tilde t}\cdot)  \right\} (\xi).
\eee
Young's inequality again implies
\[ \eqref{eqinteract2} \lesssim_{d, b, a_1, a_2, W_1, W_2} \epsilon^{-\frac 14} \| f \|_{\dot{H}^\sigma}. \]
So far, we have verified 
\[ \sup_{t \in [a_1, a_2]} \eqref{eqinteract0} = o_{\epsilon}(1)\|f \|_{\dot{H}^\sigma}. \]

\emph{(2.b) The finite-rank part $\tilde{V}_{2, 2} := -\la x \ra^{\gamma_1} P_{\rm disc}(\calH - i\mu)$.} We can assume $\tilde{V}_{2, 2} g= \sum_{i = 1}^N (g, u_i)_{\dot{H}^\sigma}  \la x \ra^{\gamma_1} v_i$ with $v_i \in \mathrm{Ran}P_{\rm disc}$ and $u_i \in \mathrm{Ran} P_{\rm disc}^*$. Since $u_i \in \dot{W}^{\sigma, p_2'}$ and $\la x \ra^{\gamma_1} v_i \in \dot{H}^\sigma$ (from Proposition \ref{prop3} and \eqref{eqpotbdd4}), we can approximate $u_i$ with $\tilde{u}_i \in \mathscr{S}(\RR^d)$ and obtain uniform smallness for $t \in [a_1, a_2]$ from the strongly continuity of the semigroup
\[ \left(e^{i(t+\epsilon)(\calH_b - ibs_c)} - e^{it(\calH_b -ibs_c)}\right)^{*} \tilde{u}_i \to 0\quad \mathrm{in}\,\,\dot{H}^\sigma \,\, \mathrm{as}\,\,\epsilon \to 0. \]

So \eqref{eqinteract00} and hence the second condition are confirmed by combining these two estimates for $\tilde{V}_{2, 1}$ and $\tilde{V}_{2,2}$. \\

(3) Recalling the definition of $\calS_b^-$ in \eqref{eqSb}, we compute
\[\hat{A}(z) = I - i \int_0^\infty \tilde{V}_2 e^{it(\calH_b - ibs_c)}\tilde{V}_1 e^{-itz} dt = I + \tilde{V}_2 \calS_b^-(z + ibs_c) \tilde{V}_1.   \]
The third follows Lemma \ref{lemdecompV2} (2) directly. \\

\underline{Step 2. Proof of Strichartz estimate \eqref{eqStrichartz}.}
Now we define 
\[ U_b := \int_0^t e^{i(t-s)(\calH_b-ibs_c)}ds. \]
The conclusion of Corollary \ref{coroWiener} implies
\be \left(\int_0^t A(t-s)ds\right)^{-1} = \left(I - i\tilde{V}_2 U_b \tilde{V}_1\right)^{-1} \in \L(L^2_t ([0,\infty), e^{b(\sigma - s_c) t}\dot{H}^\sigma). \label{eqfin1} \ee
Also from the Strichartz estimate for $e^{it\Delta_b}$ from \eqref{eqStrichartzfree} and the commutator relation \eqref{eqcomgroup}, 
we have
\be U_b \in \L\left(L_t^{q_2'} e^{tb(\sigma - s_c)} \dot{W}^{\sigma,p_2'} \to L_t^{q_1} e^{tb(\sigma - s_c)} \dot{W}^{\sigma, p_1} \right) \label{eqfin2} \ee
	where $(q_i, p_i)$ for $i = 1,2$ satisfies the admissible condition \eqref{eqadmissible}.


Finally, we will come back to the modified system \eqref{eqmod}-\eqref{eqtildeH} and prove \eqref{eqStrichartz}. 
Viewing $\tilde{V}\tilde{Z}$ as the source term and applying Duhamel's formula, we get
\bee 
\tilde{Z} = U_b \left[ \delta_{t=0}\tilde{Z}(0) - i(-\tilde{V}\tilde{Z} + \tilde{F})\right].
\eee
Then we decompose $\tilde{V}= \tilde{V}_1 \tilde{V}_2$ according to Lemma \ref{lemdecompV}, act $\tilde{V}_2$ on both sides and invert $I - i\tilde{V}_2 U_b \tilde{V}_1$ to get the formula for $\tilde{V}_2 \tilde{Z}$
\[  \tilde{V}_2 \tilde{Z} = (I - i\tilde{V}_2 U_b \tilde{V}_1)^{-1} \tilde{V}_2 U_b (\delta_{t=0} \tilde{Z}(0) -i \tilde{F} ).\]
Plugging this representation into the previous formula for $Z$, we get
\be \tilde{Z} = \left[U_b + iU_b \tilde{V}_1 (I - i\tilde{V}_2 U_b \tilde{V}_1)^{-1} \tilde{V}_2 U_b  \right]  (\delta_{t=0} \tilde{Z}(0) -i \tilde{F} )\ee
The bounds \eqref{eqfin1}, \eqref{eqfin2} and boundedness of $\tilde{V}_1$ and $\tilde{V}_2$ from Lemma \ref{lemdecompV} indicate that 
\be \left\| e^{bt(\sigma - s_c)}\tilde{Z} \right\|_{L^{q_1}_t \dot{W}^{\sigma, p_1}_x} 
\lesssim \| \tilde{Z}(0)\|_{\dot{H}^\sigma}
 +  \left\|e^{bt(\sigma - s_c)}  \tilde{F} \right\|_{L_t^{q_2'} \dot{W}^{\sigma, p_2'}_x}.  \ee
 We emphasize that the enlarged admissible region of \eqref{eqfin1} and \eqref{eqfin2} is necessary because $\tilde{V}_1$ and $\tilde{V}_2$ can only slightly change the integrability.
 
Now consider $Z$ with $Z(0) = Z_0 \in \dot{H}^\sigma$ satisfying the original system \eqref{eqls} and let $\tilde{Z}(0) = P_{\rm ess} Z(0) \in \dot{H}^\sigma$ evolve in the modified system \eqref{eqmod}. Then we will have $P_{\rm ess} Z(t) = \tilde{Z}(t)$ and the Strichartz estimate above implies \eqref{eqStrichartz} for $\nu = b(\sigma -s_c)$. \\

For general $\nu \le b(\sigma - s_c)$, it suffices to recover \eqref{eqfin1} and \eqref{eqfin2} with $b(\sigma -s_c)$ substituted by $\nu$. Noticing that the condition of Corollary \ref{coroWiener} for $\epsilon := \nu < b(\sigma - s_c)$ also holds, the above arguments also imply \eqref{eqfin1} for $\nu$. Since 
$$ \left \|e^{it\Delta_b} e^{-[b(\sigma-s_c) - \nu]t} \right \|_{L^{q'} \to L^{q}} \le \left \|e^{it\Delta_b} \right \|_{L^{q'} \to L^{q}} \lesssim |t|^{-\left ( \frac d2 - \frac dq \right )}, $$
this dispersive estimate similarly yields the free Strichartz \eqref{eqfin2} with exponential weight $e^{\nu t}$. These two elements conclude the proof of for all  $\nu \le b(\sigma - s_c)$.

\end{proof}

\section{Nonlinear stability in $\dot H^\sigma$}\label{s6}

In this section, we turn to the nonlinear stability theory self-similar blowup of \eqref{eqNLS} in $\dot H^\sigma$. More precisely, we will construct a finite-codimensional Lipschitz manifold of initial data leading to the self-similar blowup plus a decaying perturbation in $\dot{H}^\sigma$, and show that this implies the asymptotic stability assuming mode stability Assumption \ref{assmodestab}. 

We begin by clarifying our setting. Recall \eqref{eqNLS} under the self-similar renormalization \eqref{eqlaw} with $u = Q_b + \e$, $Z = \left(\begin{array}{c} \e \\ \bar \e \end{array}\right)$ as 
\be i \partial_\tau Z + \mathcal{H}Z = F(Z). \tag{\ref{eqZ}} \ee
where $\calH$ is the linear operator \eqref{Hb} with potential \eqref{potential}, $Z = \left( \begin{array}{c} Z_1 \\ Z_2 \end{array} \right)\in \Bbb C^2$ and $F = \left( \begin{array}{c} -R(Z_1, Q_b) \\ \overline{R(\bar{Z_2}, Q_b)} \end{array} \right)$ with 
\begin{align}
	R(\e, v) = (\e+v)|\e+v|^{p-1} - v|v|^{p-1} - \frac{p+1}{2}|v|^{p-1}\e - \frac{p-1}{2} v^2 |v|^{p-3} \bar{\e}.\label{eqNL}
\end{align}
From the decay of self-similar profile \eqref{selfsimilardecay}, the potential satisfies \eqref{eqpotbd0} with $\a = 2$. Therefore, the exponent from \eqref{eqdelta0} now reads as 
\be \left| \begin{array}{l}
     \delta_0 := \frac{1}{16} \min \left\{ \frac d2 -s_c , 1-s_c \right\} > 0,  \\
     p_0 := \left( \max \left\{ \frac{d-2}{2d}, \frac{\sigma}{d} \right\} + \frac{\delta_0 s_c}{d^2 (p+1)}\right)^{-1} \in \left ( 2, \frac{2d}{d-2}\right ).
\end{array}\right.\label{eqp0sec6} \ee

\subsection{$\mathcal{J}$-invariance and spectrally decoupled system}\label{S51}
In this subsection, we will transform the nonlinear Schr\"odinger equation into a renormalized system with stable modes and unstable modes decoupled. The extra degree of freedom in the system will be removed by an $\RR$-linear operator $\calJ$ (borrowed from \cite{MR2480603}).\\

\noindent{\em $\calJ$-invariance}.  We define the $\RR$-linear subspace
\be X_+ := \left\{ \left( \begin{array}{c} \e \\ \bar{\e} \end{array} \right): \e \in \dot{H}^{\sigma}(\RR^d) \right\} \subset \dot{H}^\sigma(\RR^d) \times  \dot{H}^\sigma(\RR^d)  \label{eqX+}  \ee
and the $\RR$-linear transform on $(\dot H^\sigma)^2$
\be \calJ:  \left( \begin{array}{c} Z_1 \\ Z_2 \end{array} \right) \mapsto \overline{ \left( \begin{array}{c} Z_2 \\ Z_1 \end{array} \right)} \label{eqJ} \ee
Obviously, for $Z \in (\dot{H}^\sigma)^2$, $Z \in X_+$ if and only if $\calJ Z = Z$. With the help of $\calJ$, we can show $X_+$ is invariant under the evolution \eqref{eqZ}.
\begin{lemma}[$\calJ$-invariance for \eqref{eqZ}] \label{lemJinv1}
	 We have 
	\bee  \calJ \circ i &=& -i \circ \calJ, \\
	\calJ \circ \calH &=&-\calH \circ \calJ, \\
	\calJ F(Z) &=& -F(\calJ Z).
	 \eee
    Thus if $Z(0) \in X_+$, the solution $Z(\tau)$ of \eqref{eqZ} will satisfies $Z(\tau) \in X_+$ for all $\tau$ in the lifespan. 
	\end{lemma}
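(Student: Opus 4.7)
The proof will split into two stages: verifying the three intertwining relations by direct computation, and then leveraging them together with uniqueness of the Cauchy problem to conclude invariance of $X_+$.

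The first identity $\calJ \circ i = -i \circ \calJ$ is immediate from the definition since $\calJ$ combines componentwise conjugation with a swap of entries. For the second identity, I would write out $\calH Z$ componentwise using the explicit form \eqref{Hb} and then apply $\calJ$; the crucial observation is that complex conjugation turns the deformed Laplacian into its partner,
\[ \overline{\Delta_b f} = \Delta_{-b} \bar f, \qquad \overline{\Delta_{-b} f} = \Delta_b \bar f, \]
and similarly turns the purely imaginary shift $-ibs_c$ into its negative. Combined with the fact that $\calJ$ swaps the two diagonal blocks (and the two off-diagonal blocks $W_2$ and $-\overline{W_2}$), a term-by-term comparison with $-\calH \calJ Z$ should close the identity. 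For the third identity, I would rely on the algebraic structure of $R(\e, v)$ in \eqref{eqNL}: since $R$ is an $\RR$-analytic function of $(\e, \bar\e)$ and $Q_b$ is independent of the dynamical variable, the relations $\calJ F(Z) = -F(\calJ Z)$ reduce to the observation that the two components of $F$ are conjugates of each other modulo the interchange $Z_1 \leftrightarrow \bar Z_2$; this is visible from the definition $F = (-R(Z_1,Q_b), \overline{R(\bar Z_2, Q_b)})^T$ by substituting $\calJ Z = (\bar Z_2, \bar Z_1)^T$ and using $\overline{\overline{R(\cdot)}} = R(\cdot)$.

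Once the three intertwining identities are established, the invariance of $X_+$ follows from a standard uniqueness argument. Applying $\calJ$ to the equation $i\partial_\tau Z + \calH Z = F(Z)$ and using the three identities gives
\[ i\partial_\tau(\calJ Z) + \calH(\calJ Z) = F(\calJ Z), \]
so $\calJ Z$ solves the same semilinear problem. If the initial datum satisfies $\calJ Z(0) = Z(0)$, then local well-posedness of \eqref{eqZ} (Proposition \ref{proplwp}) forces $\calJ Z(\tau) = Z(\tau)$ throughout the lifespan, i.e.\ $Z(\tau) \in X_+$.

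No step here presents a genuine obstacle; the content is entirely algebraic bookkeeping together with an appeal to uniqueness. The only point requiring attention is checking the signs produced by conjugating $\Delta_b$, $-ibs_c$, and the off-diagonal block $\begin{pmatrix} W_1 & W_2 \\ -\overline{W_2} & -W_1\end{pmatrix}$ simultaneously with the swap built into $\calJ$; everything else is automatic from the explicit formulas.
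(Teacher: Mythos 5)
Your proposal is correct and follows essentially the same route as the paper: verify the three intertwining identities by direct computation (the key fact being $\overline{\Delta_b \cdot} = \Delta_{-b}\,\overline{\cdot}$), apply $\calJ$ to the equation so that $\calJ Z$ solves the same Cauchy problem, and conclude by uniqueness. The only slip is the citation: uniqueness should come from the local well-posedness of the renormalized system \eqref{eqZ}, i.e.\ Proposition \ref{proplwp2}, not Proposition \ref{proplwp}.
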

\begin{proof}
	Those identities follows the definition of $\calJ$ and $i$, $\calH$, $F(Z)$. We only remark that $\overline{\Delta_b} = \Delta_{-b}$.
	
	For the invariance, suppose $Z(0) \in X_+$ generates a solution $Z(\tau)$. Since $\calJ Z$ satisfies the same equation with initial data $\calJ(Z)(0) = Z(0) \in X_+$, the uniqueness of solution from Proposition \ref{proplwp2} indicates $Z(\tau) = \calJ Z(\tau)$ for all $\tau$, which implies $Z(\tau) \in X_+$. 
	\end{proof}

Define the exponent 
\be \a_c :=d\left( \frac{1}{2} - \frac{1}{p+1}\right) = \frac{d}{d + 2 - 2s_c} \in (s_c, 1) \cap \left(\frac{d}{d+2}, \frac d2 \right) \label{eqac}\ee
such that  $ \dot{H}^{\a_c}(\RR^d) \hookrightarrow L^{p+1}(\RR^d)$. 
Recall the local-wellposedness of \eqref{eqNLS} and \eqref{eqZ} in $\dot{H}^\sigma$, $\sigma \in (s_c, \a_c)$ from Proposition \ref{proplwp} and Proposition \ref{proplwp2} respectively. The identification of these two systems via $\calJ$-invariance follows.

\begin{lemma}\label{lemidentsolu}
	Let $d \ge 1$, $0 < s_c < \min\left\{ \frac d2 , 1 \right\}$, $\sigma \in (s_c, \a_c)$. Let $u$ solves \eqref{eqNLS} with initial data $u_0 \in \dot{H}^\sigma$, and $Z$ solves \eqref{eqZ} with initial data $Z_0 = \left( \begin{array}{c} \e_0 \\ \bar{\e_0} \end{array} \right)\in (\dot{H}^\sigma)^2$. 
	Take any $T > 0$ and $\l(t)$, $\tau(t)$ from \eqref{eqlaw}.
	If the initial data are related by
	$$u_0 = \lambda(0)^{-\frac{2}{p-1}}(Q_b + \e_0)(x/\lambda(0) )e^{i\tau(0)},$$ then these two solutions identify by $Z(\tau, y)  = \left( \begin{array}{c} \e(\tau, y) \\ \bar{\e}(\tau, y) \end{array} \right)$ and
	\be u(t, x) = \l(t)^{-\frac{2}{p-1}}(Q_b + \e)\left (t, \frac{x}{\l(t)} \right )e^{i\left (\tau(t)\right )}. \label{equtoe}\ee
\end{lemma}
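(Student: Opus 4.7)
The plan is to verify the identification by combining the self-similar renormalization with the $\calJ$-invariance already established in Lemma \ref{lemJinv1}, and then invoke uniqueness at both the (NLS) level and the matrix-system level. First, I would apply the self-similar change of variables
\[
u(t,x) = \l(t)^{-\frac{2}{p-1}} v(\tau(t), x/\l(t)) e^{i\tau(t)}
\]
with the laws $\l(t)=\sqrt{2b(T-t)}$, $\tau(t)=-\frac{1}{2b}\ln(T-t)$ from \eqref{eqlaw} (with $T$ chosen so that $\l(0)$ and $\tau(0)$ match). A direct computation as in \eqref{enoivnevneiovnovneonvi} shows $v$ solves the renormalized flow
\[
i\pa_\tau v + \Delta v - v + \frac{ib}{2}\Big(\tfrac{2}{p-1}v + y\cdot\nabla v\Big) + v|v|^{p-1} = 0,
\]
with initial datum $v(0)=Q_b+\e_0$.

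Next, setting $v = Q_b+\e$ and using that $Q_b$ solves \eqref{eqselfsimilar}, the standard linearization (expanding $(Q_b+\e)|Q_b+\e|^{p-1}$ around $Q_b$ and collecting linear and nonlinear parts, with the potentials $W_1,W_2$ given by \eqref{potential}) produces the scalar equation
\[
i\pa_\tau \e + (\Delta_b - 1)\e - ibs_c\, \e + W_1 \e + W_2 \bar{\e} = -R(\e,Q_b),
\]
where $R$ is as in \eqref{eqNL}; pairing this equation with its complex conjugate yields exactly the matrix system \eqref{eqZ} satisfied by $\widetilde Z(\tau,y):=(\e(\tau,y),\bar\e(\tau,y))^{T}\in X_+$ with initial datum $\widetilde Z(0)=Z_0$.

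Now I invoke $\calJ$-invariance. Since $Z_0\in X_+$, Lemma \ref{lemJinv1} guarantees that the unique solution $Z(\tau)$ of \eqref{eqZ} with datum $Z_0$ remains in $X_+$ for all $\tau$ in its lifespan, i.e.\ is of the form $Z(\tau)=(\zeta(\tau),\bar\zeta(\tau))^T$ for some $\zeta$. Both $\widetilde Z$ and $Z$ are then $X_+$-valued solutions of \eqref{eqZ} with the same initial datum $Z_0$, so uniqueness from Proposition \ref{proplwp2} forces $\widetilde Z=Z$ on the common lifespan; in particular $\zeta=\e$. Unwinding the change of variables yields \eqref{equtoe}.

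The only non-routine point is to make sure that all lifespans agree and that the formal manipulations are legitimate at the regularity $\dot H^\sigma$ with $\sigma\in(s_c,\a_c)$. This is where the local well-posedness results cited (Proposition \ref{proplwp} for (NLS) and Proposition \ref{proplwp2} for \eqref{eqZ}) enter: the scaling and multiplication by $e^{i\tau(t)}$ preserve $\dot H^\sigma$ (up to an explicit $\l$-factor), the nonlinear remainder $R(\e,Q_b)$ is well defined in the relevant spaces thanks to $\dot H^{\a_c}\hookrightarrow L^{p+1}$ and the polynomial decay \eqref{selfsimilardecay} of $Q_b$, and the renormalized time variable $\tau$ is a smooth bijection of the lifespan onto its image. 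This is the main obstacle to spell out carefully; once these compatibilities are in place, the argument above closes.
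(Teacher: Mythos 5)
Your proof is correct and follows essentially the same strategy as the paper: the self-similar change of variables combined with $\calJ$-invariance (Lemma \ref{lemJinv1}) and uniqueness of the Cauchy problem. The only difference is the direction of the identification --- the paper transports the solution $Z$ of \eqref{eqZ} back to a solution of \eqref{eqNLS} and invokes uniqueness from Proposition \ref{proplwp}, whereas you transport $u$ into the renormalized matrix system and invoke uniqueness from Proposition \ref{proplwp2}; both are equally valid.
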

\begin{proof}
The form $Z = \left( \begin{array}{c} \e \\ \bar{\e} \end{array} \right) \in X_+$ follows the previous lemma. Using the equation of $Q_b$ \eqref{eqselfsimilar}, we can check the $u$ defined in \eqref{equtoe} satisfies \eqref{eqNLS} with initial data $u_0$. Thus the local well-posedness Proposition \ref{proplwp} implies the correspondence.
	\end{proof}

\mbox{}

Next, we show the spectral projection of $\calH$ from Section \ref{s4} also preserves $X_+$, so that we can decouple the evolution of \eqref{eqZ} in $X_+$ into stable, unstable and center parts. 

\begin{lemma}[Spectrum and $\calJ$-invariance of $\calH$]\label{lemJinv2} Let $d \ge 1$, $ 0 < s_c < \min \{\frac d2, 1\}$ and assume the existence of $Q_b$ satisfying \eqref{eqselfsimilar}, \eqref{eqnonvanishing} and \eqref{selfsimilardecay}. For the operator $\calH$ in \eqref{Hb} with potential \eqref{potential}, there exists $0 < \delta_1 < \delta_0$ with $\delta_0$ from \eqref{eqp0sec6}, such that for $\sigma \in (s_c, s_c + \delta_1)$, $\calH\big|_{\dot{H}^\sigma}$ has the spectrum decomposition
	\be \sigma_{\rm ess}(\calH\big|_{\dot{H}^\sigma}) = \{ \RR + ib(\sigma - s_c) \}, \quad \sigma_{\rm disc}(\calH\big|_{\dot{H}^\sigma}) = \calE_s \cup \calE_u \ee
	with the finite discrete sets $\calE_s$, $\calE_u$ satisfying
	\be \calE_s \subset \{z \in \CC : \Im z \ge b\delta_1\},\quad \calE_u \subset \{z \in \CC: \Im z \le 0\}. \label{eqdecompE} \ee
	The corresponding spectral projections as $P_c:= P_{\rm ess}$\footnote{We replace subscription $\rm{ess}$ by $c$ for simplicity.}, $P_s$ and $P_u$ will satisfy
	\begin{enumerate}
		\item $I = P_{\rm c} + P_u + P_s$.
		\item $P_s, P_u \in \L((\dot{W}^{\sigma, p_1} + \dot{W}^{\sigma, p_0'})^2 \to (\dot{W}^{\sigma, p_0} \cap \dot{W}^{\sigma, p_1'})^2)$ for $p_0$ as \eqref{eqp0sec6} and some $p_1 > 2$.
		\item {\rm $\calJ$-invariance: }
		\be  \calJ \circ P_\a = P_\a \circ \calJ, \quad \a \in \{s, u, c\}.\label{eqcomPJ}
		\ee
	\end{enumerate}
\end{lemma}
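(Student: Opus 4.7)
\emph{The plan.} All three assertions are essentially bookkeeping on top of the spectral theory already developed for $\calH$ in Section \ref{s4}, together with one computation that tracks how antilinearity of $\calJ$ interacts with the Riesz contour integral. The only genuinely new input is the $\calJ$-invariance, which rests on the commutation $\calJ\calH = -\calH\calJ$ from Lemma \ref{lemJinv1} combined with the identity $\calJ z = \bar z\,\calJ$ for scalar multiplication.

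\emph{Step 1: spectrum.} Take $\delta_1$ as in Lemma \ref{lemgap} and fix any $\sigma\in(s_c,s_c+\delta_1)$. Proposition \ref{prop3} already gives $\sigma_{\rm ess}(\calH\big|_{\dot H^\sigma})=\RR+ib(\sigma-s_c)$ and $\sigma_{\rm disc}(\calH\big|_{\dot H^\sigma})=\tilde\calE_\sigma=\tilde\calE_\sigma^+\cup\tilde\calE_\sigma^-$. Since $b(\sigma-s_c)<b\delta_1$ and Lemma \ref{lemgap} excludes every exceptional value from the open strip $\{0<\Im z<b\delta_1\}$, the definition \eqref{eqtruncexceptional} forces $\tilde\calE_\sigma^+\subset\{\Im z\ge b\delta_1\}$ and $\tilde\calE_\sigma^-\subset\{\Im z\le 0\}$. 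I therefore set $\calE_s:=\tilde\calE_\sigma^+$ and $\calE_u:=\tilde\calE_\sigma^-$. The corresponding spectral projections are $P_s:=\sum_{\xi\in\calE_s}P_\xi$, $P_u:=\sum_{\xi\in\calE_u}P_\xi$, and $P_c=P_{\rm ess}=I-P_s-P_u$; the identity $I=P_c+P_s+P_u$ and the stated mapping property of $P_s,P_u$ are then immediate from Proposition \ref{prop3}(2), since each summand $P_\xi$ has the required boundedness and there are only finitely many.

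\emph{Step 2: $\calJ$-invariance (the main point).} From $\calJ(\lambda v)=\bar\lambda\,\calJ v$ one reads off $\calJ z\calJ^{-1}=\bar z$ as operators on $(\dot H^\sigma)^2$, so combined with $\calJ\calH\calJ^{-1}=-\calH$ from Lemma \ref{lemJinv1} (and $\calJ^2=I$) one gets, for every $z\in\rho(\calH)$,
\[
\calJ(\calH-z)^{-1}\calJ^{-1}=-(\calH+\bar z)^{-1}=-\bigl(\calH-(-\bar z)\bigr)^{-1}.
\]
In particular $\rho(\calH)$ and hence $\sigma(\calH)$ are invariant under the reflection $z\mapsto -\bar z$. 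I then apply $\calJ$ to the Riesz representation \eqref{eqdefproj1} for $P_\xi$. Because $\calJ$ is antilinear, its interaction with a complex contour integral is $\calJ\oint F(z)\,dz=\oint \calJ F(z)\,\overline{dz}$; substituting $w=-\bar z$ (which sends a small CCW loop around $\xi$ to a small CCW loop around $-\bar\xi$ after the sign $\overline{dz}=-dw$ is absorbed into the reversal of orientation) produces
\[
\calJ P_\xi\calJ^{-1}=-\frac{1}{2\pi i}\oint_{|w-(-\bar\xi)|=\e}(\calH-w)^{-1}\,dw=P_{-\bar\xi}.
\]
Since $z\mapsto -\bar z$ preserves the imaginary part, both $\calE_s\subset\{\Im z\ge b\delta_1\}$ and $\calE_u\subset\{\Im z\le 0\}$ are invariant under it (as subsets of $\sigma_{\rm disc}(\calH)$, which is itself invariant), so summing $\calJ P_\xi=P_{-\bar\xi}\calJ$ over $\xi\in\calE_s$ (resp. $\calE_u$) gives $\calJ P_s=P_s\calJ$ and $\calJ P_u=P_u\calJ$. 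The identity $\calJ P_c=P_c\calJ$ follows at once from $P_c=I-P_s-P_u$.

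\emph{Where care is needed.} Every step is routine except the antilinear contour manipulation in Step 2; the main obstacle is simply keeping track of the conjugations and the orientation reversal so that one arrives at $P_{-\bar\xi}$ with the correct sign. Once this identity is secured, the reflection invariance of $\calE_s$ and $\calE_u$—which holds automatically because these sets are cut out by conditions on $\Im z$—immediately delivers the $\calJ$-invariance of all three projections.
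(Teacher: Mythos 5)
Your proposal is correct and follows essentially the same route as the paper: the spectral decomposition, the gap \eqref{eqdecompE}, and the mapping properties of $P_s,P_u$ are read off from Proposition \ref{prop3} and Lemma \ref{lemgap}, and the $\calJ$-invariance is obtained by conjugating the Riesz contour integral with $\calJ$, using $\calJ\circ\calH=-\calH\circ\calJ$, antilinearity, and the substitution $z\mapsto-\bar z$. The only cosmetic difference is that you argue eigenvalue-by-eigenvalue (deriving $\calJ P_\xi\calJ^{-1}=P_{-\bar\xi}$ and then the reflection symmetry of $\calE_s,\calE_u$), whereas the paper conjugates a single contour chosen symmetric about $i\RR$ for each of $P_s,P_u$; the two arguments are equivalent.
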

\begin{proof}The spectral decomposition is a direct conclusion of Proposition \ref{prop3} with the $\delta_1$ and decoupling \eqref{eqdecompE} coming from Lemma \eqref{lemgap}. The only statement left is the $\calJ$-invariance property.
	
	It suffices to prove \eqref{eqcomPJ} for $\a \in P_s$.  From the boundedness and gap of $\calE_s$ and $\calE_u$, we can pick contours $\gamma_s, \gamma_u \subset \CC$ to be symmetric about $i\RR$ such that
	\be P_\a =- \frac{1}{2\pi i} \oint_{\gamma_\a} (\calH - z)^{-1} dz,\quad \a \in \{u, s\}. \label{eqcontourPa} \ee
	Thus $\gamma_\a = -\bar{\gamma}_\a$ as sets but with reversed orientation as oriented curves. Denote $J := \left( \begin{array}{cc} 0 & 1 \\ 1& 0 \end{array} \right)$, then $\calJ Z = \overline{JZ}$. Direct computation implies $J\calH J = -\overline{\calH}$. Thus
	\bee
	\calJ P_\a Z &=& \frac{1}{2\pi i} \oint_{\gamma_\a} \overline{J  (\calH - z)^{-1} J } d\bar{z}  \overline{JZ} =  \frac{1}{2\pi i} \oint_{\gamma_\a}  (-\calH - \bar{z})^{-1} d\bar{z}  \overline{JZ} \\
	&= &\frac{1}{2\pi i} \oint_{-\bar{\gamma}_\a}  (\calH - \omega)^{-1} d\omega  \overline{JZ} = -\frac{1}{2\pi i} \oint_{\gamma_\a}  (\calH - \omega)^{-1} d\omega  \overline{JZ} = P_\a \calJ Z
	\eee
	for $\a \in \{u, s\}$. 
\end{proof}

\mbox{}

\noindent{\em Spectrally decoupled system \eqref{eqZdecouple} in $X_+$.}
 Applying these projections onto \eqref{eqZ}, we get the decoupled system
\be
\left| \begin{array}{l}
i\pa_\tau Z_c + \calH Z_c = F_c \\
i\pa_\tau Z_s + \calH Z_s =F_s \\
i\pa_\tau Z_u + \calH Z_u = F_u 
\end{array}\right. \label{eqZcsu}
\ee
with $Z_\a = P_\a Z$, $F_\a = P_\a F$ for $\a \in \{u, s, c\}$. 
We further parametrize the evolution in $X_s := P_s X_+$ and $X_u:= P_u X_+$ as $\RR^{d_s}$ and $\RR^{d_u}$ respectively\footnote{We cannot view $X_u$ or $X_s$ as complex-valued space since $X_+$ is only $\RR$-linear.}.
Then the system \eqref{eqZcsu} finally turns into 
\be
 \left|\begin{array}{l}
   i\pa_\tau Z_c + \calH Z_c = F_c \\
    	\frac{d}{d\tau}z_s + A_s z_s= f_s, \\  
   \frac{d}{d\tau}z_u + A_u z_u = f_u. 
 \end{array}\right.\label{eqZdecouple}
\ee
where
\begin{align*}
 z_\a(\tau) = \phi_\a(Z_\a(\tau)) \in \RR^{d_\a},& \qquad f_\a(\tau) = \phi_\a (-iF_\a(\tau)) \in \RR^{d_\a},\\
A_\a = \phi_\a^{-1} \circ& (-i\calH P_\a) \circ \phi_\a \in \RR^{d_\a \times d_\a}, 
\end{align*}
with $\phi_\a: P_\a X_+ \to \RR^{d_\a}$ a $\RR$-linear homeomorphism for $\a \in \{ u, s\}$. Moreover, they satisfy
 \bea \Re\left( \mathrm{eig}(A_s) \right)\ge b\delta_1, &\quad& \Re \left (\mathrm{eig}(A_u)\right ) \le 0, \label{eqspecA}\\ |f_\a| \sim \|F_\a \|_{\dot{H}^{\sigma}} \lesssim \| F \|_{\dot{H}^{\sigma} + \dot{W}^{\sigma, p_0'}}, &\quad &
|z_\a| \sim \|Z_\a\|_{\dot{W}^{\sigma, q}}\,\,  \forall q \in [2, p_0] \label{eqzProj} \eea
from Lemma \ref{lemJinv2}. 

\subsection{Finite-codimensional stability in $\dot H^\sigma$}

In this subsection, we prove a more accurate version of Theorem \ref{thmcodimstability}. We first recall $X_+$ space from \eqref{eqX+} and define the natural isometry 
\be \Pi: X_+\to \dot H^\sigma(\RR^d), \qquad \Pi\left( f, \bar f\right)^T = f. \label{eqdefPi}\ee

\begin{theorem}[$\dot H^\sigma$ finite-codimensional asymptotic stability] \label{thmfincodimstabHsigmaB}
    Let $d \ge 1$, $s_c \in (0, 1)$, and $b, Q_b$ satisfying  \eqref{eqselfsimilar}, \eqref{eqnonvanishing}, \eqref{selfsimilardecay}. 
  Then for every $0 < \sigma - s_c \ll 1$, the following statements hold true.
  \begin{enumerate} 
      \item Finite codimensional stability: There exists $\epsilon_0 \ll 1$ and a Lipschitz map
      \[ \Phi: B_{\epsilon_0}^{X_+} \cap P_{cs} X_+ \to P_u X_+  \]
      such that for all $\e_{cs} \in B_{\epsilon_0}^{\dot H^\sigma} \cap \Pi P_{cs} X_+$, the initial data
      \[ u_0 = Q_b + \e_{cs} + \Pi \Phi(\Pi^{-1}\e_{cs}) \]
      generates a solution of \eqref{eqNLS} blowing up at $T = \frac{1}{2b}$ satisfying  
      \bea 
      u(t, x) &=& \frac{1}{\l(t)^{\frac{2}{p-1}}}(Q_b + \e) \left(t, \frac{x}{\l(t)}\right)e^{i\tau(t)}, 
      \label{eqss1}\\
        \| \e(t) \|_{\dot H^\sigma_x} &\lesssim& (1-2bt)^{\frac{\sigma - s_c}{2}},\label{eqss2}
           \eea
	where $(\lambda(t),\gamma(t),\tau(t))$ are given by \eqref{eqlaw}, and there exists $u_* \in \dot H^\sigma$ such that 
        \be u(t) - \frac{1}{\l(t)^{\frac{2}{p-1}}}Q_b \left( \frac{x}{\l(t)}\right)e^{i\tau(t)}  \to u_*\quad \mathrm{in}\,\,\dot{H}^\sigma \quad as\quad t\to T.
		\label{eqss3}\ee
   
      Here $P_u$ and $P_{cs} := P_c + P_s$ are from Lemma \ref{lemJinv2}. 
    \item Contraction of the unstable-data map:  $\Phi$ satisfies $\Phi(0) = 0$ and
      \be \| \Phi(\e_1) - \Phi(\e_2) \|_{X_+} \lesssim \left( \| \e_1\|_{X_+} + \| \e_2 \|_{X_+} \right)^{\min \{ p-1, 1\}} \| \e_1 - \e_2 \|_{X_+}.\label{eqLipestPhi}  \ee
  \end{enumerate}
\end{theorem}
    
    The statement (1) clearly implies Theorem \ref{thmcodimstability}. 

\begin{proof}[Proof of Theorem \ref{thmfincodimstabHsigmaB}] We choose $\sigma \in (s_c, s_c + \frac 12 \delta_1)$ with $\delta_1$ from Lemma \ref{lemJinv2}, and we denote
\be \mu = b(\sigma - s_c) \le \frac 12 \delta_1 \le \frac 12 \delta_0, 
\label{eqdeltachoice} \ee 
where $\delta_0$ is from \eqref{eqp0sec6}.
We introduce the following exponents:
\be  \frac{1}{r_1} = \frac{\sigma}{d} + \frac{1}{p+1} \frac{d-2\sigma}{d},\quad \frac{1}{r_2} = \frac{p}{p+1}\frac{d-2\sigma}{d}, \quad \frac{1}{q_1} = \frac{(p-1)(d-2\sigma)}{4(p+1)}.
\label{eqr1r2q1}\ee
Using $\a_c$ from \eqref{eqac}, we have $\frac{1}{r_1} = \frac{1}{2} - \frac{\a_c}{d} \frac{d-2\sigma}{d} \in \left ( \frac{1}{2^*},\frac 12 \right )$, $\frac{1}{q_1} = \frac{\a_c(d-2\sigma)}{2d}$.
One can then check $\frac{2}{q_1} + \frac{d}{r_1} = \frac d2 $, $\frac{1}{r_1'} = \frac{1}{r_1} + \frac{p-1}{pr_2}$, $\dot{W}^{\sigma, r_1'} \hookrightarrow L^{r_2}$ and $\dot{W}^{\sigma, r_1} \hookrightarrow L^{pr_2}$. Moreover, recalling $\delta_0, p_0$ from \eqref{eqp0sec6}, we have $r_1 < p_0$ since
\[ \frac 1{r_1} - \frac{1}{p_0} = \min \left\{ \frac{d(1-\a_c)+ 2\sigma \a_c}{d^2}, \frac{d-2\sigma}{d(p+1)}\right \} - \frac{\delta_0 s_c}{d^2 (p+1)} > 0. \]
We also define
\be \beta := \frac{1}{q_1'} - \frac{p}{q_1} = \frac{\sigma-s_c}{\frac d2 - s_c} > 0,\quad \tilde p = \min \left\{ \frac{p+1}{2}, \frac 32 \right\}.\label{eqbeta}\ee 


\mbox{}

\underline{Step 1. Reduction to contraction mapping estimates.} Let $\epsilon_0 \ll 1$. For any 
\[ (Z_{c0}, z_{s0}) \in P_c X_+ \times \RR^{d_s}, \quad \max \{ \| Z_{c0}\|_{(\dot H^\sigma)^2}, |z_{s0}| \} =: \epsilon \le \epsilon_0, \]
 We denote $\bm{Z} = (Z_c, z_s, z_u)$, and define the linear map $\bm{\Psi}(\bm{Z}) = (\bm{\Psi}_c(\bm{Z}), \bm{\Psi}_s(\bm{Z}), \bm{\Psi}_u(\bm{Z}))$ as
\be
  \left( \begin{array}{c}\bm{\Psi}_c(\bm{Z}) \\ \bm{\Psi}_s(\bm{Z}) \\ \bm{\Psi}_u(\bm{Z}) \end{array} \right) = \left( \begin{array}{c}
 e^{it\calH} Z_{c0} - i \int_0^t e^{i(t-\tau)\calH} P_c F d\tau \\ 
 e^{-tA_s} z_{s0} + \int_0^t e^{-(t-\tau)A_s} f_s d\tau \\
- \int_t^\infty e^{(\tau-t)A_u} f_u d\tau
  \end{array} \right)
\ee
 Define the Banach space as
\bee
 \XX = \XX_c \times \XX_s \times \XX_u := S \times C^0_{e^{\frac{\tilde p + 1}{2}\mu t}}\left([0, \infty), \RR^{d_s} \right) \times C^0_{e^{\frac{\tilde p + 1}{2}\mu t}}\left([0, \infty), \RR^{d_u} \right)  
\eee
where the Strichartz norm is 
\be
  S = C^0_{e^{\mu t}} ([0, \infty), (\dot{H}_y^\sigma)^2(\RR^d) )  \cap \left(L^2_{e^{\mu t}}\cap L^{q_1}_{e^{\mu t}}\right) ([0, \infty), (\dot{W}_y^{\sigma, r_1})^2(\RR^d) ). \label{eqdefStrinorm}
\ee
We claim that for $\bm{Z}, \tilde{\bm{Z}} \in B^{\XX}_{C_0 \epsilon}$ with $\epsilon_0 \ll 1$, $C_0 \gg 1$, we have 
\bea
\| e^{\tilde p \mu \tau} F(Z)\|_{L^{q_1'}_\tau\dot W^{\sigma, r_1'}_y} &\lesssim & \| \bm{Z}\|_{\XX}^{\min \{ p, 2 \}} \label{eqPsicontract00}  \\
  \| \bm{\Psi}_c(\bm{Z}) \|_{\XX_c} &\le& \frac 12 C_0 \left( \epsilon + \| \bm{Z} \|_{\XX}^{\min \{ p, 2 \}}\right), \label{eqPsicontract01} \\
  \| \bm{\Psi}_s(\bm{Z}) \|_{\XX_s} &\le& \frac 12 C_0 \left( \epsilon + \| \bm{Z} \|_{\XX}^{\min \{ p, 2 \}}\right), \label{eqPsicontract02}\\
  \| \bm{\Psi}_u(\bm{Z}) \|_{\XX_u} &\lesssim&  \| \bm{Z} \|_{\XX}^{\min \{ p, 2 \}}, \label{eqPsicontract03}\\
  \| \bm{\Psi}(\bm{Z}) - \bm{\Psi}(\tilde{\bm{Z}}) \|_{\XX} &\lesssim& \epsilon^{\min \{p-1, 1 \}} \| \bm{Z} - \tilde{\bm{Z}} \|_{\XX}. \label{eqPsicontract04}
\eea

Postponing the proof for later steps, we now show that these estimates \eqref{eqPsicontract00}-\eqref{eqPsicontract04} imply Theorem \ref{thmfincodimstabHsigmaB}. Clearly, they imply that $\bm{\Psi}$ is a contraction mapping in $B^{\XX}_{C_0 \epsilon}$, leading to a global solution $Z \in X_+$ solving \eqref{eqZ} and hence $u$ of the form \eqref{eqss1} solving \eqref{eqNLS}. The unstable-data map $\Phi$ is determined by $(Z_{c0}, z_{s0}) \mapsto z_{u0} := -\int_0^\infty e^{\tau A_u} f_u d\tau$, whose contraction estimate \eqref{eqLipestPhi} follows from \eqref{eqPsicontract04} noticing that the $\bm{\Psi}_u$ component is defined independent of $(Z_{c0}, z_{s0})$. 

For the estimate of $\epsilon$ \eqref{eqss2}, we combine the boundedness of $\bm{Z}$ with regularity of eigenspaces \eqref{eqzProj} to see
\be \| e^{\mu \tau} Z \|_S \lesssim \| e^{\mu \tau} Z_c \|_{S} + \| e^{\mu\tau} z_s \|_{L^2_\tau} +  \| e^{\mu\tau} z_u \|_{L^2_\tau} \lesssim \epsilon. \label{eqestZfinal}
\ee
Noticing that $\tau(t) = -\frac{\ln (1-2bt)}{2b}$, $\mu = b(\sigma - s_c)$ and hence $e^{\mu\tau} = (1-2bt)^{-\frac{\sigma -s_c}{2}}$, this is the boundedness of $\e$ \eqref{eqss2}. 

Finally, we check the limit profile in the original coordinates \eqref{eqss3}. From
\[ i\pa_\tau Z + (\calH_b - ibs_c) Z = F - VZ, \]
we have
\bee  e^{-i\tau(\calH_b - ibs_c)}Z(\tau) &=& \left (Z(0) - i \int_0^\infty e^{-i\tau'(\calH_b - ibs_c)} (F - VZ)(\tau') d\tau'\right ) \\
&+&i\int_\tau^\infty e^{-i\tau'(\calH_b - ibs_c)} (F - VZ)(\tau') d\tau' \\
&=:& Z_+ + \mathfrak{r}(\tau).\eee
With \eqref{eqestZfinal}, \eqref{eqPsicontract00} and that $V: \dot W^{\sigma, r_1'} \to \dot W^{\sigma, r_1}$ from Lemma \ref{lempotbdd}, the Strichartz estimate for $e^{it\calH_b}$ \eqref{eqStrichartzfree} implies 
\bee \left\| \mathfrak{r}(\tau) \right \|_{\dot{H}^\sigma} 
\lesssim \| e^{ \mu \tau } VZ \|_{L^2_\tau ([\tau,\infty), \dot{W}^{\sigma, r_1'}_y)} + \| e^{\mu \tau} F \|_{L^{q_1'}_\tau([\tau, \infty), \dot{W}^{\sigma, r_1'}_y) }\to 0  \eee
as $\tau\to +\infty$. Thus
\[ e^{b(\sigma - s_c) \tau}Z(\tau) - e^{i\tau(\calH_b - ib\sigma)} Z_+ = e^{i\tau(\calH_b - ib\sigma)} \mathfrak{r}(\tau) \to 0,\quad \mathrm{in}\,\, (\dot{H}^\sigma)^2 \]
as $\tau \to +\infty$ since $\| e^{i\tau(\calH_b - ib\sigma)} \mathfrak{r}(\tau)  \|_{(\dot{H}^\sigma)^2}  = \| \mathfrak{r}(\tau)  \|_{(\dot{H}^\sigma)^2}$. Denote  $\e_+$ and $\e_r$ as the first component of $Z_+$ and $\mathfrak{r}$ respectively. We invoke \eqref{eqbto0} to go back to the original coordinates
\bee  \frac{1}{\l(t)^{\frac{2}{p-1}}} \e\left (t, \frac{x}{\l(t)}\right ) e^{i\tau(t)}& = &\frac{1}{\l(t)^{\frac d2}} \left(e^{i\tau(t)(\Delta_b - 1)}(\e_+ + \e_r(\tau(t)) \right)\left (\frac{x}{\l(t)}\right ) e^{i\tau(t)}\\
&=&e^{it\Delta}\e_+(x) + (e^{it\Delta}\e_r(\tau(t)))(x).
\eee
This implies \eqref{eqbdd3} with $u_* := e^{i\frac{1}{2b} \Delta}\e_+$.

\mbox{}

\underline{Step 2. Onto estimate \eqref{eqPsicontract00}-\eqref{eqPsicontract03}}.

\textit{Case 1. $1 < p \le 2$.} We first obtain a spatial nonlinear estimate from the fractional difference estimate Lemma \ref{lemfracdif}. Note that $Q_b$ is non-vanishing \eqref{eqnonvanishing} and satisfies the condition \eqref{eqslowvar} with $a = 1$. Then we have
\be 
\| F \|_{\dot{W}^{\sigma, r_1'}} \lesssim \| Z \|_{L^{pr_2^-}}^p + \| Z \|_{L^{pr_2}}^{p-1} \| Z \|_{\dot{W}^{\sigma, r_1}} \lesssim \| Z \|_{\dot{W}^{\sigma, r_1} \cap \dot{W}^{\sigma, r_1^-}}^p \label{eqFnonlinear1}
\ee
for $1 < p \le 2$.
Here $r^-_i$ with $i = 1,2$ are fixed such that $0 < r_i - r_i^- \ll 1$ and $\dot{W}^{\sigma, r_1^-} \hookrightarrow L^{pr_2^-}$. We also pick $q_1^+$ and $\beta^+$ such that $\frac{2}{q_1^+} + \frac{d}{r_1^-} = \frac d2$, $\beta^+ = \frac{1}{q_1'} - \frac{p}{q_1^+}.$

Next, we verify the nonlinear estimates in the Strichartz norms \eqref{eqPsicontract00}. Compute 
\bea
&& \| e^{ \tilde p \mu \tau} F \|_{L^{q_1'}_\tau \dot{W}^{\sigma, r_1'}_y} \lesssim \left\| e^{ \tilde p \mu \tau}\left(  \| Z \|_{\dot{W}^{\sigma, r_1}_y}^p + \|  Z \|_{\dot{W}^{\sigma, r_1^-}_y}^{p}  \right) \right\|_{L^{q_1'}_\tau}  \nonumber\\
& \lesssim & \left\| e^{ \tilde p \mu \tau}\left(  \| Z_c \|_{\dot{W}^{\sigma, r_1}_y}^p + \|  Z_c \|_{\dot{W}^{\sigma, r_1^-}_y}^{p}  + |z_u|^p + |z_s|^p \right) \right\|_{L^{q_1'}_\tau} \nonumber\\
&\lesssim &  \| e^{-(p  \mu -  \tilde p \mu)\tau} \|_{L^{q_1'}_\tau} \left( \|e^{  \mu \tau} z_s \|_{L^\infty_\tau}^p + \|e^{  \mu \tau} z_u \|_{L^\infty_\tau}^p \right) + \| e^{-(p\mu- \tilde p \mu) \tau} \|_{L^{\frac{1}{\beta}}_\tau} \| e^{\mu \tau}Z_c \|_{L^{q_1}_{\tau}\dot{W}^{\sigma, r_1}_y}^p  \nonumber\\
&+&  \| e^{-(p\mu- \tilde p \mu) \tau} \|_{L^{\frac{1}{\beta^+}}_\tau} \| e^{\mu \tau}Z_c \|_{L^{q_1^+}_{\tau}\dot{W}^{\sigma, r_1^-}_y}^{p} \nonumber\\
& \lesssim &  \| e^{\mu \tau}Z_c \|_{S}^p +  \|e^{  \mu \tau} z_s \|_{L^\infty_\tau}^p + \|e^{  \mu \tau} z_u \|_{L^\infty_\tau}^p \lesssim \| \bm{Z}\|_{\XX}^p  \nonumber \label{eqnonlinearF}
\eea 
where the second inequality follows \eqref{eqzProj} and that $r_1 \in (2, p_0)$. 

Finally, we check \eqref{eqPsicontract01}-\eqref{eqPsicontract03}. For \eqref{eqPsicontract01}, we apply the Strichartz estimate \eqref{eqStrichartz} from Theorem \ref{thmStrichartz} to see
\bee
 \| \bm{\Psi}_c(\bm{Z}) \|_{\XX_c} \lesssim \| Z_{c0} \|_{\dot{H}^\sigma_y} + \|e^{\mu\tau} F \|_{L^{q_1'}_\tau \dot{W}^{\sigma, r_1'}_y} \lesssim 
 \| Z_{c0} \|_{\dot{H}^\sigma_y} + \| \bm{Z}\|_{\XX}^p  
\eee
Then \eqref{eqPsicontract01} holds if we choose $C_0$ larger than twice the constant here. 
For \eqref{eqPsicontract02}, Note that \eqref{eqspecA} and $\delta_1 \ge 2(\sigma - s_c)$ indicate $\| e^{-A_s \tau}\|_{\RR^{d_s} \to \RR^{d_s}} \lesssim e^{-\frac{\tilde p + 1}{2}\mu \tau}$. By Duhamel's formula for $z_s$-evolution in \eqref{eqZdecouple} and \eqref{eqzProj}, we have
\bee 
|z_s(\tau)| &\le &\left |e^{-A_s \tau}z_{s0} \right | + \left |\int_0^\tau e^{-A_s(\tau-\tau')} f_s(\tau') d\tau'\right| \\
& \lesssim &e^{-\frac{\tilde p + 1}{2}\mu\tau} \left( |z_{s0}| +  \| e^{\frac{\tilde p + 1}{2}\mu\tau'} \| F(\tau') \|_{\dot{W}^{\sigma, r_1'}_y} \|_{L^1_{\tau'}([0, \tau])}  \right)  \\
&\lesssim  &e^{-\frac{\tilde p + 1}{2}\mu\tau} \left( |z_{s0}| +  \| e^{-\frac{\tilde p-1}{2}\mu\tau'}  \|_{L^{q_1}_{\tau'}}  \| e^{\tilde p \mu \tau} F \|_{L^{q_1'}_\tau \dot{W}^{\sigma, r_1'}_y} \right),
\eee
which implies \eqref{eqPsicontract02} by further applying \eqref{eqPsicontract00} and increasing $C_0$ if necessary. Similarly, the estimate of $z_u$ \eqref{eqPsicontract03} follows $\| e^{A_u \tau} \|_{\RR^{d_u} \to \RR^{d_u}} \lesssim e^{\frac{\tilde p - 1}{2} \mu \tau}$ from \eqref{eqspecA} and the nonlinear estimate \eqref{eqPsicontract00} that 
\bee
|z_u(\tau)| &\lesssim& e^{-\frac{\tilde p - 1}{2} \mu \tau} \left\|  e^{\frac{\tilde p - 1}{2} \mu \tau'} \| F(\tau') \|_{\dot{W}^{\sigma, r_1'}_y}  \right\|_{L^1_{\tau'}([\tau, \infty))} \\
&\lesssim& e^{-\frac{\tilde p - 1}{2} \mu \tau}  \| e^{-\frac{\tilde p + 1}{2} \mu \tau} \|_{L^{q_1}_{\tau'} [\tau, \infty)} \| e^{\tilde p \mu \tau'} F \|_{L^{q_1'}_\tau \dot{W}^{\sigma, r_1'}_y} \lesssim e^{-\tilde p \mu \tau} \| \bm{Z}\|_{\XX}^p.
\eee
 
 \mbox{}
 
\underline{Case 2. $p > 2$. } We only sketch the proof since the argument is similar. Like \eqref{eqFnonlinear1}, we have a similar nonlinear estimate
\be 
\| F \|_{\dot{W}^{\sigma, r_1'}} \lesssim \| Z \|_{\dot{W}^{\sigma, r_1} \cap \dot{W}^{\sigma, r_1^-}}^p + \| Z \|_{\dot{W}^{\sigma, r_1} \cap \dot{W}^{\sigma, r_1^-}}^2 \label{eqFnonlinaer2}
\ee
for $p > 2$ from Lemma \ref{lemfracdif}. To deal with the additional quadratic terms, we introduce $\beta_1 := \frac{1}{q_1'} - \frac{2}{q_1} > \frac{1}{q_1} - \frac{p}{q_1} = \beta$, and correspondingly $\beta_1^+ = \frac{1}{q_1'} - \frac{2}{q_1^+}$. 
Then noticing that $\tilde p = \frac 32$ for $p > 2$, the weighted nonlinear estimate \eqref{eqPsicontract00} becomes
\bee 
&& \| e^{ \tilde p \mu \tau} F \|_{L^{q_1'}_\tau \dot{W}^{\sigma, r_1'}_y} \\
&\lesssim &  \| e^{-(p\mu -  \tilde p \mu)\tau} \|_{L^{q_1'}_\tau} \left( \|e^{\mu \tau} z_s \|_{L^\infty_\tau}^p + \|e^{\mu \tau} z_u \|_{L^\infty_\tau}^p \right) + \| e^{-(p\mu -  \tilde p \mu) \tau} \|_{L^{\frac{1}{\beta}}_\tau} \| e^{\mu \tau}Z_c \|_{L^{q_1}_{\tau}\dot{W}^{\sigma, r_1}_y}^p  \\
&+&  \| e^{-(p\mu -  \tilde p \mu) \tau} \|_{L^{\frac{1}{\beta^+} }_\tau} \| e^{\mu \tau}Z_c \|_{L^{q_1^+}_{\tau}\dot{W}^{\sigma, r_1^-}_y}^{p} \\
&+ &  \| e^{-(2\mu -  \tilde p \mu)\tau} \|_{L^{q_1'}_\tau} \left( \|e^{\mu \tau} z_s \|_{L^\infty_\tau}^2 + \|e^{\mu \tau} z_u \|_{L^\infty_\tau}^2 \right) + \| e^{-(2\mu -  \tilde p \mu) \tau} \|_{L^{\frac{1}{\beta_1}}_\tau} \| e^{\mu \tau}Z_c \|_{L^{q_1}_{\tau}\dot{W}^{\sigma, r_1}_y}^2  \\
&+&  \| e^{-(2\mu -  \tilde p \mu) \tau} \|_{L^{\frac{1}{\beta_1^+}}_\tau} \| e^{\mu \tau}Z_c \|_{L^{q_1^+}_{\tau}\dot{W}^{\sigma, r_1^-}_y}^2 \\
& \lesssim & \| \bm{Z}\|_{\XX}^p +  \| \bm{Z}\|_{\XX}^2 \lesssim  \| \bm{Z}\|_{\XX}^2.
\eee
The other estimates \eqref{eqPsicontract01}-\eqref{eqPsicontract03} follow verbatim as above plus this estimate. 

\mbox{}

\underline{Step 3. Contraction estimate \eqref{eqPsicontract04}}.

The proof is mainly a nonlinear estimate for the difference flow of \eqref{eqZdecouple}.  
	Without loss of generality, we assume $\e \ge \tilde{\e}$. We also employ notations $\tilde F = F(\tilde Z)$ and $\triangle F = \tilde{F} - F$. 
	Since the unit ball in $L^{q_1'}_\tau([0,\infty),  (\dot{W}^{\sigma, r_1'}_y)^2)$ is closed in $L^{q_1}_\tau ([0, \infty), (L^{r_2}_y)^2)$ and $e^{\tilde{p} \delta \tau} \triangle F \in B_1^{L^{q_1'}_\tau \dot{W}^{\sigma, r_1'}_y}$ from \eqref{eqPsicontract00} and $\epsilon \le \epsilon_0 \ll 1$,  the inverse mapping theorem then implies
	\[ \left \| e^{\tilde{p} \delta \tau} \triangle F  \right \|_{L^{q_1'}_\tau  \dot{W}^{\sigma, r_1'}_y} \sim  \left \| e^{\tilde{p} \delta \tau} \triangle F  \right \|_{L^{q_1'}_\tau L^{r_2}_y}.  \]
	
	Note that Lemma \ref{lemnondif} indicates 
	\bee 
	 \| \triangle F\|_{ L^{r_2} } \lesssim \| \triangle Z \|_{ L^{pr_2} } \left( \| Z \|_{ L^{pr_2} }^{p-1} + \| \tilde{Z}\|_{ L^{pr_2} }^{p-1} \right ) 
	 \lesssim \| \triangle Z \|_{ \dot{W}^{\sigma, r_1}} \left (\| Z \|_{ \dot{W}^{\sigma, r_1}}^{p-1} + \|  \tilde{Z} \|_{ \dot{W}^{\sigma, r_1}}^{p-1} \right )
	\eee
	for $1 < p \le 2$ and
		\bee 
	\| \triangle F\|_{ L^{r_2} }
	\lesssim \| \triangle Z \|_{ \dot{W}^{\sigma, r_1}} \left (\| Z \|_{ \dot{W}^{\sigma, r_1}}^{p-1} + \|  \tilde{Z} \|_{ \dot{W}^{\sigma, r_1}}^{p-1} + \| Z \|_{ \dot{W}^{\sigma, r_1}} + \|  \tilde{Z} \|_{ \dot{W}^{\sigma, r_1}} \right )
	\eee
	for $p > 2$. Similar to Step 2, we can derive 
	\bee
	  \left \| e^{\tilde{p} \delta \tau} \triangle F  \right \|_{L^{q_1'}_\tau  \dot{W}^{\sigma, r_1'}_y} \lesssim \| \triangle \bm{Z} \|_{\XX} \left( \| \bm{Z}\|_{\XX}^{\bar p - 1} +  \|\tilde{\bm{Z}} \|_{\XX}^{\bar p - 1}\right) \lesssim \epsilon^{\bar p - 1} \| \triangle \bm{Z} \|_{\XX}
	\eee
	where $\bar{p} := \min \{ p, 2\}$, and thereafter, the linear estimates of Strichartz, $e^{-A_s \tau}$ and $e^{A_u \tau}$ imply the contraction estimate \eqref{eqPsicontract04} as in Step 1. 
\end{proof}

\subsection{Asymptotic stability in $\dot H^\sigma$}\label{sec63}

In this subsection, we will prove the nonlinear asymptotic stability in $\dot H^\sigma$ by assuming mode stability Assumption \ref{assmodestab} by a Brouwer argument for matching initial data.

We begin by computing the eigenpairs generated by symmetry. 
Define
\be \label{eqdefxi01b}
\xi_{0} = i \left(\begin{array}{c} Q_b \\ -\bar{Q}_b \end{array}\right), \,\,
\xi_{1} = \frac 12 \left(\begin{array}{c} \Lambda Q_b \\ \overline{\Lambda Q_b} \end{array}\right); \,\,
\zeta_{j} = \left(\begin{array}{c} \pa_j Q_b \\ \overline{\pa_j Q_b} \end{array}\right),\,\, 1 \le j \le d. 
\ee
where $\Lambda = \frac{2}{p-1} + x\cdot\nabla$. 
Then $\xi_0, \xi_1, \zeta_j \in X_+$ from their symmetry and the decay of $Q_b$ \eqref{selfsimilardecay}. 
By differentiating the self-similar profile equation \eqref{eqselfsimilar} with respect to phase rotation, scaling and spatial translation, we obtain the algebraic relations
\bee
    \calH_b \xi_0 = 0,\quad \calH_b \xi_1 = -2bi\xi_1 - i\xi_0;\quad
\calH_b \zeta_j = -bi \zeta_j, \quad 1 \le j \le d.
\eee
Thus
\be
\calH_b \xi_0 = 0,\quad (\calH_b + 2bi) \left( \xi_0 + 2b\xi_1 \right) = 0,\quad 
(\calH_b + bi)\zeta_j = 0. \label{eqexpliciteigenfunc}
\ee

\mbox{}

\begin{proof}[Proof of theorem \ref{thmasympstab}]
Let $\epsilon_1 \le \epsilon_0$ to be determined later, with $\epsilon_0$ from Theorem \ref{thmfincodimstabHsigmaB}. The explicit eigen relations \eqref{eqexpliciteigenfunc} and mode stability Assumption \ref{assmodestab} imply that  
\be P_u X_+ = {\rm span}_\RR(\xi_0, \xi_1, \zeta_1,..., \zeta_d).  \label{eqRieszcharX+} \ee

\mbox{}

\textit{Step 1. Reduction to matching initial data.}
  Fix $\e_0 \in B^{\dot H^\sigma}_{\epsilon_1}$. For $\vec \a = (\l_0, x_0, \theta_0)$, we define the modulated profile $w_0^{\vec \a}$ to satisfy
  \[ (Q_b + \e_0)(x) = \left(Q_b + w^{(\vec \a)}_0 \right)\left( \frac{x-x_0}{\l_0} \right) \l_0^{-\frac d2 + s_c} e^{-i\theta_0}, \]
  namely 
  \be
    w^{(\vec \a)}_0(y) := \left( Q_b + \e_0 \right) \left( \l_0 y + x_0 \right) \l_0^{\frac d2 - s_c}e^{i\theta_0} - Q_b(y).
  \ee
  We claim that for $\epsilon_1$ small enough, there exists $C_* > 0$, such that there exists $\vec \a \in B^{\RR^{d+2}}_{C_* \| \e_0\|_{\dot H^\sigma}}$ satisfying 
  \be
     \Phi \left( (1-P_u) \Pi^{-1} w^{(\vec \a)}_0 \right) = P_u \Pi^{-1} w^{(\vec \a)}_0 \label{eqwalpha0}
  \ee
  where $\Phi$ is the unstable-data map defined in Theorem \ref{thmfincodimstabHsigmaB}.
  
  Now we prove that \eqref{eqwalpha0} concludes the proof of Theorem \ref{thmasympstab}.
  Let 
  $$\tilde u_0 := Q_b + w^{(\vec \a)}_0 = Q_b + \Pi (1-P_u) \Pi^{-1} w^{(\vec \a)}_0 + \Pi \Phi \left( (1-P_u) \Pi^{-1} w^{(\vec \a)}_0 \right).$$
  Since
  $$\| \Pi (1-P_u) \Pi^{-1} w^{(\vec \a)}_0 \|_{\dot H^\sigma} \lesssim \| \e_0 \|_{\dot H^\sigma} + |\vec \a| \lesssim \epsilon_1, $$
  when $\epsilon_1$ is small enough, we have $\| \Pi (1-P_u) \Pi^{-1} w^{(\vec \a)}_0 \|_{\dot H^\sigma} \le \epsilon_0$ and hence Theorem \ref{thmfincodimstabHsigmaB} implies that $\tilde u_0$
  generates a self-similar blowup
  \[ \tilde u(t, x) = \frac{1}{(1-2bt)^{\frac{1}{p-1}}}  (Q_b + w)\left( t, \frac{x}{\sqrt{1-2bt}} \right)  e^{-i\frac{\ln(\frac{1}{2b} - t)}{2b}}, \quad w\big|_{t=0} = w^{(\vec \a)}_0, \]
  with $w$ satisfies $\| w(t) \|_{\dot H^\sigma_x} \lesssim (1-2bt)^{\frac{\sigma - s_c}{2}}$ and there exists a blowup profile $\tilde u_*$ in the sense of \eqref{eqss3}. 
  Then from the symmetry of \eqref{eqNLS}, the solution generated by $u_0 = Q_b + \e_0$ becomes 
  \[ u(t, x) = \tilde u\left(\frac{t}{\l_0^2}, \frac{x-x_0}{\l_0} \right) \l_0^{-\frac d2 + s_c} e^{-i\theta_0}, \]
  so \eqref{eqss33} holds with $\e(t) = w(\l_0^{-2} t)$ has desired decaying when $t \to T := \frac{\l_0^2}{2b}$. The vanishing of $\e(t)$ implies the uniqueness of the asymptotics decomposition \eqref{eqss33}, and hence the uniqueness of the modulation parameter $\vec \a$. The blowup profile for $u$ is $u_* = \tilde u_*\left(\frac{\cdot - x_0}{\l_0}\right) \l_0^{-\frac d2 + s_c} e^{-i\theta_0}$. 

\mbox{}

\textit{Step 2. Parameterization and Brouwer argument.}
  Now we prove the claim by solving \eqref{eqwalpha0}. 
  Let 
  \[ r_{\vec \a} := Q_b(\l_0 \cdot + x_0)\l_0^{\frac d2 - s_c} e^{i\theta_0} - Q_b,\quad v_{\vec \a} = w^{(\vec \a)}_0 - r_{\vec \a} = \e_0(\l_0 \cdot + x_0)\l_0^{\frac d2 - s_c} e^{i\theta_0}, \]
  then \eqref{eqwalpha0} becomes
  \be P_u \Pi^{-1} r_{\vec \a} = \Phi \left( (1-P_u) \Pi^{-1} w^{(\vec \a)}_0 \right) - P_u \Pi^{-1} v_{\vec \a}. \label{eqwalpha1} \ee
  Since $P_u\big|_{P_uX_+} = I\big|_{P_u X_+}$, we compute
  \[  P_u \Pi^{-1} \left(\left( \pa_{\theta_0}, \pa_{\l_0}, \pa_{(x_0)_1},...,\pa_{(x_0)_d} \right)  r_{\vec \a}\right)\Big|_{\vec \a = \vec 0} = \left( \xi_0, 2\xi_2, \zeta_1,...,\zeta_d \right).  \]
  So with \eqref{eqRieszcharX+}, we can parameterize $P_u X_+$ by
  \[ \calP: P_u X_+ \to \RR^{d+2},\quad  c_1 \xi_0+ 2c_2 \xi_1 + \sum_{j=1}^d c_{j+2} \zeta_j \mapsto \vec c,  \] 
  which is clearly well-defined by the linear independence of those modes \eqref{eqdefxi01b}. Then $\calP P_u \Pi^{-1} (\nabla_{\vec \a} r_{\vec \a}) \big|_{\vec \a = \vec 0} = {\rm Id}_{\RR^{d+2}}$ and \eqref{eqwalpha1} turns into
  \be
  \begin{split}
    \vec \a = \calT_{\e_0} (\vec \a) :&= - \calP P_u \Pi^{-1}\left(r_{\vec \a} - \vec \a \cdot (\nabla_{\vec \a} r_{\vec \a}) \big|_{\vec \a = \vec 0}\right) \\
    &+ \calP\left[ \Phi \left( (1-P_u) \Pi^{-1} w^{(\vec \a)}_0 \right) - P_u \Pi^{-1} v_{\vec \a} \right]. 
    \end{split} 
    \label{eqwalpha2}
  \ee
  From the boundedness of $P_u$, $\Pi^{-1}$, $\calP$ and \eqref{eqLipestPhi}, together with that for $|\vec \a| \le 1$,
  \[ \left\| v_{\vec \a} \right\|_{\dot{H}^\sigma} \lesssim \| \e_0 \|_{\dot H^\sigma},\quad \| r_{\vec \a} \|_{\dot{H}^\sigma} \lesssim |\vec \a|,\quad \|r_{\vec \a} - \vec \a \cdot (\nabla_{\vec \a} r_{\vec \a}) \big|_{\vec \a = \vec 0} \|_{\dot{H}^\sigma} \lesssim |\vec \a|^2, \]
  we have for $|\vec \a| \le 1$, there exists $C_* > 0$ independent of $\e_0, \vec \a$ such that
  \bee
 \left|\calT_{\e_0}(\vec \a)\right| &\le& \frac 12 C_* \left( \| \e_0 \|_{\dot H^\sigma} + |\vec \a|^{\min \{p, 2\}} \right).
  \eee
  Suppose 
  $$\epsilon_1 < (2C_*^{\min\{p, 2\}})^{\frac{1}{\min \{ p-1, 1\}}},$$
  then with $\| \e_0 \|_{\dot H^\sigma} \le \epsilon_1$, we have
  \[ \calT_{\e_0} \left(B^{\RR^{d+2}}_{C_* \| \e_0\|_{\dot H^\sigma}}\right) \subset B^{\RR^{d+2}}_{C_* \| \e_0\|_{\dot H^\sigma}}, \]
  and the Brouwer fixed point theorem implies the existence of solution $\vec \a$ to \eqref{eqwalpha2} and hence \eqref{eqwalpha0}. That concludes the proof.
\end{proof}

\section{Nonlinear stability in $H^1$}\label{s7}

\subsection{Modified spectral projections}

In this subsection, we apply the strategy from \cite[Section 2.5]{arXiv:2404.17228}. Our goal is to construct the modified spectral projections by truncating ${\rm Ran} P_{\rm disc}$. 

\begin{lemma}[Modified spectral projections] \label{lemJinv3} Under the assumption of Lemma \ref{lemJinv2} and assuming additionally \eqref{selfsimilardecayhigh} for $Q_b$, there exist linear operators $\tilde P_c, \tilde P_s, \tilde P_u \in \calL((\dot H^\sigma)^2)$ satisfying 
\begin{enumerate}
    \item $I = \tilde P_c + \tilde P_s + \tilde P_u$; for $\a, \a' \in \{ s, u, c\}$,  $\tilde P_\a \tilde P_{\a'}= \tilde P_\a \delta_{\a, \a'}$. 
    \item For $ \a \in \{ s, u \}$, $\dim ({\rm Ran} \tilde P_\a) = \dim ({\rm Ran} P_\a ) < \infty$,  and ${\rm Ran} \tilde P_\a \subset C^\infty_c(\RR^d)$. 
    \item $\tilde P_s, \tilde P_u \in \calL\left( (\dot{W}^{\sigma, p_1} + \dot{W}^{\sigma, p_0'})^2 \to H^N \right)$ for any $N \ge 0$ with $p_0$ $p_1$ as in Lemma \ref{lemJinv2}. 
    \item $\calJ$-invariance: $\calJ \circ \tilde P_\a = \tilde P_\a \circ \calJ$ for $\a \in \{ s, u, c\}$. 
    \item Composition with original spectral projections: 
    \bea
    \tilde P_\a P_\a = \tilde P_\a, && P_\a \tilde P_\a = P_\a,\quad {\rm for }\,\, \a \in \{ s, u\}; \label{eqspecprojcom1} \\
    \tilde P_c P_c = P_c, && P_c \tilde P_c = \tilde P_c.\label{eqspecprojcom2}
    \eea
    In particular, ${\rm Ran} P_c  = {\rm Ran} \tilde P_c$. 
    \item Action of $\calH$ on stable/unstable subspaces: View $\tilde P_\a \calH P_\a \in \calL({\rm Ran} \tilde P_\a)$ as matrix operators for $\a \in \{s, u \}$, then
    \be \Im\left( \mathrm{eig}(\tilde P_s \calH  P_s) \right)\ge b\delta_1, \quad \Im \left (\mathrm{eig}(\tilde P_u \calH  P_u)\right ) \le 0. \ee
\end{enumerate}
\end{lemma}
\begin{proof} Within this proof, we will write the inner product $(\cdot, \cdot) := (\cdot, \cdot)_{(\dot H^\sigma)^2}$ for simplicity. 

    \underline{Step 1. Symmetry of discrete spectrum.} In this step, we will show that for $\a \in \{ u, s\}$, there exists a set of functions $\{ \psi_{\a;j}, \varphi_{\a;j} \}_{j = 1}^{d_\a} \subset X_+ \cap (\dot W^{\sigma, p_0} \cap \dot W^{\sigma, p_1'})^2$ such that 
    \be P_\a = \sum_{j = 1}^{d_\a} (\cdot, \psi_{\a;j})  \varphi_{\a; j}, \quad (\varphi_{\a;i}, \psi_{\a;j}) = \delta_{ij}.\label{eqformproj} \ee
For simplicity, we only prove the case for $\a = u$. 

    To begin with, we recall the contour integral definition of $P_u$ \eqref{eqcontourPa} and decompose 
    \[ P_u = P_{\Im} + \sum_\pm P_{\Re \pm}  \]
    by choosing contour $\gamma_\Im \subset \CC$, $\gamma_{\Re\pm} \subset  \{z \in \CC: \pm \Re z > 0 \}$ respectively which encompass the unstable modes $\calE_u \cap i\RR$, $\calE_u \cap  \{z \in \CC: \pm \Re z > 0 \}$ respectively. Then $P_\Im, P_{\Re\pm}$ are orthogonal Riesz projections, namely 
    \be P_\beta P_{\beta'} =  P_\beta \delta_{\beta, \beta'} \quad {\rm for}\,\, \beta, \beta' \in \{\Im, \Re+, \Re- \}. \label{eqPImReproj}\ee
    Moreover, we can require the symmetry with respect to $i\RR$ that $\gamma_\Im = -\bar \gamma_\Im$ and $\gamma_{\Re \pm} = - \bar \gamma_{\Re \mp}$ as sets. Thereafter, a similar computation as in Lemma \ref{lemJinv2} leads to 
    \be \calJ P_\Im = P_\Im \calJ,\quad \calJ P_{\Re\pm} = P_{\Re \mp} \calJ. \label{eqPImResym} \ee

    Next, we claim that we can pick a basis $\{v_j\}_{j = 1}^{M_1} \subset X_+$ for ${\rm Ran} P_\Im \big|_{(\dot H^\sigma)^2}$. Indeed, from Lemma \ref{lemJinv1}, we have $(\calH - \l)^n \varphi_\l = 0$ if and only if $(\calH + \bar \l)^n \calJ \varphi_\l = 0$. Because $P_{\Im} = \sum_{\l \in \calE_u \cap i\RR} P_\l$, we see  $\varphi_\l \in {\rm Ran} P_\Im \big|_{(\dot H^\sigma)^2}$ implies $\calJ\varphi_\l \in {\rm Ran} P_\Im \big|_{(\dot H^\sigma)^2}$ and thus $\varphi_\l + \calJ \varphi_\l, i(\varphi_\l - \calJ \varphi_\l) \in {\rm Ran} P_\Im \big|_{(\dot H^\sigma)^2}$. Since the last vectors belongs to $X_+$, the choice of $\{ v_j \}$ follows.
    
    Now we also take $\{w_j \}_{j=1}^{M_2}$ to be the basis of ${\rm Ran} P_{\Re+} \big|_{(\dot H^\sigma)^2}$. Then there exists $\{ h_j\}_{j = 1}^{M_1} \subset {\rm Ran} (P_{\Im} \big|_{(\dot H^\sigma)^2})^*$, $\{ g_j\}_{j=1}^{M_2} \subset {\rm Ran} (P_{\Re+} \big|_{(\dot H^\sigma)^2})^*$ such that 
    \[ P_\Im = \sum_{j=1}^{M_1}(\cdot, h_j) v_j, \quad P_{\Re+} = \sum_{j=1}^{M_2}(\cdot, g_j) w_j.\]
    The symmetry \eqref{eqPImResym} indicates that 
    \[ P_{\Re-} = \calJ P_{\Re+}\calJ = \sum_{j = 1}^{M_2} \overline{(\calJ \cdot, g_j)} \calJ w_j = \sum_{j = 1}^{M_2} (\cdot, \calJ  g_j) \calJ w_j, \]
    and that $h_j = \calJ h_j$, namely $h_j \in X_+$. The projection property \eqref{eqPImReproj} then implies 
    \be (v_i, h_j) = (w_i, g_j) = \delta_{ij},\quad (v_i, g_j) = (w_i, h_j) = (\calJ w_i, g_j) = 0,\quad \forall i,j. \label{eqvwghortho} \ee
    Therefore we can compute
    \bee
    P_\a  &=& P_{\Im} + P_{\Re+} + P_{\Re-} = \sum_{j=1}^{M_1} (\cdot, h_j) v_j + \sum_{j = 1}^{M_2} \left[ (\cdot, g_j) w_j + (\cdot, \calJ g_j) \calJ w_j \right] \\
    &=& \sum_{j=1}^{M_1} (\cdot, h_j) v_j +  \sum_{j = 1}^{M_2} \left[ (\cdot, g_j + \calJ g_j) \frac{w_j + \calJ w_j}{2} + (\cdot, i(g_j - \calJ g_j)) i\frac{w_j -\calJ w_j}{2} \right].
    \eee
    This clearly implies the choice of $\{\psi_{u;j}, \varphi_{u;j}\}_{j=1}^{d_u} \subset X_+$ with $d_u = M_1 + 2M_2$ to ensure the representation formula in \eqref{eqformproj}, and the orthogonal conditions follow from \eqref{eqvwghortho}. Finally, the additional regularity $\psi_{u;j}, \varphi_{u;j} \in (\dot W^{\sigma, p_0} \cap \dot W^{\sigma, p_1'})^2$ was verified in Proposition \ref{prop3}. 

    \mbox{}

    \underline{Step 2. Definition and properties of modified spectral projections.} For $R > 0$ we define the matrix by 
    \be M_{\a;R} = \{ (\chi_R \varphi_{\a;i}, \psi_{\a;j}) \}_{1 \le i, j \le d_\a} \in \RR^{d_\a \times d_\a},\quad {\rm for}\,\, \a \in \{ u, s\}.   \label{eqdefMaR} \ee
    Since $\varphi_{\a; i}, \psi_{\a; j} \in (\dot H^\sigma)^2$, we have $M_{\a; R} = I_{d_\a} + o_R(1)$. So we can fix a $R_0 \gg 1$, such that $M_{u;R_0}$, $M_{s; R_0}$ are invertible, and define for $\a \in \{ u, s \}$ that
    \be
      \left(\tilde \varphi_{\a; 1}, ..., \tilde \varphi_{\a; d_\a}\right)^\top = M_{\a;R_0}^{-1} \left(\chi_{R_0}\varphi_{\a; 1}, ..., \chi_{R_0}\varphi_{\a; d_\a}\right)^\top,  \label{eqdefmodspecproj}
    \ee
    and 
    \be
      \tilde P_\a = \sum_{j = 1}^{d_\a} (\cdot, \psi_{\a;j})_{(\dot H^\sigma)^2} \tilde \varphi_{\a;j},\quad {\rm for\,\,} \a \in \{u, s\};\quad \tilde P_c = I - \tilde P_s - \tilde P_u. \label{eqdeftildePa} \ee

    From the orthogonality \eqref{eqformproj} and definition of $\tilde \varphi_{\a; j}$ \eqref{eqdefmodspecproj}, we have 
    \be (\tilde \varphi_{\a;i}, \psi_{\a;j}) = \delta_{ij}  \label{eqmodspecortho} \ee
    which implies the projection property (1). With $Q_b \in C^\infty_{loc}$ from \eqref{selfsimilardecayhigh}, standard elliptic interior regularity theory implies that ${\rm Ran} P_s \cup {\rm Ran} P_u \subset C^\infty_{loc}$. Thus ${\rm Ran} \tilde P_\a = \chi_{R_0} {\rm Ran} P_\a \subset C^\infty_c(\RR^d)$ for $\a \in \{ u, s\}$ and it concludes (2) and (3). (4) follows from $\chi_{R_0} \varphi_{\a; j} \in X_+$.

    For (5), \eqref{eqspecprojcom1} follows a direct computation using the definition \eqref{eqdefmodspecproj} and the orthogonality conditions \eqref{eqformproj}, \eqref{eqmodspecortho}, while \eqref{eqspecprojcom2} comes from \eqref{eqspecprojcom1} and $\tilde P_\a P_{\a'} = P_{\a} \tilde P_{\a'} = 0$ for $(\a, \a') = (u, s)$ or $(s, u)$. For (6), it suffices to observe that $\tilde P_\a \calH P_\a$ under the basis $\{\tilde \varphi_{\a;j}\}$ has the same matrix representation as $\calH \big|_{{\rm Ran} P_\a}$  under the basis $\{ \varphi_{\a;j}\}$ by definition. 
\end{proof}

Similar to \eqref{eqZcsu} and \eqref{eqZdecouple}, we let 
\bee
\tilde   Z_\a = \tilde P_\a Z,\quad \tilde F_\a = \tilde P_\a F,\quad \tilde X_\a = \tilde P_\a X_+,\quad {\rm for\,\,} \a \in \{ u, s, c\}.
\eee
And for $\a \in \{u, s \}$, we define the $\RR$-linear homeomorphism $\phi_\a: \tilde X_\a \to \RR^{d_\a}$ as parametrization, 
\[\tilde z_\a =\tilde \phi_\a (\tilde Z_\a),\quad \tilde z_\a  =\tilde \phi_\a (\tilde Z_\a), \quad \tilde A_\a = \phi_\a^{-1} \circ (-i \tilde P_\a \calH P_\a) \circ \phi_\a \in \RR^{d_\a \times d_\a}. \]
From Lemma \ref{lemJinv3}, they satisfy for $\a \in \{s, u \}$ and all $N \ge 0$ and $q \in [2, p_0]$ that
 \bea \Re\left( \mathrm{eig}(\tilde A_s) \right)\ge b\delta_1, && \Re \left (\mathrm{eig}(\tilde A_u)\right ) \le 0, \label{eqspecA1}\\ |\tilde f_\a| \sim_N \|\tilde F_\a \|_{H^N} \lesssim_N \| F \|_{\dot{H}^{\sigma} + \dot{W}^{\sigma, p_0'}}, &&
|\tilde z_\a| \sim_{q, N} \| \tilde Z_\a \|_{\dot W^{\sigma, q} \cap W^{1, q} \cap H^N_{\la x \ra^{N}}}. \label{eqzProj1} \eea

Now we compute the modified spectrally decoupled flow. We first compute using \eqref{eqspecprojcom1}-\eqref{eqspecprojcom2} that
\bee
  &&\tilde P_\a \calH Z = \tilde P_\a P_\a \calH Z = \tilde P_\a \calH P_\a Z =  \tilde P_\a \calH P_\a \tilde P_\a Z = \tilde P_\a \calH P_\a \tilde Z_\a,\quad \a \in \{ u, s\}, \\
  &&\tilde P_c \calH Z = \tilde P_c \calH (\tilde Z_c + \tilde Z_s + \tilde Z_u) = \calH \tilde Z_c + \tilde P_c \calH (\tilde Z_s + \tilde Z_u) 
\eee
So applying the modified projection $\tilde P_s, \tilde P_u, \tilde P_c$ to the self-similar flow \eqref{eqZ} yields 
\be
\left| \begin{array}{l}
i\pa_\tau \tilde Z_c + \calH \tilde Z_c = \tilde F_c - \tilde P_c \calH (\tilde Z_s + \tilde Z_u) \\
\frac{d}{d\tau}\tilde z_s + \tilde A_s \tilde z_s= \tilde f_s, \\  
   \frac{d}{d\tau}\tilde z_u + \tilde A_u \tilde z_u = \tilde f_u. 
\end{array}\right.  \label{eqZdecouple2}
\ee

\subsection{Strichartz estimate in $\dot H^\sigma \cap \dot H^1$}

\begin{lemma}[$\dot H^\sigma \cap \dot H^1$ Strichartz estimate]\label{lemStrichartz1}
    Under the assumption of Lemma \ref{lemJinv2} and assuming additionally \eqref{selfsimilardecayhigh} for $Q_b$, 
    the following linear estimate holds for the linear system $i\pa_t Z + \calH Z = F$ 
    \be
    \| P_{\rm ess} Z \|_{\tilde S_\sigma \cap \tilde S_1} \lesssim \| P_{\rm ess} Z (0) \|_{\dot H^\sigma \cap \dot H^1} + \| P_{\rm ess} F \|_{\tilde S_\sigma' \cap \tilde S_1'} \label{eqStrichartz2}
\ee
where 
\begin{align*}
\tilde S_\a &= \left(C^0_{e^{\mu \tau}} \dot H^\a \cap L^{q_1}_{e^{\mu \tau}} \dot W^{\a, r_1} \cap L^2_{e^{\mu \tau}} (\dot W^{\a, r_0} \cap \dot W^{\a, r_1}) \right)([0,\infty) \times \RR^d)\\
\tilde S_\a' &= (L^{q_2'}_{e^{\mu \tau}} + L^2_{e^{\mu \tau}}) \dot W^{\a, r_2'}([0,\infty) \times \RR^d)
\end{align*}
with $\mu = b(\sigma - s_c)$, $0 < r_0 - 2 \ll 1$; for $i = 1, 2$, $r_i \in (2, p_0]$ for $p_0$ from \eqref{eqp0sec6},  and $q_i \ge 2$ is determined by $\frac{2}{q_i} + \frac{d}{r_i} = \frac d2$. 
\end{lemma}

\begin{proof}
The bound for $\tilde S_\sigma$ follows from the $\dot H^\sigma$ Strichartz estiamte \eqref{eqStrichartz}, so it suffices to control $\| P_{\rm ess}Z \|_{\tilde S_1}$, which will basically follow from \eqref{eqStrichartz} plus some commutator estimates and boundedness of spectral projections. 

We also note that $\| \cdot \|_{D^{1-\sigma} \dot W^{\sigma, q}} \sim \|\cdot\|_{\dot W^{1, q}}$ for $q \in (1, \infty)$, thanks to the $L^q$ boundedness of Riesz transform $\calR_i := \pa_i D^{-1}$ and that $\sum_i^2 \calR_i = -1$. In particular $\tilde S_1 = D^{1-\sigma} \tilde S_\sigma$ and $\tilde S_1' = D^{1-\sigma} \tilde S_\sigma'$. 

\mbox{}

Firstly, we compute with \eqref{eqcomDeltab} that
\[ D^{1-\sigma} \calH = \left( \calH_b + ib(1-\sigma) + D^{1-\sigma} V D^{-(1-\sigma)} \right) D^{1-\sigma} = \left( \calH_b + ib(1-\sigma)\right) D^{1-\sigma} - [V, D^{1-\sigma}]. \]
Therefore applying $P_{\rm ess} D^{1-\sigma} P_{\rm ess}$ the original linear equation \eqref{eqZ} yields
\bee
  \left(i\pa_t + \calH + ib(1-\sigma)\right) P_{\rm ess} D^{1-\sigma}  Z_{\rm ess} = P_{\rm ess} D^{1-\sigma} F_{\rm ess} + P_{\rm ess} [V, D^{1-\sigma}] Z_{\rm ess}
\eee
where we denote $Z_{\rm ess} = P_{\rm ess} Z$, $F_{\rm ess} = P_{\rm ess} F$. With $\nu = \mu - b(1 - \sigma)$, the $\dot H^\sigma$-Strichartz estimate \eqref{eqStrichartz} implies 
\bee
  \| P_{\rm ess} D^{1-\sigma}  Z_{\rm ess} \|_{\tilde S_\sigma}  
  &\lesssim& \| P_{\rm ess} D^{1-\sigma}  Z_{\rm ess}(0) \|_{\dot H^\sigma} + \| P_{\rm ess} [V, D^{1-\sigma}] Z_{\rm ess} \|_{L^2_\tau \dot{W}^{\sigma, 2_-}_x}  \\
   &+& \| e^{\mu t}P_{\rm ess} D^{1-\sigma} F_{\rm ess}  \|_{(L^{q_1'}_\tau + L^2_\tau) (\dot{W}^{\sigma, r_1'}_x + \dot W^{\sigma, 2_-}_x)} 
\eee
Recall the boundedness of $P_{\rm dist}$ from Proposition \ref{prop3} (3), so that $P_{\rm ess} = {\rm Id} - P_{\rm dist} \in \calL( \dot W^{\sigma, 2_-}) \cap \calL(\dot H^\sigma) \cap \calL ( \dot W^{\sigma, r_1'} \to  \dot W^{\sigma, r_1'}  + \dot W^{\sigma, 2_-})$ with $0 < 2 - 2_- \ll 1$. 
Plus the commutator estimate in Lemma \ref{lempotbdd2} (iv) (where we used $|\nabla^2 V|\lesssim \la x \ra^{-4}$ from \eqref{selfsimilardecayhigh}), with $0 < 2_+ - 2 \ll 1$ and control of $\| Z_{\rm ess} \|_{L^2_\tau \dot W^{\sigma, 2_+}_x}$ by Strichartz estimate \eqref{eqStrichartz}, we obtain
\be 
  \| P_{\rm ess} D^{1-\sigma}  Z_{\rm ess} \|_{\tilde S_\sigma}  \lesssim \| Z_{\rm ess}(0) \|_{\dot H^1} + \| F_{\rm ess} \|_{\tilde S_1'} + \|Z_{\rm ess} \|_{\tilde S_\sigma}. \label{eqest1Stri}
\ee

\mbox{}

Next, we claim that 
\be P_{\rm dist} \circ \chi_R \circ D^{1-\sigma} \in \calL(\dot H^\sigma + \dot W^{\sigma, 2_+} \to \dot H^\sigma \cap \dot W^{\sigma, p_0}) \label{eqbddPdistchiR} \ee
for any $R > 0$ and $r \in [2, p_0]$. Indeed, by rewriting $P_{\rm dist} = \sum_j (\cdot, \psi_j)_{\dot H^\sigma} \varphi_j$ and using that $\dot H^\sigma + \dot W^{\sigma, 2_+} \subset L^{2_\sigma} + L^{2_{\sigma+}}$ with $2_\sigma = \frac{2d}{d-2\sigma} \in (2, \infty)$, the claim follows that $D^{1-\sigma} \chi_R D^{2\sigma}\psi_j \in L^1 \cap L^\infty$ and $\varphi_j \in \dot H^\sigma \cap \dot W^{\sigma, p_0}$. From Proposition \ref{prop3}, the regularity of $\varphi_j$ is verified and $\psi_j \in {\rm Ran} P_{\rm dist}^* \subset \dot W^{\sigma, p_1} \cap \dot W^{\sigma, p_1'}$ for some $p_1 > 2$. We will show that this implies $D^\sigma \psi_j \in C^\infty_{loc} \cap L^2$. Then $D^{2\sigma} \psi_j \in C^\infty_{loc}$, and the desired regularity of $D^{1-\sigma} \chi_R D^{2\sigma} \psi_j$ follows, which concludes \eqref{eqbddPdistchiR}. 

Suppose $\psi_j$ is an eigenfunction, namely $(\calH^* - \bar \xi)\psi_j = 0$. Since $V^\top \in C^\infty_{loc}$ and $D^{-\sigma} V^{\top} D^\sigma \in \calL(L^2)$ from \eqref{eqpotbdd3}, the elliptic regularity theory implies $D^\sigma \psi_j \in H^2_{loc}$ and then $D^{2\sigma} \psi_j \in C^\infty_{loc}$ exploiting that locality for the equation for $D^{2\sigma} \psi_j$. When $\psi_j$ is an generalized eigenfunction $(\calH^* - \bar \xi)^N\psi_j = 0$, the smoothness can be similarly shown by induction on $N$. 

\mbox{}

Now we are in place to bound the unstable part of $D^{1-\sigma} Z_{\rm ess}$. Lemma \eqref{lempotbdd} indicates that $\| (1-\chi_R) \|_{\dot W^{\sigma, (\frac 12 + \frac \sigma d - \frac 1{q})^{-1}}}\lesssim \| (1-\chi_R) \|_{\dot W^{1, (\frac 1d + \frac 12 - \frac 1{q})^{-1}}} \lesssim R^{-d(\frac 12 - \frac 1q)}$ for $2 < q < \min \{ \frac{2d}{d-2\a}, \frac d\sigma \}$, so with  the regularity of $\varphi_j, \psi_j$ above, we have  
\bee 
&&\| P_{\rm dist} \circ (1- \chi_R) \|_{\dot H^\sigma + \dot W^{\sigma, 2_+} \to \dot H^\sigma \cap \dot W^{\sigma, p_0} } \\
&\lesssim& \| D^\sigma (1-\chi_R) D^{-\sigma} \|_{L^2 \to L^{(2_+)'}}+ \| D^\sigma (1-\chi_R) D^{-\sigma} \|_{L^{2_+} \to L^2 }  = o_R(1).
\eee
We compute using this and \eqref{eqbddPdistchiR} 
\bea
   && \| P_{\rm dist} D^{1-\sigma}  Z_{\rm ess} \|_{\tilde S_\sigma} \lesssim \| P_{\rm dist} \chi_R D^{1-\sigma}  Z_{\rm ess} \|_{\tilde S_\sigma} + \| P_{\rm dist} (1-\chi_R) D^{1-\sigma}  Z_{\rm ess} \|_{\tilde S_\sigma} \nonumber \\
   &\le& C_R \| e^{\mu \tau}  Z_{\rm ess} \|_{C^0 \dot H^\sigma \cap (L^{q_1} \cap L^2) \dot W^{\sigma, 2_+}} + o_R(1) \| e^{\mu \tau} D^{1-\sigma} Z_{\rm ess} \|_{C^0 \dot H^\sigma \cap (L^{q_1} \cap L^2) \dot W^{\sigma, 2_+}}\nonumber \\
   &\lesssim& C_R \| Z_{\rm ess}\|_{\tilde S_\sigma} + o_R(1) \| Z_{\rm ess} \|_{\tilde S_1} \label{eqest2Stri}
\eea
Here we requires $r_0 \le 2_+$ so that $C^0_{e^\mu \tau} \dot H^\sigma \cap (L^{q_1}_{e^\mu \tau} \cap L^2_{e^\mu \tau}) \dot W^{\sigma, 2_+} \hookrightarrow \tilde S_\sigma$.

\mbox{}

Finally, since $\| Z_{\rm ess} \|_{\tilde S_1} \sim \| D^{1-\sigma} Z_{\rm ess}\|_{\tilde S_\sigma} \lesssim  \| P_{\rm ess} D^{1-\sigma}  Z_{\rm ess} \|_{\tilde S_\sigma} +  \| P_{\rm dist} D^{1-\sigma}  Z_{\rm ess} \|_{\tilde S_\sigma}$, we can conclude \eqref{eqStrichartz2} by summing up \eqref{eqest1Stri} and \eqref{eqest2Stri}, taking $R \gg 1$ to absorb $o_R(1)\| Z_{\rm ess} \|_{\tilde S_1}$ term to the left and applying the $\dot H^\sigma$ Strichartz \eqref{eqStrichartz}.

\end{proof}

\subsection{Proof of nonlinear stability in $H^1$} \label{sec73}

We begin by proving the counterpart of Theorem \ref{thmfincodimstabHsigmaB}. Here we denote 
\be
  \tilde X_+:= X_+ \cap (\dot H^1)^2 \subset (\dot H^\sigma \cap \dot H^1)^2.
\ee
and will still use the natural isometry $\Pi: \tilde X_+ \to \dot H^\sigma \cap \dot H^1$ from \eqref{eqdefPi}. 

\begin{theorem}[$\dot H^\sigma \cap \dot H^1$ finite-codimensional asymptotic stability] \label{thmfincodimstabH1B}
    Let $d \ge 1$, $s_c \in (0, 1)$, and $b, Q_b$ satisfying  \eqref{eqselfsimilar}, \eqref{eqnonvanishing}, \eqref{selfsimilardecay}, \eqref{selfsimilardecayhigh}. 
  Then for every $0 < \sigma - s_c \ll 1$, the following statements hold true.
  \begin{enumerate}
      \item Finite codimensional stability: There exists $\epsilon_0 \ll 1$ and a Lipschitz map
      \[ \tilde \Phi: B_{\epsilon_0}^{\tilde X_+} \cap \tilde P_{cs} \tilde X_+ \to \tilde P_u \tilde X_+  \]
      such that for all $\tilde \e_{cs} \in B_{\epsilon_0}^{\dot H^\sigma \cap \dot H^1} \cap \Pi \tilde P_{cs} \tilde X_+$, the initial data
      \[ u_0 = Q_b +\tilde \e_{cs} + \Pi \tilde \Phi(\Pi^{-1}\tilde \e_{cs}) \]
      generates a solution of \eqref{eqNLS} blowing up at $T = \frac{1}{2b}$ satisfying  
      \bea 
      u(t, x) &=& \frac{1}{\l(t)^{\frac{2}{p-1}}}(Q_b + \e) \left(t, \frac{x}{\l(t)}\right)e^{i\tau(t)}, \label{eqss11}\\
        \| \e(t) \|_{\dot H^\sigma_x \cap \dot H^1_x} &\lesssim& (1-2bt)^{\frac{\sigma - s_c}{2}},\label{eqss21}
           \eea
           where $(\lambda(t),\gamma(t),\tau(t))$ are given by \eqref{eqlaw},
           and there exists $u_* \in \dot H^\sigma \cap \dot H^1$ such that 
        \be u(t) - \frac{1}{\l(t)^{\frac{2}{p-1}}}Q_b \left( \frac{x}{\l(t)}\right)e^{i\tau(t)}  \to u_*\quad \mathrm{in}\,\,\dot{H}^\sigma \cap \dot H^1 \quad as\quad t\to T.
		\label{eqss31}\ee
   
      Here $\tilde P_u$ and $\tilde P_{cs} := \tilde P_c + \tilde P_s$ are from Lemma \ref{lemJinv3}. 
    \item Contraction of the unstable-data map:  $\tilde \Phi$ satisfies $\tilde \Phi(0) = 0$ and for any $N \ge 1$,
      \be \| \tilde\Phi(\e_1) -\tilde \Phi(\e_2) \|_{H^N_{\la x \ra^N}} \lesssim_N \left( \| \e_1\|_{\tilde X_+} + \| \e_2 \|_{\tilde X_+} \right)^{\min \{ p-1, 1\}} \| \e_1 - \e_2 \|_{\tilde X_+}.\label{eqLipestPhi1}  \ee
  \end{enumerate}
\end{theorem}
\begin{proof}
    The proof is almost identical with that of Theorem \ref{thmfincodimstabHsigmaB}. We only sketch the proof and focus on the new elements.

    We use the same $\delta_0, p_0$ from \eqref{eqp0sec6}, the same requirement of $\delta_1, \sigma$ and the same exponents $\mu, r_1, r_2, q_1, \beta, \tilde p$ as in \eqref{eqdeltachoice}-\eqref{eqbeta}. Now we define the linear map $\tilde{\bm{\Psi}}(\tilde{\bm{Z}}) = (\tilde{\bm{\Psi}}_c(\tilde{\bm{Z}}), \tilde{\bm{\Psi}}_s(\tilde{\bm{Z}}), \tilde{\bm{\Psi}}_u(\tilde{\bm{Z}}))$ for the modified flow \eqref{eqZdecouple2} as
\be
  \left( \begin{array}{c} \tilde{\bm{\Psi}}_c(\tilde{\bm{Z}}) \\ \tilde{\bm{\Psi}}_s(\tilde{\bm{Z}}) \\ \tilde{\bm{\Psi}}_u(\tilde{\bm{Z}}) \end{array} \right) = \left( \begin{array}{c}
 e^{it\calH} \tilde Z_{c0} - i \int_0^t e^{i(t-\tau)\calH} \left(\tilde F_c - \tilde P_c \calH (\tilde Z_s + \tilde Z_u)  \right) d\tau \\ 
 e^{-t\tilde A_s} \tilde z_{s0} + \int_0^t e^{-(t-\tau)\tilde A_s}\tilde f_s d\tau \\
- \int_t^\infty e^{(\tau-t)\tilde A_u}\tilde f_u d\tau
  \end{array} \right)
\ee
The Banach space we use here is
\bee
 \tilde \XX(A_0) = \tilde S \times A_0 \XX_s \times A_0 \XX_u,\quad 
 \| \tilde{\bm{Z}} \|_{\tilde \XX(A_0)} = \max \{ \| \tilde Z_c \|_{\tilde S}, A_0 \| \tilde z_s \|_{\XX_s}, A_0 \| \tilde z_u \|_{\XX_u} \},
\eee
where $A_0 \gg 1$ is a constant chosen later, $\XX_\a = C^0_{e^{\frac{\tilde p + 1}{2}\mu t}}\left([0, \infty), \RR^{d_\a} \right)$ for $\a \in \{ s, u \}$ as before and the Strichartz norm $\tilde S = S \cap D^{1-\sigma} S$ with $S$ from \eqref{eqdefStrinorm}.  We remark that this is controlled by the Strichartz norm from Lemma \ref{lemStrichartz1}. The core proof of Theorem \ref{thmfincodimstabH1B} will again be reduced to proving contraction mapping of $\tilde{\bm{\Phi}}$ in $B^{\tilde \XX(A_0)}_{C_0\epsilon}$ with $\epsilon_0 \ll 1$, $C_0 \gg 1$ and $A_0 \gg 1$, namely the estimates similar to \eqref{eqPsicontract00}-\eqref{eqPsicontract04}. 

For the nonlinear estimate, the $L^{q_1'}_{e^{\mu t}} \dot W^{\sigma, r_1'}$ control namely \eqref{eqPsicontract00} still holds, and the $L^{q_1'}_{e^{\mu t}} \dot W^{1, r_1'}$ control follows from Lemma \ref{lemnonestderiv} that 
\bee
  \| F\|_{\dot W^{1, r_1'}} \lesssim \left| \begin{array}{ll}
      \| Z \|_{L^{pr_2^-}} + \| Z\|_{L^{pr_2}} \| Z \|_{\dot W^{1, r_1}} \lesssim \| Z \|^p_{\dot W^{1, r_1} \cap \dot W^{1, r_1^-}}  & 1 < p \le 2, \\
      \| Z \|^p_{\dot W^{1, r_1} \cap \dot W^{1, r_1^-}} + \| Z \|^2_{\dot W^{1, r_1} \cap \dot W^{1, r_1^-}}   & p > 2.
  \end{array}\right.
\eee
Then similar computations in the time-variable with \eqref{eqzProj1} implies for any $A_0 \ge 1$
\[ \| e^{\tilde p \mu \tau} F(Z)\|_{L^{q_1'} (\dot W^{\sigma, r_1'} \cap \dot W^{1, r_1'})} \lesssim_{A_0}  \| \bm{Z}\|_{\tilde \XX(A_0)}^{\min \{ p, 2 \}}. \]

The onto estimates of $\tilde z_s$, $\tilde z_u$ hold in the same way as  \eqref{eqPsicontract02}-\eqref{eqPsicontract03}, thanks to the same linear estimate of  $e^{-t\tilde A_s}$, $e^{t\tilde A_u}$ from Lemma \ref{lemJinv3} (6). For the onto estimate of $\tilde Z_c$, we first treat the additional source term $\tilde P_c \calH (\tilde Z_s + \tilde Z_u)$ by exploiting its high regularity \eqref{eqzProj1} to see 
\[ \| \tilde P_c \calH (\tilde Z_s + \tilde Z_u) \|_{\dot W^{\sigma, r_1'} \cap \dot W^{1, r_1'}} \lesssim |\tilde z_s| + |\tilde z_u|, \] 
which implies 
\be \|e^{\mu \tau}   \tilde P_c \calH (\tilde Z_s + \tilde Z_u) \|_{L^{q_1}(\dot W^{\sigma, r_1'} \cap \dot W^{1, r_1'})} \lesssim \|\tilde z_s\|_{\XX_s} + \|\tilde z_u\|_{\XX_u} \le 2A_0^{-1} \| \tilde{\bm{Z}} \|_{\tilde \XX(A_0)} \label{eqH1ZsutoZc} \ee
 thanks to higher exponential decay in $\XX_s$ and $\XX_u$. Therefore, thanks to $\tilde Z_c = P_c \tilde Z_c$ by Lemma \ref{lemJinv3} (5), Lemma \ref{lemStrichartz1} provides the counterpart of the linear Strichartz estimate of $e^{it\calH}$ in $\dot H^\sigma \cap \dot H^1$, implying that 
\[  \| \tilde{\bm{\Psi}}_c(\tilde{\bm{Z}}) \|_{\tilde S} \lesssim \| Z_{c0} \|_{\dot{H}^\sigma \cap \dot H^1} + C_{A_0} \| \bm{Z}\|_{\tilde \XX(A_0)}^{\min\{p, 2\}} + A_0^{-1}  \| \bm{Z}\|_{\tilde \XX(A_0)}. \]
This requires us to choose $A_0$ large enough to ensure smallness of $ \tilde z_s$, $\tilde z_u$ relatively to $\tilde Z_c$ (and then letting $\epsilon_0 \ll 1$ to absorb the $C_{A_0}$ constant by smallness of nonlinearity), so that this concludes the onto estimate for contraction mapping. 

Finally, for the contraction estimate, with the inverse mapping theorem argument, we can apply the same nonlinear estimate and \eqref{eqH1ZsutoZc} for additional term to conclude 
\bea
    \| \tilde{\bm{\Psi}}_\a(\tilde{\bm{Z}}) - \tilde{\bm{\Psi}}_\a(\tilde{\bm{Z}}') \|_{\XX_\a} &\lesssim_{A_0}& \epsilon^{\min \{p-1, 1 \}} \| \tilde{\bm{Z}} - \tilde{\bm{Z}}' \|_{\tilde \XX(A_0)}\quad  {\rm for}\,\, \a \in \{ u, s\};  \label{eqlipschitzsu1}\\
    \| \tilde{\bm{\Psi}}_c(\tilde{\bm{Z}}) - \tilde{\bm{\Psi}}_c(\tilde{\bm{Z}}') \|_{\tilde S} &\lesssim& \left( C(A_0)\epsilon^{\min \{p-1, 1 \}} + A_0^{-1} \right) \| \tilde{\bm{Z}} - \tilde{\bm{Z}}' \|_{\tilde \XX(A_0)}.\nonumber
\eea
Thus with $A_0 \ll 1$ and $\epsilon_0 \ll 1$, this is the contraction estimate and concludes that $\tilde{\bm{\Phi}}$ is contraction mapping. Following the argument in Step 1 as Theorem \ref{thmfincodimstabHsigmaB}, this implies all statements of Theorem \ref{thmfincodimstabH1B}. We only stress that the Lipschitz estimate \eqref{eqLipestPhi1} only depends on the difference estimate \eqref{eqlipschitzsu1} with $\a = u$, so the smallness is not affected by the new error in $\tilde Z_c$ evolution. The norm on LHS of \eqref{eqLipestPhi1} comes from \eqref{eqzProj1}. 
\end{proof}

Finally, we conclude the proof of Theorem \ref{thmasympstabH1}.

\begin{proof}[Proof of Theorem \ref{thmasympstabH1}] 
This theorem is mainly an application of Theorem \ref{thmasympstabH1}. So we fix a $0 < \sigma - s_c \ll 1$ throughout the proof. 

To begin with, we construct the first $H^1$ blowup. Denote $h_{R_1}$ for $R_1 \ge 1$ as 
\[ h_{R_1} = \tilde P_{cs} \Pi^{-1} \left((\chi_{R_1} - 1) Q_b \right) , \]
then $\| h_{R_1} \|_{\tilde X_+} = o_{R_1 \to \infty} (1)$. Then when $R_1 \gg 1$, we have $ h_{R_1} \in B^{\tilde X_+}_{\epsilon_0} \cap \tilde P_{cs} \tilde X_+$, so we define $\e^*_{R_1}$ as
\[ \e^*_{R_1} = Q_b (1 - \chi_{R_1}) +  \Pi h_{R_1} + \Pi \tilde \Phi (h_{R_1}) = \Pi \tilde P_u \Pi^{-1} \left((1 - \chi_{R_1}) Q_b \right) + \Pi \tilde \Phi (h_{R_1}). \]
The boundedness of $\tilde P_u$ (Lemma \ref{lemJinv3} (3)) and $\tilde \Phi$ \eqref{eqLipestPhi1} implies that 
\be \| \e^*_{R_1} \|_{H^3_{\la x \ra^3}} = o_{R_1 \to \infty} (1), \label{eqbdde*R1} \ee
and $\e^*_{R_1} \in C^\infty_c$ from Lemma \ref{lemJinv3} (2). Hence the initial data
\be Q_{b;R_1} := Q_b \chi_{R_1} + \e^*_{R_1} = Q_b + \Pi h_{R_1} + \Pi \tilde \Phi(h_{R_1}) \in C^\infty_c \label{eqdefQbR1} \ee
generates a blowup solution of \eqref{eqNLS} satisfying \eqref{eqss11}-\eqref{eqss31}.

We also define 
\be \tilde \Phi_{R_1} (h) = \tilde \Phi(h+h_{R_1}) - \tilde \Phi(h_{R_1}), \label{eqdefPhiR1} \ee
which is also Lipschitz with constant $\epsilon_1 + o_{R_1}(1)$ from $B^{\tilde X_+}_{\epsilon_1}$ to $H^1$ due to \eqref{eqLipestPhi1}.

\mbox{}

(i) For $R_1 \gg 1$ and $\epsilon_1 \ll 1$ such that $\| h_{R_1}\|_{\tilde X_+} + \epsilon_1 < \epsilon_0$, we define the manifold as 
\bea
  \calM &=& \left\{ h \in B^{\tilde X_+}_{\epsilon_1} \cap \tilde P_{cs} \tilde X_+ : Q_b + \Pi (h+h_{R_1}) + \Pi \tilde\Phi (h+h_{R_1}) \right\} \label{eqcalMdef1}\\
  &=&  \left\{ h \in B^{\tilde X_+}_{\epsilon_1} \cap \tilde P_{cs} \tilde X_+ : Q_{b;R_1} + \Pi ({\rm Id} + \tilde \Phi_{R_1}) h  \right\}  \label{eqcalMdef2}
\eea
 Therefore, \eqref{eqcalMdef2} indicates $\calM$ being a Lipschitz finite-codimensional manifold in $H^1$, while \eqref{eqcalMdef1} implies that it generates solutions satisfying \eqref{eqss11}-\eqref{eqss31}. To conclude Theorem \ref{thmasympstabH1} (i), it remains to check the limit profile $u^*$ \eqref{eqL2profH1} and compute the critical norm \eqref{eqcriticalnorm}. 

Recall that $p_c = \frac{d(p-1)}{2}$ such that $\dot{H}^{s_c} \hookrightarrow L^{p_c}$. Denote $\tilde{Q_b}$ and $\tilde{\e}$ to be the rescaled functions 
\[ u = e^{i\tau(t)}\l(t)^{-\frac{2}{p-1}} (Q_b + \e)\left( t, \frac{\cdot}{\l(t)}\right) =: \tilde{Q}_b + \tilde{\e}.\]
Then \eqref{eqcriticalnorm} is a consequence of the following four estimates
\bea
  \| \tilde \e (t) \chi_R \|_{\dot{H}^{s_c}} &\lesssim& 1,\label{eqpcext1}\\
     \| \tilde Q_b (t) \chi_R \|_{L^{p_c}} &\sim& |\log(T-t)|^{\frac{1}{p_c}},\label{eqpcext3}\\
  \| \tilde Q_b (t) \chi_R \|_{\dot{H}^{s_c}} &\sim& |\log(T-t)|^{\frac 12},\label{eqpcext2}\\
  \| u(t) (1-\chi_R) \|_{\dot{H}^{s_c}} &\lesssim& 1 \label{eqpcext}
\eea
where the constant is independent of $t$ as $t \to T$. We will prove them and particularly show \eqref{eqL2profH1} during the proof of \eqref{eqpcext}. 

\textit{Proof of  \eqref{eqpcext1}-\eqref{eqpcext2}.} For \eqref{eqpcext1}, we need the following estimate
\be \| f\chi_M \|_{\dot{H}^{s_c}} \lesssim M^{\sigma - s_c} \| f \|_{\dot{H}^\sigma}. \label{eqpoincare}\ee
for any $M > 0$, $0 \le s_c \le \sigma < \min \{\frac d2, 1 \}$. It can be easily proven through high-low decomposition of $f$. 
Applying this inequality and \eqref{eqbdd1}, we obtain
\bee \| \tilde{\e}(t) \chi_R \|_{\dot{H}^{s_c}} = \| \e(t) \chi_{R\l(t)^{-1}} \|_{\dot{H}^{s_c}} \lesssim  \| \e(t) \|_{\dot{H}^{\sigma}} (R\l(t)^{-1})^{\sigma - s_c} \lesssim 1 \eee

Next, recall from  \eqref{selfsimilardecay} that
\be |Q_b|\sim \la x \ra^{-\frac{2}{p-1}},\quad |Q_b'| \lesssim \la x \ra^{-\frac{p+1}{p-1}} \label{eqQbasympp}\ee
and for a large enough $R^* \gg 1$,
\be | Q_b(x) - Q_b(x')| \sim \la x \ra^{-\frac{2}{p-1}}\qquad \forall |x| \ge R^*,\,\, |x'| \ge 2 |x|.  \label{eqQbasympp2}\ee
Hence the $L^{p_c}$ estimate \eqref{eqpcext3} follows immediately
\[ \| \tilde{Q}_b \chi_R \|_{L^{p_c}} \sim \left(\int_{B_{2R\l(t)^{-1}}} \la x \ra^{-\frac{2}{p-1}p_c} dx\right )^{\frac{1}{p_c}} \sim |\log(T-t)|^{\frac{1}{p_c}}.\]
Using
\bee\| \tilde Q_b (t) \chi_R \|_{\dot{H}^{s_c}}^2 =  \| Q_b \chi_{\tilde R} \|_{\dot{H}^{s_c}}^2 &\sim& \iint_{\RR^{2d}} \frac{\left| (Q_b \chi_{\tilde R})(x-y) - (Q_b \chi_{\tilde R})(x)\right|^2}{|y|^{d+2s_c}}dxdy \\
&=:& \iint F(x,y)dxdy
\eee
where $\tilde R := R\l(t)^{-1}$.
The $\dot{H}^{s_c}$ estimate comes from a careful partition of $\RR^{2d}$. Indeed, let $\tilde R =: 2^{K}$, with $K \gg 1$, we define $K^*$ the smallest integer greater than $K$. Define the disjoint partition $\RR^{2d} = \cup_{i = 1}^5 I_i$ with
\bee
 I_1 &=& \{ |x| \ge 2^{K^* + 1} \}, \\
 I_2 &=& \{ |x| < 2^{K^* + 1},  |y| \ge 2^{K^* + 1} \},\\
 I_3 &=& \{ |x| < 2^{K^* + 1}, |y| \in [2^{K^* - 2}, 2^{K^* + 1}) \}\\
 I_4 &=& \{ |x| < 2^{K^* + 1}, |y| \in [2, 2^{K^* -2}) \}\\
 I_5 &=& \{ |x| < 2^{K^* + 1},  |y| < 4 \},
\eee
and further decompose $I_4$ with $k \in [2, K^*-3]$ by $I_4 = \cup_{k = 2}^{K^* -3} \cup_{l= 1}^4 I_{4,k,l}$
\bee
   I_{4, k, 1} &=& I_4 \cap  \{|x| < 2^{k-2}  \}, \\
   I_{4, k, 2} &=& I_4 \cap \{|x| \ge 2^{k-2}, |x-y| < 2^{k-2}  \}, \\
   I_{4, k, 3} &=& I_4 \cap \{ |x| \in [2^{k-2}, 2^{k+2}), |x-y| \ge 2^{k-2} \}, \\
   I_{4, k, 4} &=& I_4 \cap  \{|x| \ge 2^{k+2} \}.
\eee
Brute force computations with the support of $Q_b \chi_{\tilde R}$ and \eqref{eqQbasympp} yield
\bee \iint_{I_j} F(x, y) dxdy \lesssim 1,&& j = 1, 2, 3, 5;\\
\iint_{I_{4, k, l}} F(x, y) dxdy \lesssim 1,&& l = 1, 2, 3, 4; \quad k \in [2, K^* - 3]. \eee
And via \eqref{eqQbasympp2}, one can check the lower bound
\[ \iint_{I_{4, k, l}} F(x, y) dxdy \sim 1,\qquad l = 1, 2; \quad k \in [\log R^* + 5, K^* - 3]. \]
Thus by summing them up and using $K^* \sim \log \tilde{R} \sim |\log(T-t)|$, we obtain \eqref{eqpcext2}. 

\textit{Proof of \eqref{eqpcext} and \eqref{eqL2profH1}.} As a preparation, we first consider the subcritical $\dot{H}^{\tilde{\sigma}}$ convergence with $0 \le \tilde{\sigma} < s_c$. Consider the Strichartz pair 
\[ r_{\tilde{\sigma}} = \frac{d(p+1)}{d + \tilde{\sigma}(p-1)},\quad \gamma_{\tilde{\sigma}} = \frac{4(p+1)}{(p-1)(d-2\tilde{\sigma})},\quad \frac{2}{\gamma_{\tilde{\sigma}}} + \frac{d}{r_{\tilde{\sigma}}} = \frac{d}{2} \]
And let $\frac{1}{r_{\sigma}} - \frac{1}{r_{\tilde{\sigma}}} = \frac{\sigma -\tilde{\sigma}}{d}$
such that $\dot{W}^{\sigma, r_\sigma} \hookrightarrow \dot{W}^{\tilde{\sigma}, r_{\tilde{\sigma}}}$. Then by fractional Leibniz rule,
\[ \| D^{\tilde{\sigma}}(u |u|^{p-1}) \|_{L^{r_{\tilde{\sigma}}'}} \lesssim \| D^{\tilde{\sigma}} u \|_{L^{r_{\tilde{\sigma}}}}^p \lesssim \| D^{\sigma} u \|_{L^{r_\sigma}}^p. \]
From Duhamel formula and Strichartz estimate of $e^{it\Delta}$, for $0 \le  t_1 \le t_2 < T$
\bee
 \| u(t_1) - u(t_2) \|_{\dot{H}^{\tilde{\sigma}}} \lesssim \| e^{it_1\Delta} u_0 -e^{it_2\Delta} u_0 \|_{\dot{H}^{\tilde{\sigma}}} + \| D^{\sigma} u\|_{L^{p\gamma_{\tilde{\sigma}}'}_t L^{r_\sigma}_x [t_1, T)}^p 
\eee
Note that 
\[ \| e^{it_1\Delta} u_0 -e^{i(t_2 - t_0)\Delta} u_0 \|_{\dot{H}^{\tilde{\sigma}}}  = \| (e^{i(t_2 - t_1)\Delta} - I)u(t_0) \|_{\dot{H}^{\tilde{\sigma}}} = o_{|t_2 - t_1| \to 0}(1)  \]
 and 
\bee
 \| D^{\sigma} u\|_{L^{p\gamma_{\tilde{\sigma}}'}_t L^{r_\sigma}_x [t_1, T)}^p  \lesssim \left ( \int_{t_1}^T \l(t)^{-2+\gamma_{\tilde{\sigma}}' (s_c - \tilde{\sigma})}  (\| Q_b \|_{\dot{W}^{\sigma, r_\sigma}}^{p \gamma_{\tilde{\sigma}}'} + \| \e(t)\|_{\dot{W}^{\sigma, r_\sigma}}^{p\gamma_{\tilde{\sigma}}'}) dt \right )^{\frac{1}{\gamma_{\tilde{\sigma}}'}}  \\
 \lesssim (T-t_1)^{s_c - \tilde{\sigma}} (\| Q_b \|_{\dot{W}^{\sigma, r_\sigma}}^{p} + \| \e\|_{L^{p \gamma_{\tilde{\sigma}}' }_\tau \dot{W}^{\sigma, r_\sigma}_y}^p)  = o_{t_1 \to T}(1)
 \eee
using $Q_b \in \dot{W}^{\sigma, r_\sigma}$ and the boundedness of  $\dot H^\sigma$-Strichartz norm for $\e$ with $\frac{2}{p\gamma_{\tilde{\sigma}}'} + \frac{d}{r_\sigma} > \frac{d}{2}$ an admissible pair. We know $\{u(t)\}$ is a Cauchy sequence in $\dot{H}^{\tilde{\sigma}}$ as $t \to T$, yielding the existence of limit profile $u^* \in H^{\tilde \sigma}$ and the convergence \eqref{eqL2profH1}. In particular, $\| u(t) \|_{L^\infty_t \dot{H}^{\tilde{\sigma}} [0, T]} < \infty$.

By interpolation between $\dot{H}^{\tilde{\sigma}}$ and $\dot{H}^{\sigma}$, \eqref{eqpcext} is reduced to bound $\| u(t)(1-\chi_R) \|_{\dot{H}^\sigma}$. Decompose by
\[ D^{\sigma} (u(t) (1-\chi_R)) = (D^{\sigma} \tilde{Q}_b) (1-\chi_R) - [D^\sigma, \chi_R] \tilde{Q}_b + D^{\sigma} (\tilde{\e} (1-\chi_R)).  \]
The last term involving $\e$ is bounded by $O(1)$ from \eqref{eqbdd1}. The $O(1)$ bound of the first term follows a pointwise decay of $D^\sigma Q_b$
\[ |D^\sigma Q_b (x)| = C_{\sigma, d}  \left | \lim_{\epsilon \to 0} \int_{\{ |y| \ge \epsilon\}}\frac{Q_b (x-y) - Q_b(x)}{|y|^{d+\sigma}}dy \right| \lesssim_\sigma \la x \ra^{-\frac{2}{p-1}-\sigma}  \]
which is proven by splitting the integral in two and applying pointwise bounds of $Q_b$ and $\nabla Q_b$ \eqref{selfsimilardecay} respectively. Finally, by the self-similar scaling and decay, we can uniformly bound 
$$\sup_{t \in [0, T)}|\tilde{Q}_b (t, x)| \lesssim |x|^{-\frac{2}{p-1}}.$$
Thus the commutator has a pointwise bound
\bee  |[D^\sigma, \chi_R] \tilde{Q}_b| &\lesssim& \left |  \int \frac{\chi_R (x-y) - \chi_R(x)}{|y|^{d+\sigma}} \tilde{Q}_b(x-y)  dy \right | \\
&\lesssim &\mathbbm{1}_{B_{3R}} \left (|\cdot|^{-d-\sigma+1}\mathbbm{1}_{B_{\frac{R}{2}}} + |\cdot|^{-d-\sigma}\mathbbm{1}_{B_{\frac{R}{2}}^c} \right ) * |\cdot|^{-\frac{2}{p-1}} \\
&& + \left (|\cdot|^{-d-\sigma}\mathbbm{1}_{B_{\frac{R}{2}}^c}   \right ) * (|\cdot|^{-\frac{2}{p-1}} \mathbbm{1}_{B_{2R}} ) \\
&\lesssim_R & |x|^{-\frac{2}{p-1}} \mathbbm{1}_{B_{3R}} + \left (|\cdot|^{-d-\sigma}\mathbbm{1}_{B_{\frac{R}{2}}^c}   \right ) * (|\cdot|^{-\frac{2}{p-1}} \mathbbm{1}_{B_{2R}} ) 
 \eee
by again splitting the integral. This implies $\|  [D^\sigma, \chi_R] \tilde{Q}_b \|_{L^2} \lesssim 1$ from H\"older and Young's inequality.
This yields \eqref{eqpcext} and hence concludes \eqref{eqcriticalnorm}.

\mbox{}

(ii) Similar to the proof of Theorem \ref{thmasympstab}, we reduce the proof to matching initial data in order to apply Theorem \ref{thmfincodimstabH1B}, and solve the finite dimensional equation through Brouwer argument. 

For fixed $\e_0 \in B^{H^1}_{\epsilon_1}$, we define the $w_0^{\vec \a}$ from 
\[ (Q_{b;R_1}  + \e_0)(x) = \left(Q_{b;R_1} +  w^{(\vec \a)}_0 \right)\left( \frac{x-x_0}{\l_0} \right) \l_0^{-\frac d2 + s_c} e^{-i\theta_0}, \]
and the statement in (ii) boils down to finding $\vec \a \in B^{\RR^{d+2}}_{C_* \| \e_0 \|_{H^1}}$ such that 
\be   \tilde\Phi_{R_1} \left( (1-\tilde P_u) \Pi^{-1} w^{(\vec \a)}_0 \right) = \tilde P_u \Pi^{-1} w^{(\vec \a)}_0  \label{eqwalpha5} \ee
where $\tilde \Phi_{R_1}$ is defined as \eqref{eqdefPhiR1}. 

To solve \eqref{eqwalpha5}, we again take $r_{\vec \a; R_1} =Q_{b;R_1}(\l_0 \cdot + x_0)\l_0^{\frac d2 - s_c} e^{i\theta_0} - Q_{b;R_1}$ and $v_{\vec \a} = w_0^{\a} - r_{\vec \a; R_1}$. 
From Assumption \ref{assmodestab} and \eqref{eqexpliciteigenfunc}, we choose the basis of ${\rm Ran} P_u$ as $\{\varphi_{u;j}\}_{j=1}^{d+2} = \{ \xi_0, 2\xi_1, \zeta_1,...,\xi_d \}$, and define the modified eigenbasis $\{\tilde \varphi_{u;j}\}_{j=1}^{d+2}$ for $\tilde P_u$ correspondingly according to \eqref{eqdefmodspecproj}. 
Notice that 
\[  \tilde P_u \Pi^{-1} \left(\nabla_{\vec \a}  r_{\vec \a;R_1}\right)\Big|_{\vec \a = \vec 0} - \left( \xi_0, 2\xi_2, \zeta_1,...,\zeta_d \right) = o_{(\tilde X_+)^{d+2}}(1),\quad {\rm as}\,\, R_1 \to \infty,    \]
due to the definition of $Q_{b;R_1}$ \eqref{eqdefQbR1} and vanishing of residual \eqref{eqbdde*R1}. Therefore, defining the parametrization using the basis $\{\tilde \varphi_{u;j}\}_{j=1}^{d+2}$, from the definition of $\tilde P_u$ \eqref{eqdeftildePa}, we still have linear non-degeneracy that
$$\tilde \calP \tilde P_u \Pi^{-1} \left(\vec \a \cdot (\nabla_{\a} r_{\a;R_1})\big|_{\vec \a = \vec 0}\right) = ({\rm Id} + o_{\RR^{(d+2)\times (d+2)}}(1))\vec \a,\quad {\rm as \,\,} R_1 \to \infty.$$
The smallness of $v_{\vec\a}$, nonlinearity and Lipschitz constant of $\tilde \Phi_{R_1}$ \eqref{eqdefPhiR1} enables us to conclude the Brouwer argument and hence the whole proof. 
\end{proof}

\appendix
\section{Fractional Non-stationary Phase}\label{appA}

In Lemma \ref{lemanalyticity} and Lemma \ref{lemimpreg}, we want to make use of the $H^\delta$-regularity to integrate by parts and get decay from the quadratic oscillatory phase. For $\delta < 1$, we achieve this via mollification, with the mollifier's size adapted to the decay $r^{-1}$ we gain. The idea of this part is credited to Xiaodong Li. 

\begin{lemma}\label{lemfrac}
	Let smooth cutoff function $\psi \in C^\infty_{c, rad}(\RR^d)$, $\supp \psi \subset B_2$, non-negative and $\int_{\RR^d} \psi(x) dx = 1$. Define the rescaling of $\psi$ (or general $C^\infty_c$ functions) by $\psi_\l (r) := \l^{-d}\psi(\l^{-1} r)$. For $f \in \mathscr{S}(\RR^d)$ and $\delta \in (0, 1)$, we have
	\bea 
	\left\| f * \psi_{|\cdot|^{-1}} \right\|_{L^2\{|x| \ge 1/2 \}} &\lesssim_{\psi, d}& \|f\|_{L^2} \label{eqfrac1} \\
    \left\|\partial_r ( f * \psi_{|\cdot|^{-1}}) |\cdot|^{-1+\delta} \right\|_{L^2\{|x| \ge 1/2 \}} &\lesssim_{\psi, d, \delta}& \|f\|_{H^\delta} \label{eqfrac4} \\
    \left\| \left(f - f * \psi_{|\cdot|^{-1}}\right)|\cdot|^{\delta} \right\|_{L^2\{|x| \ge 1/2 \}} &\lesssim_{\psi, d, \delta}& \|f\|_{H^\delta} \label{eqfrac3}
	\eea
	where \eqref{eqfrac1} also holds for general $\psi \in C^\infty_c(B_2)$ with constant depending on $\psi$.
\end{lemma}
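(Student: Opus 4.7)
The plan is to treat all three estimates via a common reduction. Using the change of variable $z = |x|(x - y)$ at fixed $x$, one writes
\[
(f * \psi_{|\cdot|^{-1}})(x) = \int_{\RR^d} f(x - z/|x|)\,\psi(z)\,dz,
\]
which has the virtue that $\psi$ is supported in $B_2$ independently of $x$ and has unit mass. Cauchy--Schwarz in $z$ against the probability measure $\psi\,dz$ (using $\psi\ge 0$) then reduces each estimate to a tractable kernel or difference-quotient estimate. A pointwise identity that will be used repeatedly is
\[
|x|^{d+a}\,\psi(|x|y)\lesssim |y|^{-d-a} \qquad\text{on the support } \{|x||y|\le 2\}, \quad a\ge 0,
\]
which tightly links the $x$-dependent mollification scale to the Gagliardo kernel $|y|^{-d-2\delta}$.

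For \eqref{eqfrac1} I plan to apply Cauchy--Schwarz and Fubini to reduce matters to a bound on $\int_{|x|\ge 1/2}|f(x - z/|x|)|^2\,dx$ uniform in $z\in B_2$. Splitting into $|x|\le 4$ (controlled trivially since $|(f*\psi_{|x|^{-1}})(x)|\lesssim |x|^{d/2}\|f\|_{L^2}$ on a bounded region) and $|x|\ge 4$ (where the map $x\mapsto x - z/|x|$ has Jacobian $1 + x\cdot z/|x|^3\in [7/8,9/8]$, so a direct change of variables applies) closes the argument. For \eqref{eqfrac3} I will use $\int\psi = 1$ to write $(f - f*\psi_{|\cdot|^{-1}})(x) = \int[f(x) - f(x - z/|x|)]\,\psi(z)\,dz$, apply Cauchy--Schwarz in $z$, and substitute $y = z/|x|$. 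The displayed pointwise bound with $a = 2\delta$ then converts the resulting double integral into $\iint |f(x) - f(x-y)|^2\,|y|^{-d-2\delta}\,dy\,dx\sim \|f\|_{\dot H^\delta}^2$.

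The nontrivial step is \eqref{eqfrac4}. Writing $x = r\omega$ and $g = f*\psi_{|\cdot|^{-1}}$, I will compute
\[
\partial_r g(r\omega) = \int \nabla f(r\omega - z/r)\cdot\bigl(\omega + z/r^2\bigr)\,\psi(z)\,dz,
\]
and then integrate by parts in $z$ (using $\partial_{z_i}[f(r\omega - z/r)] = -r^{-1}\partial_i f(r\omega - z/r)$) to move derivatives off $f$, arriving schematically at
\[
\partial_r g(r\omega) = r\!\int f(r\omega - z/r)\,(\omega\cdot\nabla\psi)(z)\,dz + \tfrac{1}{r}\!\int f(r\omega - z/r)\bigl(d\psi(z) + z\cdot\nabla\psi(z)\bigr)\,dz.
\]
The prefactor $r$ in the first term is dangerous, but the cancellation $\int(\omega\cdot\nabla\psi)(z)\,dz = 0$ lets me replace the integrand by $[f(r\omega - z/r) - f(r\omega)](\omega\cdot\nabla\psi)(z)$. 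After multiplication by $|x|^{-1+\delta}$, the net weight on this difference term is $|x|^\delta$, and the argument of \eqref{eqfrac3} applies verbatim. The remaining $1/r$ piece carries the benign factor $|x|^{-2+\delta}$, which is bounded on $|x|\ge 1/2$, and is absorbed via \eqref{eqfrac1}.

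The main obstacle is precisely recognizing and exploiting the vanishing mean $\int \omega\cdot\nabla\psi = 0$ inside the integration-by-parts identity for $\partial_r g$. Without this cancellation, the naive bound on the $r$-prefactor term loses a full power of $|x|$ and no amount of $H^\delta$ regularity of $f$ can recover it; conversely, once the cancellation is harnessed, the estimate reduces cleanly to the same Gagliardo-seminorm comparison that powers \eqref{eqfrac3}, and the rest is bookkeeping.
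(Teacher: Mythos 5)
Your proposal is correct and follows essentially the same route as the paper's proof: the same decomposition of $\partial_r(f*\psi_{|\cdot|^{-1}})$ into a benign $|x|^{-1}$-weighted term with kernel $(d+z\cdot\nabla)\psi$ (absorbed via \eqref{eqfrac1} for a general $C^\infty_c(B_2)$ kernel) plus the gradient-kernel term, the same key use of the vanishing mean $\int \nabla\psi = 0$ to insert $f(x)$, and the same Cauchy--Schwarz reduction to the Gagliardo seminorm that powers \eqref{eqfrac3} and \eqref{eqfrac4}. The only cosmetic difference is in \eqref{eqfrac1}, where you run a change of variables $x\mapsto x - z/|x|$ with a Jacobian bound in place of the paper's direct estimate of $\sup_y \int_{\{|x|\ge 1/2,\ |x-y|\le 2|x|^{-1}\}} |x|^d\,dx$; both are elementary and interchangeable, provided you note that when invoking \eqref{eqfrac1} and the \eqref{eqfrac3}-type argument for the signed kernels $\omega\cdot\nabla\psi$ and $d\psi + z\cdot\nabla\psi$ the Cauchy--Schwarz step should be run with $|\psi|$ in place of the probability measure $\psi\,dz$.
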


\begin{proof}
	(1) For \eqref{eqfrac1}, we have
	\bee
	&&\left\| f * \psi_{|\cdot|^{-1}} \right\|_{L^2\{|x| \ge 1/2 \}}^2 =\int_{B_{1/2}^c} \left| \int_{\RR^d} f(y) \psi_{|x|^{-1}}(x - y ) dy\right|^2 dx \\
	&\le& \int_{B_{1/2}^c} \left( \int_{\RR^d} |f(y)|^2  |\psi_{|x|^{-1}}(x - y )| dy \right) \left( \int_{\RR^d}  |\psi_{|x|^{-1}}(x - y )| dy \right)  dx\\
	& = & \| \psi\|_{L^1} \int_{\RR^d} |f(y)|^2 \int_{B_{1/2}^c} |\psi(|x|(x - y ))| |x|^d dx dy \\
	& \le & \| f \|_{L^2}^2 \| \psi\|_{L^1} \| \psi\|_{L^\infty} \sup_{y \in \RR^d}\int_{\left\{ x: |x| \ge \frac 12, |x - y| \le 2|x|^{-1} \right\}} |x|^d dx .
	\eee
    We claim that
    \be \sup_{y \in \RR^d}\int_{\left\{ x: |x| \ge \frac 12, |x - y| \le 2|x|^{-1} \right\}} |x|^d dx \lesssim_d 1. \label{eqfrac12} 
    \ee
    and \eqref{eqfrac1} follows with general $\psi \in C^\infty_c(B^2)$. Indeed, when $|y| \ge 8 $, noticing that 
    \[ \left\{ x: |x| \ge \frac 12, |x - y| \le 2|x|^{-1} \right\} \subset \{x: |x - y| \le 4 \}, \]
    we have $\frac 12 |y| \le |x| \le 2 |y|$. So 
    \bee \int_{\left\{ x: |x| \ge \frac 12, |x - y| \le 2|x|^{-1} \right\}} |x|^d dx \le \int_{|x - y| \le 4|y|^{-1}} 2^d |y|^d dx \lesssim_d 1.
    \eee
    Also, when $|y| \le 8$, 
    \bee \int_{\left\{ x: |x| \ge \frac 12, |x - y| \le 2|x|^{-1} \right\}} |x|^d dx \le \int_{|x - y| \le 4} |x|^d dx \le \int_{B_{12}} |x|^d dx \lesssim_d 1,
    \eee
    which verifies \eqref{eqfrac12}	and hence \eqref{eqfrac1}. \\
    
    (2) For \eqref{eqfrac4}, we first compute
    \bee \partial_r (f * \psi_{|x|^{-1}})(x) &=& (f * \partial_\l \psi_{|x|^{-1}})(x) (r^{-1})' + \frac{x}{|x|} \cdot (f * \nabla(\psi_{|x|^{-1}}))(x) \\
    &=&|x|^{-1} \left(f * [(d+(\cdot)\cdot\nabla)\psi]_{|x|^{-1}} \right)(x)  + x \cdot (f * (\nabla \psi)_{|x|^{-1}})(x)
    \eee
    Let $\tilde{\psi} := (d+x\cdot \nabla)\psi \in C^\infty_c(B^2)$, we have
    \bee &&\left\|\partial_r ( f * \psi_{|\cdot|^{-1}}) |\cdot|^{-1+\delta} \right\|_{L^2\{|x| \ge 1/2 \}} 
    \\
    &\le& \left\|( f * \tilde{\psi}_{|\cdot|^{-1}}) |\cdot|^{-2+\delta} \right\|_{L^2\{|x| \ge 1/2 \}} + \left\| ( f * (\nabla \psi)_{|x|^{-1}}) |\cdot|^{\delta} \right\|_{L^2\{|x| \ge 1/2 \}}.
    \eee
    Using \eqref{eqfrac1}, the first term is bounded by $C_{\psi, d, \delta} \| f\|_{L^2}$. So it suffices to prove
    \bea
    	\left\| \left(f * (\nabla \psi)_{|\cdot|^{-1}}\right) |\cdot|^{\delta} \right\|_{L^2\{|x| \ge 1/2 \}} &\lesssim_{\psi, d, \delta}& \|f\|_{\dot{H}^\delta} \label{eqfrac2} 
    \eea

    For \eqref{eqfrac2}, 
    \bee
        &&\left\| \left(f * (\nabla \psi)_{|\cdot|^{-1}}\right) |\cdot|^{\delta} \right\|_{L^2\{|x| \ge 1/2 \}}^2 = \int_{B_{1/2}^{c}} \left| \int_{\RR^d} f(x - y) (\nabla \psi)_{|x|^{-1}}(y ) dy \right|^2 |x|^{2\delta} dx \\
        &=&  \int_{B_{1/2}^{c}} \left| \int_{\RR^d} \left(f(x - y) - f(x)\right) (\nabla \psi)_{|x|^{-1}}( y ) dy \right|^2 |x|^{2\delta} dx \\
        & \le & \int_{B_{1/2}^{c}} \left( \int_{\RR^d} \left|f(x - y) - f(x)\right| \left(\frac{2|x|^{-1}}{|y|}\right)^{\frac{d+2\delta}{2}} |(\nabla \psi)_{|x|^{-1}}( y )| dy \right)^2 |x|^{2\delta} dx\\
        & \le & \int_{B_{1/2}^{c}} 2^{d+2\delta}|x|^{-(d+2\delta)}  \left( \int_{\RR^d} \frac{\left|f(x - y) - f(x)\right|^2}{|y|^{d+2\delta}} dy  \right) 
        \left( \int_{\RR^d}   |(\nabla \psi)_{|x|^{-1}}( y )|^2 dy \right) |x|^{2\delta} dx\\
        &\le& \| \nabla\psi\|_{L^2}^2 2^{d+2\delta} \int_{B^c_{1/2}}  \int_{\RR^d} \frac{\left|f(x - y) - f(x)\right|^2}{|y|^{d+2\delta}} dy dx
    \eee
    where we used the vanishing $\int_{\RR^d} (\nabla \psi)_\l (x) dx = \vec{0}$ for any $\l >0$ and its support and scaling. 
    With the equivalent expression of $\dot{H}^\delta$ norm \eqref{eqHdelta}, the above computation implies \eqref{eqfrac2} and hence \eqref{eqfrac4}.\\
    
    (3) Finally, for \eqref{eqfrac3}, note that  $\int \psi(x)dx = 1$ indicates
    \bee
    && \left\| \left(f - f * \psi_{|\cdot|^{-1}}\right)|\cdot|^{\delta} \right\|_{L^2\{|x| \ge 1/2 \}}^2 \\
    &=& \int_{B_{1/2}^{c}} \left| \int_{\RR^d} \left(f(x - y) - f(x)\right)  \psi_{|x|^{-1}}( y ) dy \right|^2 |x|^{2\delta} dx \\
    & \le & \| \psi \|_{L^2}^2 2^{d+2\delta} \int_{B^c_{1/2}}  \int_{\RR^d} \frac{\left|f(x - y) - f(x)\right|^2}{|y|^{d+2\delta}} dy dx.
    \eee
    The last inequality follows almost the same computation as the above one for \eqref{eqfrac2}.
\end{proof}

\section{Commutator estimates of multiplication in $\dot{H}^\sigma$}
\label{appB}

We consider several boundedness and commutator estimates for scalar potential $V$ in $\dot{H}^\sigma$. Recall our polynomial decay assumption on potential 
\be |V(x)|\lesssim \la x \ra^{-\a},\quad |\nabla V(x)| \lesssim \la x \ra^{-\a-1} \tag{\ref{eqpotbd0}} \ee
for some $\a > 0$.

\begin{lemma}\label{lempotbdd} For  $d \ge 1$, $0 < \sigma < \min\left\{1, \frac{d}{2} \right\} $ and $V \in C^1(\RR^d)$, for $2\le p < \frac{d}{\sigma}$, 
	\be	\| D^\sigma V D^{-\sigma} \|_{L^2 \to L^{p'}} + \| D^\sigma V D^{-\sigma} \|_{L^p \to L^2} \lesssim_{d, p, \sigma} \| V \|_{ \dot{W}^{\sigma, \left(\frac 12 + \frac \sigma d - \frac{1}{p}\right)^{-1}}}\label{eqleb1} \ee
	In particular, for $V$ satisfying \eqref{eqpotbd0} with $\a > 0$, $D^\sigma VD^{-\sigma} \in \L(L^2 \to L^{p'}) \cap \L(L^p\to L^2)$ for $\frac 12 \ge \frac 1p >\max\{\frac 12 - \frac{\a}{d}, \frac{\sigma}{d} \}$.
\end{lemma}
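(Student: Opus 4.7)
The plan is to establish both operator-norm bounds via the fractional Leibniz (Kato--Ponce) rule combined with Sobolev embedding, and then verify that the hypothesis \eqref{eqpotbd0} implies membership in the relevant Sobolev space.

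For the bound on $\|D^\sigma V D^{-\sigma}\|_{L^2\to L^{p'}}$, I would fix $f \in L^2$, set $g := D^{-\sigma} f$, and write $D^\sigma V D^{-\sigma} f = D^\sigma(Vg)$. The Kato--Ponce inequality gives
\[
\|D^\sigma(Vg)\|_{L^{p'}} \lesssim \|D^\sigma V\|_{L^{r_1}}\|g\|_{L^{q_1}} + \|V\|_{L^{r_2}} \|D^\sigma g\|_{L^{q_2}},
\]
with $1/p' = 1/r_i + 1/q_i$. I choose $q_1 = (1/2 - \sigma/d)^{-1}$ (so Sobolev gives $\|g\|_{L^{q_1}} \lesssim \|f\|_{L^2}$) and $q_2 = 2$ (so $\|D^\sigma g\|_{L^{q_2}} = \|f\|_{L^2}$). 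A direct exponent count shows $r_1 = s := (1/2 + \sigma/d - 1/p)^{-1}$ and $r_2 = (1/2 - 1/p)^{-1}$, and crucially $\dot{W}^{\sigma,s} \hookrightarrow L^{r_2}$ by Sobolev. Both terms are thereby bounded by $\|V\|_{\dot{W}^{\sigma,s}}\|f\|_{L^2}$. The bound $L^p \to L^2$ is treated identically: for $f \in L^p$, $g = D^{-\sigma}f \in L^{p^*}$ with $1/p^* = 1/p - \sigma/d$ (requiring $p < d/\sigma$), and Kato--Ponce with $q_1 = p^*$, $q_2 = p$ yields the same exponents $r_1 = s$, $r_2 = (1/2-1/p)^{-1}$.

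For the second assertion under \eqref{eqpotbd0}, the task reduces to showing $\|V\|_{\dot{W}^{\sigma,s}} < \infty$ whenever $1/p > \max\{1/2 - \alpha/d, \sigma/d\}$. Since $\sigma \in (0,1)$, I would use the Gagliardo--Slobodeckij representation
\[
\|V\|_{\dot{W}^{\sigma,s}}^s \approx \iint_{\RR^d\times\RR^d} \frac{|V(x) - V(y)|^s}{|x-y|^{d+\sigma s}}\,dx\,dy
\]
and split the integration according to whether $|x-y| \le \tfrac12 \langle x\rangle$ or not. On the near-diagonal piece, the mean value theorem with $|\nabla V(x)| \lesssim \langle x\rangle^{-\alpha-1}$ gives $|V(x)-V(y)| \lesssim \langle x\rangle^{-\alpha-1}|x-y|$, and integrating $|z|^{s(1-\sigma)-d} \mathbf{1}_{|z|\le \langle x\rangle/2}$ in $z = x - y$ produces a factor $\langle x\rangle^{s(1-\sigma)}$ (since $\sigma < 1$). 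The $x$-integral then converges iff $s(\alpha+\sigma) > d$, i.e.\ $1/s < (\alpha+\sigma)/d$, which is exactly $1/p > 1/2 - \alpha/d$. The far-field piece uses the pointwise bound $|V(x)-V(y)|^s \lesssim \langle x\rangle^{-\alpha s} + \langle y\rangle^{-\alpha s}$ and, after integrating $|z|^{-d-\sigma s}$ on $|z| \gtrsim \langle x\rangle$, gives the same condition.

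There is no real obstacle; the only delicate point is ensuring the derived threshold on $1/p$ matches the stated $\max\{1/2 - \alpha/d,\, \sigma/d\}$. The constraint $1/p > \sigma/d$ is precisely $p < d/\sigma$ (needed in Step 1 so that $p^* < \infty$), while the decay threshold $1/p > 1/2 - \alpha/d$ arises in Step 2 from the Gagliardo-seminorm bookkeeping above. Combined, these give exactly the range asserted.
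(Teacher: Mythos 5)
Your Step 1 is the same route the paper takes (its proof is a two-line appeal to the fractional Leibniz rule \eqref{eqfracLeb} plus Sobolev embedding), and the exponent bookkeeping there is correct in the range $2<p<d/\sigma$. But there is a genuine gap in Step 2. You replace $\| V\|_{\dot{W}^{\sigma,s}}$ by the Gagliardo--Slobodeckij double integral; these are equivalent only for $s=2$. For general $s$ the double integral is the Besov seminorm $\dot{B}^{\sigma}_{s,s}$, whereas $\dot{W}^{\sigma,s}$ in this paper means $\| D^\sigma V\|_{L^s}$, i.e. the Triebel--Lizorkin space $\dot F^{\sigma}_{s,2}$ (this is exactly the Littlewood--Paley equivalence \eqref{eqLP} used in Appendix C). In the present lemma $1/s=\frac12+\frac\sigma d-\frac1p$ with $\frac\sigma d<\frac1p\le\frac12$, so $s>2$ throughout, and then $\ell^2\hookrightarrow\ell^s$ gives $\|V\|_{\dot B^{\sigma}_{s,s}}\lesssim \|D^\sigma V\|_{L^s}$ -- the wrong direction. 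Finiteness of your double integral therefore does not control the right-hand side of \eqref{eqleb1}, so Step 2 as written does not prove the "in particular" assertion. The fix is to estimate the correct quantity directly: write $D^\sigma V(x)=C\int \frac{V(x+y)-V(x)}{|y|^{d+\sigma}}\,dy$, split at $|y|\sim\langle x\rangle$, and use \eqref{eqpotbd0} to get the pointwise bound $|D^\sigma V(x)|\lesssim \langle x\rangle^{-\sigma-\min\{\a,d\}}$ (up to a logarithm in the borderline case); integrating then gives $D^\sigma V\in L^s$ precisely under $\frac1p>\frac12-\frac\a d$, the same threshold you found. This is the computation the paper itself carries out for $Q_b$ in Section \ref{s65}.

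A second, smaller point: at the endpoint $p=2$, which the second assertion of the lemma includes, your choice forces $r_2=\infty$ and the claimed embedding $\dot W^{\sigma,d/\sigma}\hookrightarrow L^\infty$ is false (it is the failing critical Sobolev embedding). For $p=2$ you should instead place $\|V\|_{L^\infty}$ -- finite by \eqref{eqpotbd0} -- in the second Kato--Ponce term, rather than trying to reach it from the $\dot W^{\sigma,s}$ norm; with that modification the conclusion $D^\sigma V D^{-\sigma}\in\L(L^2)$ at $p=2$ goes through. For $2<p<d/\sigma$ your embedding $\dot W^{\sigma,s}\hookrightarrow L^{r_2}$ with $\frac1{r_2}=\frac12-\frac1p$ is fine.
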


\begin{proof}
It is an easy application of Sobolev embedding and the fractional Lebnitz rule (see for example \cite[Theorem 1]{grafakos2014kato})
\be  \label{eqfracLeb} \| D^s (f_1 f_2)\|_{L^r} \lesssim_{d, s, r, p_1, p_2, q_1, q_2} \| D^s f_1 \|_{L^{p_1}} \| f_2 \|_{L^{q_1}} + \|f_1\|_{L^{p_2}} \| D^s f_2 \|_{L^{q_2}}  \ee
	with $1 > s > 0$, $r \in (1, \infty)$, $p_1, p_2, q_1, q_2 \in (1, \infty]$ satisfying $\frac{1}{r} = \frac{1}{p_1} + \frac{1}{q_1} = \frac{1}{p_2} + \frac{1}{q_2}$ and $f, g\in \mathscr{S}(\RR^d)$.
\end{proof}

Moreover, the shape of $V$ indicates a stronger bound: $\|\la x \ra^\a Vf \|_{L^2} \lesssim \| f \|_{L^2}$ in $L^2$. This remains almost true after conjugating $V$ with fractional derivatives.


\begin{lemma}\label{lempotbdd2}
	Let $d \ge 1$, $0 < \sigma < \min\left\{1, \frac{d}{2} \right\} $ and $V \in C^1(\RR^d)$ satisfying \eqref{eqpotbd0} with parameter $\a > 0$. We have the following statements:
	\begin{enumerate}
		\item[\rm (i)]For $0 \le \beta < \min\left\{ \alpha, 1\right\}$ and any $f \in \mathscr{S}(\RR^d)$, we have
		\be \| \la x \ra^{\beta} D^\sigma V D^{-\sigma} f \|_{L^2} \lesssim \| f \|_{L^2}. \label{eqpotbdd2} \ee
		\item[\rm (ii)]For $0 \le \b < \min\{\a, \frac d2 - \sigma\}$
		and $f \in \mathscr{S}(\RR^d)$, we have
		\be \| \la x \ra^\b D^{-\sigma} V D^{\sigma} f \|_{L^2} \lesssim \| f \|_{L^2} \label{eqpotbdd3} \ee
		\item[\rm (iii)]Let $0 < \a + \b < \gamma < 1$, we have
		\be \| \la x \ra^{\a} D^\sigma \la \cdot \ra^{\b} D^{-\sigma}\la \cdot \ra^{-\gamma} f \|_{L^2} \lesssim \| f \|_{L^2}.\label{eqpotbdd4} \ee
		Consequently, $\la x \ra^{\b} \in \L(\dot{H}^\sigma_{\la x \ra^\gamma} \to\dot{H}^\sigma_{\la x \ra^\alpha}) $.
        \item[\rm (iv)] Suppose additionally $V \in C^2(\RR^d)$ and $|\nabla^2 V(x)| \lesssim \la x \ra^{-\a - 2}$, then for $r < \frac d \sigma$, $\frac 1q + \frac 1r \in (1-\frac \sigma d, 1)$, we have 
        \be \|D^\sigma [V, D^{1-\sigma}] D^{-\sigma} f \|_{L^{r'}} \lesssim \| f \|_{L^q}. \label{eqpotbdd5} \ee
        Consequently, $[V, D^{1-\sigma}] \in \calL(\dot W^{\sigma, q} \to \dot W^{\sigma, r'} )$.
	\end{enumerate}
\end{lemma}

\begin{proof}
	(i) Since $\la x\ra^{\beta} V$ satisfies the polynomial decay condition \eqref{eqpotbd0} with parameter $\a-\beta > 0$, Lemma \ref{lempotbdd} suggests that it suffices to estimate the commutator
	\be \left\| [D^\sigma, \la \cdot \ra^\beta] VD^{-\sigma} f \right\|_{L^2} \lesssim \| f \|_{L^2}.\label{eqcom00} \ee
	Without less of generality, we assume $\a \in (\beta, 1)$.
	
	With the physical representation of $D^\sigma$ and the cancellation from the commutator, we have the decomposition
	\begin{align}
		&\left| [D^\sigma, \la \cdot \ra^\beta] VD^{-\sigma} f  \right|(x) = C \left| \int_{\RR^d} \frac{\la x \ra^{\beta} - \la x-y \ra^{\beta}}{|y|^{d+\sigma}}(VD^{-\sigma}f )(x-y) dy\right| \nonumber \\
		\lesssim& \int_{|y| \ge 2\la x \ra}|y|^{-(d+\sigma+\a-\b)} \left(|\cdot|^{-(d-\sigma)}* |f| \right)(x-y) dy \label{eqcom01} \\
		+& \int_{|y| \le 2\la x \ra} |y|^{-(d+\sigma - 1)} \la x\ra^{\b - 1} \la x -y\ra^{-\a} \left[\left(|\cdot|^{-(d-\sigma)} \mathbbm{1}_{B_{4\la x \ra}} \right) * |f| \right](x-y) dy  \label{eqcom02}  \\
		+& \int_{|y| \le 2 \la x \ra} |y|^{-(d+\sigma - 1)} \la x\ra^{\b - 1} \la x -y\ra^{-\a} \left[\left(|\cdot|^{-(d-\sigma)} \mathbbm{1}_{B_{4\la x \ra}^c} \right) * |f| \right](x-y) dy \label{eqcom03}
	\end{align}
	where $|V|$ is substituted by $\la x \ra^{-\a}$ and the fractional weight is bounded by
	\[\left| \la x \ra^{\beta} - \la x-y \ra^{\beta}  \right|\la x-y\ra^{-\a} \lesssim 
	\left\{ \begin{array}{ll}
		|y|^{\b -\a} & |y| \ge 2 \la x \ra, \\
		|y| \la x \ra^{\b-1} \la x-y \ra^{-\a} & |y| \le 2\la x \ra.
	\end{array} \right. \]
	Now we will bound the $L^2$-norm of \eqref{eqcom01}-\eqref{eqcom03} with $\| f \|_{L^2}$. 
	
	For \eqref{eqcom01}, using H\"older and Hardy-Littlewood-Sobolev inequality, we get
	\bee \eqref{eqcom01}\lesssim \| |\cdot|^{-(d+\sigma +\a-\b)} \|_{L^\frac{2d}{d+2\sigma}(B_{2\la x \ra}^c)} \| |\cdot|^{-(d-\sigma)} * f\|_{L^\frac{2d}{d-2\sigma}} \lesssim \la x \ra^{-\frac d2 -\a+\b}\| f\|_{L^2}.
	\eee
	Since $\a > \b$, we get $\| \eqref{eqcom01}\|_{L^2} \lesssim \| f \|_{L^2}$. 
	
	For \eqref{eqcom02}, noticing that $|z| \ge 4 \la x \ra$ and $|y| \le 2\la x \ra$ imply $|x-y-z| \sim |z|$, so
	\bee \eqref{eqcom02} \lesssim \la x \ra^{\b -1} \int_{B_{2\la x \ra}} |y|^{-(d+\sigma -1)} \la x-y\ra^{-\a} dy \int_{B_{4\la x \ra}^c} \frac{|f(z)|}{|z|^{d-\sigma}}dz \lesssim \la x \ra^{-\frac d2 -\a+\b}\| f \|_{L^2} \eee
	where the first integral is bounded by $\la x \ra^{1-\sigma -\a}$ when $\a < d$ via elementary computation. Therefore this term has the desired bound. 
	
	Finally, we write \eqref{eqcom03} as
	\bee \eqref{eqcom03} = \la x \ra^{\b - 1} \left(  |\cdot|^{-(d+\sigma -1)} \mathbbm{1}_{B_{2\la x \ra}} \right) * \left[ \la \cdot \ra^{-\a} \mathbbm{1}_{B_{3\la x \ra}} \left( |\cdot|^{-(d-\sigma)}\mathbbm{1}_{B_{4\la x \ra}} * |f| \right) \right](x). \eee 
	Then we can estimate their $L^2$ norm on dyadic annulus with $k \ge 0$
	\bee \| \eqref{eqcom03} \|_{L^2(B_{2^{k+1}} - B_{2^k})} 
	&\lesssim& 2^{-k(1-\b)} \| |\cdot|^{-(d+\sigma -1)} \|_{L^{p_1}(B_{2^{k+3}})} \| \la \cdot \ra^{-\a} \|_{L^{p_2}(B_{2^{k+3}})} \\
	&&\cdot \| |\cdot|^{-(d-\sigma)} \|_{L^{p_3}(B_{2^{k+3}})} \| f \|_{L^2} \\
	&\lesssim& 2^{-k(\a - \b)} \| f \|_{L^2}.
	\eee
	where we choose $p_1, p_2, p_3 \in (1, \infty)$ such that $\frac{1}{p_1} + \frac{1}{p_2} + \frac{1}{p_3} = 2$, $p_1 < \frac{d}{d+\sigma-1}$, $p_2 < \frac{d}{\a}$ and $p_3 < \frac{d}{d-\sigma}$ which is possible when $\a < 1$. The contribution on $B_1$ can be bounded similarly, and we conclude the estimate of \eqref{eqcom03} by summing all these estimates up using $\a > \b$. 
	
	Combining these three parts, we obtain \eqref{eqcom00} and hence \eqref{eqpotbdd2}.\\
	
	(ii) Without loss of generality, we assume $\a < \frac d2 - \sigma$. Because $\la x \ra^{\b} V \in L^\infty$, it suffices to show
	\be \left\|  \la x\ra^\b D^{-\sigma} [D^\sigma, V] f\right\|_{L^2} \lesssim \| f \|_{L^2}. \ee
	
	Notice that 
	\bee
	&&\la x\ra^\b D^{-\sigma} [D^\sigma, V] f(x) \\
	&=& C \la x \ra^\b \int_{\RR^d} \frac{1}{|y|^{d-\sigma}} \int_{\RR^d} \frac{V(x-y) - V(x-y-z)}{|z|^{d+\sigma}} f(x-y-z) dzdy. \eee
	Once again, we decompose the integral region and estimate the weights correspondingly. Here the estimate for interaction is 
	\be |V(x_1) - V(x_2)| \lesssim \left\{ \begin{array}{ll}
		\la x_1 \ra^{-\a-1} |x_1-x_2| & |x_1-x_2| \le \frac 12 \la x_1 \ra \\
		\la x_2 \ra^{-\a} & |x_1 - x_2| \in [\frac 18 \la x_1 \ra, 8 \la x_1 \ra] \\
		\la x_1 \ra^{-\a} & |x_1-x_2| \ge 2  \la x_1 \ra.
	\end{array} \right. \label{eqcomV}\ee
	So we will partition $y$ and $z$ into $3 \times 3$ regions:
	\begin{enumerate}
		\item 
		$\left\{ |y| \le \frac 12 \la x \ra, |z| \le \frac 14 \la x\ra \right\},
		\left\{ |y| \le \frac 12 \la x \ra, |z| \in \left[ \frac 14, 4 \right] \la x\ra\right\},
		\left\{ |y| \le \frac 12 \la x \ra, |z| \ge 4 \la x\ra\right\};$
		\item $\left\{ |y| \in \left[ \frac 12, 2\right] \la x \ra, |z| \le \frac 14 \la x-y \ra \right\},
		\left\{ |y| \in \left[ \frac 12, 2\right] \la x \ra, |z| \in \left[ \frac 14, 4 \right] \la x-y \ra\right\},\\
		\left\{ |y| \in \left[ \frac 12, 2\right] \la x \ra, |z| \ge 4 \la x-y \ra\right\};$
		\item $\left\{ |y| \ge 2 \la x \ra, |z| \le \frac 14 \la y \ra \right\},
		\left\{ |y| \ge 2 \la x \ra, |z| \in \left[ \frac 14, 4 \right] \la y \ra\right\}, 
		\left\{ |y| \ge 2 \la x \ra, |z| \ge 4 \la y \ra\right\}$.
	\end{enumerate}
	Therefore we have
	\bea 
	&&\left| \la x\ra^\b D^{-\sigma} [D^\sigma, V] f(x) \right| \nonumber \\
	&\lesssim& \la x \ra^{-1-\a +\b} \left(|\cdot|^{-(d-\sigma)} \mathbbm{1}_{B_{\frac 12 \la x \ra}} \right)* \left(|\cdot|^{-(d+\sigma-1)} \mathbbm{1}_{B_{\frac 14 \la x \ra}} \right) * |f| (x) \label{eqcom41} \\ 
	& + & \la x \ra^{-d-\sigma +\b} \int_{B_{\frac 12 \la x \ra}} |y|^{-(d-\sigma)} \int_{B_{4\la x \ra}} \frac{|f(x-y-z)|}{\la x -y-z\ra^{\a}}dz dy \label{eqcom42} \\
	&+& \la x \ra^{-\a +\b} \left(|\cdot|^{-(d-\sigma)} \mathbbm{1}_{B_{\frac 12 \la x \ra}} \right)* \left(|\cdot|^{-(d+\sigma)} \mathbbm{1}_{B^c_{4 \la x \ra}} \right) * |f| (x)  \label{eqcom43}\\
	&+& \la x \ra^{-d+\sigma+\b} \int_{B_{2\la x \ra} - B_{\frac 12 \la x \ra}} \la x -y\ra^{-\a - 1} \int_{B_{\frac 14 \la x-y\ra}} \frac{|f(x-y-z)|}{|z|^{d+\sigma -1}} dz dy \label{eqcom44}\\
	&+&\la x \ra^{-d+\sigma+\b} \int_{B_{2\la x \ra} - B_{\frac 12 \la x \ra}} \la x -y\ra^{-d- \sigma} \int_{B_{4 \la x -y \ra} - B_{\frac 14 \la x-y\ra}} \frac{|f(x-y-z)|}{\la x-y-z \ra^{\a}} dz dy \label{eqcom45} \\
	&+& \la x \ra^{-d+\sigma+\b} \int_{B_{2\la x \ra} - B_{\frac 12 \la x \ra}} \la x -y\ra^{-\a } \int_{B^c_{4 \la x-y\ra}} \frac{|f(x-y-z)|}{|z|^{d+\sigma}} dz dy \label{eqcom46}\\
	&+& \la x \ra^{\b} \int_{B_{2\la x \ra}^c} |y|^{-(d-\sigma+\a+1)} \int_{B_{\frac 14 \la y \ra}} \frac{|f(x-y-z)|}{|z|^{d+\sigma - 1}}  dz dy \label{eqcom47}\\
	&+& \la x \ra^{\b} \int_{B_{2\la x \ra}^c} |y|^{-2d} \int_{B_{4\la y \ra} - B_{\frac 14 \la y \ra}} \frac{|f(x-y-z)|}{\la x-y-z\ra^{\a}}  dz dy \label{eqcom48}\\
	&+& \la x \ra^{\b} \int_{B_{2\la x \ra}^c} |y|^{-(d-\sigma+\a)} \int_{B_{4 \la y \ra}^c} \frac{|f(x-y-z)|}{|z|^{d+\sigma}}  dz dy. \label{eqcom49}
	\eea
	We partition these 9 terms into four groups to estimate.
	\begin{enumerate}
		\item For \eqref{eqcom41} and \eqref{eqcom43}, we exploit the convolution structure to estimate the $L^2$ norm on non-homogeneous dyadic annulus, resulting in $\| \eqref{eqcom41}\|_{L^2(B_{2^{k+1}}-B_{2^k})} \lesssim 2^{-k(\a - \b)}\| f \|_{L^2}$, $\| \eqref{eqcom41}\|_{L^2(B_1)} \lesssim \| f \|_{L^2}$, and similar for \eqref{eqcom43}.  
		\item For \eqref{eqcom42}, \eqref{eqcom45} and \eqref{eqcom48}, a straightforward application of H\"older's inequality implies pointwise bound by $\la x \ra^{-\frac d2 -\a + \b} \| f \|_{L^2}$. 
		\item We address \eqref{eqcom47} and \eqref{eqcom49} similarly to \eqref{eqcom41} and \eqref{eqcom43}, but with an additional dyadic decomposition for the domain of $y$ to fix the integration domain of $z$. For example, 
		\bee
		&&\| \eqref{eqcom47}\|_{L^2(B_{2^{k+1}} - B_{2^k})}\\
		&\lesssim& 2^{k\b}\sum_{j \ge k} \left\| \left(|\cdot|^{-(d-\sigma+\a+1)}\mathbbm{1}_{B_{2^{j+1}} - B_{2^j}  }\right) * \left(|\cdot|^{-(d+\sigma-1)}\mathbbm{1}_{B_{2^{j}}}\right) * |f|\right\|_{L^2}\\
		&\lesssim& 2^{k\b} \sum_{j\ge k} 2^{-k\a} \| f\|_{L^2} \lesssim 2^{-k(\a-\b)} \| f\|_{L^2}.
		\eee
		\item For \eqref{eqcom44}-\eqref{eqcom46}, we first apply a change of variable $y' := x-y \in B_{4\la x \ra}$ so that $\la x \ra$ only affects the size of the integration domain of $y'$.
		By considering $x$ and $y$ dyadically, we can finally arrive at the same pointwise bound $\la x \ra^{-\frac d2 + \b -\a } \| f \|_{L^2}$. For instance, when $x \in B_{2^{k+1}} - B_{2^k}$, 
		\bee &&\eqref{eqcom46} \lesssim \la x \ra^{-d+\sigma +\b} \int_{B_{2^{k+3}}} \la y'\ra^{-\a} \int_{B_{4\la y'\ra}^c} \frac{|f(y'-z)|}{|z|^{d+\sigma}} dz dy \\
		&\lesssim& 2^{(-d+\sigma +\b)k} \sum_{0 \le j \le k}\int_{B_{2^{j+3}} - B_{2^{j+2}}} 2^{-j\a} \left[\left(|\cdot|^{-(d+\sigma)}\mathbbm{1}_{B_{2^{j+1}}^c}\right) * |f| \right] (y')dy' \\
		&&+ 2^{(-d+\sigma +\b)k} \int_{B_4} \left[\left(|\cdot|^{-(d+\sigma)}\mathbbm{1}_{B_{8}^c}\right) * |f| \right] (y')dy' \\
		&\lesssim & 2^{(-d+\sigma +\b)k} \sum_{0 \le j \le k} 2^{j(\frac{d}{2} - \sigma - \a)} \| f \|_{L^2} 
		\lesssim 2^{\left(-\frac d2 + \b -\a\right)k} \| f \|_{L^2}.
		\eee
			\end{enumerate}
	
These bounds conclude the proof. \\
	
	(iii) It suffices to estimate the commutator 
	\be \| \la x \ra^\a [D^\sigma, \la \cdot \ra^\b] D^{-\sigma} \la \cdot \ra^{-\gamma} f\|_{L^2} \lesssim \| f \|_{L^2}. \label{eqcom20}\ee
	
	We decompose the domain of integration for $y$ into 
	$\{|y| \ge 2\la x \ra\}$ and $\{|y| \le 2 \la x \ra\}$, and further decompose the domain of $z$ by comparing $|z|$ with $\frac 14 |y|$ or $4\la x \ra$ respectively. Estimating $|\la x \ra^\b - \la x-y\ra^\b|$, $|x-y-z|$ and $\la z \ra$ correspondingly, we obtain
	\begin{align} &\left| \la x \ra^\a [D^\sigma, \la \cdot \ra^\b] D^{-\sigma} \la \cdot \ra^{-\gamma} f(x)\right|  \nonumber\\
		=& C \left| \la x \ra^\a \int_{\RR^d} \frac{\la x \ra^\b - \la x -y\ra^{\b}}{|y|^{d+\sigma}} \int_{\RR^d} \frac{\la z \ra^{-\gamma} f(z)}{|x-y-z|^{d-\sigma}}dzdy \right| \nonumber \\
		\lesssim & \la x \ra^\a \int_{B_{2\la x \ra}^c} |y|^{-(d+\sigma -\b)} \int_{B_{\frac 14 |y|}} |y|^{-(d-\sigma)} \la z \ra^{-\gamma} |f(z)|  dz dy \label{eqcom21} \\
		+ & \la x \ra^\a \int_{B_{2\la x \ra}^c} |y|^{-(d+\sigma -\b + \gamma)} \left(|\cdot|^{-(d-\sigma)} *  |f|\right)(x-y)  dy\label{eqcom22} \\
		+ &  \la x \ra^{\a+\b-1} \left(|\cdot|^{-(d+\sigma -1)} \mathbbm{1}_{B_{2\la x \ra}} \right) *  \left(|\cdot|^{-(d-\sigma )} \mathbbm{1}_{B_{8\la x \ra}} \right) *  \left(\la \cdot\ra^{-\gamma} \mathbbm{1}_{B_{4\la x \ra}} |f| \right)(x) \label{eqcom23}\\
		+ & \la x \ra^{\a+\b-1} \int_{B_{2\la x \ra}}|y|^{-(d+\sigma -1)} dy \int_{B_{4\la x \ra}^c} \frac{|f(z)|}{|z|^{d-\sigma+\gamma}} dz \label{eqcom24}
	\end{align}
	For \eqref{eqcom21}, \eqref{eqcom22} and \eqref{eqcom24}, H\"older's and Hardy-Littlewood-Sobolev inequalities imply a pointwise bound by $\la x \ra^{-(d/2 + \gamma - \a -\b)} \| f \|_{L^2}$, so their $L^2$-norm is bounded by $\| f \|_{L^2}$. For \eqref{eqcom23}, similar to \eqref{eqcom03}, we can control their $L^2$-norm on dyadic annulus $B_{2^{k+1}} - B_{2^k}$ by $2^{-k(\gamma - \a -\b)} \| f \|_{L^2}$ and on $B_1$ by $\| f \|_{L^2}$. Combining these bounds, we get \eqref{eqcom20} and hence \eqref{eqpotbdd4}.

    \mbox{}

    (iv) We decompose 
    \[ D^\sigma [V, D^{1-\sigma}] D^{-\sigma}f = [V, D^{1-\sigma}] f + \left[ D^\sigma, [V, D^{1-\sigma}]  \right] D^{-\sigma} f\]
    and estimate both terms. The estimates are similar to the previous cases, with extra simplicity in that we can always set $\a = 0$. 

    \textit{Term 1: $[V, D^{1-\sigma}] f$.} By partitioning $|y| \ge 2\la x \ra$ or $|y| \le 2\la x \ra$, we have 
    \bee
     \left| [V, D^{1-\sigma}] f  \right| &\lesssim& \la x \ra^{-1} \left( |\cdot|^{-(d-\sigma)} \mathbbm{1}_{B_{2\la x \ra}} )* |f|  \right) + \left( |\cdot|^{-(d-\sigma+1)} \mathbbm{1}_{B_{2\la x \ra}}^c )* |f|  \right) \\
     &+& \la x \ra^{-(d+1-\sigma)}  \left(  \mathbbm{1}_{B_{4\la x \ra}}, |f|  \right)_{L^2}
    \eee
    For the first two terms, we decompose $\la x \ra$ in dyadic regions and apply Young's inequality to bound the $L^{r'}$ norm, and the last term can be estimated directly through H\"older's inequality. The range of $\frac 1{r'} - \frac 1q \in (0, \frac\sigma d)$ ensures the integrability and summability. 

    \textit{Term 2: $\left[ D^\sigma, [V, D^{1-\sigma}]  \right] D^{-\sigma} f$.} Denote $F := D^{-\sigma} f$ and $\frac 1{q_\sigma} := \frac 1q + \frac \sigma d$. Then from Sobolev embedding, it suffices to show 
    $$\| \left[ D^\sigma, [V, D^{1-\sigma}]  \right] F\|_{L^{r'}} \lesssim \| F \|_{L^{q_\sigma}}.$$ 
    By expanding both fractional derivative and exploit cancellations, we obtain 
    \[ \left[ D^\sigma, [V, D^{1-\sigma}]  \right] F = C \iint \frac{V(x) - V(x-y) - V(x-z) + V(x-y-z)}{|y|^{d+1-\sigma} |z|^{d+\sigma}} F(x-y-z) dydz. \]
    By partitioning $|y|$ and $|z|$ both according to $\frac 14 \la x \ra$ and use the pointwise bound of $\nabla V$, $\nabla^2 V$, we obtain 
    \bea
     && \left| \left[ D^\sigma, [V, D^{1-\sigma}]  \right] F(x) \right| \nonumber\\
     &\lesssim& \la x \ra^{-2} \cdot \left[ \left( |\cdot|^{-(d-\sigma)} \mathbbm{1}_{B_{\frac 14\la x \ra}}  \right) *  \left( |\cdot|^{-(d-1+\sigma)} \mathbbm{1}_{B_{\frac 14\la x \ra}}  \right) * |F| \right]  \label{eqdoucom1} \\
     &+& \int_{B_{\frac 14 \la x \ra}^c} \int_{B_{\frac 14 \la x \ra}} \frac{\la x \ra^{-1} + \la x-y \ra^{-1}}{|y|^{d+1-\sigma}|z|^{d-1+\sigma}} |F(x-y-z)| dydz \label{eqdoucom2}   \\
     &+& \int_{B_{\frac 14 \la x \ra}} \int_{B_{\frac 14 \la x \ra}^c} \frac{\la x \ra^{-1} + \la x-z \ra^{-1}}{|y|^{d-\sigma}|z|^{d+\sigma}} |F(x-y-z)| dydz  \label{eqdoucom3}  \\
     &+& \left( |\cdot|^{-(d+1-\sigma)} \mathbbm{1}_{B_{\frac 14\la x \ra}^c}  \right) *  \left( |\cdot|^{-(d+\sigma)} \mathbbm{1}_{B_{\frac 14\la x \ra}^c}  \right) * |F|  \label{eqdoucom4} 
    \eea
    Noticing that $s:=\frac{1}{q_\sigma} - \frac 1{r'} \in (0, \frac \sigma d)$, for \eqref{eqdoucom1} and \eqref{eqdoucom4}, we can partition the range of $\la x \ra$ dyadically and apply Young's inequality with suitable $L^{1+}$ norm on the homogeneous parts so as to exploit $\| F \|_{L^{q_\sigma}}$, which generates additional growth $\la x \ra^{\frac \sigma d}$ in \eqref{eqdoucom1} absorbed by $\la x \ra^{-2}$, and additional decay $\la x \ra^{-\frac \sigma d}$ in \eqref{eqdoucom4} for summability. The terms \eqref{eqdoucom2} and \eqref{eqdoucom3} are symmetric, and the part with $\la x \ra^{-1}$ can be estimated using exactly the same strategy as \eqref{eqdoucom1} or \eqref{eqdoucom4}. Lastly, we estimate the part in \eqref{eqdoucom2} with $\la x -y \ra^{-1}$ dyadically as 
    \bee
      && \left\| \int_{B_{\frac 14 \la x \ra}^c} \int_{B_{\frac 14 \la x \ra}} \frac{\la x-y \ra^{-1}}{|y|^{d+1-\sigma}|z|^{d-1+\sigma}} |F(x-y-z)| dydz  \right\|_{L^{r'}(B_{2^{n+1}} - B_{2^n})} \\
      &\lesssim& \left\| (|\cdot|^{-(d-1+\sigma)} \mathbbm{1}_{B_{2^{n+2}}}) * \left[ \la \cdot \ra^{-1} \cdot \left( (|\cdot|^{-(d+1-\sigma)} \mathbbm{1}_{B_{2^{n-3}}^c}) * |F| \right)  \right]  \right\|_{L^{r'}(B_{2^{n+1}})} \\
      &\lesssim& \left\||\cdot|^{-(d-1+\sigma)} \mathbbm{1}_{B_{2^{n+2}}}\right\|_{L^1}\cdot  \left\| |\cdot|^{-(d+1-\sigma)} \mathbbm{1}_{B_{2^{n-3}}^c} \right\|_{L^{(1-s)^{-1}}} \cdot \| F \|_{L^{(\frac 1{r'} +s)^{-1}}} \\
      &\lesssim& 2^{-sn} \| F \|_{L^{q_\sigma}}.
    \eee
    Summing over $n \ge 0$ using $s > 0$ implies the desired control.
\end{proof}

\section{Nonlinear estimates in fractional Sobolev space}\label{appC}

The following estimates in fractional Sobolev space are for the nonlinearity in \eqref{eqNL}. We first state the main result and give some remarks.

\begin{lemma}\label{lemfracdif}
	Given $d \ge 1$ and $W \in C^1(\RR^d \to \CC - \{ 0\})$ satisfying 
	\be |\nabla W(x) | \le C_W \la x \ra^{-a} |W(x)|,\quad \forall\, x \in \RR^d \label{eqslowvar} \ee
	for some $C_W > 0$, $a \in [0, 1]$, then for $p > 1$, $s \in (0, 1)$, $r \in (1, \infty)$ and $\e \in \mathscr{S}(\RR^d \to \CC)$, we have the following estimates for  
	\be R(\e)
	:=\bigg[ |W + \e|^{p-1} (W + \e) - |W|^{p-1} W - \frac{p+1}{2} |W|^{p-1} \e - \frac{p-1}{2} |W|^{p-3}W^2 \bar{\e} \bigg] \label{eqdefI}\ee
	with $\tilde{r}, r_1, r_2 \in (1, \infty)$ such that 
	$\frac{1}{r} - \frac{1}{\tilde{r}} \in \left [0, \frac{as}{d}\right )$, $\frac{1}{r} = \frac{1}{r_1} + \frac{1}{r_2}$ and $(p-1)r_1 > 1$:
	\begin{itemize}
		\item If $1 < p \le 2$, 
		\be \| R(\e)  \|_{\dot{W}^{s, r}} \lesssim_{d, p, s, r, \tilde{r}, r_1, r_2, a, C_W} \| |\e|^p\|_{L^{\tilde{r}}} +  \| \e \|_{L^{(p-1)r_1}}^{p-1} \| \e \|_{\dot{W}^{s, r_2}} \label{eqfracdif1} \ee
		\item If $p > 2$,
		\be 
		\begin{split}
			\| R(\e)  \|_{\dot{W}^{s, r}} \lesssim_{d, p, s, r,\tilde{r}, r_1, r_2, a, C_W} &\| |\e|^p \|_{L^{\tilde{r}}}  + \| |W|^{p-2} |\e|^2  \|_{L^{\tilde{r}}} \\
			+& \left( \| \e \|_{L^{(p-1)r_1}}^{p-1} + \||W|^{p-2}|\e|\|_{L^{r_1}} \right) \| \e \|_{\dot{W}^{s, r_2}}.
		\end{split} \label{eqfracdif2} \ee
		\end{itemize}
	\end{lemma}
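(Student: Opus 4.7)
The plan is to work from the Gagliardo characterization of the fractional Sobolev seminorm, which is available since $s \in (0,1)$ and $r \in (1,\infty)$:
\[
\|f\|_{\dot{W}^{s,r}}^r \sim \iint \frac{|f(x+h)-f(x)|^r}{|h|^{d+sr}}\,dx\,dh.
\]
I first rewrite $R(\e) = \Phi_{W(\cdot)}(\e(\cdot))$, where $\Phi_a(z) := F(a+z)-F(a)-F'_a z$ is the second-order Taylor remainder of $F(u) := |u|^{p-1}u$ at $a$, and $F'_a z$ denotes its real-linear differential (that is, the two terms subtracted in \eqref{eqdefI}). Standard pointwise bounds give
\[
|\Phi_a(z)| \lesssim |z|^p + \mathbbm{1}_{\{p>2\}}|a|^{p-2}|z|^2, \qquad |D_z\Phi_a(z)| \lesssim |z|^{p-1} + \mathbbm{1}_{\{p>2\}}|a|^{p-2}|z|,
\]
together with a variation bound in $a$ that uses the H\"older-$(p-1)$ continuity of $F'$ for $1<p\le 2$ and the Lipschitz bound $|F''(u)| \lesssim |u|^{p-2}$ away from $0$ for $p>2$. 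These split naturally into the regimes $|z|\lesssim|a|$ (second-order Taylor remainder, relevant only when $p>2$) and $|z|\gtrsim|a|$ (where the $|z|^p$ term dominates).

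The main decomposition is
\[
R(\e)(x+h)-R(\e)(x) = \underbrace{\bigl[\Phi_{W(x+h)}(\e(x+h)) - \Phi_{W(x+h)}(\e(x))\bigr]}_{I(x,h)} + \underbrace{\bigl[\Phi_{W(x+h)}(\e(x)) - \Phi_{W(x)}(\e(x))\bigr]}_{II(x,h)},
\]
isolating the variation of $\e$ (with $W$ frozen) from the variation of $W$ (with $\e$ frozen). For $I$, the fundamental theorem of calculus in $z$ together with the bound on $D_z\Phi_a$ yields $|I(x,h)| \lesssim M_p(x,x+h)\,|\e(x+h)-\e(x)|$, with $M_p = |\e(x)|^{p-1}+|\e(x+h)|^{p-1}$ for $1<p\le 2$ and an additional $|W|^{p-2}(|\e(x)|+|\e(x+h)|)$ for $p>2$. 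Plugging into the Gagliardo integral, applying H\"older in $x$ with the exponents $r_1, r_2$, and re-using the Gagliardo expression for $\|\e\|_{\dot{W}^{s,r_2}}$ delivers the contribution $\|\e\|_{L^{(p-1)r_1}}^{p-1}\|\e\|_{\dot{W}^{s,r_2}}$ (and, for $p>2$, the additional $\||W|^{p-2}|\e|\|_{L^{r_1}}\|\e\|_{\dot{W}^{s,r_2}}$) appearing in \eqref{eqfracdif1}--\eqref{eqfracdif2}.

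The term $II$ is where the slowly varying condition \eqref{eqslowvar} enters decisively. Integrating \eqref{eqslowvar} along segments shows that $|W(x+h)-W(x)| \lesssim |h|\la x\ra^{-a}|W(x)|$ and $|W(x+h)| \sim |W(x)|$ in the regime $|h| \lesssim \la x\ra^a$. The $a$-variation bound for $\Phi_a(z)$ then yields $|II(x,h)| \lesssim |h|\la x\ra^{-a}\bigl[|\e(x)|^p + \mathbbm{1}_{\{p>2\}}|W(x)|^{p-2}|\e(x)|^2\bigr]$, after absorbing a subleading $|W||\e|^{p-1}$ contribution through the elementary inequality $|W||\e|^{p-1} \le |\e|^p + \mathbbm{1}_{\{p>2\}}|W|^{p-2}|\e|^2$. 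In the complementary regime $|h| > \la x\ra^a$, the trivial bound $|II(x,h)| \lesssim |\e(x)|^p + \mathbbm{1}_{\{p>2\}}|W|^{p-2}|\e|^2$ suffices. Performing the $h$-integral in both regimes produces the same weight $\la x\ra^{-as}$ in the $L^r_x$ bound, and a weighted H\"older inequality within the allowed range $\frac{1}{r}-\frac{1}{\tilde r} < \frac{as}{d}$ then trades this weight for the stronger integrability $\tilde r$, producing the $\||\e|^p\|_{L^{\tilde r}}$ (and $\||W|^{p-2}|\e|^2\|_{L^{\tilde r}}$ for $p>2$) terms.

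The hard part will be the case $1<p<2$, where $F$ is only $C^{1,p-1}$: classical Taylor expansion to order two is unavailable, and the $a$-variation estimate for $\Phi_a$ cannot exploit a bounded second differential. Because $W$ is nonvanishing, one can still use second-order expansion in the inner regime $|\e| \lesssim |W|$ (where the remainder is $|W|^{p-3}|\e|^2$ and is absorbed into $|\e|^p$ via $|W| \gtrsim |\e|$), while in the outer regime $|\e| \gtrsim |W|$ the H\"older-$(p-1)$ continuity of $F'$ combined with $|W| \lesssim |\e|$ already reduces everything to $|\e|^p$. Tracking carefully which of these regimes governs each factor in each splitting ($|h|$ large versus small relative to $\la x\ra^a$, and $|\e(x)|$ large versus small relative to $|W(x)|$) then closes the estimate.
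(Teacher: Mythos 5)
Your pointwise difference estimates are essentially Step 1 of the paper's own proof: the same splitting into an $\e$-variation term and a $W$-variation term, the same use of \eqref{eqslowvar} to get $|W(x+h)-W(x)|\lesssim |h|\la x\ra^{-a}|W(x)|$ and $|W(x+h)|\sim |W(x)|$ for $|h|\lesssim \la x\ra^{a}$, and the same case analysis $|\e|\lessgtr |W|$ exploiting the non-vanishing of $W$ when $1<p\le 2$. (One slip there: the ``elementary inequality'' $|W||\e|^{p-1}\le |\e|^p+\mathbbm{1}_{\{p>2\}}|W|^{p-2}|\e|^2$ is false for $2<p<3$ and $|\e|\ll|W|$; in that regime you must instead use the second-order integral representation along a segment staying comparable to $|W(x)|$, as you do elsewhere, so this is repairable.)

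The genuine gap is your very first step. The double-integral (Gagliardo) expression $\iint |f(x+h)-f(x)|^r|h|^{-d-sr}\,dx\,dh$ is equivalent to $\|f\|_{\dot{W}^{s,r}}^r$ only for $r=2$; for general $r$ it is the Slobodeckij/Besov norm $\|f\|_{\dot{B}^s_{r,r}}^r$, whereas the space $\dot{W}^{s,r}$ in this paper is the Riesz-potential space $\{f: D^s f\in L^r\}=\dot{F}^s_{r,2}$, and it is this space that must appear because the lemma is fed into Strichartz norms $\dot{W}^{\sigma,r_1'}$ and $\dot{W}^{\sigma,r_1}$ with exponents different from $2$. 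The mismatch bites exactly in the regime of application: there $r<2$ on the left and $r_2>2$ on the right. On the left, $\dot{B}^s_{r,r}\hookrightarrow \dot{F}^s_{r,2}$ for $r\le 2$ saves you, but on the right it does not: after your H\"older step in $x$ at fixed $h$, the $\e$-factor you actually produce is $\big(\int \|\Delta_h\e\|_{L^{r_2}}^{r}|h|^{-d-sr}dh\big)^{1/r}=\|\e\|_{\dot{B}^s_{r_2,r}}$, a Besov norm with outer index $r<2<r_2$, and $\dot{W}^{s,r_2}=\dot{F}^s_{r_2,2}$ embeds into $\dot{B}^s_{r_2,q}$ only for $q\ge r_2$; so the quantity you obtain is strictly stronger than the claimed $\|\e\|_{\dot{W}^{s,r_2}}$ and the estimate does not close (for $r>2$, which the lemma also covers, even the left-hand comparison fails). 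The paper avoids this by converting the pointwise bounds through the Littlewood--Paley square-function form of $\dot{W}^{s,r}$: the vanishing moment of $\check{\psi}_j$ lets $|P_jR(\e)(x)|$ be dominated by $\int|R(\e)(y)-R(\e)(x)||\check{\psi}_j(x-y)|dy$, the maximal-function lemmas \eqref{eqLPmax1}--\eqref{eqLPmax2} handle the product structure, the weight $\min\{1,\la x\ra^{-a}|h|\}$ becomes $\min\{2^{-j}\la x\ra^{-a},1\}$ and sums over $j$ to $\la x\ra^{-2as}$, and H\"older in the window $\frac1r-\frac1{\tilde r}<\frac{as}{d}$ together with the Fefferman--Stein vector-valued maximal inequality finishes. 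If you want to keep a difference-quotient formulation, you must use the inner-$L^2$ square-function characterization of $\dot{F}^s_{r,2}$ (H\"older in $x$ after the $h$-integration), not the $L^r$-in-both-variables Gagliardo form.
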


\begin{remark}
	When $p$ is large, these estimates easily follow the fractional chain rule for $F''(W+t\e)$ with $F(z) := |z|^{p-1}z$. However, when $p -1 \ll 1$ (as $d \gg 1$),  $F''(u)$ may not even be continuous for generic $u \in C^\infty_c$, let alone estimates on $D^s$ derivatives.

	Fortunately, we exploit the specialty of $F$ and $W$ here: the nonlinearity $F$ only has a singularity at $0$ while $W$ does not vanish anywhere. Thus, with an appropriate estimate for $\nabla W$, the Taylor expansion of $F$ around $W$ will be good enough.
\end{remark}

\begin{proof}
Denote $W =: \Sigma + i \Theta$, $\e =: \e_1 + i\e_2$ and $R_1(\e) := \Re R(\e)$, $R_2 (\e) := \Im R(\e)$.
Also let $F_i: \RR^2 \to \RR$ with $i =1 , 2$ to be 
\be F_1(a, b) = (a^2 + b^2)^{\frac{p-1}{2}} a,\quad F_2(a, b) = (a^2 + b^2)^{\frac{p-1}{2}}b. \label{eqFdecomp}\ee
Then for $i = 1, 2$, 
\bee R_i(\e) = F_i\left(\Sigma+\e_1, \Theta + \e_2\right) - F_i ( \Sigma, \Theta) - \pa_1 F_i (\Sigma, \Theta)\e_1 - \pa_2 F_i(\Sigma, \Theta)\e_2 \eee 
From the expression of $F_i$, it is easy to see $F_i \in C^\infty(\RR^2-\{0\}, \RR)$ and
\be \left| \nabla^n F_i(a, b)  \right| \lesssim_n (a^2 + b^2)^{\frac{p-n}{2}},\quad \forall \, n \ge 0,\, \forall \,(a, b) \neq (0, 0). \label{eqFderivative} \ee
So if $(W+t\e)(x) \neq 0$ for any $t \in [0, 1]$, the following representation also holds
\be R_i(\e)(x) = \sum_{ j,k= 1}^2 \e_j(x) \e_k(x) \int_0^1 \pa_{j}\pa_k F_i (\Sigma(x) + t\e_1(x), \Theta(x) + t\e_2(x)) (1-t) dt  \label{eqI1derivative}\ee
We will prove \eqref{eqfracdif1}-\eqref{eqfracdif2} in two steps, following the strategy of \cite{MR1124294} or \cite{MR1766415}. For simplicity, we only prove the case $1 < p \le 2$ while the $p>2$ case follows in a similar way. \\

\underline{Step 1. Pointwise estimates.}
We first derive pointwise estimate for difference of $R(\e)(x+h) - R(\e)(x)$ with $x, h \in \RR^d$. For notational simplicity, we define the difference operator
\bee \Delta_h f (x) := f(x+h) -f(x). \eee
Then we claim the following estimates:
\be
	\begin{split} |\Delta_h R(\e)(x)| \lesssim_{p, C_W, a}& \left(|\e(x+h)|^{p} + |\e(x)|^p\right)\min\{1, \la x\ra^{-a} |h|\} \\
	+ & \left(|\e(x+h)|^{p-1} + |\e(x)|^{p-1}\right) \left|\Delta_h \e(x)\right|;  \end{split}\label{eqdif1}
    \ee
    We only consider estimates for $R_1(\e)$ as an example.

From \eqref{eqslowvar}, there exists a $c_0= c_0(C_W, a) > 0$ such that 
\be |W(x) - W(x+h)| \le \frac 13 |W(x)|, \quad \forall \,x \in \RR^d,\, \forall\, |h| \le \la x \ra^{a} c_0. \label{eqWsim} \ee
Then we distinguish two cases by comparing $|h|$ with $ \la x \ra^a c_0$. 
\\

\emph{Case 1. $|h| \ge  \la x\ra^{a} c_0$.} It suffices to bound $R_1(\e)(x)$ and $R_1(\e)(x+h)$ separately by $ |R(\e)(\cdot)| \lesssim_p |\e(\cdot)|^p$.

	 

If $|\e(x)| \ge \frac{1}{2} |W(x)|$, the trivial bound $|R_1(\e)(x)| \lesssim_p |W(x)|^{p} + |\e(x)|^p$ suffices. If $|\e(x)| \le \frac 12 |W(x)|$, from \eqref{eqFderivative}
we know 
\[ \sup_{t \in [0, 1]} \left| \nabla^2 F_1 (\Sigma(x) + t\e_1(x), \Theta(x) + t\e_2(x))\right| \lesssim_p |W(x)|^{p-2}. \]
Thus \eqref{eqI1derivative} yields
\[ |R_1(\e)(x)| \lesssim |W(x)|^{p-2} |\e(x)|^2 \lesssim_p | \e(x)|^{p}. \]
 
 \mbox{}
 
\emph{Case 2. $|h| \le  \la x\ra^{a}c_0$.} We consider the following three subcases.

\mbox{}

\emph{Case 2.1. $|\e(x)| \ge \frac{1}{6} |W(x)|$.} Since $|\e(x)| \gtrsim |W(x)| \sim |W(x+h)|$, we can estimate most interaction terms in a direct fashion
\bee 
   && |\Delta_h(|W|^{p-1}\Sigma)|+ |\Delta_h(|W|^{p-1}\e_1)| + |\Delta_h (|W|^{p-3}(\Sigma^2 - \Theta^2)\e_1)| \\
   &&\qquad + |\Delta_h (|W|^{p-3}\Sigma \Theta \e_2)| \\
   &\lesssim_{p, C_W}& |h| \la x\ra^{-a} |W(x)|^{p-1} \left(|\e(x)| + |\e(x+h)| +|W(x)|\right) + |W(x)|^{p-1}|\Delta_h \e(x)| \\
   & \lesssim_p& |h| \la x\ra^{-a}\left( |\e(x)|^p + |\e(x+h)|^p \right) + |\e(x)|^{p-1} |\Delta_h \e(x)|  \eee
 where we used $|\nabla W(x+th)| \le C_W  \la x+th\ra^{-a} |W(x+th)| \sim  \la x\ra^{-a}|W(x)|$ for $t \in [0, 1]$ to bound the difference of $W$. For the first term, we exploit the elementary estimate
 \be  \left| \frac{A^\a - B^\a}{A-B} \right| \sim_\a \max\{ A, B \}^{\a - 1}, \quad \a > 0  \label{eqalphadif} \ee
 for $A, B > 0$. Thus 
 \bee 
  &&\left|\Delta_h \left(|W+\e|^{p-1}(\Sigma + \e_1)\right)\right| \\
  & \le& \left| \frac{\Delta_h |W+\e|^{p-1}}{\Delta_h |W+\e|} \left(\Delta_h |W+\e| \right) (\Sigma + \e_1)(x+h)  \right| + |W+\e|^{p-1}(x) \left| \Delta_h (\Sigma + \e_1) \right| \\
  &\lesssim_{p, C_W}& \left( |W+\e|^{p-1}(x+h) +  |W+\e|^{p-1}(x)\right) \left( |h|  \la x\ra^{-a} |W(x)| + |\Delta_h \e| \right)  \\
  &\lesssim_{p}& |h| \la x\ra^{-a} \left( |\e(x)|^p + |\e(x+h)|^p \right) + \left( |\e(x)|^{p-1} + |\e(x+h)|^{p-1} \right)|\Delta_h \e(x)|.
 \eee
 These estimates implies both \eqref{eqdif1}.\\
   
\emph{Case 2.2. $|\e(x)| \le \frac{1}{6}|W(x)| \le |\e(x+h)|$.} Noticing the symmetry between $x$ and $x+h$ term and that $|\e(x+h)| \gtrsim |W(x)| \sim |W(x+h)|$, the bound in this case follows exactly the same argument as Case 2.1 above.\\

\emph{Case 2.3. $|\e(x)| \le \frac{1}{6}|W(x)|$, $|\e(x+h)| \le \frac{1}{6}|W(x)|$.} 
In the following computation, we slightly abuse notation to denote $W := W(x)$, $W' := W(x+h)$ and similarly for $\e$ and $\e'$. Applying \eqref{eqI1derivative}, we get 
\bee
  && R_1(\e)(x+h) - R_1(\e)(x) \\
  &=& \sum_{ j,k= 1}^2\Bigg[ \e_j' \e_k' \int_0^1 \pa_{j}\pa_k F_1 (W'+t\e') (1-t) dt -  \e_j \e_k \int_0^1 \pa_{j}\pa_k F_1 (W + t\e) (1-t) dt \Bigg] \\
   &=& \sum_{ j,k= 1}^2 \left( \Delta_h(\e_j \e_k) \right)  \int_0^1 \pa_{j}\pa_k F_1 (W'+t\e') (1-t) dt \\
    &+& \sum_{ j,k= 1}^2 \e_j \e_k \int_0^1 (1-t) \int_0^1 [(W'-W)+t(\e' - \e)] \cdot \nabla \pa_j \pa_k F_1 \big((1-s)(W+t\e) + s (W' + t\e')\big)  ds dt.
\eee
Thanks to \eqref{eqWsim} and the smallness of $\e(x)$, $\e(x+h)$, we have
\[ \left|(1-s)(W+t\e) + s (W' + t\e') - W  \right| \le \frac 12 |W(x)|,\quad \forall \, t, s \in [0, 1], \]
and hence all the derivatives acting on $F_1$ are well-defined. This and \eqref{eqFderivative} for $k = 3$ imply the boundedness 
\bee |\Delta_h R_1(\e)(x)| \lesssim_{p, C_W} |\Delta_h \e|(|\e'| + |\e|) |W|^{p-2}  
+ |\e|^2 \left( |h| \la x\ra^{-a} |W|
+ |\Delta_h \e| \right) |W|^{p-3}.  
\eee
which leads to the bounds \eqref{eqdif1}.\\

\underline{Step 2. Littlewood-Paley and maximal function estimates.} Next, we apply the argument in \cite{MR1766415} to derive estimates in fractional Sobolev spaces. Denote the homogeneous Littlewood-Paley decomposition as 
\[ P_k f = \mathscr{F}^{-1} (\psi_k\cdot) \hat{f})= \check{\psi}_k  * f,\quad k \in \ZZ, \]
and the Hardy-Littlewood maximal operator as $M$. Since $\psi_k = \psi ( 2^{-k} \cdot)$ with $\psi \in \C^\infty_c(B_2 -B_{\frac12})$ for $k \in \ZZ$, we have $\check{\psi}_k = 2^{kd} \check{\psi}(2^{k}\cdot) \in \mathscr{S}(\RR^d)$ and 
\bea
  |\check{\psi}_k(x)| &\lesssim_{d, m}& 2^{dk}\la 2^{k}x\ra^{-m},\quad \forall \, m \ge 0, \label{eqLP1}\\
  \int_{\RR^d} \psi_k (x) dx &=& 0.\label{eqLP2}
  \eea
The Littlewood-Paley theory indicates 
\be  \left\| f \right \|_{\dot{W}^{s, r}} \sim_{d, s, r} \left \| \sum_{j = -\infty}^{+\infty}2^{js}P_j f \right \|_{L^r} \sim_{d, s, r} \left \| \left( \sum_j 2^{2js} |P_j f(\cdot)|^2\right)^{\frac 12} \right \|_{L^r}  \label{eqLP}\ee
for $s \in (0, 1)$ and $r \in (1, \infty)$. We recall the following estimates involving maximal functions from 
\cite[Chap. 2 Lemma 3.1-3.5]{MR1766415}
\bea 
 &\begin{split}\int |P_k u(x) - P_k u(y)| \cdot |\check{\psi}_j (x-y)| dy \lesssim_d \min \{ 2^{k-j}, 1\}M(P_k u)(x), \end{split}\label{eqLPmax1}\\
 &\begin{split}
 &\int |P_k u(x) - P_k u(y)| \cdot |\check{\psi}_j (x-y)| H(y) dy \\
 &\qquad \lesssim_d \min \{ 2^{k-j}, 1\}\left[ M(P_k u)(x) MH(x) + M(|P_k u|H)(x) \right]
 \end{split}\label{eqLPmax2}
\eea 
where $H$ is a positive function and $k, j \in \ZZ$.

Now we derive a pointwise estimate for the function in \eqref{eqLP} and finish the proof. \\

Using the vanishing condition \eqref{eqLP2} and the pointwise estimates \eqref{eqdif1}, we have
\bea 
 &&\sum_j 2^{2js} |P_j R(\e)(x)|^2 
 \le \sum_{j} 2^{2js} \left[\int | R(\e)(y) - R(\e)(x)| |\check{\psi}_j(x-y)| dy \right]^2 \nonumber \\
 &\lesssim &\sum_j 2^{2js} \left[ \int \left(|\e(y)|^p + |\e(x)|^p\right) \min \{ 1, \la x \ra^{-a} |x-y|\} |\check{\psi}_j(x-y)| dy\right]^2 \label{eqdifterm1}\\ 
 &+& \sum_j 2^{2js} \left[\int \left(|\e(y)|^{p-1} + |\e(x)|^{p-1}\right) |\e(x) - \e(y)| |\check{\psi}_j(x-y)| dy\right]^2 \label{eqdifterm2}
\eea

For \eqref{eqdifterm1}, we claim
\be \eqref{eqdifterm1} \lesssim (M(|\e|^{p})(x))^2 \la x\ra^{-2as}. \ee
Indeed, the decay \eqref{eqLP1} implies
	\[ \int|\check{\psi}_j(x-y)| \min\{ 1, \la x \ra^{-a} |x-y|\}dy \lesssim \min \{2^{-j} \la x\ra^{-a},1\}. \]
	Then the bound on $|\e(x)|^p$ term follows
	\[ \sum_{j\in \ZZ} 2^{2js} \min \{2^{-2j} \la x \ra^{-2a},1\} \sim \la x \ra^{-2as} \]
thanks to $s \in (0, 1)$. Likewise, for the other term involving $|\e(y)|^p$, it suffices to show the maximal function bound
\[ A_j (x) := \int |\e(y)|^p|\check{\psi}_j(x-y)| \min\{ 1,  \la x \ra^{-a} |x-y|\}dy \lesssim \min \{2^{-j}\la x \ra^{-a},1\} M(|\e|^p)(x). \]
Let $j_0 \in \ZZ$ such that $2^{j_0} \le \la x \ra^{-a} < 2^{j_0 + 1}$. Take $m = d+2$, then for $j \le j_0$, 
\bee A_j(x) &\lesssim& \int_{|z| \le 2^{-j}}|\e(x-z)|^p 2^{dj} dz +  \sum_{k \ge -j} \int_{2^k \le |z| \le 2^{k+1}} |\e(x-z)|^p 2^{dj} 2^{-(j+k)m} dz \\ &\lesssim& M(|\e|^p)(x) \left( 1 + \sum_{k \ge -j}2^{(d-m)(j+k)} \right) \lesssim M(|\e|^p)(x); 
\eee
and for $j > j_0$, 
\bee A_j(x)  &\lesssim&  \int_{|z| \le 2^{-j}} |\e(x-z)|^p 2^{dj}2^{j_0-j} dz 
 + \sum_{k = -j}^{-j_0} \int_{2^k \le |z| \le 2^{k+1}} |\e(x-z)|^p 2^{dj}2^{j_0+k}2^{-m(j+k)} dz \\
&+& \sum_{k \ge -j_0+1} \int_{2^k \le |z| \le2^{k+1}} |\e(x-z)|^p 2^{dj}2^{-m(j+k)} dz \\
&\lesssim&  2^{j_0-j}M(|\e|^p)(x).  \eee

For \eqref{eqdifterm2}, we apply Littlewood-Paley decomposition to $\e(x) - \e(y)$ and then use \eqref{eqLPmax1}-\eqref{eqLPmax2} to get
\bee \eqref{eqdifterm2} &\lesssim& \sum_{j} 2^{2js} \left| \sum_k \int \left(|\e(y)|^{p-1} + |\e(x)|^{p-1}\right) |P_k \e(x) - P_k \e(y)| |\check{\psi}_j(x-y)| dy \right|^2\\ 
&\lesssim &\sum_{j} 2^{2js} \left[\sum_k \min \{ 2^{k-j}, 1\}  M(P_k \e)(x) M(|\e|^{p-1})(x) \right]^2 \\
&&+\sum_{j} 2^{2js} \left[\sum_k \min \{ 2^{k-j}, 1\} M(|P_k \e|\cdot |\e|^{p-1} )(x) \right]^2  \\
&\lesssim &  \left[M(|\e|^{p-1})(x)\right]^2 \sum_k 2^{2ks} \left[M(P_k \e)(x)\right]^2 +  \sum_k 2^{2ks} \left[ M(|P_k \e|\cdot |\e|^{p-1} )(x)\right]^2 
\eee
where the last inequality follows Young's inequality. 

From \eqref{eqLP} and these estimates of \eqref{eqLP1}-\eqref{eqLP2}, we bound the $\dot{W}^{s, r}$ norm by
\bee 
 && \| R(\e) \|_{\dot{W}^{s, r}}\\
   &\lesssim &\left\| M(|\e|^p) \la x\ra^{-as} \right\|_{L^r} +  \left \| M(|\e|^{p-1})(x)  \left\| 2^{ks} M(P_k \e)(x)\right\|_{l^2_k} \right\|_{L^{r}}\\
   &&+ \left \|  \left\| 2^{ks} M(|P_k \e|\cdot |\e|^{p-1} )(x)\right \|_{l^2_k} \right\|_{L^{r}}  \\
 & \lesssim &\| |\e|^p \|_{L^{\tilde{r}}} + \| \e \|_{L^{(p-1)r_1}}^{p-1} \left  \| \left\| 2^{ks} |P_k \e(x)|\right\|_{l^2_k}  \right \|_{L^{r_2}} +  \left \|  |\e|^{p-1}(x) \left\| 2^{ks} |P_k \e|(x) \right\|_{l^2_k} \right\|_{L^{r}} \\
  &\lesssim & \| |\e|^p \|_{L^{\tilde{r}}} +  \| \e \|_{L^{(p-1)r_1}}^{p-1} \| \e \|_{\dot{W}^{s, r_2}}
\eee
with $\frac{1}{r} - \frac{1}{\tilde{r}} \in \left [0, \frac{as}{d}\right )$, $\frac{1}{r} = \frac{1}{r_1} + \frac{1}{r_2}$ and $(p-1)r_1 > 1$. In the second-to-last inequality, we invoked the Fefferman-Stein maximal theorem \cite{MR284802}
\[ \left\| \|M(h_k)(x)\|_{l^2_k}  \right\|_{L^q_x} \lesssim \left\| \|h_k(x)\|_{l^2_k}  \right\|_{L^q_x} \]
for a sequence of functions $\{ h_k\}$ and $q \in (1, \infty)$.
\end{proof}

Next, we present a difference estimate in $L^p$ space. 
	\begin{lemma}\label{lemnondif}
	Given $d \ge 1$ and $W \in C^1(\RR^d \to \CC)$, $p > 1$ and $u, v \in \mathscr{S}(\RR^d \to \CC)$, for $R(u)$ defined as in \eqref{eqdefI},
	we have difference estimate for $r, r_1, r_2 \in [1, \infty]$ with $\frac{1}{r} = \frac{1}{r_1} + \frac{p-1}{r_2}$:
	\begin{itemize}
		\item If $1 < p \le 2$, 
		\be \| R(u) - R(v)  \|_{L^r} \lesssim_{d, p, r, r_1, r_2}
		\| u -v \|_{L^{r_1}}\left( \| u \|_{L^{r_2}}^{p-1} + \| v \|_{L^{r_2}}^{p-1} \right);
		\label{eqfracdif3} \ee
		\item If $p > 2$,
		\be 
		\begin{split} &\| R(u) - R(v)  \|_{L^r} \\
			\lesssim_{d, p, s, r, r_1, r_2}&
			\| u -v \|_{L^{r_1}}\left( \| u \|_{L^{r_2}}^{p-1} + \| v \|_{L^{r_2}}^{p-1} + \left( \| u \|_{L^{r_2}} + \| v \|_{L^{r_2}} \right) \| W \|_{L^{r_2}}^{p-2}\right). 
		\end{split} \label{eqfracdif4} 
		\ee
	\end{itemize}
\end{lemma}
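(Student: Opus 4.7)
The strategy is to recognize $R(u)$ as a second-order Taylor-type remainder for the nonlinearity $F(z) := |z|^{p-1}z$. Viewing $F$ as a real-differentiable map $\CC \to \CC$, its $\RR$-linear derivative is
$$L(z)\xi = \tfrac{p+1}{2}|z|^{p-1}\xi + \tfrac{p-1}{2}|z|^{p-3}z^2 \bar{\xi},$$
so that $R(u) = F(W+u) - F(W) - L(W)u$. Taking the difference and applying the fundamental theorem of calculus to $F(W+u) - F(W+v)$ yields the identity
$$R(u) - R(v) = \int_0^1 \bigl[L(W+v+t(u-v)) - L(W)\bigr](u-v)\, dt.$$
The problem thus reduces to a pointwise estimate for $L(z_1) - L(z_2)$, followed by H\"older's inequality.

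For $1 < p \le 2$, the first step is to establish the H\"older continuity $|L(z_1) - L(z_2)| \lesssim_p |z_1 - z_2|^{p-1}$. This follows from a standard case split: when $\min(|z_1|,|z_2|) \le 2|z_1-z_2|$, both $|L(z_i)| \lesssim |z_i|^{p-1}$ combine with the triangle inequality to give the bound; in the opposite regime both $|z_i|$ are bounded below by $\tfrac12|z_1-z_2|$ and, since $p-2 \le 0$, one integrates $|dL(z)| \lesssim |z|^{p-2}$ along the segment connecting $z_1$ and $z_2$ to recover the same exponent. Applying this with $z_1 = W + v + t(u-v)$, $z_2 = W$ gives $|L(z_1) - L(z_2)| \lesssim (|u|+|v|)^{p-1} \lesssim |u|^{p-1}+|v|^{p-1}$, hence the pointwise bound $|R(u) - R(v)| \lesssim |u-v|(|u|^{p-1} + |v|^{p-1})$. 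H\"older's inequality with $\frac{1}{r} = \frac{1}{r_1} + \frac{p-1}{r_2}$ then yields \eqref{eqfracdif3}.

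For $p > 2$, $L$ is of class $C^1$ with $|dL(z)| \lesssim |z|^{p-2}$, so the fundamental theorem of calculus gives $|L(z_1) - L(z_2)| \lesssim |z_1-z_2|(|z_1|+|z_2|)^{p-2}$. Substituted into the Taylor identity and expanded via $(|W|+|u|+|v|)^{p-2} \lesssim_p |W|^{p-2} + |u|^{p-2} + |v|^{p-2}$ (using concavity for $p\in(2,3]$ and Jensen for $p\ge 3$), this produces
$$|R(u) - R(v)| \lesssim |u-v|\bigl(|u|^{p-1} + |v|^{p-1} + (|u|+|v|)|W|^{p-2}\bigr),$$
and H\"older's inequality with $\frac{1}{r} = \frac{1}{r_1} + \frac{p-1}{r_2}$, together with $\||W|^{p-2}\|_{L^{r_2/(p-2)}} = \|W\|_{L^{r_2}}^{p-2}$, gives \eqref{eqfracdif4}.

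The only delicate point is the H\"older estimate for $L$ when $1 < p \le 2$: the factor $|z|^{p-3}z^2$ appears singular at the origin, but equals $|z|^{p-1}(z/|z|)^2$, which is a bounded rotation of a quantity of size $|z|^{p-1}$, so the singularity is spurious and the case split above closes the argument. The rest is routine H\"older bookkeeping, and unlike the fractional Sobolev case of Lemma \ref{lemfracdif} no Littlewood--Paley or maximal function machinery is needed.
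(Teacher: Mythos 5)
Your proof is correct. The paper's own argument is the same reduction to a pointwise bound followed by H\"older, but it derives the pointwise estimate by pointing to Step 1 of Lemma \ref{lemfracdif}: a case analysis comparing $|u|,|v|$ with $|W|$, using direct bounds $|R(\cdot)|\lesssim |\cdot|^p$ in the large-perturbation regimes and the second-order Taylor representation \eqref{eqI1derivative} (with $|\nabla^2F|\lesssim |z|^{p-2}$) when both perturbations are small relative to $W$. You instead derive the pointwise bound from the first-order identity $R(u)-R(v)=\int_0^1\bigl[L(W+v+t(u-v))-L(W)\bigr](u-v)\,dt$ together with the $(p-1)$-H\"older continuity of the derivative $L$ when $1<p\le 2$ (resp.\ the weighted Lipschitz bound $|L(z_1)-L(z_2)|\lesssim |z_1-z_2|(|z_1|+|z_2|)^{p-2}$ when $p>2$), which is a clean cancellation argument requiring no case split relative to $W$; in particular it makes transparent that the lemma needs neither non-vanishing of $W$ nor the slow-variation hypothesis \eqref{eqslowvar}, consistent with its weaker assumptions compared to Lemma \ref{lemfracdif}. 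The only cosmetic point is that for $p>2$ the expansion of $(|W|+|u|+|v|)^{p-2}$ produces cross terms such as $|u|\,|v|^{p-2}$, which should be absorbed into $|u|^{p-1}+|v|^{p-1}$ by Young's inequality (either pointwise or after the H\"older step); with that noted, the exponent bookkeeping $\frac1r=\frac1{r_1}+\frac{p-1}{r_2}=\frac1{r_1}+\frac1{r_2}+\frac{p-2}{r_2}$ closes both \eqref{eqfracdif3} and \eqref{eqfracdif4} exactly as you state.
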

\begin{proof}
	It suffices to prove the corresponding pointwise estimate for any $x$
	\bee |R(u) - R(v)| \lesssim_p \left\{ \begin{array}{ll}
		|u-v| (|u|^{p-1} + |v|^{p-1}) & 1 < p \le 2, \\
		|u-v| (|u|^{p-1} + |v|^{p-1} + (|u|+|v|)|W|^{p-2}) &  p \ge 2.
	\end{array} \right. \eee
 The proof is similar to Step 1 in Lemma \ref{lemfracdif} and thus is omited.
	\end{proof}

Finally, we give a pointwise derivative estimate for nonlinearity.
\begin{lemma}\label{lemnonestderiv}
    Let $d \ge 1$ and $W \in C^1(\RR^d \to \CC - \{ 0 \})$ satisfying \eqref{eqslowvar} with $a \in [0, 1]$, $p > 1$ and $u, v \in \mathscr{S}(\RR^d \to \CC)$, for $R(u)$ defined as in \eqref{eqdefI}. Then we have 
    \be
      |\nabla R(\e)| \lesssim \left| \begin{array}{ll}
          |\e|^{p-1} \cdot(|\nabla \e| + |\e| \la x \ra^{-a}) &  1 < p \le 2\\
          (|\e|^{p-2} + |W|^{p-2})\cdot|\e|\cdot(|\nabla \e| + |\e| \la x \ra^{-a})  &  p \ge 2
      \end{array}\right.
    \ee
\end{lemma}
\begin{proof}
    It is straightforward computation of Lebnitz rule after decomposing into two cases $|\e(x)| \le \frac 12|W(x)|$ or $|\e(x)| \ge \frac 12|W(x)|$.
\end{proof}

\section{Local Well-posedness in $\dot{H}^\sigma$}\label{appD}

In this section, we prove local well-posedness of the intercritical Schr\"odinger equation \eqref{eqNLS}
\bee \left\{\begin{array}{l} i\pa_t u+\Delta u+u|u|^{p-1}=0\\ u_{|t=0}=u_0 \in \dot{H}^\sigma \end{array}\right. \eee 
  and the renormalized system \eqref{eqZ} (we denote the time variable as $t$)
 \bee
   \left \{ \begin{array}{l}
    i \partial_t Z + \mathcal{H}Z = F, \\
    Z_{|t=0} = Z_0 \in \dot{H}^\sigma \times \dot{H}^\sigma. 
    \end{array}\right.
 \eee
 with $\calH$, $ F = \left( \begin{array}{c} -R(\e, Q_b) \\ \overline{R(\e, Q_b)} \end{array} \right)$ defined by \eqref{Hb}, \eqref{potential} and \eqref{eqNL}.
 Here we assume $s_c < \sigma \le \a_c = \frac{d}{d+2-2s_c}$ above the critical scaling. We will also use the exponents $r_1, r_2, q_1, \beta$ as in \eqref{eqr1r2q1}-\eqref{eqbeta}. We begin with the original system \eqref{eqNLS}.

\begin{proposition}\label{proplwp}
	Suppose $d \ge 1$ and $0 < s_c  < \min \left \{ 1, \frac d2\right \}$ and consider the Cauchy problem \eqref{eqNLS}. For any $s_c < \sigma \le \a_c$, we have the following local well-posedness result: when $d \neq 2$, for any initial data $u_0 \in \dot{H}^\sigma$, there exists $T = T(\| u \|_{\dot{H^\sigma}})$ such that \eqref{eqNLS} has a unique solution
	\[  u \in 
		C^0_t \dot{H}^\sigma_x  \cap L^2_t \dot{W}_x^{\sigma, 2^*}([-T, T] \times \RR^d) =: Y_d; \]
		when $d = 2$, for any $u_0 \in \dot{H}^\sigma$ and $r \in [2, \infty)$, there exists $T = T(\| u \|_{\dot{H^\sigma}}, r)$ such that 
		\eqref{eqNLS} has a unique solution
		\[  u \in 
		C^0_t \dot{H}^\sigma_x \cap L^{r'}_t \dot{W}_x^{\sigma, r}([-T, T] \times \RR^2) =: Y_2. \] 
		Moreover, $\| u \|_{Y_d} \lesssim \| u_0\|_{\dot{H}^\sigma}$ and the solution map $u_0 \mapsto u$ is Lipschitz.
	\end{proposition}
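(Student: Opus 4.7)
The plan is to implement a standard contraction mapping argument on the Duhamel formulation
\[ \Phi(u)(t) := e^{it\Delta} u_0 - i \int_0^t e^{i(t-s)\Delta} \bigl(u|u|^{p-1}\bigr)(s)\, ds \]
in a closed ball of the Strichartz space $Y_d$, equipped with a suitably weaker mixed Lebesgue distance to guarantee completeness when fractional derivatives are involved. The strategy follows the classical intercritical framework of Cazenave--Weissler but exploits the exponents $(q_1, r_1, r_2)$ already introduced in \eqref{eqr1r2q1} so that the bookkeeping matches the a priori analysis of Lemma \ref{lemaprioriexp}.

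First I would apply the standard Strichartz estimates for $e^{it\Delta}$ to bound the linear part: for $d\neq 2$ the pair $(2, 2^*)$ is admissible and yields $\|e^{it\Delta}u_0\|_{C^0_t\dot{H}^\sigma_x \cap L^2_t \dot{W}^{\sigma,2^*}_x} \lesssim \|u_0\|_{\dot{H}^\sigma}$, while for $d=2$ the endpoint fails and I would use the non-endpoint pair $(r', r)$. For the Duhamel term I would combine the inhomogeneous Strichartz estimate with the intercritical pair $(q_1, r_1)$. Since $\dot{W}^{\sigma, r_1}_x \hookrightarrow L^{p r_2}_x$ and $\dot{W}^{\sigma, r_1'}_x \hookrightarrow L^{r_2}_x$, the fractional Leibniz rule gives
\[ \|u|u|^{p-1}\|_{\dot{W}^{\sigma, r_1'}_x} \lesssim \|u\|_{L^{pr_2}_x}^{p-1}\|u\|_{\dot{W}^{\sigma,r_1}_x} \lesssim \|u\|_{\dot{W}^{\sigma,r_1}_x}^p. \]
Then the crucial point is the strictly subcritical gap $\sigma > s_c$, which by H\"older in time produces an extra factor $T^{\beta}$ with $\beta>0$ as in \eqref{eqbeta}:
\[ \|u|u|^{p-1}\|_{L^{q_1'}_t \dot{W}^{\sigma, r_1'}_x([-T,T])} \lesssim T^{\beta}\|u\|_{L^{q_1}_t \dot{W}^{\sigma,r_1}_x([-T,T])}^p. \]

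Combining these ingredients yields $\|\Phi(u)\|_{Y_d} \le C\|u_0\|_{\dot{H}^\sigma} + C T^\beta \|u\|_{Y_d}^p$, so choosing $T = T(\|u_0\|_{\dot{H}^\sigma})$ small enough makes $\Phi$ stabilize a ball of radius $2C\|u_0\|_{\dot{H}^\sigma}$. An analogous difference estimate based on Lemma \ref{lemnondif} (which avoids placing fractional derivatives on the difference) shows that $\Phi$ is a strict contraction in the weaker norm $L^{q_1}_t L^{p r_2}_x$ on this ball, on which it is complete; this simultaneously delivers uniqueness in $Y_d$ and the Lipschitz dependence of the solution map, while the Strichartz regularity of the limit is recovered by lower semicontinuity together with the bound on $\Phi$. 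The $d=2$ case is handled identically after replacing $(2,2^*)$ by the non-endpoint pair $(r',r)$ throughout, with $r$ determining $T$.

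The main obstacle will be the fractional chain rule when $p$ is close to $1$, which occurs for large $d$ with $s_c$ near $0$: the nonlinearity $F(z)=z|z|^{p-1}$ is then only $C^{0,p-1}$ with $p-1$ potentially smaller than $\sigma$, so the naive Leibniz estimate above is not directly justified. I would handle this via the Christ--Weinstein / Visan-type fractional chain rule, which allows up to $\sigma < p$ fractional derivatives on $|u|^{p-1}u$, together with the restriction $\sigma \le \alpha_c < 1 < p$ which is enforced in the statement. Alternatively, one can recast the nonlinear bound in the difference form of Lemma \ref{lemfracdif}, treating one copy of $u$ as the slowly varying background, at the cost of slightly shrinking the admissible range of $\sigma$ near $s_c$; either route suffices for the stated conclusion.
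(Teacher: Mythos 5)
Your overall strategy is exactly the paper's: a contraction argument for the Duhamel map on a ball of the Strichartz space $Y_d$, with the nonlinearity estimated through the fractional chain rule in the pair $(q_1,r_1)$ of \eqref{eqr1r2q1}, the subcritical gain $T^\beta$ from H\"older in time, and a weaker metric on the ball to sidestep fractional derivatives in the difference estimate. Two remarks. First, your worry about the chain rule for $p$ close to $1$ is moot at this level: $z\mapsto z|z|^{p-1}$ is $C^1$ for every $p>1$ and $\sigma\le \a_c<1$, so the Christ--Weinstein chain rule applies directly (this is what the paper cites); the more delicate Lemma \ref{lemfracdif} is only needed for the renormalized nonlinearity around the non-vanishing profile $Q_b$, not here.

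Second, there is a genuine gap in your contraction step: the pair $(q_1, pr_2)$ you propose for the metric is not Schr\"odinger-admissible. Indeed $\frac{2}{q_1}+\frac{d}{pr_2}=\frac d2-\sigma<\frac d2$, so the standard inhomogeneous Strichartz estimate does not bound $\bigl\|\int_0^t e^{i(t-s)\Delta}\bigl(F(u)-F(v)\bigr)ds\bigr\|_{L^{q_1}_t L^{pr_2}_x}$ by $\|F(u)-F(v)\|_{L^{q_1'}_t L^{r_2}_x}$; to reach $L^{pr_2}_x$ you would have to pass through $\dot{W}^{\sigma,r_1}_x$ by Sobolev embedding, which puts $\sigma$ derivatives back on the difference (defeating the purpose of Lemma \ref{lemnondif}), or you would need Foschi/Vilela-type non-admissible inhomogeneous estimates, which you neither invoke nor verify. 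The paper avoids this by taking the metric in an \emph{admissible} pair $(q_0,r_0)$ with $\frac d2-\frac{d}{r_0}=\frac{d/2-\sigma}{d/2-s_c}$ and $q_0=2\frac{d/2-s_c}{d/2-\sigma}$: the dual-pair estimate then applies, H\"older in space with the embedding $\dot{H}^\sigma\hookrightarrow L^{\tilde r_0(p-1)}$ (where $\frac{1}{\tilde r_0}=\frac{1}{r_0'}-\frac{1}{r_0}$, and this is where $\sigma\le\a_c$ is used) handles the $p-1$ remaining factors, and $\frac{1}{q_0'}-\frac{1}{q_0}=\beta$ reproduces the same $T^\beta$ gain; moreover $(\dot{H}^\sigma\cap\dot{W}^{\sigma,2^*})\hookrightarrow L^{r_0}$ makes the ball complete for this metric. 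Replacing your metric by such an admissible-pair norm repairs the argument; as written, the contraction claim does not follow from the tools you cite.
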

\begin{proof}
	The proof follows standard Banach fixed point argument and the $\dot{H}^\sigma$-subcritical nature of \eqref{eqNLS}. For clarity, we only prove the case $d \neq 2$. The $d =2$ case is no more than a substitution by non-endpoint Strichartz norm. 
	
	Define
	\[  \Psi_{u_0}(u) := e^{it\Delta} u_0 + i \int_0^t e^{i(t-s)\Delta} (|u|^{p-1}u)(s)ds. \]
	We will show $\Psi_{u_0}$ is contractive in some suitable metric space. 

    Let $M: = \| u_0\|_{\dot{H}^\sigma}$. Define the closed ball
	\be  X_{T, M}:= \left \{ u \in C^0_t \dot{H}^\sigma_x \cap L^2_t \dot{W}_x^{\sigma, 2^*}([-T, T] \times \RR^d): \| u \|_{L^\infty_t \dot{H}^\sigma_x \cap L^2_t \dot{W}_x^{\sigma, 2^*}} \le 2C_0 M \right  \}\label{eqball}\ee
	and metric
	\be
	\mathfrak{d}_T (u, v):= \| u-v\|_{L^{q_0}_t L^{r_0}_x([-T, T] \times \RR^d)}\label{eqmetric}\ee
	where $C_0$ is the constant in the Strichartz estimate for $e^{it\Delta}$. 
	Here $r_0 \in (2, 2^*)$ satisfies 
	\be \frac d2 - \frac{d}{r_0} =\frac{\frac d2 - \sigma}{\frac d2 - s_c}\label{eqrsigma} \ee
	and $q_0 = 2\frac{\frac d2 - s_c}{\frac d2 - \sigma}$ such that $(q_0, r_0)$ is Strichartz admissible. 
	From $s_c < \sigma \le \a_c = \frac{d}{d+2-2s_c}$, we know $\sigma \le \mathrm{RHS \,\, of\,\,}\eqref{eqrsigma} < \min \left \{ 1, \frac d2\right \}$ and thus $ (\dot{H}^\sigma \cap \dot{W}^{\sigma, 2^*})(\RR^d) \hookrightarrow L^{r_0}(\RR^d)$. Consequently, $(X_{T,M}, \mathfrak{d}_T)$ is a complete metric space for $T < \infty$. 
	
	We first check $\Psi_{u_0} : X_{T,M} \to X_{T,M}$. 
	By Strichartz estimate, 
	\bee 
	  \| \Psi_{u_0}(u) \|_{L^\infty_t \dot{H}^\sigma_x \cap L^2_t \dot{W}_x^{\sigma, 2^*}} 
	  \le C_0 \| u_0 \|_{\dot{H}^\sigma} + C_1 \left \| D^{\sigma} (|u|^{p-1}u) \right \|_{L_t^{q_1'} L^{r_1'}_x}.
	\eee
	Then by fractional chain rule for $C^1$ nonlinearity (see \cite{MR1124294}), we get
	\bee
	\left \| D^{\sigma} (|u|^{p-1}u) \right \|_{L_t^{q_1'} L^{r_1'}_x} \lesssim \left\| \| u\|_{\dot{W}_x^{\sigma, r_1}} \| u\|_{L^{pr_2}_x}^{p-1} \right\|_{L^{q_1'}_t} 
	\lesssim  \|u \|_{L^{q_1}_t \dot{W}^{\sigma, r_1}_x}^p  T^{\beta} \lesssim M^p T^{\beta} 
	\eee
    Thus the onto property of $\Psi_{u_0}$ is guaranteed by taking $T \sim \| u_0 \|_{\dot{H}^\sigma}^{-\frac{p-1}{\beta}} = M^{-\frac{p-1}{\beta}} $.

	
	Next, we check that $\Psi_{u_0}$ is a contraction concerning the metric $\mathfrak{d}_T$. Taking $\tilde{r}_0$ as $\frac{1}{\tilde{r}_0} = \frac{1}{r_0'} - \frac{1}{r_0}$, we then have $ \dot{H}^{\sigma} \hookrightarrow L^{\tilde{r}_0 (p-1)}$. Thus
	\bee && \| \Psi_{u_0} (u) - \Psi_{u_0}(v) \|_{L^{q_0}_t L^{r_0}_x} \lesssim \| |u|^{p-1} u - |v|^{p-1} v \|_{L^{q_0'}_t L^{r_0'}_x} \\
	&\lesssim  &\left\| \| u-v\|_{L_x^{r_0}} \left( \| u \|_{L_x^{\tilde{r}_0(p-1)}}^{p-1} + \| v \|_{L_x^{\tilde{r}_0(p-1)}}^{p-1} \right)\right\|_{L^{q_0'}_t}\\
	&\lesssim &\| u-v \|_{L^{q_0}_t L^{r_0}_x}\left ( \| u \|_{L^\infty_t \dot{H}^\sigma_x}^{p-1} + \| v \|_{L^\infty_t \dot{H}^\sigma_x}^{p-1}  \right) T^{\beta} \lesssim \| u-v \|_{L^{q_0}_t L^{r_0}_x} \| u_0 \|_{\dot{H}^\sigma}^{p-1} T^{\beta}.
		\eee
	Here the same $\beta$ comes from $\frac{1}{q_0'} - \frac{1}{q_0} = \frac{\sigma-s_c}{\frac d2 - s_c}= \beta$. By further shrinking $T$ by a constant if necessary, we can conclude that $\Psi_{u_0}$ is a contraction on $(X_{T,M}, \mathfrak{d}_T)$ for $d \neq 2$. We omit the standard arguments for the rest statements in this proposition.
		\end{proof}

Next, we derive a similar local-wellposedness result of the renormalized system \eqref{eqZ}.

\begin{proposition} \label{proplwp2}
		Suppose $d \ge 1$ and $0 < s_c  < \min \left \{ 1, \frac d2\right \}$ and consider the Cauchy problem \eqref{eqZ}. For any $s_c < \sigma \le \a_c$, we have the following local well-posedness result: when $d \neq 2$, for any initial data $Z_0 \in (\dot{H}^\sigma)^2$, there exists $T = T(\| Z_0 \|_{(\dot{H^\sigma})^2})$ such that \eqref{eqZ} has a unique solution
	\[  Z \in 
	C^0_t ([-T, T], (\dot{H}^\sigma_x(\RR^d))^2) \cap L^2_t ([-T, T], (\dot{W}_x^{\sigma, 2^*}(\RR^d))^2) =: \tilde{Y}_d; \]
	when $d = 2$, for any $Z_0 \in (\dot{H}^\sigma)^2$ and $r \in [2, \infty)$, there exists $T = T(\| Z_0 \|_{(\dot{H^\sigma})^2}, r)$ such that 
	\eqref{eqNLS} has a unique solution
	\[  Z \in 
		C^0_t ([-T, T], (\dot{H}^\sigma_x(\RR^d))^2) \cap L^{r'}_t ([-T, T], (\dot{W}_x^{\sigma, r}(\RR^d))^2) =: \tilde{Y}_2. \] 
	Moreover, $\| Z \|_{\tilde{Y}_d} \lesssim \|Z_0 \|_{(\dot{H}^\sigma)^2} $ and the solution map $Z_0 \mapsto Z$ is Lipschitz.
	\end{proposition}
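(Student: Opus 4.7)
The plan is to run a Banach fixed-point argument parallel to Proposition \ref{proplwp}, replacing the free Schr\"odinger propagator by the unperturbed matrix semigroup $e^{it\calH_b}$ and absorbing the remaining linear contribution of $V$ and $-ibs_c$ on a short time interval. Writing \eqref{eqZ} as $i\pa_t Z + \calH_b Z = F(Z) + ibs_c Z - V Z$, Duhamel's formula yields
\[
\Psi_{Z_0}(Z)(t) := e^{it\calH_b} Z_0 - i\int_0^t e^{i(t-s)\calH_b}\bigl[F(Z) + ibs_c Z - V Z\bigr](s)\,ds.
\]
I would look for a fixed point in the closed ball $\tilde X_{T,M} \subset \tilde Y_d$ of radius $2C_0 M$ (with $M := \|Z_0\|_{(\dot H^\sigma)^2}$) endowed with the weaker metric $\mathfrak d_T(Z,\tilde Z) := \|Z-\tilde Z\|_{L^{q_0}_t L^{r_0}_x}$, where $(q_0,r_0)$ is the admissible pair of \eqref{eqrsigma}, so that $(\tilde X_{T,M},\mathfrak d_T)$ is a complete metric space by the embedding $\dot H^\sigma \cap \dot W^{\sigma,2^*} \hookrightarrow L^{r_0}$.

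Since $\calH_b$ is diagonal, the scalar Strichartz bound of Proposition 2.3 applies componentwise and controls $\|e^{it\calH_b}Z_0\|_{\tilde Y_d}$ by $C_0 e^{|b|s_c T}\|Z_0\|_{(\dot H^\sigma)^2}$, the growth factor being harmless on $[-T,T]$. For the two linear perturbations I would use the admissible endpoint $(q_2',p_2')=(1,2)$: Lemma \ref{lempotbdd} (applied at $p=2$) gives $V\in\calL(\dot H^\sigma)$, so
\[
\|VZ\|_{L^1_t \dot H^\sigma_x} + \|bs_c Z\|_{L^1_t \dot H^\sigma_x} \lesssim T\,\|Z\|_{L^\infty_t \dot H^\sigma_x},
\]
which is absorbed into the Strichartz ball by taking $T$ small. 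Their contribution to the Lipschitz constant for $\mathfrak d_T$ is handled identically.

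The main obstacle is the nonlinear term $F(Z)=(-R(Z_1,Q_b),\,\overline{R(\bar Z_2,Q_b)})^T$ in the dual Strichartz norm $L^{q_1'}_t \dot W^{\sigma,r_1'}_x$. Since $Q_b\notin \dot H^\sigma$ (it carries only self-similar decay), one cannot simply expand $(Q_b+\e)|Q_b+\e|^{p-1}$; moreover, when $1<p\le 2$ the nonlinearity is only $C^{p-1}$, which can be strictly smaller than $\sigma$. Both issues are exactly what Lemma \ref{lemfracdif} is built to handle: the non-vanishing \eqref{eqnonvanishing} and the decay \eqref{selfsimilardecay} yield the slow-variation condition \eqref{eqslowvar} with $a=1$, and the cancellation built into $R$ forces $R(\e)$ to enter at quadratic order in $\e$. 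Choosing exponents as in \eqref{eqr1r2q1} so that $\dot W^{\sigma,r_1}\hookrightarrow L^{pr_2}$, Lemma \ref{lemfracdif} combined with H\"older in time gives
\[
\|F(Z)\|_{L^{q_1'}_t \dot W^{\sigma,r_1'}_x} \lesssim T^{\beta}\bigl(\|Z\|_{L^{q_1}_t \dot W^{\sigma,r_1}_x}^p + \text{lower-order in $Z$ for $p>2$}\bigr),
\]
with $\beta>0$ as in \eqref{eqbeta}; the lower-order pieces $\||Q_b|^{p-2}|\e|^2\|_{L^{\tilde r}}$ are admissible since $|Q_b|^{p-2}\lesssim \langle x\rangle^{-2(p-2)/(p-1)}$ is locally bounded and integrable in the relevant dual space. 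The contraction estimate on $\mathfrak d_T$ is obtained analogously from Lemma \ref{lemnondif}, producing the same $T^\beta$ gain. Choosing $T\sim M^{-(p-1)/\beta}$ then makes $\Psi_{Z_0}$ a contraction on $\tilde X_{T,M}$, yielding existence, uniqueness and Lipschitz dependence on initial data. The $d=2$ case is handled by replacing the endpoint Strichartz pair $(2,2^*)$ by the non-endpoint family $(r',r)$ with $r\in[2,\infty)$, exactly as in Proposition \ref{proplwp}.
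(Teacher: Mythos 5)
Your proposal is correct and follows essentially the same route as the paper: a Banach fixed-point argument via Duhamel for the matrix free flow, with the potential absorbed perturbatively on a short time interval (Lemma \ref{lempotbdd}), the nonlinearity $R$ estimated through Lemma \ref{lemfracdif} using the non-vanishing and slow variation of $Q_b$, and contraction measured in the weaker $L^{q_0}_t L^{r_0}_x$ metric via Lemma \ref{lemnondif}, exactly as in Proposition \ref{proplwp}. The only deviations are cosmetic: the paper keeps $-ibs_c$ inside the propagator $e^{it(\calH_b-ibs_c)}$ and places $VZ$ in the same dual norm $L^{q_1'}_t\dot W^{\sigma,r_1'}_x$ as $F$ (rather than your $L^1_t\dot H^\sigma_x$ endpoint), both of which are equally valid.
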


\begin{proof}
	This is still a contraction argument as above with estimates similar to the proof of Proposition \ref{proplwp} and Theorem \ref{thmfincodimstabHsigmaB}. Again, for simplicity, we only discuss $d \neq 2$ and $1 < p \le 2$ case. 
	
	We view the potential as a source term and write 
	\bee \Psi_{Z_0}(Z) := e^{i\tau(\calH_b - ibs_c)}Z_0 + i \int_0^\tau e^{i(\tau - \tau') (\calH_b - ibs_c)} (F - VZ)(\tau')d\tau'.  \eee
    As in Proposition \ref{proplwp}, we take a distance given by the weaker norm $L^{q_0}_\tau L^{r_0}_y$. 
	
	Since we only consider well-posedness local in time, we may assume $T_0 \le 1$. Thereafter, all exponential timewise weight can be neglected and $e^{it(\calH_b - ibs_c)}$ has the Strichartz estimates in both $\dot{H}^\sigma$ and $L^2$.

	Using the nonlinear estimate from Lemma \ref{lemfracdif} and linear estimate Lemma \ref{lempotbdd} for potential, we get the estimate similar to \eqref{eqFnonlinear1} (with exponents borrowed there)
	\bee \| F \|_{\dot{W}^{\sigma, r_1'}} &\lesssim&  \| Z \|_{\dot{W}^{\sigma, r_1} \cap \dot{W}^{\sigma, r_1^-}}^p,\quad \| VZ \|_{\dot{W}^{\sigma, r_1'}} \lesssim \| Z \|_{ \dot{W}^{\sigma, r_1}}. \eee
	Like \eqref{eqnonlinearF}, the space-time estimate follows
	\bee
	\|F -VZ\|_{L^{q_1'}_\tau \dot{W}^{\sigma, r_1'}_y} 
	 \lesssim   (\|Z \|_{\tilde Y_d}^p + \|Z \|_{\tilde Y_d}) T^{\beta}
	\eee
	Now the Strichartz estimate for $e^{it(\calH_b-ibs_c)}$ in $\dot{H}^\sigma$ yields the onto estimate straightforwardly.
	
	As for the contraction estimate, notice that  
	\bee |R(Z_1, Q_b) - R(Z_1', Q_b)|
	 \lesssim  |Z_1 - Z_1'|(|Z_1|^{p-1} + |Z_1'|^{p-1} + |Q_b|^{p-1}). \eee
	Then mimicking Proposition \ref{proplwp}, we get
	\bee 
	\| (F-VZ) - (F'-VZ')\|_{L^{q_0'}_\tau L^{r_0'}_y} 
	\lesssim \| Z - Z'\|_{L^{q_0}_\tau L^{r_0}_y} \left ( \| Z \|_{L^\infty_\tau \dot{H}^\sigma_y}^{p-1} + \| Z' \|_{L^\infty_\tau \dot{H}^\sigma_y}^{p-1} + \| Q_b \|_{\dot{H}^\sigma}^{p-1}  \right) T^{\beta}
	\eee
	which suffices to conclude the contraction estimate. 
	
\end{proof}

\bibliographystyle{plain}
\bibliography{Bib-1}

\end{document}